\def\bd#1{\mbox{\boldmath${#1}$}}
\newtheorem{theorem}{Theorem}[section]
\newtheorem{lemma}[theorem]{Lemma}
\newtheorem{proposition}[theorem]{Proposition}
\newtheorem{corollary}[theorem]{Corollary} 
\theoremstyle{definition}  
\newtheorem{definition}[theorem]{Definition}
\newtheorem{example}[theorem]{Example}
\newtheorem{conjecture}[theorem]{Conjecture}  
\newtheorem{remark}[theorem]{Remark}
\newcommand{\id}{\text{id}}
\newcommand{\End}{\text{End}}
\newcommand{\Hom}{\operatorname{Hom}} 
\def\uRep{\underline{\operatorname{Re}}\!\operatorname{p}}
\newcommand{\Lift}{\operatorname{Lift}}
\newcommand{\Rep}{\operatorname{Rep}}
\newcommand{\Inc}{\operatorname{Inc}}
\newcommand{\Proj}{\operatorname{Proj}}
\newcommand{\uRes}{\underline{\operatorname{Res}}}
\newcommand{\tr}{\text{tr}\,}
\newcommand{\Z}{\mathbb{Z}}
\newcommand{\cat}{\mathcal}
\newcommand{\ep}{\varepsilon}
\newcommand{\Dox}{\rotatebox{45}{$\Box$} }
\begin{document}

\title{On blocks of Deligne's category $\uRep(S_t)$.}

\author{Jonathan Comes}
\address{J.C.: Technische Universit\"{a}t M\"{u}nchen,
Zentrum Mathematik, 85748 Garching, Germany}
\email{comes@ma.tum.de}

\author{Victor Ostrik}
\address{V.O.: Department of Mathematics,
University of Oregon, Eugene, OR 97403, USA}
\email{vostrik@math.uoregon.edu}

\begin{abstract}
We describe blocks in Deligne's category $\uRep(S_t)$.
\end{abstract}

\date{\today}
\maketitle  

\section{Introduction}

Let $F$ be a field of characteristic zero and $t\in F$. 
Recently P.~Deligne introduced the tensor category $\uRep(S_t)$ which in a certain precise
sense interpolates the categories $\Rep(S_d)$ of $F-$linear representations  of the symmetric 
groups $S_d$, see \cite{Del07}. In general, the category $\uRep(S_t)$ is additive but not abelian; 
however it is proved in
\cite[Th\'eor\`eme 2.18]{Del07} that for $t$ which is not a nonnegative integer the category $\uRep(S_t)$
is semisimple (and hence abelian). 

The goal of this paper is to get a better understanding of the additive category $\uRep(S_t)$ in the case
when it is not semisimple. To this end we employ the usual strategy from representation theory:
we split our category into {\em blocks} (see \S \ref{blocksdef}) and analyze the blocks separately. 
In order to split a category into blocks, it is a standard technique to study
the endomorphism ring of the identity functor in this category (recall that for an algebra $A$
the endomorphism ring of the identity functor of the category of $A-$modules is precisely the
center of $A$). 
Thus we start by constructing many nontrivial endomorphisms of the identity functor of
the category $\uRep(S_t)$ (see \S \ref{endId}) and studying how these endomorphisms act on 
the indecomposable objects of the category $\uRep(S_t)$. In \S \ref{blocks} we show that this is
sufficient for describing all the blocks in the category $\uRep(S_t)$; in particular
we give a new proof of \cite[Th\'eor\`eme 2.18]{Del07}, see Corollary \ref{ss}. In general we find
that the category $\uRep(S_t)$ contains infinitely many {\em trivial} blocks (they are equivalent
to the category of finite dimensional $F-$vector spaces as additive categories) and finitely many
non-semisimple blocks. In \S \ref{blockquiver} we show that all these non-semisimple blocks are
equivalent to each other as additive categories and give a description of this category via
quiver with relations.

One of the very interesting results of \cite{Del07} is a construction of an {\em abelian} tensor
category $\uRep^{ab}(S_t)$ and fully faithful tensor functor $\uRep(S_t)\to \uRep^{ab}(S_t)$, 
see \cite[Proposition 8.19]{Del07}. In a subsequent publication we plan to apply the
results of the present paper to the study of $\uRep^{ab}(S_t)$; in particular we plan to prove
its universal property as in \cite[Conjecture 8.21]{Del07}. In another direction, F.~Knop
significantly generalized Deligne's construction in \cite{MR2349713}; we hope that our
approach will be useful in the study of tensor categories from {\em loc. cit.}

The paper is organized as follows. Sections 2 and 3 are mostly expository. In Section 2 
we give a detailed construction of category $\uRep(S_t)$ with emphasis on motivation. 
In Section 3 we give a classification of indecomposable objects in the category $\uRep(S_t)$
and discuss the relation of $\uRep(S_d)$ and $\Rep(S_d)$ (this is closely related with 
\cite[\S 5-6]{Del07}). In Section 4 we construct many
endomorphisms of the identity functor of the category $\uRep(S_t)$; this is the main technical
tool of this paper. In Section 5 we give a complete classification of the blocks in the category
$\uRep(S_t)$. Finally in Section 6 we give a quiver description of the non-semisimple blocks
in the category $\uRep(S_t)$.

It is a great pleasure to thank here Alexander Kleshchev who suggested we study
the category $\uRep(S_t)$ and taught us about representation theory of the symmetric group.
V.O. also acknowledges inspiring conversations with Pierre Deligne and Pavel Etingof.   J.C. is  
indebted to Anne Henke and Friederike Stoll for their comments on improving the paper.
Both authors are grateful to the referee for carefully reading the paper and providing useful suggestions.
Part of the work on this paper was done when the second author enjoyed hospitality
of the Institute for Advanced Study; he is happy to thank this institution for 
the excellent research conditions.  
This work was  partially supported by the NSF grant DMS-0602263.

\subsection{Terminology and notation conventions}\label{notation}  Let $F$ be a field.  A \emph{tensor category} is an $F$-linear\footnote{Notice that we do not require our tensor categories to be abelian, or even additive.} category $\cat{T}$ along with an $F$-linear bifunctor $\otimes:\cat{T}\times\cat{T}\to\cat{T}$ and associativity, commutativity, and unit constraints satisfying the triangle, pentagon, and hexagon axioms (see \cite{MR1797619}).  Moreover, we require our tensor categories to be rigid with $\End_\cat{T}({\bf 1})=F$.  

A \emph{Young diagram} is an infinite tuple of nonnegative integers  $\lambda=(\lambda_1, \lambda_2,\ldots)$ with $\lambda_i\geq\lambda_{i+1}$ for all $i>0$ such that $\lambda_k=0$ for some $k>0$.  As usual, we identify a Young diagram with an array of boxes: 
$$\includegraphics{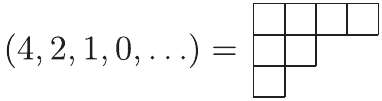}$$  Let $|\lambda|$ denote the number of boxes in $\lambda$. 
Given Young diagrams $\lambda=(\lambda_1, \lambda_2,\ldots)$ and $\lambda'=(\lambda_1', \lambda_2',\ldots)$ write $\lambda\subset\lambda'$ if $\lambda_i\leq\lambda_i'$ for all $i>0$.  If $|\lambda|=|\lambda'|$ we write $\lambda\prec\lambda'$ if $\sum_{i=1}^m\lambda_i\leq\sum_{i=1}^m\lambda_i'$ for all $m>0$.  Occasionally it will be useful to give a Young diagram by its \emph{multiplicities}.  We will write $(l_1^{m_1},\ldots, l_r^{m_r})$ to denote the Young diagram with $m_i$ rows of length $l_i$ for each $i=1,\ldots, r$.  If $F$ has characteristic zero and $d$ is a nonnegative integer, the simple modules of the symmetric group $S_d$ are labelled by Young diagrams $\lambda$ with $|\lambda|=d$ (see e.g. \cite[4.2]{MR1153249}).  Let $L_\lambda$ denote the simple $S_d$-module corresponding to $\lambda$.  For example, $L_{(d,0,\ldots)}$ is the trivial representation of $S_d$.
Finally, for arbitrary $t\in F$, the \emph{$t$-completion of $\lambda$} is $\lambda(t):=(t-|\lambda|, \lambda_1, \lambda_2,\ldots)$.

\section{The tensor category $\uRep(S_t)$}

\subsection{Motivation}\label{motivation}

Let $d$ be a nonnegative integer, and let $F$ be a field of characteristic zero.  
Let us consider the tensor category $\Rep(S_d; F)$ of finite dimensional representations over $ F$ of the symmetric group $S_d$.  Note that we take $S_0$ to be the trivial group whose one element is the identity permutation of the empty set.  Let $V_d$ denote the natural $d$-dimensional representation of $S_d$ with basis $\{v_1,\ldots, v_d\}$, so that $S_d$ acts by permuting the basis elements ($V_0$ is taken to be 0).    Setting $V_d^{\otimes 0}=F$ for all $d\geq 0$, we have the following well-known result:

\begin{proposition}\label{full} Any irreducible representation of $S_d$ is a direct summand of $V_d^{\otimes n}$ for some nonnegative integer $n$.
\end{proposition}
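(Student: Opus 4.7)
The plan is to show that for every $d\geq 1$ the regular representation $F[S_d]$ appears as a direct summand of $V_d^{\otimes d}$. Since $F[S_d]$ contains every irreducible representation of $S_d$ as a summand (with multiplicity equal to its dimension), this immediately implies the proposition. The case $d=0$ is handled separately and is trivial: the only irreducible representation of $S_0$ is the trivial one, which coincides with $V_0^{\otimes 0}=F$.

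First, I would observe that $V_d^{\otimes n}$ has a natural basis consisting of the simple tensors $v_{i_1}\otimes\cdots\otimes v_{i_n}$ with $i_j\in\{1,\ldots,d\}$, and $S_d$ acts on this basis by permutation. Hence $V_d^{\otimes n}$ is a permutation $S_d$-module, and it decomposes as a direct sum of $S_d$-submodules indexed by the $S_d$-orbits on this basis. For each orbit, the associated summand is isomorphic to $F[S_d/H]$, where $H$ is the stabilizer of any representative.

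Second, I would specialize to $n=d$ and single out the orbit of the basis vector $v_1\otimes v_2\otimes\cdots\otimes v_d$. The stabilizer of this element in $S_d$ is trivial, since any $\sigma\in S_d$ fixing it must satisfy $v_{\sigma(i)}=v_i$ for all $i$. Therefore the direct summand of $V_d^{\otimes d}$ corresponding to this orbit is isomorphic to $F[S_d]$, the regular representation.

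Third, invoking the standard fact that every irreducible representation of $S_d$ occurs in the regular representation, one concludes that every irreducible of $S_d$ appears as a direct summand of $V_d^{\otimes d}$, and hence of $V_d^{\otimes n}$ for every $n\geq d$. There is no real obstacle here — the result is classical and the argument amounts to the single observation that $(v_1,\ldots,v_d)$ has trivial stabilizer. The only mild care needed is the boundary convention $V_d^{\otimes 0}=F$ used to handle the $d=0$ case.
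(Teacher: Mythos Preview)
Your argument is correct and is in fact the standard one: the orbit of $v_1\otimes\cdots\otimes v_d$ in $V_d^{\otimes d}$ has trivial stabilizer, so its span is a copy of the regular representation, and every irreducible of $S_d$ occurs there. The paper itself states Proposition~\ref{full} as a ``well-known result'' and gives no proof, so there is nothing to compare against; your proposal supplies exactly the kind of short justification one would expect.
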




As a consequence of Proposition \ref{full}, one way to understand $ \Rep(S_d; F)$ is to study objects of the form $V^{\otimes n}_d$ and morphisms between those objects.  We will now use set partitions to construct some such morphisms.  Our notation will be similar to that of \cite{MR2143201}.

By a \emph{partition} $\pi$ of a finite set $S$ we mean a collection $\pi_1,\ldots,\pi_n$
of disjoint subsets of $S$ with $S=\bigcup_i \pi_i$. The sets
$\pi_i$ will be called \emph{parts} of $\pi$.  Given a partition $\pi$ of $\{1,\ldots, n, 1',\ldots, m'\}$, a \emph{partition diagram of $\pi$} is any graph with vertices labelled $\{1,\ldots, n, 1',\ldots, m'\}$ whose connected components partition the vertices into the parts of $\pi$.  We will always draw partition diagrams using the following convention:
\begin{itemize}
\item Vertices $1,\ldots, n$ ~(resp. $1',\ldots, m'$) are aligned horizontally and increasing \\
from left to right with $i$ directly above $i'$.
\item Edges lie entirely below the vertices labelled $1,\ldots, n$ and above the vertices \\ labelled $1',\ldots, m'$.
\end{itemize}
We will abuse notation by writing $\pi$ for both the partition and the partition diagram.

\begin{example}\label{pd} The graph
$$\includegraphics{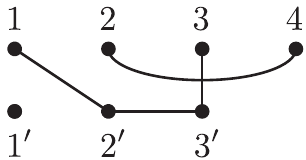}$$
 is a partition diagram for the partition
$\{\{1,3,2',3'\},\{2,4\},\{1'\}\}$.  Note a
partition diagram representing this partition is not unique, but its connected components are.$\hfill\Dox$
\end{example}

To each partition of the set $\{1,\ldots, n, 1',\ldots, m'\}$ we will associate a linear map $V_d^{\otimes n}\to V^{\otimes m}_d$.  Before doing so, we introduce some notation.

\begin{itemize}\item Let $P_{n,m}$ denote the set of all partitions of $\{1,\ldots, n, 1',\ldots, m'\}$, let $P_{n,0}$ denote the set of all partitions of $\{1,\ldots, n\}$, let $P_{0,m}$ denote the set of all partitions of $\{1',\ldots, m'\}$, and let $P_{0,0}$ denote the set consisting of the empty partition.  Finally,  let $ F P_{n, m}$ denote the $ F$-vector space with basis $P_{n,m}$.

\item For nonnegative integers $n$ and $d$, let $[n, d]$ denote the set of all functions from $\{j~|~1\leq j\leq n\}$ to $\{j~|~1\leq j\leq d\}$.  In particular, $[0, d]=\{\varnothing\}$ for all $d$, and $[n, 0]=\varnothing$ for all $n\not=0$.   Given ${\bd{i}}\in[n, d]$ and $j\in\{j~|~1\leq j\leq n\}$, write $i_j$ for the image of $j$ under ${\bd{i}}$.

\item For ${\bd{i}}\in[n, d]$ and ${\bd{i}}'\in[m, d]$, the \emph{$({\bd{i}}, {\bd{i}}')$-coloring} of a partition $\pi\in P_{n, m}$ is obtained by coloring the vertices of $\pi$ labelled $j$ (resp. $j'$) by the integer $i_j$ (resp. $i'_j$).  
Saying an $({\bd{i}}, {\bd{i}}')$-coloring of $\pi$ is \emph{good} means vertices are colored the same whenever they are in the same connected component of $\pi$.  Saying an $({\bd{i}}, {\bd{i}}')$-coloring of $\pi$ is \emph{perfect} means vertices are colored the same if and only if they are in the same connected component of $\pi$. 

\item For $n\not=0$ and ${\bd{i}}\in[n, d]$, set $v_{\bd{i}}:=v_{i_1}\otimes\cdots\otimes v_{i_n}\in V_d^{\otimes n}$.   Set $v_\varnothing:=1\in V^{\otimes0}_d$ where $\varnothing$ is the unique element of $[0, d]$.  

\end{itemize}

We are now ready to associate partitions in $P_{n,m}$ with linear maps $V_d^{\otimes n}\to V^{\otimes m}_d$.

\begin{definition}\label{f} For nonnegative integers $n, m$, and $d$, define the $F$-linear map $f:FP_{n, m}\to\Hom_{S_d}(V_d^{\otimes n}, V_d^{\otimes m})$ by setting  $$f(x)(v_{\bd{i}})=\sum_{{\bd{i}}'\in[m, d]}f(x)^{\bd{i}}_{ {\bd{i}}'}v_{{\bd{i}}'}\qquad (x\in FP_{n, m}, \bd{i}\in[n, d])$$ where 
$$f(\pi)^{\bd{i}}_{{\bd{i}}'}:=\left\{\begin{array}{ll}
1, & \text{if the $({\bd{i}}, {\bd{i}}')$-coloring of $\pi$ is good},\\
0, & \text{otherwise}.
\end{array}\right.\qquad (\pi\in P_{n,m})$$
\end{definition}
Indeed, $f(x)$ commutes with the action of $S_d$ which merely permutes the colors.

\begin{example} (1) If $\varnothing\in P_{0,0}$ denotes the empty partition, then $f(\varnothing):F\to F$ is the identity map.

(2) Assume $d>0$ and let $\pi$ be the partition with partition diagram $$\includegraphics{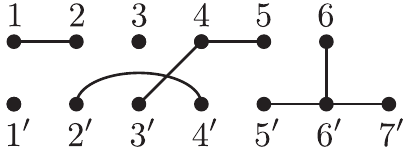}$$
Then $f(\pi):V_d^{\otimes 6}\to V_d^{\otimes 7}$ is given by $$f(\pi)(v_{\bd{i}})=\left\{\begin{array}{ll}
\sum\limits_{1\leq k, j\leq d} v_k\otimes v_j\otimes v_{i_4}\otimes v_j\otimes v_{i_6}\otimes v_{i_6}\otimes v_{i_6}, & \text{if $i_1=i_2$ and $i_4=i_5$},\\
0, & \text{otherwise}.
\end{array}\right.$$ for ${\bd{i}}\in[6,d]$.

(3) Assume $d>0$ and let $\pi$ be the partition with partition diagram $$\includegraphics{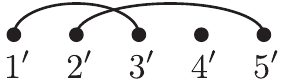}$$ Then $f(\pi):F\to V_d^{\otimes 5}$ is given by $$f(\pi)(1)=\sum_{1\leq i, j, k\leq d}v_i\otimes v_j\otimes v_i\otimes v_k\otimes v_j.$$
$\hfill\Dox$
\end{example}

Next we wish to show that $f$ is surjective.  A proof of this fact when $n=m$ can be found in \cite[Theorem 3.6]{MR2143201}.  Their proof extends to the case $n\not=m$ without difficulty.  For completeness we will give a modified version of their proof.   Before doing so, we must introduce a bit more notation.

\begin{itemize} \item Let $\leq$ be the partial order on $P_{n,m}$ defined by
$\pi\leq\mu$ whenever the partition $\mu$ is coarser than the
partition $\pi$ (i.e. $r$ and $s$ are in the same part of $\mu$
whenever they are in the same part of $\pi$ for each pair $r, s\in\{1,\ldots, n, 1',\ldots, m'\}$).

\item Define the basis $\{x_\pi~|~\pi\in P_{n,m}\}$ of $F P_{n,m}$ inductively by setting \begin{equation}\label{xpi} x_\pi:=\pi-\sum\limits_{\mu\gneqq \pi} x_\mu.\end{equation}
\end{itemize}

\begin{example} (1) $x_\pi=\pi$ when $\pi$ is any partition consisting of one part.

(2) Let $\pi\in P_{4,3}$ be the partition with the partition diagram given in Example \ref{pd}.  Then $x_\pi=\pi-\mu_1-\mu_2-\mu_3+2\mu_4$ where $$\includegraphics{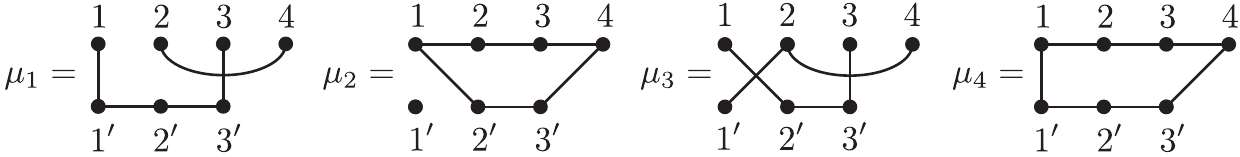}$$ $\hfill\Dox$

\end{example}

We are now ready to prove the following:

\begin{theorem}\label{sw}  For integers $n, m, d\geq 0$, the map $f: F P_{n,m}\to\Hom_{S_d}(V_d^{\otimes n}, V_d^{\otimes m})$ (see Definition \ref{f}) has the following properties:
\begin{enumerate}
\item[(1)] $f$ is surjective.

\item[(2)] $\ker(f)=\text{Span}_ F\{x_\pi~|~\pi\text{ has more than $d$ parts}\}$.  
\end{enumerate} 
In particular, $f$ is an isomorphism of $ F$-vector spaces whenever $d\geq n+m$.
\end{theorem}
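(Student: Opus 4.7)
The plan is to introduce an auxiliary basis of the image side and show that, under $f$, the basis $\{x_\pi\}$ maps to it (with the partitions having too many parts collapsing to zero). Specifically, for each $\pi \in P_{n,m}$ with at most $d$ parts I would define $e_\pi \in \Hom_{S_d}(V_d^{\otimes n}, V_d^{\otimes m})$ by the rule that its $(\bd{i},\bd{i}')$-matrix coefficient equals $1$ if the $(\bd{i},\bd{i}')$-coloring of $\pi$ is \emph{perfect} and $0$ otherwise (with the convention $e_\pi = 0$ if $\pi$ has more than $d$ parts, since no perfect coloring exists). The first step is to show $\{e_\pi : \pi \text{ has} \leq d \text{ parts}\}$ is a basis of $\Hom_{S_d}(V_d^{\otimes n}, V_d^{\otimes m})$. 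Linear independence is immediate because the different $e_\pi$ are supported on disjoint sets of matrix coefficients. Spanning follows because an $S_d$-equivariant map has matrix coefficients that are constant on $S_d$-orbits of pairs $(\bd{i},\bd{i}')$, and these orbits are in bijection with partitions of $\{1,\dots,n,1',\dots,m'\}$ having at most $d$ parts.

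Next I would compare $f(\pi)$ to the $e_\mu$'s. By definition, the $(\bd{i},\bd{i}')$-coloring of $\pi$ is good precisely when the partition $\pi_{\bd{i},\bd{i}'}$ induced by coincidences of colors is coarser than $\pi$. Partitioning the good colorings by the partition they actually induce yields the key identity
$$f(\pi) \;=\; \sum_{\mu \geq \pi} e_\mu,$$
where on the right-hand side we implicitly discard (i.e.\ set to zero) the terms with $\mu$ having more than $d$ parts.

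Now combining the defining recursion $\pi = \sum_{\mu \geq \pi} x_\mu$ with this identity and doing a triangular induction (decreasing on $\pi$, starting from the maximal partition, the all-in-one part), I would establish
$$f(x_\pi) \;=\; e_\pi \qquad \text{(with } e_\pi := 0 \text{ if } \pi \text{ has more than } d \text{ parts).}$$
From this everything follows at once: surjectivity of $f$ because its image contains the basis $\{e_\mu : \mu \text{ has} \leq d \text{ parts}\}$ of the target; the kernel description because the $\{x_\pi\}$ with $\pi$ having more than $d$ parts map to zero, while the remaining ones map to linearly independent elements; and finally, when $d \geq n+m$, every partition of the $n+m$ element set has at most $n+m \leq d$ parts, so the kernel is trivial and $f$ is an isomorphism.

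The main obstacle is bookkeeping: carefully verifying the identity $f(\pi) = \sum_{\mu \geq \pi} e_\mu$ and then using the inductive definition of $x_\pi$ correctly to extract $f(x_\pi) = e_\pi$. Everything else is formal once that is in place. (Essentially one is performing a Möbius inversion between the two natural bases, $\{$good coloring$\}$ versus $\{$perfect coloring$\}$, on the poset of partitions.)
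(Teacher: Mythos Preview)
Your proposal is correct and is essentially the same argument as the paper's, just with the intermediate object named: what you call $e_\pi$ is exactly what the paper computes as $f(x_\pi)$ in equation~(\ref{fx}), and your ``triangular induction'' is the ``straightforward induction argument'' the paper invokes. The only cosmetic difference is that the paper phrases surjectivity as ``$g$ is a linear combination of the $f(x_\pi)$'s'' rather than first establishing that the $e_\pi$ form a basis, but the content is identical.
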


\begin{proof} (compare with \cite[proof of Theorem 3.6(a)]{MR2143201})  If $d=0$ then the theorem is certainly true.  Now assume $d>0$.  For $g\in\Hom_{S_d}(V_d^{\otimes n}, V_d^{\otimes m})$ write $$g(v_{\bd{i}}):=\sum_{{\bd{i}}'\in[m, d]}g^{\bd{i}}_{{\bd{i}}'}v_{{\bd{i}}'}\qquad({\bd{i}}\in[n, d])$$  Since $g$ commutes with the action of $S_d$, the matrix entries, $g^{\bd{i}}_{{\bd{i}}'}$, are constant on the $S_d$-orbits of the matrix coordinates $\{({\bd{i}}, {\bd{i}}')\}_{{\bd{i}}\in[n,d], {\bd{i}}'\in[m,d]}$.  Each $S_d$-orbit of $\{({\bd{i}}, {\bd{i}}')\}_{{\bd{i}}\in[n,d], {\bd{i}}'\in[m,d]}$ corresponds to a partition $\pi\in P_{n,m}$ as follows: $({\bd{i}}, {\bd{i}}')$ is in the orbit corresponding to $\pi$ if and only if the $({\bd{i}}, {\bd{i}}')$-coloring of $\pi$ is perfect.  A straightforward induction argument shows 
 \begin{equation}\label{fx}
f(x_\pi)^{\bd{i}}_{{\bd{i}}'}=\left\{\begin{array}{ll}
1, & \text{if the $({\bd{i}}, {\bd{i}}')$-coloring of $\pi$ is perfect},\\
0, & \text{otherwise}.
\end{array}\right.\quad({\bd{i}}\in[n,d], {\bd{i}}'\in[m,d])
\end{equation}
Thus $g$ is an $ F$-linear combination of the $f(x_\pi)$'s.  This proves part (1).

To prove part (2) notice that for $\pi\in P_{n,m}$, there exist ${\bd{i}}\in[n, d]$ and ${\bd{i}}'\in[m, d]$ such that the $({\bd{i}}, {\bd{i}}')$-coloring of $\pi$ is perfect if and only if $\pi$ has at most $d$ parts.  Hence, by (\ref{fx}), $f(x_\pi)$ is the zero map if and only if $\pi$ has more than $d$ parts.  Part (2) now follows since $\{x_\pi~|~\pi\in P_{n,m}\}$ is a basis for $ F P_{n,m}$.
\end{proof}

We conclude our investigation of morphisms of the form $f(\pi):V_d^{\otimes n}\to V_d^{\otimes m}$ by studying the composition of such morphisms.  First we require the following:

\begin{definition}\label{star} Given partition diagrams $\pi\in P_{n,m}$ and $\mu\in P_{m,l}$, construct a new diagram $\mu\star\pi$ by identifying the vertices $1',\ldots, m'$ of $\pi$ with the vertices $1,\ldots, m$ of $\mu$ and renaming them $1'',\ldots,m''$ as illustrated below.
$$\includegraphics{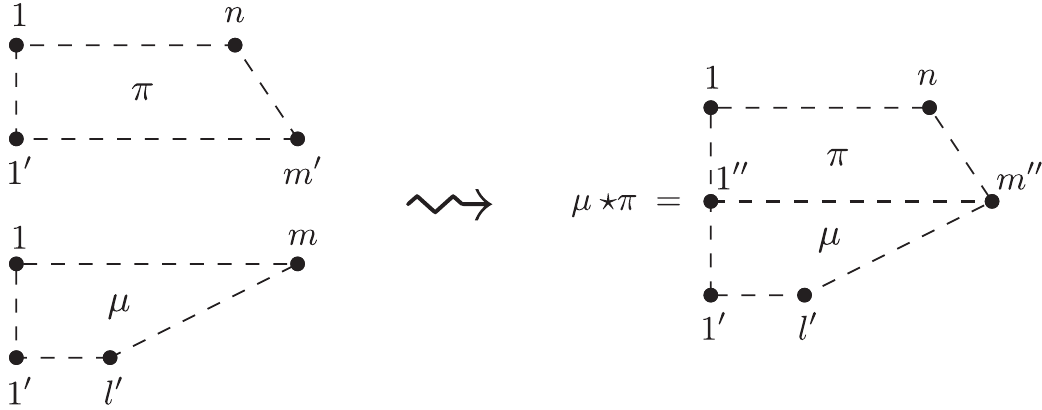}$$ 
Now let $\ell(\mu, \pi)$ denote the number of connected components of $\mu\star\pi$ whose vertices are not among $1,\ldots, n, 1', \ldots, l'$.  Finally, let $\mu\cdot\pi\in P_{n,l}$ be the partition obtained by restricting $\mu\star\pi$ to $\{1,\ldots, n, 1',\ldots, l'\}$ (i.e. $r$ and $s$ are in the same part of $\mu\cdot\pi$ if and only if $r$ and $s$ are in the same part of $\mu\star\pi$).
\end{definition}

We are now ready to state

\begin{proposition}\label{comp} $f(\mu)f(\pi)=d^{\ell(\mu, \pi)}f(\mu\cdot\pi)$ for any $\pi\in P_{n,m}$, $\mu\in P_{m,l}$.
\end{proposition}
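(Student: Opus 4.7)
The plan is to verify the identity by comparing matrix coefficients on both sides, in the natural bases $\{v_{\bd i}\}$ of the tensor powers $V_d^{\otimes n}$ and $V_d^{\otimes l}$. Write $\bd i\in[n,d]$ for indices on the top and $\bd i''\in[l,d]$ for indices on the bottom, and compute
\[
(f(\mu)f(\pi))^{\bd i}_{\bd i''} \;=\; \sum_{\bd i'\in[m,d]} f(\pi)^{\bd i}_{\bd i'}\, f(\mu)^{\bd i'}_{\bd i''}.
\]
By Definition \ref{f}, each summand is $0$ or $1$, and equals $1$ precisely when the $(\bd i,\bd i')$-coloring of $\pi$ and the $(\bd i',\bd i'')$-coloring of $\mu$ are both good. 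So the sum counts the number of $\bd i'\in[m,d]$ which, together with the fixed top and bottom colorings, produce a good coloring of the combined diagram $\mu\star\pi$ (where ``good'' means the color is constant on each connected component).

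The second step is to count such $\bd i'$. Each connected component $C$ of $\mu\star\pi$ must receive a single color. There are two types of components: those that contain at least one vertex from $\{1,\ldots,n,1',\ldots,l'\}$, and those whose vertices all lie among the identified middle vertices $1'',\ldots,m''$. Components of the first type have their color dictated by the given boundary coloring; this assignment is consistent (i.e.\ any such choice gives a good coloring) if and only if any two outer vertices sitting in the same component of $\mu\star\pi$ receive the same color — equivalently, by the definition of $\mu\cdot\pi$, if and only if the $(\bd i,\bd i'')$-coloring of $\mu\cdot\pi$ is good. Components of the second type, of which there are by definition $\ell(\mu,\pi)$, can be independently colored in any of $d$ ways. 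Consequently
\[
(f(\mu)f(\pi))^{\bd i}_{\bd i''} \;=\; \begin{cases} d^{\ell(\mu,\pi)}, & \text{if the $(\bd i,\bd i'')$-coloring of $\mu\cdot\pi$ is good},\\ 0, & \text{otherwise,}\end{cases}
\]
which is exactly $d^{\ell(\mu,\pi)} f(\mu\cdot\pi)^{\bd i}_{\bd i''}$, proving the identity.

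The only delicate point — and the step I would write out most carefully — is the bijective correspondence between the connected components of $\mu\star\pi$ meeting $\{1,\ldots,n,1',\ldots,l'\}$ and the parts of $\mu\cdot\pi$, together with the claim that restricting to outer vertices gives a good coloring of $\mu\cdot\pi$ exactly when the outer coloring can be extended consistently across $\mu\star\pi$. Everything else is a straightforward bookkeeping argument, and the formula $d^{\ell(\mu,\pi)}$ arises entirely from the free choices made on the ``floating'' middle components.
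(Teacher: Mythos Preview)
Your proof is correct and follows essentially the same approach as the paper: both compute the matrix coefficient $(f(\mu)f(\pi))^{\bd i}_{\bd i''}$ as a sum over middle colorings, identify the nonzero summands with good colorings of $\mu\star\pi$, and count these by separating the components touching outer vertices (whose color is forced, with consistency equivalent to the $(\bd i,\bd i'')$-coloring of $\mu\cdot\pi$ being good) from the $\ell(\mu,\pi)$ floating components (each contributing a factor of $d$). The only difference is cosmetic: the paper names the middle index $\bd i''$ rather than $\bd i'$.
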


Before we give a proof of Proposition \ref{comp} let us consider an example.

\begin{example} In this example we will verify Proposition \ref{comp} when $$\includegraphics{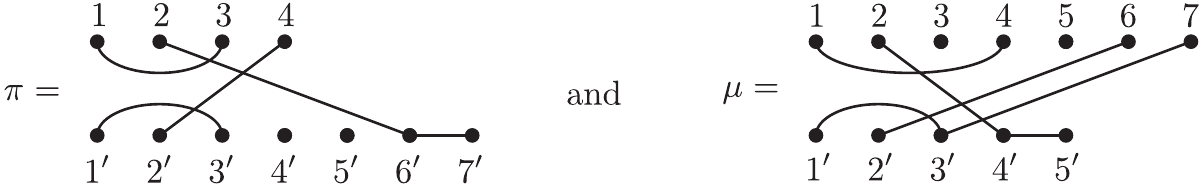}$$
On the one hand,
$$\includegraphics{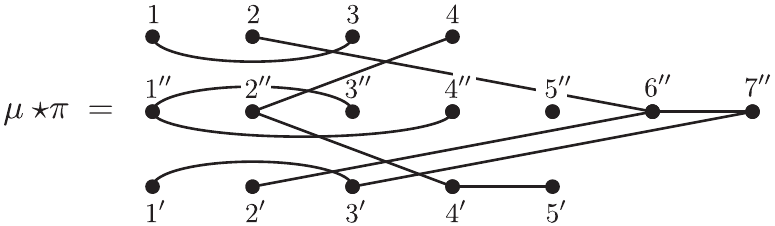}$$ Thus $\ell(\mu,\pi)=2$ and $$\includegraphics{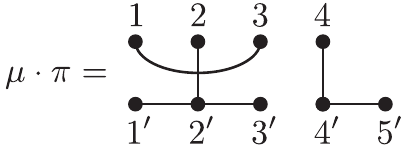}$$
Therefore we have $d^{\ell(\mu, \pi)}f(\mu\cdot\pi)(v_{i_1}\otimes v_{i_2}\otimes v_{i_3}\otimes v_{i_4})=d^2\delta_{i_1, i_3}v_{i_2}\otimes v_{i_2}\otimes v_{i_2}\otimes v_{i_4}\otimes v_{i_4}$ where $\delta_{i,j}$ is the Kronecker delta function.

On the other hand, 
$$\begin{array}{rl}
f(\mu) f(\pi)(v_{\bd{i}}) & =f(\mu)\left(\delta_{i_1,i_3}\sum\limits_{1\leq j, k, l\leq d}v_{j}\otimes v_{i_4}\otimes v_{j}\otimes v_{k}\otimes v_{l}\otimes v_{i_2}\otimes v_{i_2}\right)\\
& =\delta_{i_1,i_3}\sum\limits_{1\leq j, k, l\leq d}f(\mu)\left(v_{j}\otimes v_{i_4}\otimes v_{j}\otimes v_{k}\otimes v_{l}\otimes v_{i_2}\otimes v_{i_2}\right)\\
& =\delta_{i_1,i_3}\sum\limits_{1\leq j, k, l\leq d}\delta_{j,k}~v_{i_2}\otimes v_{i_2}\otimes v_{i_2}\otimes v_{i_4}\otimes v_{i_4} \\
& =\delta_{i_1,i_3}\sum\limits_{1\leq j, l\leq d}v_{i_2}\otimes v_{i_2}\otimes v_{i_2}\otimes v_{i_4}\otimes v_{i_4}   \\
& =d^2\delta_{i_1,i_3}~v_{i_2}\otimes v_{i_2}\otimes v_{i_2}\otimes v_{i_4}\otimes v_{i_4}. \\
\end{array}$$
 for any ${\bd{i}}\in[4, d]$, as desired.$\hfill\Dox$
\end{example}

Now to show Proposition \ref{comp} holds in general.

\medskip\noindent{\bf Proof of Proposition \ref{comp}.}
 Suppose $\pi\in P_{n,m}$ and $\mu\in P_{m, l}$ for some integers $n, m, l\geq 0$.  By Definition \ref{f} the matrix coordinates of $f(\mu) f(\pi):V_d^{\otimes n}\to V_d^{\otimes l}$ are given by 
\begin{equation}\label{comp1}(f(\mu) f(\pi))_{{\bd{i}}'}^{\bd{i}}=\sum_{{\bd{i}}''\in[m,d]}f(\mu)_{{\bd{i}}'}^{{\bd{i}}''}f(\pi)_{{\bd{i}}''}^{{\bd{i}}}.\end{equation}
Hence $(f(\mu) f(\pi))_{{\bd{i}}'}^{\bd{i}}$ is the number of ${\bd{i}}''\in[m,d]$ such that the $({\bd{i}}, {\bd{i}}'')$-coloring of $\pi$ and the $({\bd{i}}'', {\bd{i}}')$-coloring of $\mu$ are simultaneously good.  These are exactly the ${\bd{i}}''\in[m,d]$ such that coloring the vertices $j, j', j''$ of $\mu\star\pi$ with integers $i_j, i'_j, i''_j$ respectively, gives a good coloring of $\mu\star\pi$.  Any good coloring of $\mu\star\pi$ gives rise to a good coloring of $\mu\cdot\pi$.  Clearly any good coloring of $\mu\cdot\pi$ arises in this way.  Moreover, two good colorings of $\mu\star\pi$ give the same good coloring of $\mu\cdot\pi$ if and only if the two colorings of $\mu\star\pi$ differ only at connected components whose vertices are not among $1,\ldots, n, 1', \ldots, l'$.  Since there are $d$ choices of color for each component, we see the number of ${\bd{i}}''\in[m, d]$ such that the $({\bd{i}}, {\bd{i}}'')$-coloring of $\pi$ and the $({\bd{i}}'', {\bd{i}}')$-coloring of $\mu$ are simultaneously good is $d^{\ell(\mu, \pi)}f(\mu\cdot\pi)_{{\bd{i}}'}^{{\bd{i}}}$.  The result follows.
$\hfill\Box$\medskip

\begin{remark} From Proposition \ref{comp} the structure constants of the composition $f(\mu)f(\pi)$ are polynomial in the integer $d$.  We will exploit this fact in section \ref{defRep} when we define the category $\uRep(S_t; F)$ which ``interpolates" the category $\Rep(S_t; F)$ for nonnegative integer $t$, but is defined for arbitrary $t\in F$.  

\end{remark}


\subsection{Definition of $\uRep(S_t; F)$}\label{defRep} 

In this section we define Deligne's tensor category $\uRep(S_t; F)$ for arbitrary $t\in F$
following \cite[\S 8]{Del07} (so our definition is different from the one given in \cite[\S 2]{Del07}). To construct $\uRep(S_t; F)$ we will first use partitions to construct the smaller category $\uRep_0(S_t; F)$.  We then obtain $\uRep(S_t; F)$  from $\uRep_0(S_t; F)$ using the process of Karoubification.  

As in the previous section, assume $d$ is a nonnegative integer and that $F$ is a field of characteristic zero.  Let $\Rep_0(S_d; F)$ denote the full subcategory of $\Rep(S_d; F)$ whose objects are of the form $V_d^{\otimes n}$ for $n\geq 0$.  Clearly the objects in $ \Rep_0(S_d; F)$ are indexed by nonnegative integers, and by Theorem \ref{sw} the morphisms in $ \Rep_0(S_d; F)$ are given (albeit not uniquely) by $F$-linear combinations of maps $f(\pi)$ indexed by set partitions.  Moreover, the structure constants of compositions of the $f(\pi)$'s are polynomials in $d$ (see Proposition \ref{comp}).  Using this data, we now define a tensor category similar to $ \Rep_0(S_d; F)$ replacing the integer $d$ with an arbitrary element of $F$.

Let $t\in F$. 

\begin{definition}  The category $\uRep_0(S_t; F)$ has 

Objects: $[n]$ for each $n\in\Z_{\geq 0}$.

Morphisms:  $\Hom_{ \uRep_0(S_t; F)}([n], [m]):=FP_{n,m}$.  

Composition: $FP_{m,l}\times FP_{n,m}\to FP_{n,l}$ is defined to be the bilinear map satisfying $\mu\circ\pi=t^{\ell(\mu,\pi)}\mu\cdot\pi$ for each $\pi\in P_{n,m}$, $\mu\in P_{m,l}$.

\end{definition}

To see that composition is associative, it is enough to show $\nu\circ(\mu\circ\pi)=(\nu\circ\mu)\circ\pi$ for all $\pi\in P_{n,m}$, $\mu\in P_{m,l}$, $\nu\in P_{l,k}$.  To do so, consider the partition $$\includegraphics{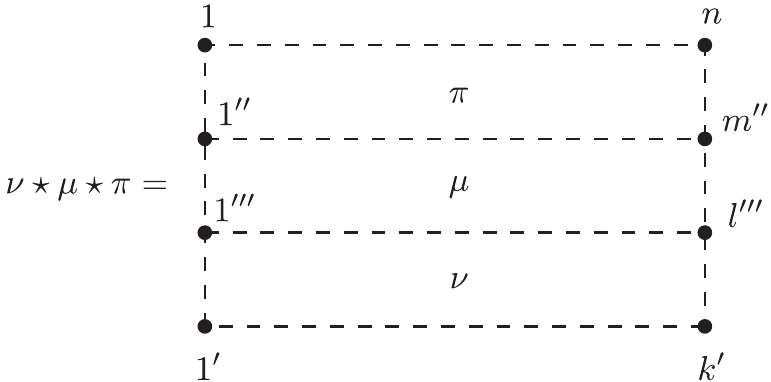}$$
Notice that $\nu\cdot(\mu\cdot\pi)$, $(\nu\cdot\mu)\cdot\pi\in P_{n,k}$ are both obtained by restricting the partition $\nu\star\mu\star\pi$ to the set $\{1,\ldots,n,1',\ldots,k'\}$.  Furthermore, $\ell(\mu,\pi)+\ell(\nu,\mu\cdot\pi)$ and $\ell(\nu,\mu)+\ell(\nu\cdot\mu,\pi)$ are both the number of connected components of $\nu\star\mu\star\pi$ whose vertices are not among $\{1,\ldots,n,1',\ldots,k'\}$.  Hence composition is associative.

One can easily check that the partition in $P_{n,n}$ whose parts are all of the form $\{j, j'\}$ is the identity morphism $\id_n:[n]\to[n]$.

\begin{example} The identity morphism $\id_7:[7]\to[7]$ is given by $$~\hspace{1.45in}\includegraphics{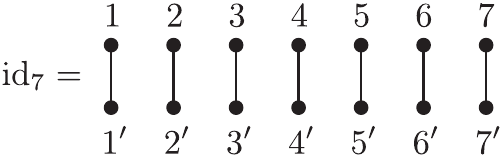}\hspace{1.4in}\Dox$$

\end{example}

Before giving $ \uRep_0(S_t; F)$ the structure of a tensor category, we pause to give the following definition which will play an important role in later sections.

\begin{definition}\label{FP} (compare with \cite{MR1103994}, \cite{MR1265453})  The \emph{partition algebra} $FP_n(t)$ is defined to be the endomorphism algebra $\End_{ \uRep_0(S_t; F)}([n])$.
\end{definition}

\begin{remark}\label{Snembedding} We identify each element of the symmetric group $S_n$ with a partition in $P_{n,n}$ as follows: $\sigma \leftrightarrow \{\{i, \sigma(i)'\}\,|\, 1\leq i\leq n\}$.  This identification extends linearly to an inclusion of algebras $FS_n\hookrightarrow FP_n(t)$ for each $t\in F$.
\end{remark}

Now to define tensor products.  While reading the following definitions, the reader may find it helpful to keep in mind the analogy studied in section \ref{motivation} between the objects $[n]$ (resp. morphisms $\pi$) in $ \uRep_0(S_t; F)$ and the objects $V_d^{\otimes n}$ (resp. morphisms $f(\pi)$) in $ \Rep(S_d; F)$.

\begin{definition}  For objects $[n], [m]$ in $ \uRep_0(S_t; F)$ set $[n]\otimes[m]:=[n+m]$.  For morphisms we let $\otimes: FP_{n_1,m_1}\times FP_{n_2, m_2}\to FP_{n_1+n_2, m_1+m_2}$ be the bilinear map such that $$\includegraphics{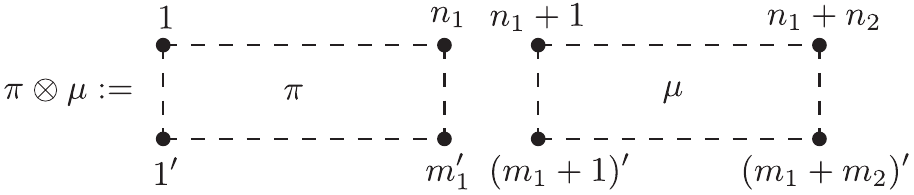}$$ for all $\pi\in P_{n_1, m_1}$, $\mu\in P_{n_2, m_2}$.
\end{definition}

\begin{example} Suppose $$\includegraphics{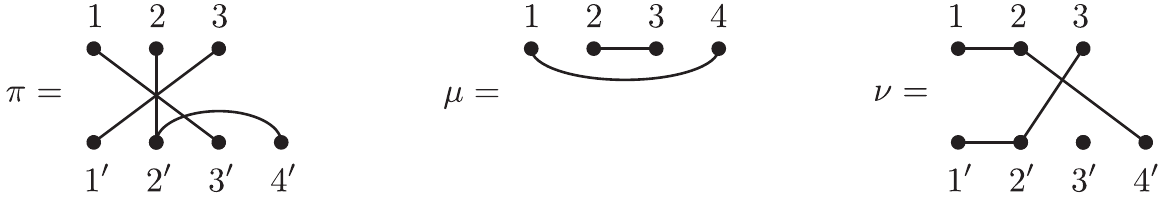}$$ Then $$\hspace{1.3in}\includegraphics{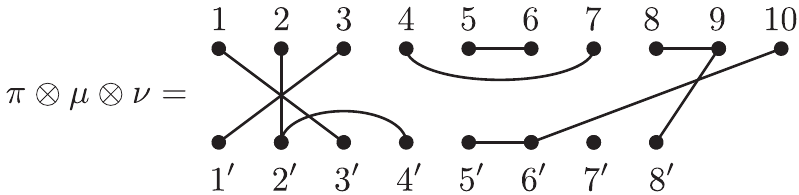}\hspace{.4in}\Dox$$
\end{example}

Next we wish to show that $ \uRep_0(S_t; F)$ is a tensor category with the following choices.
\begin{itemize}
\item (associativity)  $\alpha_{n,m,l}:([n]\otimes[m])\otimes[l] \to[n]\otimes([m]\otimes[l])$ is the identity morphism $\id_{n+m+l}$.

\item (commutativity)  $\beta_{n,m}:[n]\otimes[m]\to[m]\otimes[n]$ is the partition in $P_{n+m,n+m}$ whose parts are of the form $\{j, (m+j)'\}$ or $\{n+j, j'\}$ as illustrated below.
$$\includegraphics{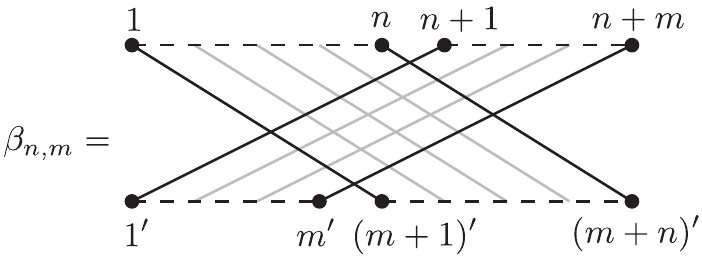}$$

\item (unit) Set ${\bf 1}:=[0]$.  Both unit morphisms $[0]\otimes[n]\to[n]$ and $[n]\otimes[0]\to[n]$ are the identity morphism $\id_n$.

\item (duals) Set $[n]^\vee:=[n]$ with the
morphism $ev_n:[n]^\vee\otimes [n]\to {\bf 1}$ (resp. $coev_n:{\bf 1}\to [n]\otimes [n]^\vee$) given by the partition in $P_{2n,0}$ (resp. $P_{0,2n}$) whose parts are of the form $\{j, n+j\}$ (resp. $\{j',(n+j)'\}$) as illustrated below.
\end{itemize}$$\includegraphics{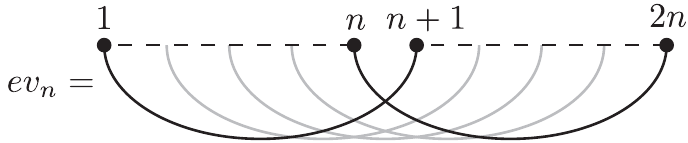}$$
$$\includegraphics{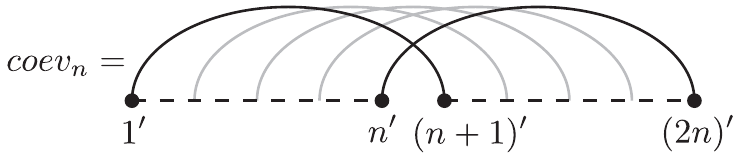}$$

The triangle and pentagon axioms are easily satisfied, as all morphisms in both diagrams are identity morphisms.  The following diagrams illustrate the hexagon axiom.
$$\includegraphics{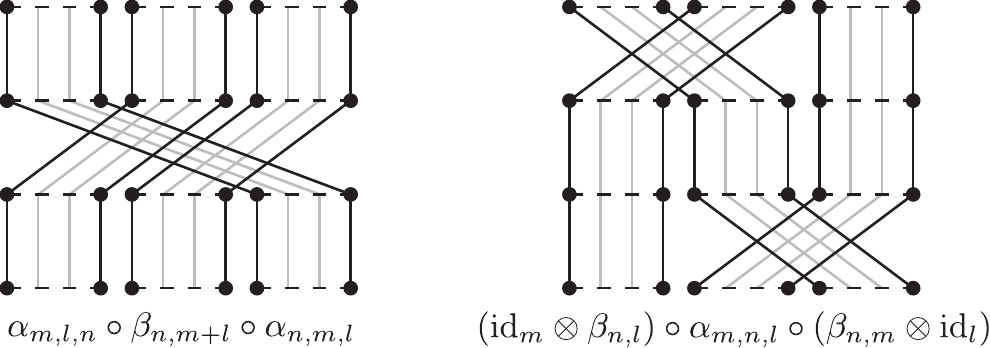}$$
The diagrams below illustrate that the choice of dual objects as well as the evaluation and coevaluation morphisms make $ \uRep_0(S_t; F)$ into a rigid category.
$$\includegraphics{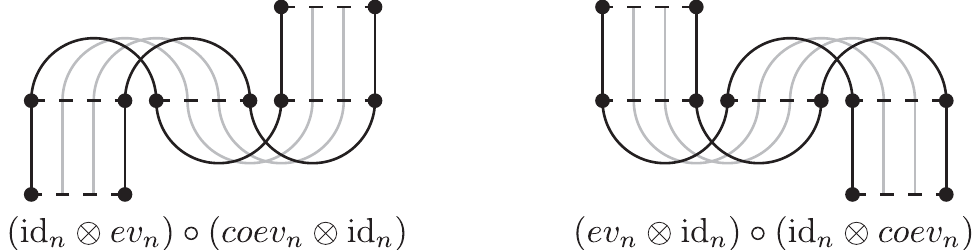}$$
Finally, $\text{End}_{ \uRep_0(S_t; F)}({\bf 1})=F$ as $P_{0,0}$ contains only the empty partition.  Thus $ \uRep_0(S_t; F)$ is a tensor category.

\begin{remark}\label{fzero} For $d\in\Z_{\geq0}$, the connection between $\uRep_0(S_d; F)$ and $\Rep_0(S_d; F)$ can be made more precise as follows.  The functor $\cat{F}_0: \uRep_0(S_d; F)\to  \Rep_0(S_d; F)$ given on objects by $[n]\mapsto V_d^{\otimes n}$, and on morphisms as the $F$-linear map $\pi\mapsto f(\pi)$ is a tensor functor.  $\cat{F}_0$ is certainly surjective on objects.  Moreover, by Theorem \ref{sw}(1), $\cat{F}_0$ is surjective on morphisms.  However, by Theorem \ref{sw}(2), $\cat{F}_0$ does not give an equivalence of categories.
\end{remark}

Now we are ready to use the process of Karoubification to define Deligne's category.  For more on this process see \cite[1.7-8]{Del07}.  Informally, $ \uRep(S_t; F)$ is the category obtained from $\uRep_0(S_t; F)$ by adding formal direct sums and images of idempotents.  Let us be more precise:

\begin{definition} Let $\uRep_1(S_t; F)$ denote the \emph{additive envelope} of $\uRep_0(S_t; F)$ so that $\uRep_1(S_t; F)$ has 

Objects:  finite tuples of objects in $\uRep_0(S_t; F)$ written as $A_1\oplus\cdots\oplus A_k$ for objects $A_1,\ldots, A_k$ in $\uRep_0(S_t; F)$.  We also include the empty tuple which is the zero object in $\uRep_1(S_t; F)$.

Morphisms:  $\Hom_{\uRep_1(S_t; F)}\left(\bigoplus_{i=1}^kA_i, \bigoplus_{i=1}^lB_i\right)$ is the set of all $l\times k$-matrices whose $(i,j)$-entry is a morphism $A_j\to B_i$ in $\uRep_0(S_t; F)$.  

Composition: given by matrix multiplication along with the induced composition from $\uRep_0(S_t; F)$.

\end{definition}

\begin{definition} Let $\uRep(S_t; F)$ denote the \emph{Karoubian envelope}\footnote{Some authors, including Deligne, refer to Karoubian envelopes as pseudo-abelian completions.} of $\uRep_1(S_t; F)$ which has 

Objects: pairs $(A, e)$ where $A$ is an object in $\uRep_1(S_t; F)$ and $e\in\End_{\uRep_1(S_t; F)}(A)$ is an idempotent.

Morphisms:  $\Hom_{\uRep(S_t; F)}\left((A,e), (B,f)\right):=f\Hom_{\uRep_1(S_t; F)}(A, B)e$.  

Composition: induced from composition in $\uRep_1(S_t; F)$.

\end{definition}

The following properties of $\uRep(S_t; F)$ follow from general theory of additive and Karoubian envelopes.

\begin{proposition}\label{proprep} (1) The obvious tensor product on objects and morphisms, as well as the obvious associativity, commutativity, unit, and dual constraints make $\uRep(S_t; F)$ a tensor category.

(2) Given a nonnegative integer $n$ and a nonzero idempotent $e\in FP_n(t)$, the object $([n], e)$ in $\uRep(S_t; F)$ is indecomposable if and only if $e$ is primitive\footnote{An idempotent is \emph{primitive} if it is nonzero and it cannot be written as a sum $e_1+e_2$ where $e_1$ and $e_2$ are nonzero idempotents with $e_1e_2=e_2e_1=0$.}.  Moreover, every indecomposable object in $\uRep(S_t; F)$ is isomorphic to one of the form $([n], e)$.  Finally, given two primitive idempotents $e, e'\in FP_n(t)$, the objects $([n], e)$ and $([n], e')$ are isomorphic if and only if $e$ and $e'$ are conjugate\footnote{Recall that two idempotents $e,e'$ in a finite dimensional algebra $A$ are
conjugate if and only if the modules $Ae$ and $Ae'$ are isomorphic.} in $FP_n(t)$.

(3) (Krull-Schmidt property) Every object in $\uRep(S_t; F)$ can be decomposed as a finite direct sum of indecomposable objects.  Moreover, if $A=L_1\oplus\cdots\oplus L_r$ and $A=L'_1\oplus\cdots\oplus L'_s$ are two decomposition of $A$ into indecomposables, then $r=s$ and there is a permutation $\sigma\in S_r$ with $L_i\cong L_{\sigma(i)}'$ for all $1\leq i\leq r$.

\end{proposition}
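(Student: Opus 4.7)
The plan is to deduce each part from standard machinery of additive and Karoubian envelopes, together with the explicit description of $\uRep_0(S_t;F)$ built in the previous subsection. For (1), I would first extend the tensor bifunctor to $\uRep_1(S_t;F)$ by $F$-bilinear distributivity over direct sums, and then to $\uRep(S_t;F)$ via $(A,e)\otimes(B,f):=(A\otimes B,\,e\otimes f)$, with the associativity, commutativity, unit and duality constraints lifted componentwise from those in $\uRep_0$. All coherence axioms, rigidity, and $\End_{\uRep(S_t;F)}({\bf 1})=F$ descend formally from their verified counterparts in $\uRep_0$, so (1) follows.

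For (2), the basic computation is $\End_{\uRep(S_t;F)}(([n],e))=eFP_n(t)e$, a finite-dimensional $F$-algebra. An object of an additive category is indecomposable iff its endomorphism ring contains no idempotents other than $0$ and $1$; for $([n],e)$ this is precisely primitivity of $e$ in $FP_n(t)$. For two primitive idempotents $e,e'\in FP_n(t)$, an isomorphism $([n],e)\cong([n],e')$ amounts to a pair $u\in e'FP_n(t)e$, $v\in eFP_n(t)e'$ with $uv=e'$ and $vu=e$, which by standard ring theory is equivalent to $FP_n(t)e\cong FP_n(t)e'$ as left $FP_n(t)$-modules and hence to conjugacy of $e,e'$. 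The remaining claim, that every indecomposable has the form $([n],e)$ for a \emph{single} $n$, reduces to showing that any object $[n_1]\oplus\cdots\oplus[n_k]$ of $\uRep_1(S_t;F)$ sits as a direct summand of some $[N]$: granted this, an indecomposable $(A,e_0)$ is realized as $([N],e)$ for $e$ obtained by conjugating $e_0$ by the splitting $A\hookrightarrow[N]$. For $t\ne 0$ one can exhibit such an embedding using the rigid tensor structure: the ``all-together'' partitions in $P_{0,k}$ and $P_{k,0}$ compose to $t\cdot\id_{[0]}$, so $[0]$ splits off of every $[k]$ with $k\ge 1$, and iterating and tensoring these splittings produces enough orthogonal copies of $[0]$ to realize any given direct sum as a summand of a suitable $[N]$.

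For (3), the Krull--Schmidt property follows from the general theorem that an additive Karoubian category with finite-dimensional $F$-Hom spaces satisfies Krull--Schmidt: every object decomposes into indecomposables by iteratively splitting idempotents, and uniqueness follows from Fitting's lemma applied to the local endomorphism rings of the indecomposables (finite-dimensional $F$-algebras with no nontrivial idempotents are local). In our setting each Hom space $f\Hom_{\uRep_1(S_t;F)}(A,B)e$ sits inside a finite direct sum of partition algebras $FP_n(t)$, hence is finite-dimensional, so the standard proof applies. The main obstacle in the whole proposition is the ``single $n$'' claim in (2); the argument sketched above works cleanly for $t\ne 0$, but $t=0$ calls for a separate, more delicate analysis directly in $FP_N(0)$ (or else a specialization/continuity argument from the generic case).
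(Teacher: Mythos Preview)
The paper does not actually give a proof: it simply records that ``the following properties of $\uRep(S_t;F)$ follow from general theory of additive and Karoubian envelopes,'' and leaves it at that. Your proposal is exactly this general theory made explicit, so in spirit you are doing precisely what the paper does, only with the details filled in.

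That said, you manufacture an obstacle in (2) that is not really there. You worry that showing every indecomposable is isomorphic to some $([n],e)$ for a \emph{single} $n$ requires embedding $[n_1]\oplus\cdots\oplus[n_k]$ into some $[N]$, and that this embedding is delicate at $t=0$. In fact the claim follows straight from part (3), with no case distinction on $t$. Any object $(A,e)$ with $A=[n_1]\oplus\cdots\oplus[n_k]$ is a direct summand of $(A,\id_A)\cong[n_1]\oplus\cdots\oplus[n_k]$. Each $[n_i]=([n_i],\id_{n_i})$ decomposes as a finite direct sum of objects $([n_i],e_{ij})$, obtained by writing $\id_{n_i}$ as a sum of primitive orthogonal idempotents in the finite-dimensional algebra $FP_{n_i}(t)$. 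By Krull--Schmidt, the indecomposable $(A,e)$ must be isomorphic to one of these $([n_i],e_{ij})$. There is no circularity: your proof of (3) uses only that Hom spaces are finite-dimensional, which is independent of (2). So you can drop the embedding argument and the special pleading for $t=0$ entirely.
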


\begin{remark}  Informally, studying the category $ \uRep_0(S_t; F)$ is a way to simultaneously study the partition algebras $FP_n(t)$ for all $n\geq 0$ (see Definition \ref{FP}).  In this line of thinking, studying $ \uRep(S_t; F)$ is a way to simultaneously study all finitely generated projective right $FP_n(t)$-modules for all $n\geq 0$.
\end{remark}

\subsection{}\label{trace} We close this section by examining the trace of a morphism in $\uRep_0(S_t; F)$.   First, notice that $\tr:\End_{\uRep_0(S_t; F)}([n])\to F$ is an $F$-linear map.  Furthermore,  
if $\pi:[n]\to [n]$ is a partition diagram (not equal to $\id_0$) then, by the definition of trace (see \cite[3.3]{Del07}), $\tr(\pi)=t^\ell$ where $\ell$ is the number of connected components in the diagram $$\includegraphics{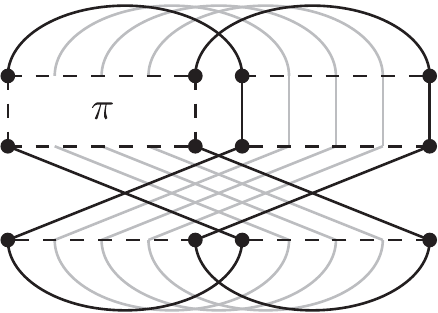}$$  Clearly $\ell$ is also the number of connected components in the following \emph{trace diagram}$$\includegraphics{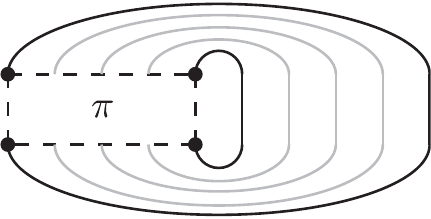}$$

\begin{example}\label{traces}(i) The only endomorphisms in $\uRep_0(S_0; F)$ with nonzero trace are nonzero scalar multiples of $\id_0$.

(ii)  In $\uRep_0(S_t; F)$, $\dim([0])=\tr(\id_0)=1$ and $\dim([n])=\tr(\id_n)=t^n$ for all positive $n$.

(iii) Consider $\pi:[7]\to [7]$ given by $$\includegraphics{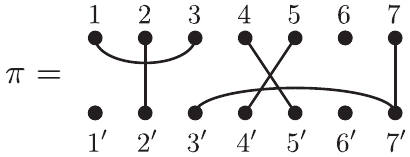}$$  $\tr(\pi)=t^4$ since there are 4 connected components in the diagram $$~\hspace{1.4in}\includegraphics{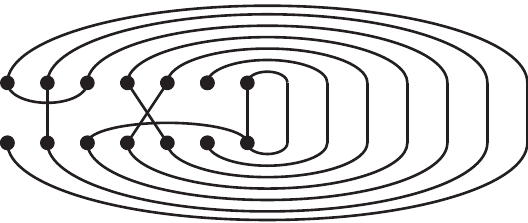}\hspace{1.3in}\Dox$$ 

\end{example}


\section{Indecomposable objects in $\uRep(S_t)$}

\subsection{Classification of indecomposable objects of $\uRep(S_t)$}\label{indecomposSt}
In this section we will classify indecomposable objects in $ \uRep(S_t; F)$ for arbitrary $t\in F$.   To do so, we first classify primitive idempotents in partition algebras.  The following lemma will be useful in that endeavor.

\begin{lemma}\label{ees} For $n>1$ let $\zeta$ denote the following 
idempotent in $FP_n(t)$:
$$\includegraphics{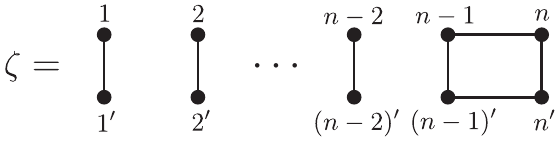}$$ Then for each  $n>1$ we have the following algebra isomorphisms:
\begin{enumerate}

\item[(1)]$\zeta FP_n(t)\zeta\cong
FP_{n-1}(t)$.

\item[(2)] $FP_n(t)/(\zeta)\cong FS_n$ where $(\zeta)$ denotes the two-sided ideal generated by $\zeta$.
\end{enumerate}
\end{lemma}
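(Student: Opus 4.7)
The plan is to prove both isomorphisms by working directly in the partition-diagram basis of $FP_n(t)$, tracking the \emph{propagating number} of a diagram (the number of its parts meeting both the top and bottom rows) and its local shape at the $n$-th pair of vertices. Throughout I use the composition rule $\mu \circ \pi = t^{\ell(\mu,\pi)}\mu\cdot\pi$ from Proposition \ref{comp}.

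For part (1), I construct an explicit algebra isomorphism $\Phi: FP_{n-1}(t) \to \zeta FP_n(t)\zeta$ by ``adjoining the $n$-th strand pattern of $\zeta$''. At the basis level, $\Phi$ sends $\pi \in P_{n-1,n-1}$ to the diagram in $P_{n,n}$ whose restriction to $\{1,\ldots,n-1,1',\ldots,(n-1)'\}$ is $\pi$ and whose parts containing $n$ or $n'$ are copied from $\zeta$. I will then verify: (a) $\Phi(\pi) \in \zeta FP_n(t)\zeta$, which follows from $\zeta \circ \zeta = \zeta$ and the compatibility of the adjoined pattern with itself; (b) $\Phi$ is injective, since $\pi$ is recovered from $\Phi(\pi)$ by restriction; (c) $\Phi$ is multiplicative, because the adjoined $n$-th-strand pattern contributes no new loops under composition, so $\ell(\Phi(\mu),\Phi(\pi)) = \ell(\mu,\pi)$ and $\Phi(\mu)\cdot\Phi(\pi) = \Phi(\mu\cdot\pi)$; and (d) $\Phi$ is surjective, since for any $\sigma \in P_{n,n}$ the product $\zeta\sigma\zeta$ has its $n$-th-strand shape forced to agree with $\zeta$'s and hence lies in the image of $\Phi$.

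For part (2), I identify $(\zeta)$ with the subspace $J \subseteq FP_n(t)$ spanned by all non-permutation partitions in $P_{n,n}$. By Remark \ref{Snembedding}, $FP_n(t) = FS_n \oplus J$ as vector spaces, and $J$ is a two-sided ideal because multiplication in the partition algebra never strictly increases the propagating number, so any product involving a non-permutation is itself a non-permutation. Since $\zeta$ has propagating number less than $n$, we have $(\zeta) \subseteq J$. For the reverse inclusion, I show that each non-permutation $\pi \in P_{n,n}$ can be written as a linear combination of products $a\,\zeta\,b$ with $a,b \in FP_n(t)$, by using $a$ and $b$ to transport $\pi$'s non-propagating parts through $\zeta$'s non-propagating structure. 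This yields $J = (\zeta)$, and the composition $FS_n \hookrightarrow FP_n(t) \twoheadrightarrow FP_n(t)/(\zeta)$ is the desired algebra isomorphism.

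The main obstacle is the explicit factorization in part (2). Since $\zeta$ contributes only a fixed, minimal non-propagating pattern while a general non-permutation $\pi$ may have many non-propagating parts of various shapes, I plan to induct on the number of non-propagating parts of $\pi$: at each step, pre- and post-multiply by suitable permutations to position the relevant vertices, and then multiply by $\zeta$ to split off one non-propagating part. Tracking the scalars $t^\ell$ arising from each composition is tedious but harmless, since any nonzero power of $t$ can be absorbed by rescaling $a$ or $b$.
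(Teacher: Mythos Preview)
Your approach to part~(1) is essentially the paper's: the map $\Phi$ you describe is exactly the embedding the paper uses, sending $\pi\in P_{n-1,n-1}$ to the diagram obtained by adding $n$ to the part containing $n-1$ and $n'$ to the part containing $(n-1)'$.

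Part~(2) has a genuine gap. Your final sentence dismisses the scalar factors $t^{\ell}$ as ``harmless, since any nonzero power of $t$ can be absorbed by rescaling $a$ or $b$''. But the lemma is stated (and used later in the paper, e.g.\ in the classification of primitive idempotents in $FP_n(0)$) for \emph{all} $t\in F$, including $t=0$. If your factorization produces a loop, then $a\,\zeta\,b = t^{\ell}\cdot(\text{diagram})$ with $\ell>0$, and at $t=0$ this is simply zero---there is nothing to rescale. So the inductive step as written does not establish $\pi\in(\zeta)$ when $t=0$.

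The paper avoids this issue entirely by replacing your induction with a single-step dichotomy. If $\mu\in P_{n,n}\setminus S_n$, then either two top vertices $j,k$ lie in the same part of $\mu$, or some top vertex $i$ is a singleton part (a counting argument shows these exhaust the non-permutation cases). In the first case one checks directly that $\mu = \mu\,\pi_{j,k}$ where $\pi_{j,k}$ is the $S_n$-conjugate of $\zeta$ with $j,k$ in place of $n-1,n$; crucially this composition creates \emph{no} loops, so the identity holds on the nose for every $t$. The singleton case is handled by a similar explicit identity $\mu = \mu\,\pi_{i,j}\,\nu_{i,j}$, again loop-free. Your inductive framework could be repaired by exhibiting loop-free factorizations at each step, but the paper's direct argument is both shorter and manifestly uniform in $t$.
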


\begin{proof} To prove (1) notice we can embed $FP_{n-1}(t)$ into $FP_n(t)$ as the $F$-span of $$\{\pi\in P_{n,n}~|~n\text{ (resp. }n'\text{) is in the same part of }\pi\text{ as }n-1\text{ (resp. }(n-1)')\}.$$  This span is exactly $\zeta FP_n(t)\zeta$.  

To prove (2) recall (Remark \ref{Snembedding}) that we can embed $FS_n$ into $FP_n(t)$ by identifying  $\sigma\in S_n$ with the partition $\{\{1, \sigma(1)'\},\ldots,\{n,\sigma(n)'\}\}$.  Since $FS_n\cap (\zeta)=0$, it suffices to show a partition $\pi\in P_{n,n}$ has $\pi\in(\zeta)$ whenever $\pi\not\in S_n$.  Notice for fixed $j$ and $k$, the partition $$\includegraphics{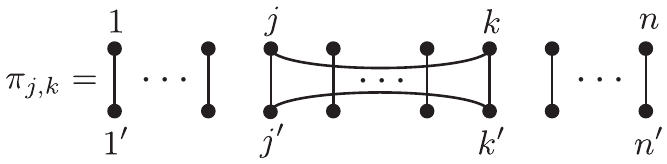}$$ is in $(\zeta)$.  Indeed, $\pi_{j,k}=\sigma \zeta\sigma$ where $\sigma\in S_n\subset P_{n,n}$ is the product of transpositions $(j,n-1)(k,n)$.  Now suppose $\mu\in P_{n,n}\setminus S_n$.  Then either $\mu$ has a part of the form $\{i\}$ for some $i\in\{1,\ldots,n\}$ or there exist $j,k\in\{1,\ldots,n\}$ which are in the same part of $\mu$.  If the latter is true, then $\mu=\mu\pi_{j,k}\in(\zeta)$.  If the former is true, then $\mu=\mu\pi_{i,j}\nu_{i,j}\in(\zeta)$ where $j\not=i$ and $$\includegraphics{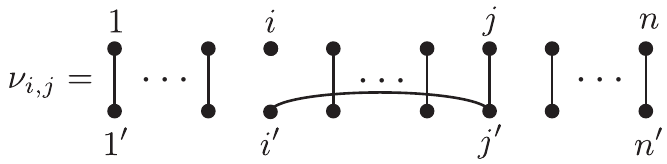}$$\end{proof}

\begin{remark} \label{Snretraction}
In fact the proof shows that the composition of the embedding $FS_n\subset FP_n(t)$
and the projection $FP_n(t)\to FP_n(t)/(\zeta)\cong FS_n$ is the identity map.
\end{remark}

Next, we state a well-known lemma (see e.g. \cite{MR1110581}) which we will use along with Lemma \ref{ees} to inductively classify primitive idempotents in partition algebras.

\begin{lemma}\label{lem} Suppose $A$ is a finite dimensional $F$-algebra and $\zeta$ is an idempotent in $A$.  As before, let $(\zeta)$ denote the two-sided ideal of $A$ generated by $\zeta$.  There is a bijective correspondence 
$$\left\{\begin{tabular}{c}
 primitive \\ idempotents\\
in $A$ up to \\ conjugation\\
\end{tabular}\right\} \stackrel{\text{bij.}}{\leftrightarrow}\left\{\begin{tabular}{c}
primitive \\ idempotents\\
in $A/(\zeta)$ up to \\ conjugation\\
\end{tabular}\right\}\sqcup\left\{\begin{tabular}{c}
primitive \\ idempotents\\
in $\zeta A\zeta$ up to \\ conjugation\\
\end{tabular}\right\}$$ satisfying the following property:

 Suppose $e$ is a primitive idempotents in $A$.  $e$ corresponds to a primitive idempotent in  $\zeta A\zeta$ if and only if $e\in(\zeta)$.  Moreover, if $e\not\in(\zeta)$ then $e$ corresponds to its image under the quotient map $A\to A/(\zeta)$.
\end{lemma}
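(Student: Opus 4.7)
The plan is to translate the statement into the standard classification of primitive idempotents via modules. For a finite-dimensional $F$-algebra $B$, primitive idempotents modulo conjugation are in bijection with isomorphism classes of indecomposable projective $B$-modules through $e\mapsto Be$, equivalently with isomorphism classes of simple $B$-modules through $e\mapsto Be/\mathrm{rad}(Be)$. Simple $A$-modules partition naturally into those annihilated by $(\zeta)$, which are precisely the simple $A/(\zeta)$-modules, and those not annihilated by $(\zeta)$. I expect the corresponding partition of primitive idempotents of $A$ to be $\{e\notin(\zeta)\}\sqcup\{e\in(\zeta)\}$, and to match the $\zeta$-non-annihilated side with simple $\zeta A\zeta$-modules.

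The central step linking idempotents to modules is the claim that for a primitive idempotent $e\in A$ with simple top $S_e=Ae/\mathrm{rad}(Ae)$, one has $e\in(\zeta)$ iff $\zeta\cdot S_e\neq 0$. If $e\in(\zeta)$, then $e\in A\zeta A e$, whence $(\zeta)Ae=Ae$ and $(\zeta)S_e=S_e\neq 0$. Conversely if $e\notin(\zeta)$, then $\bar e\neq 0$; since $eAe$ is local and any nonzero quotient of a local ring is local, $\bar e(A/(\zeta))\bar e$ is local, so $\bar e$ is primitive in $A/(\zeta)$. Then $Ae/(\zeta)Ae$ is indecomposable projective over $A/(\zeta)$, and as a nonzero quotient of $Ae$ its unique simple top must be $S_e$, forcing $(\zeta)S_e=0$.

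The two halves of the bijection then fall out as follows. For $e\notin(\zeta)$, send $[e]_A\mapsto[\bar e]_{A/(\zeta)}$; well-definedness and primitivity of $\bar e$ follow from the previous step, while injectivity and surjectivity both reduce to the fact that indecomposable projectives (over either $A$ or $A/(\zeta)$) are determined by their simple tops and that the top is unchanged under the quotient, since a primitive idempotent representing a simple $A/(\zeta)$-module $\bar S$ cannot lie in $(\zeta)$ by the criterion above. For $e\in(\zeta)$, fix a decomposition $\zeta=f_1+\cdots+f_n$ of $\zeta$ into primitive orthogonal idempotents in $A$. From orthogonality one has $\zeta f_i=f_i=f_i\zeta$, so $f_i\in\zeta A\zeta$; the identity $f_i\zeta A\zeta f_i=f_iAf_i$ shows this endomorphism ring is local, so $f_i$ is also primitive in $\zeta A\zeta$, and the $f_i$ decompose the unit of $\zeta A\zeta$ so exhaust its primitive idempotents up to conjugation. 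Any primitive $e\in(\zeta)$ realises $Ae$ as a summand of $A\zeta=\bigoplus_iAf_i$, hence $e\sim_A f_i$ for some $i$; finally, the identification $\Hom_A(Af_i,Af_j)=f_jAf_i=f_j\zeta A\zeta f_i=\Hom_{\zeta A\zeta}((\zeta A\zeta)f_i,(\zeta A\zeta)f_j)$ shows $f_i\sim_A f_j$ iff $f_i\sim_{\zeta A\zeta}f_j$, giving the bijection on this side.

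The main obstacle is that $(\zeta)$ need not be a nil ideal (or contained in the Jacobson radical), so the standard argument that idempotents lift modulo a nil ideal is unavailable, and similarly a primitive idempotent $e\in(\zeta)$ cannot be directly moved inside $\zeta A\zeta$. The module-theoretic detour above bypasses this, replacing explicit lifting of idempotents by comparison of indecomposable projectives and their simple tops on both sides of the quotient.
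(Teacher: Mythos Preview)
Your argument is correct. The paper does not actually prove this lemma; it simply states it as well known and cites Green's book on polynomial representations of $GL_n$, so there is nothing to compare against at the level of the paper's own proof. Your module-theoretic approach---identifying conjugacy classes of primitive idempotents with isomorphism classes of simple modules, then splitting the simples for $A$ according to whether $(\zeta)$ annihilates them---is exactly the standard route behind this result, and all the key claims check out. In particular, the criterion $e\in(\zeta)\iff(\zeta)S_e\neq 0$ is argued cleanly, and the identification $f_jAf_i=f_j(\zeta A\zeta)f_i$ correctly transfers conjugacy between $A$ and $\zeta A\zeta$ for summands of $\zeta$.

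One step is phrased a bit quickly: ``Any primitive $e\in(\zeta)$ realises $Ae$ as a summand of $A\zeta$.'' This is true, but it does not follow merely from $e\in(\zeta)$; you should make explicit that $\zeta S_e\neq 0$ gives a nonzero map $A\zeta\to S_e$, hence some $Af_i$ surjects onto $S_e$ and therefore $Af_i\cong Ae$ (or, equivalently, that $Ae\subseteq(\zeta)$ is projective and $(\zeta)$ is a quotient of a finite sum of copies of $A\zeta$, so Krull--Schmidt applies). With that sentence added, the proof is complete.
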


We are now ready to classify primitive idempotents (up to conjugation) in partition algebras.  
Part (1) of the following theorem is originally due to Martin (See \cite{MR1399030}).  However, his proof does not extend to the case $t=0$.  Our proof is similar to one found in \cite{MR1779601}.

\begin{theorem} \label{idemP}
 (1) When $t\not=0$ we have the following bijection.
 $$\left\{\begin{tabular}{c}
primitive idempotents in\\
$FP_n(t)$ up to conjugation \\
\end{tabular}\right\} \stackrel{\text{bij.}}{\longleftrightarrow}\left\{\begin{tabular}{c}
 Young  diagrams $\lambda$\\
 with $|\lambda|\leq n$
\end{tabular}\right\}$$

(2)  When $n>0$ we have the following bijection.
 $$\left\{\begin{tabular}{c}
primitive idempotents in\\
$FP_n(0)$ up to conjugation \\
\end{tabular}\right\} \stackrel{\text{bij.}}{\longleftrightarrow}\left\{\begin{tabular}{c}
 Young  diagrams $\lambda$\\
 with $0<|\lambda|\leq n$
\end{tabular}\right\}$$
\end{theorem}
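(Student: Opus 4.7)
My plan is to prove both parts simultaneously by induction on $n$, combining Lemma~\ref{ees} (which gives $\zeta FP_n(t) \zeta \cong FP_{n-1}(t)$ and $FP_n(t)/(\zeta) \cong FS_n$) with Lemma~\ref{lem} (which decomposes primitive idempotents of $A$ into those of $A/(\zeta)$ and those of $\zeta A \zeta$). This is the natural strategy because the two quotients produced by Lemma~\ref{ees} contribute, respectively, the Young diagrams of size exactly $n$ and the Young diagrams of size at most $n-1$.

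The base cases must be handled by direct computation because $\zeta$ is only defined for $n > 1$. For $n = 0$, we have $FP_0(t) = F$, whose unique primitive idempotent $1$ corresponds to the empty Young diagram, establishing part~(1) at $n = 0$. For $n = 1$, the algebra $FP_1(t)$ has basis $\{\id_1, u\}$ with $u = \{\{1\},\{1'\}\}$; a short calculation using Definition~\ref{star} gives $u \circ u = t u$. When $t \neq 0$ this yields two orthogonal primitive idempotents $u/t$ and $1 - u/t$ corresponding to $\emptyset$ and $(1)$. When $t = 0$, the relation $u^2 = 0$ shows $FP_1(0) \cong F[u]/(u^2)$ is local with single primitive idempotent $1$, which corresponds to $(1)$; this provides the base case for part~(2).

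For the inductive step at $n \geq 2$, I would apply Lemma~\ref{lem} with $A = FP_n(t)$ and $\zeta$ as in Lemma~\ref{ees}. The resulting bijection reads
\[
\{\text{prim.\ idemp.\ of } FP_n(t) \text{ up to conj.}\} \;\longleftrightarrow\; \{\text{prim.\ idemp.\ of } FS_n\} \sqcup \{\text{prim.\ idemp.\ of } FP_{n-1}(t)\}.
\]
Since $FS_n$ is semisimple in characteristic zero, its primitive idempotents up to conjugation correspond bijectively to isomorphism classes of simple modules $L_\lambda$, i.e.\ to Young diagrams with $|\lambda| = n$. The inductive hypothesis, applied to $FP_{n-1}(t)$, supplies Young diagrams with $|\lambda| \leq n - 1$ when $t \neq 0$ and with $0 < |\lambda| \leq n - 1$ when $t = 0$ (using $n - 1 \geq 1$). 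Taking the disjoint union with the $|\lambda| = n$ contribution produces exactly the indexing sets claimed in parts~(1) and~(2).

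The main obstacle is really just bookkeeping at the boundary: one must verify that the explicit base-case idempotents in $FP_0(t)$ and $FP_1(0)$ line up with the correct Young diagrams, and must ensure that the two inductions for $t \neq 0$ and $t = 0$ are threaded through Lemma~\ref{lem} separately so that the empty Young diagram never re-enters in the $t = 0$ case. Once the correct base is chosen, the inductive step itself is a direct application of the two preceding lemmas together with the well-known classification of simple $FS_n$-modules.
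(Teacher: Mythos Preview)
Your proposal is correct and follows essentially the same approach as the paper: both arguments handle $n=0$ and $n=1$ by direct computation (with the same idempotents $u/t$ and $1-u/t$, and the same observation that $FP_1(0)\cong F[u]/(u^2)$ is local), and for $n\ge 2$ both invoke Lemma~\ref{ees} together with Lemma~\ref{lem} to reduce to $FS_n$ and $FP_{n-1}(t)$. There is no substantive difference between your argument and the paper's.
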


\begin{proof}  Part (1) is true when $n=0$ since $FP_0(t)=F$.
To show part (1) holds for $n=1$, let 
$$\includegraphics{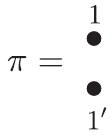}$$
and let $f$ denote the idempotent $\frac{1}{t}\pi$.
It is easy to show that $1=f+(1-f)$ is a
nontrivial decomposition of $1$ into primitive idempotents in
$FP_n(t)$ when $t\not=0$.  Thus part (1) holds when $n=1$.
Now we proceed by induction on $n$. For $n>1$ let $\zeta\in FP_n(t)$ denote the
idempotent in Lemma \ref{ees}.  
Then by Lemma \ref{ees} we have $FP_n(t)/(\zeta)\cong FS_n$ and  $\zeta FP_n(t)\zeta\cong
FP_{n-1}(t)$. Since the primitive idempotents in $FS_n$ up to conjugation
are in bijective correspondence with all Young diagrams $\lambda$
with $|\lambda|=n$, part (1) will follow by induction along
with Lemma \ref{lem}.

The proof of part (2) is similar, except $1$ is the only primitive idempotent in $FP_1(0)\cong F[\pi]/(\pi^2)$.
\end{proof}

\begin{remark}\label{zeta} Fix an integer $n>1$ and let $\zeta\in FP_n(t)$ be as in Lemma \ref{ees}.  Finally, suppose $e\in FP_n(t)$ is a primitive idempotent.  The proof of Theorem \ref{idemP} along with Lemma \ref{lem} show that $e$ corresponds to a Young diagram of size $n$ if and only if $e\not\in(\zeta)$. Moreover, if $e\not\in(\zeta)$ then the image of $e$ under the quotient map $FP_n(t)\to FP_n(t)/(\zeta)\cong FS_n$ is a primitive idempotent  corresponding to $\lambda$ in $FS_n$.
 \end{remark}

Next we classify indecomposable objects in $\uRep(S_t; F)$.  Suppose $\lambda$ is a Young diagram.  By Theorem \ref{idemP}, $\lambda$ corresponds to a primitive idempotent $e_\lambda\in FP_{|\lambda|}(t)$ (if $t=0$ set $e_\varnothing=\id_0\in FP_0(0)$).  The idempotent $e_\lambda$ is not unique, but it is unique up to conjugation; hence the object $L(\lambda)=([|\lambda|], e_\lambda)$ in $\uRep(S_t; F)$ 
is an indecomposable object which is well defined
up to isomorphism (see Proposition \ref{proprep}(2)).

\begin{lemma}\label{classifylemma}  Fix an integer $n\geq 0$.  The assignment $\lambda\mapsto L(\lambda)$ induces a bijection
$$\begin{tabular}{rcl}
$\left\{\begin{tabular}{c} 
Young diagrams $\lambda$\\
with $0\leq|\lambda|\leq n$\\
\end{tabular}\right\}$ &  $\stackrel{\text{bij.}}{\longleftrightarrow}$ & $\left\{\begin{tabular}{c} 
nonzero indecomposable objects in\\
$\uRep(S_t; F)$ 
of the form $([m], e)$\\ with $m\leq n$,
up to isomorphism\\
\end{tabular}\right\}$
\end{tabular}$$
This bijection enjoys the following properties:  

(1) If $\lambda$ is a Young diagram with $0<|\lambda|\leq n$, then there exists an idempotent $e\in FP_n(t)$ with $([n], e)\cong L(\lambda)$.

(2) If $t\not=0$, then there exists an idempotent $e\in FP_n(t)$ with $([n],  e)\cong L(\varnothing)$.  

(3) If $t=0$, then $([0], \id_0)$ is the unique object of the form $([m], e)$ which is isomorphic to $L(\varnothing)$. 

\end{lemma}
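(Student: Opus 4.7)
The strategy is to combine the classification of primitive idempotents in $FP_n(t)$ from Theorem \ref{idemP} with a ``lifting'' construction showing that any object realized as $([m-1], e)$ can equally well be realized as $([m], \phi(e))$, while preserving the associated Young diagram. Once this is in hand, properties (1) and (2) follow by iterating the lifting, property (3) is a direct computation using the composition formula at $t=0$, and the bijection follows from the uniqueness-up-to-conjugation part of Proposition \ref{proprep}(2).

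For the lifting, I would introduce explicit partition diagrams $\alpha \in P_{m-1, m}$ and $\beta \in P_{m, m-1}$ (pairing strands $\{i, i'\}$ for $i \leq m-2$, together with the single extra block $\{m-1, (m-1)', m'\}$ in $\alpha$ and $\{m-1, m, (m-1)'\}$ in $\beta$) and check by direct diagrammatic calculation that $\beta \circ \alpha = \id_{m-1}$ and $\alpha \circ \beta = \zeta$. For any idempotent $e \in FP_{m-1}(t)$ the element $\phi(e) := \alpha e \beta$ is then idempotent in $FP_m(t)$, and the morphisms $\alpha e$ and $e\beta$ compose to $e$ and $\phi(e)$, giving an isomorphism $([m-1], e) \cong ([m], \phi(e))$ in $\uRep(S_t; F)$. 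Because $\phi$ coincides with the algebra isomorphism $FP_{m-1}(t) \cong \zeta FP_m(t) \zeta$ of Lemma \ref{ees}(1), it sends primitive idempotents to primitive idempotents; and since $\phi(e) = \zeta \phi(e) \zeta \in (\zeta)$, Remark \ref{zeta} guarantees that $\phi(e)$ corresponds to the same Young diagram as $e$ under the inductive bijection of Theorem \ref{idemP}.

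Property (1) is then immediate by iterating $\phi$ starting from $e_\lambda \in FP_{|\lambda|}(t)$: because $|\lambda| \geq 1$, every intermediate index is at least $2$, where $\zeta$ is defined. For property (2) with $t \neq 0$, only the base step from $FP_0(t)$ to $FP_1(t)$ needs separate attention; here the unique partitions $p \in P_{0,1}$ and $q \in P_{1,0}$ satisfy $q \circ p = t \cdot \id_0$ and $p \circ q = \pi$ (the non-identity element of $FP_1(t)$), so the rescaled pair $(p/t, q)$ exhibits an isomorphism $[0] \cong ([1], \pi/t)$; iterating $\phi$ then covers all $n \geq 2$. For property (3), suppose for contradiction that $([m], e) \cong L(\varnothing) = [0]$ with $t = 0$ and $m \geq 1$. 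The isomorphism would provide morphisms in $FP_{0, m}$ and $FP_{m, 0}$ whose composition equals $\id_0$. But for any $\pi \in P_{0, m}$ and $\mu \in P_{m, 0}$ the diagram $\mu \star \pi$ consists entirely of $''$-labelled vertices, so every connected component is ``floating'', giving $\ell(\mu, \pi) \geq 1$ and hence $\mu \circ \pi = t^{\ell(\mu, \pi)} (\mu \cdot \pi) = 0$ when $t = 0$; the contradiction is immediate.

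To finish the bijection: surjectivity onto nonzero indecomposable objects $([m], e)$ with $m \leq n$ uses Proposition \ref{proprep}(2) to reduce to $e$ primitive, Theorem \ref{idemP} to attach a Young diagram $\lambda$ to $e$, and properties (1)--(2) to produce another primitive idempotent $e' \in FP_m(t)$ with $([m], e') \cong L(\lambda)$; conjugacy of $e$ and $e'$ then forces $([m], e) \cong L(\lambda)$. Injectivity is the same argument after lifting $L(\lambda)$ and $L(\lambda')$ to a common algebra, except in the single case $t = 0$, $\lambda = \varnothing$, $|\lambda'| \geq 1$, which is ruled out by (3). I expect the main obstacle to be pinning down the compatibility of the lifting with the inductive classification---verifying $\phi(e) \in (\zeta)$ and invoking Remark \ref{zeta} correctly to identify its Young diagram---and cleanly separating the base cases where $\zeta$ is undefined.
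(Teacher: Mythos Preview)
Your proposal is correct and follows essentially the same route as the paper: your maps $\alpha,\beta$ are exactly the paper's $\phi_n,\phi'_n$, the lifting $e\mapsto \alpha e\beta=\hat f$ with $\beta\alpha=\id_{m-1}$ and $\alpha\beta=\zeta$ is identical, and the treatment of the base cases ($n=1$ via $\pi/t$ for $t\neq 0$) and of property~(3) (all compositions $[0]\to[m]\to[0]$ vanish at $t=0$) matches the paper. One small point to tighten: the algebra isomorphism $FP_{m-1}(t)\cong \zeta FP_m(t)\zeta$ only gives primitivity of $\phi(e)$ in $\zeta FP_m(t)\zeta$, so you should deduce primitivity in $FP_m(t)$ from the categorical isomorphism $([m-1],e)\cong([m],\phi(e))$ together with Proposition~\ref{proprep}(2), and then get the Young-diagram match by the counting argument (the lifted idempotents together with the $e_\lambda$ for $|\lambda|=m$ exhaust the conjugacy classes predicted by Theorem~\ref{idemP}), exactly as the paper does.
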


\begin{proof}  First, assume $t\not=0$.  We proceed by induction on $n$.  If $n=0$ the lemma is easy to check.  For the case $n=1$, let $\mu$ and $\mu'$ be the only elements of $P_{0,1}$ and $P_{1,0}$ respectively, and let $f\in FP_1(t)$ be the primitive idempotent from the proof of Theorem \ref{idemP}.  It is easy to check that $\{f, 1-f\}$ is a complete set of pairwise non-conjugate primitive idempotents in $FP_1(t)$.  Hence, by Proposition \ref{proprep}(2), $([1], f)$ and $([1], 1-f)$ are not isomorphic.
Moreover, $f\mu\id_0:([0], \id_0)\to([1], f)$ is an isomorphism with inverse $\frac{1}{t}\id_0\mu'f$.  Therefore  the objects  $([1], f)\cong L(\varnothing)$ and $([1], 1-f)\cong L(\Box)$  form a complete list of nonzero pairwise non-isomorphic indecomposable objects in $\uRep(S_t; F)$ of the form $([m], e)$ with $m\leq 1$.

Now suppose $n>1$.  Given a Young diagram $\nu$ with $0\leq|\nu|<n$, by induction we can find a primitive idempotent $f_\nu\in FP_{n-1}(t)$ with $([n-1], f_\nu)\cong L(\nu)$ such that $\{([n-1], f_\nu)~|~0\leq |\nu|<n\}$ is a complete set of nonzero pairwise non-isomorphic 
indecomposable objects in $\uRep(S_t; F)$ of the form $([m], e)$ with $m< n$.   For each such $\nu$, set $\hat{f}_\nu=\phi_n f_\nu\phi_n'$ where
$$\includegraphics{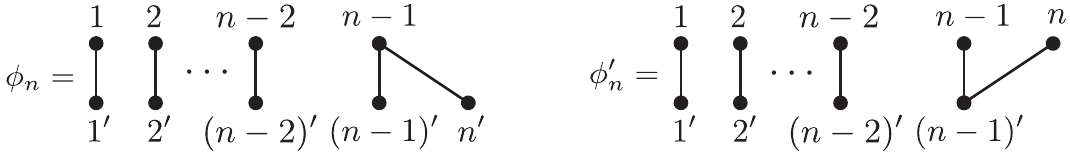}$$
Then $\hat{f}_\nu\in FP_n(t)$ is an idempotent.  Moreover, since $\phi'_n\phi_n=\id_{n-1}$, it follows that $f_\nu\phi'_n \hat{f}_\nu:([n], \hat{f}_\nu)\to([n-1], f_\nu)$ is an isomorphism with inverse $\hat{f}_\nu\phi_nf_\nu$.  In particular, by Proposition \ref{proprep}(2), $\hat{f}_\nu$ is a primitive idempotent in $FP_n(t)$.  
Moreover, $\hat{f}_\nu=\zeta\hat{f}_\nu\in(\zeta)$.  Hence, by Remark \ref{zeta}, $\hat{f}_\nu$ is not conjugate to $e_\lambda$ for any Young diagram $\lambda$ with $|\lambda|=n$.  Therefore $\{\hat{f}_\nu~|~0\leq |\nu|<n\}\cup\{e_\lambda~|~|\lambda|=n\}$ is a set of pairwise non-conjugate primitive idempotents in $FP_n(t)$.  As this set is indexed by all Young diagrams of size at most $n$, by Theorem \ref{idemP}(1), it must be a complete set of pairwise non-conjugate primitive idempotents in $FP_n(t)$.  Thus, by Proposition \ref{proprep}(2), the objects $([n], \hat{f}_\nu)\cong L(\nu)$ for $0\leq|\nu|<n$ along with $([n], e_\lambda)= L(\lambda)$ for $|\lambda|=n$ form a complete list of nonzero pairwise non-isomorphic indecomposable objects in $\uRep(S_t; F)$ of the form $([m], e)$ with $m\leq n$. 

Now assume $t=0$.  Notice every composition $([0],\id_0)\to([m], e)\to([0],\id_0)$ is equal to the zero map in $\uRep(S_0; F)$ unless $m=0$ and $e=\id_0$.  Part (3) of the lemma follows.  To prove the remainder of the lemma, we again proceed by induction on $n$.  The case $n=1$ follows from Theorem \ref{idemP}(2) along with Proposition \ref{proprep}(2).  If $n>1$, then by induction we can find idempotents $f_\nu\in FP_{n-1}(0)$ for each Young diagram $\nu$ with $0<|\nu|<n$ such that $\{([n-1], f_\nu)~|~0< |\nu|<n\}$ is a complete set of nonzero pairwise non-isomorphic indecomposable objects in $\uRep(S_0; F)$ of the form $([m], e)$ with $0<m< n$.  Define $\hat{f}_\nu$ as above.  Then the argument in the $t\not=0$ case (using part (2) of Theorem \ref{idemP} rather than part (1)) shows that $([n], \hat{f}_\nu)\cong L(\nu)$ for $0<|\nu|<n$ along with $([n], e_\lambda)= L(\lambda)$ for $|\lambda|=n$ form a complete list of nonzero pairwise non-isomorphic indecomposable objects in $\uRep(S_t; F)$ of the form $([m], e)$ with $0<m\leq n$.  
\end{proof}

The following theorem follows easily from Lemma \ref{classifylemma} (see Proposition \ref{proprep}(2)).

\begin{theorem}\label{Youngclass} The assignment $\lambda\mapsto L(\lambda)$ induces a bijection
$$\begin{tabular}{rcl}
$\left\{\begin{tabular}{c} 
Young diagrams \\
of arbitrary size\\
\end{tabular}\right\}$ & $\stackrel{\text{bij.}}{\longleftrightarrow}$ & $\left\{\begin{tabular}{c} 
nonzero indecomposable objects 
in\\
$\uRep(S_t; F)$,
up to isomorphism\\
\end{tabular}\right\}$
\end{tabular}$$
\end{theorem}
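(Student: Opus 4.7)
The plan is to deduce Theorem \ref{Youngclass} as a straightforward ``colimit'' of Lemma \ref{classifylemma}, using Proposition \ref{proprep}(2) to make sure every indecomposable is captured.

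First I would argue surjectivity of the assignment $\lambda\mapsto L(\lambda)$. Let $X$ be a nonzero indecomposable object of $\uRep(S_t;F)$. By Proposition \ref{proprep}(2), $X$ is isomorphic to some $([n],e)$ with $e\in FP_n(t)$ a primitive idempotent. Applying Lemma \ref{classifylemma} to this particular $n$, every nonzero indecomposable of the form $([m],e')$ with $m\leq n$ (and $([n],e)$ is of this form with $m=n$) is isomorphic to $L(\nu)$ for some Young diagram $\nu$ with $0\le|\nu|\le n$. Hence $X\cong L(\nu)$, proving surjectivity.

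For injectivity, suppose $L(\lambda)\cong L(\mu)$ for Young diagrams $\lambda$ and $\mu$. Choose $n\ge\max(|\lambda|,|\mu|)$. By parts (1) and (2) of Lemma \ref{classifylemma} (and, when $t=0$ and $\varnothing\in\{\lambda,\mu\}$, by part (3) together with the fact that every object $([m],e)$ appearing at the given level is already covered by the classification), both $L(\lambda)$ and $L(\mu)$ are realized as objects of the form $([m],e)$ with $m\le n$; in fact, for $|\lambda|>0$ we can take a realization at level $n$ directly, and the case $\lambda=\varnothing$ (for $t=0$) is handled by the unique realization $([0],\id_0)$. Since Lemma \ref{classifylemma} gives a \emph{bijection} between Young diagrams of size at most $n$ and isomorphism classes of such indecomposables, the hypothesis $L(\lambda)\cong L(\mu)$ forces $\lambda=\mu$.

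The main (mild) obstacle to streamline is the $t=0$, $\lambda=\varnothing$ case: then $L(\varnothing)=([0],\id_0)$ and there is no primitive idempotent $e\in FP_n(0)$ with $([n],e)\cong L(\varnothing)$ for $n>0$. I would treat this separately at the start, noting that $L(\varnothing)$ is already exhibited at $n=0$ and, by Lemma \ref{classifylemma}(3), is not isomorphic to any $L(\mu)$ with $\mu\neq\varnothing$; after that the induction on $n$ above applies unchanged to Young diagrams of positive size. With this caveat, the theorem is really just the statement that $\bigcup_n \{L(\lambda)\,:\,|\lambda|\le n\}$ exhausts all isomorphism classes of nonzero indecomposables, which Lemma \ref{classifylemma} and Proposition \ref{proprep}(2) together deliver.
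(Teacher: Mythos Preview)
Your argument is correct and is exactly the approach the paper takes: the paper's entire proof reads ``follows easily from Lemma~\ref{classifylemma} (see Proposition~\ref{proprep}(2)),'' and you have simply written out that deduction. One small simplification: you do not need to invoke parts (1)--(3) of Lemma~\ref{classifylemma} or treat the case $t=0$, $\lambda=\varnothing$ separately, since the main bijection statement of Lemma~\ref{classifylemma} already includes $\varnothing$ on the left and $L(\varnothing)=([0],\id_0)$ is an object of the form $([m],e)$ with $m\le n$ for every $n\ge 0$.
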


We close this section with a proposition concerning field extensions and idempotents in partition algebras which will be useful in subsequent sections.

\begin{proposition}\label{fieldex} Suppose $F\subset F'$ is a field extension and $e$ is a primitive idempotent in $FP_n(t)$. Then $e$ is also a primitive idempotent in $F'P_n(t)$.
\end{proposition}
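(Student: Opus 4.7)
The plan is to reduce the proposition to the stronger local statement that for every primitive idempotent $e\in FP_n(t)$ the corner algebra $eFP_n(t)e$ satisfies $eFP_n(t)e/\text{rad}(eFP_n(t)e)=F$. Granting this, the proposition follows formally by base change: $e(F'P_n(t))e=eFP_n(t)e\otimes_F F'$ carries the nilpotent ideal $\text{rad}(eFP_n(t)e)\otimes_F F'$ with quotient $F\otimes_F F'=F'$, so it is a local $F'$-algebra, and hence $e$ is primitive in $F'P_n(t)$.

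To prove the local statement I would induct on $n$, the base case $n=0$ being immediate since $FP_0(t)=F$. For the inductive step, take a primitive $e\in FP_n(t)$ and apply Lemma \ref{lem} to the pair $(FP_n(t),\zeta)$ with $\zeta$ as in Lemma \ref{ees}. If $e\notin(\zeta)$, then the image $\bar e$ of $e$ in $FP_n(t)/(\zeta)\cong FS_n$ is a primitive idempotent; since the simple $S_n$-modules are all defined over $\BQ$, the group algebra $FS_n$ is split semisimple, so $\bar eFS_n\bar e=F$. The quotient map therefore restricts to a surjection $eFP_n(t)e\twoheadrightarrow F$, and because $eFP_n(t)e$ is local (by primitivity of $e$), this surjection factors through the residue division ring $D:=eFP_n(t)e/\text{rad}(eFP_n(t)e)\supset F$; since $D$ is simple, the resulting map $D\twoheadrightarrow F$ forces $D=F$.

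If instead $e\in(\zeta)$, Lemma \ref{lem} combined with the explicit construction in the proof of Lemma \ref{classifylemma} associates to $e$ a primitive idempotent $f_\nu\in FP_{n-1}(t)$ such that $e$ is conjugate in $FP_n(t)$ to $\hat f_\nu=\phi_n f_\nu\phi_n'$. The mutually inverse morphisms $f_\nu\phi_n'\hat f_\nu$ and $\hat f_\nu\phi_n f_\nu$ exhibited in that proof furnish an $F$-algebra isomorphism $\hat f_\nu FP_n(t)\hat f_\nu\cong f_\nu FP_{n-1}(t)f_\nu$, and by the inductive hypothesis the right-hand side has residue $F$. Since conjugate idempotents yield isomorphic corner algebras, $eFP_n(t)e$ has residue $F$ as well, completing the induction. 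The main obstacle is the first case, which genuinely relies on the classical fact that $FS_n$ is split semisimple over any characteristic-zero field; everything else is formal once the inductive scaffolding from Lemmas \ref{lem} and \ref{classifylemma} is in place.
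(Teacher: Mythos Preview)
Your argument is correct, modulo one small omission: Lemma~\ref{ees} is stated only for $n>1$, so your induction step does not cover $n=1$ and you need to treat that case separately (it is trivial: $FP_1(t)$ is two-dimensional commutative, and one checks directly that its primitive idempotents have one-dimensional corners).

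The paper's proof reaches the same destination by a shorter path. Rather than induct, it invokes Theorem~\ref{Youngclass} to replace $([n],e)$ by the isomorphic object $([|\lambda|],e_\lambda)$, and then observes in one stroke that the simple top of the projective $FP_{|\lambda|}(t)e_\lambda$ is the inflation of the $FS_{|\lambda|}$-module $L_\lambda$ through the quotient $FP_{|\lambda|}(t)\twoheadrightarrow FS_{|\lambda|}$, hence absolutely irreducible; this is the module-theoretic dual of your statement that $e_\lambda FP_{|\lambda|}(t)e_\lambda$ has residue field $F$. Both arguments rest on the same classical input (split semisimplicity of $FS_n$ in characteristic zero), but the paper packages the reduction via the classification of indecomposables, whereas you unwind that classification into an explicit induction along the $\zeta$-filtration. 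Your route is more self-contained in that it does not cite Theorem~\ref{Youngclass} as a black box, at the cost of reproving part of its content; the paper's route is terser but leans on more of the preceding infrastructure.
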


\begin{proof} The assumption implies that $([n],e)$ is indecomposable in  $\uRep(S_t; F)$ (see Proposition \ref{proprep}(2)).
Thus by Theorem \ref{Youngclass} $([n], e)$ is isomorphic to the object $L(\lambda)=([|\lambda|],e_\lambda)$ in $\uRep(S_t; F)$ for some Young diagram $\lambda$.  Let $P_\lambda$ denote the projective $FP_{|\lambda|}(t)$-module $FP_{|\lambda|}(t)e_\lambda$.  Then the simple $FP_{|\lambda|}(t)$-module $P_\lambda/rad(P_\lambda)$ is isomorphic to the pullback of the simple $FS_{|\lambda|}$-module $L_\lambda$ through the quotient map $FP_{|\lambda|}(t)\to FP_{|\lambda|}(t)/(\zeta)\cong FS_{|\lambda|}$.  Hence  it is absolutely irreducible (since any representation of $S_{|\lambda|}$ is), so the
idempotent $e_\lambda$ is primitive in $F'P_{|\lambda|}(t)$. Hence the object $([n],e)\cong 
([|\lambda|],e_\lambda)\in \uRep(S_t; F')$ is indecomposable and we are done by Proposition \ref{proprep}(2).
\end{proof}


\subsection{Lifting objects}  Let $K$ denote the field of fractions of $F[[T-t]]$ where $T$ is an indeterminate. In this section we use the process of lifting idempotents (see appendix \ref{liftingappendix}) to lift objects in $\uRep(S_t; F)$ to objects in $\uRep(S_T; K)$.  
We then prove some useful properties of these lifted objects.  In the next section we will show that $\uRep(S_T; K)$ is semisimple (see Corollary \ref{ssT}), and hence uncomplicated from our point of view.  The process of  lifting objects developed in this section will be used later in the paper to compare the structure of $\uRep(S_t; F)$ with that of the semisimple category $\uRep(S_T; K)$.  
The following theorem will be used to define the notion of lifting objects.

\begin{theorem}\label{lifttoK} Suppose $e$ is an idempotent in $FP_n(t)$.  Then there exists an idempotent $\ep\in KP_n(T)$ of the form $\ep=\sum_{\pi\in P_{n,n}}a_\pi\pi$ with $a_\pi\in F[[T-t]]$ for all $\pi\in P_{n,n}$ such that $\ep|_{T=t}=e$ (we say $\ep$ is a lift of $e$).  Moreover, if $([n_1], e_1)$ and $([n_2], e_2)$ are isomorphic objects in $\uRep(S_t; F)$ and $\ep_1, \ep_2$ are lifts of $e_1, e_2$ respectively, then $([n_1], \ep_1)$ and $([n_2], \ep_2)$ are isomorphic in $\uRep(S_T; K)$.  
\end{theorem}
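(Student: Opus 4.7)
The plan is to view the $F[[T-t]]$-subalgebra of $KP_n(T)$ spanned by partition diagrams as an integral form of $KP_n(T)$ whose reduction modulo $(T-t)$ recovers $FP_n(t)$, and then apply standard idempotent lifting over the complete local ring $R:=F[[T-t]]$.

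For the existence of a lift $\ep$, set $RP_n(T):=\bigoplus_{\pi\in P_{n,n}}R\cdot\pi\subset KP_n(T)$. By Proposition \ref{comp} (applied in $\uRep_0(S_T;K)$), composition of basis diagrams has structure constants of the form $T^{\ell(\mu,\pi)}$, which lie in $F[T]\subset R$, so $RP_n(T)$ is an $R$-subalgebra of $KP_n(T)$ that is finitely generated as an $R$-module. The substitution $T\mapsto t$ gives a surjection $\rho:RP_n(T)\twoheadrightarrow FP_n(t)$ with kernel $(T-t)RP_n(T)$. Since $R$ is complete with respect to its maximal ideal $(T-t)$, the standard idempotent-lifting lemma (see appendix \ref{liftingappendix}) produces an idempotent $\ep\in RP_n(T)\subset KP_n(T)$ with $\rho(\ep)=e$, which is a lift of the required shape.

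For the isomorphism claim, recall the following reformulation: in a Karoubian envelope $(A_1,e_1)\cong(A_2,e_2)$ if and only if the idempotents $\tilde e_1:=e_1\oplus 0$ and $\tilde e_2:=0\oplus e_2$ are conjugate by a unit of $E:=\End(A_1\oplus A_2)$. I would apply this with $A_i=[n_i]$ inside $\uRep_1(S_t;F)$. Performing the same block construction inside $\uRep_1(S_T;K)$ yields idempotents $\tilde\ep_1,\tilde\ep_2$ in $E_K:=\End_{\uRep_1(S_T;K)}([n_1]\oplus[n_2])$; both lie in the natural $R$-form $E_R\subset E_K$ spanned by partition diagrams across the four blocks, and as above $E_R/(T-t)E_R=E$ with $\tilde\ep_i$ reducing to $\tilde e_i$. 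By hypothesis $\tilde e_1$ and $\tilde e_2$ are conjugate in $E$, so by the companion conjugate-lifting lemma in appendix \ref{liftingappendix} there is a unit $u\in E_R^\times$ with $u\tilde\ep_1 u^{-1}=\tilde\ep_2$. Viewing $u$ inside $E_K^\times$, its off-diagonal blocks furnish mutually inverse morphisms between $([n_1],\ep_1)$ and $([n_2],\ep_2)$ in $\uRep(S_T;K)$.

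The main technical obstacle is this second step: upgrading a conjugation of reductions to an actual conjugation of lifts in $E_R$. Mere existence of idempotent lifts over a complete local base is classical, but the conjugation upgrade is the substantive input that appendix \ref{liftingappendix} is designed to supply; once invoked, the rest of the argument is formal Karoubian bookkeeping.
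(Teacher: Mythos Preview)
Your argument is correct. For the existence of a lift you do exactly what the paper does implicitly: recognize $RP_n(T)$ as a free $R$-algebra of finite rank whose reduction mod $(T-t)$ is $FP_n(t)$, and invoke Theorem~\ref{appenlift}. For the isomorphism claim, however, you take a genuinely different route. The paper reduces to the case $n_1=n_2$ by hand, using the explicit morphisms $\phi=\phi_{n_2}\cdots\phi_{n_1+1}$ and $\phi'=\phi'_{n_1+1}\cdots\phi'_{n_2}$ from the proof of Lemma~\ref{classifylemma} to replace $([n_1],e_1)$ by an isomorphic object $([n_2],\phi e_1\phi')$, observes that $\phi\ep_1\phi'$ is a lift of $\phi e_1\phi'$, and only then applies the conjugacy part of Theorem~\ref{appenlift} inside the single algebra $FP_{n_2}(t)$. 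Your approach instead passes directly to the $R$-form $E_R$ of the $2\times 2$ block algebra $E=\End([n_1]\oplus[n_2])$, which is again free of finite rank over $R$, and applies Theorem~\ref{appenlift} there. This is cleaner: it avoids the case analysis on $n_1$ versus $n_2$ and the special treatment of $n_1=0$, and it makes transparent why nothing depends on the particular $\phi,\phi'$. The paper's approach, by contrast, stays within genuine partition algebras rather than matrix algebras over them, which keeps the objects slightly more concrete. One small point: your final sentence about ``off-diagonal blocks'' is a touch loose---the inverse isomorphisms are $\ep_2 c\ep_1$ and $\ep_1 b'\ep_2$, where $c$ is the lower-left entry of $u$ and $b'$ the upper-right entry of $u^{-1}$---but the computation is routine once $u\tilde\ep_1 u^{-1}=\tilde\ep_2$ is in hand.
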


\begin{proof} The existence of a lift of $e$ is guaranteed by the first statement of Theorem \ref{appenlift}.  Now suppose $([n_1], e_1)\cong([n_2], e_2)$ in $\uRep(S_t; F)$.  To prove the remainder of the theorem we may assume $n_1\leq n_2$.  If $n_1=0$ the theorem is easy to check, so assume $n_1>0$.  Set $$\phi:=\left\{\begin{array}{ll}\id_{n_1} & \text{if }n_1=n_2 \\
\phi_{n_2}\cdots\phi_{n_1+1} & \text{if }n_1<n_2
\end{array}\right.\quad\text{and}\quad \phi':=\left\{\begin{array}{ll}\id_{n_1} & \text{if }n_1=n_2 \\
\phi'_{n_1+1}\cdots\phi'_{n_2} & \text{if }n_1<n_2
\end{array}\right.$$ where $\phi_{n}$ and $\phi'_{n}$ are as in the proof of Lemma \ref{classifylemma}.  Then $([n_1], e_1)\cong([n_2], \phi e_1\phi')$ and $\phi\ep_1\phi'$ is a lift of $\phi e_1\phi'$.  Hence it suffices to show the theorem is true when $n_1=n_2$.  
In this case the remainder of the theorem follows from the second statement of Theorem \ref{appenlift} along with Proposition \ref{proprep}(2).  
\end{proof}

Lifted idempotents satisfy the following property.

\begin{proposition}\label{liftsums} If $([n], e)\cong ([n_1], e_1)\oplus([n_2], e_2)$ in $\uRep(S_t; F)$ and $\ep, \ep_1, \ep_2$ are lifts of $e, e_1, e_2$ respectively, then $([n], \ep)\cong([n_1], \ep_1)\oplus([n_2], \ep_2)$ in $\uRep(S_T; K)$.
\end{proposition}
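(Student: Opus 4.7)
The plan is to reduce the statement, via the trick from the proof of Theorem \ref{lifttoK}, to the case $n_1=n_2=n$, and then lift a decomposition into orthogonal idempotents inside the single algebra $FP_n(t)$. For each $i$ with $n_i<n$, I replace $e_i$ by $\phi e_i \phi'$, where $\phi$ and $\phi'$ are suitable products of the $t$-independent partitions $\phi_k, \phi'_k$ appearing in the proof of Lemma \ref{classifylemma}; then $\phi\ep_i\phi'$ is a lift of $\phi e_i\phi'$, and Theorem \ref{lifttoK} guarantees that the necessary isomorphisms $([n_i],e_i)\cong([n],\phi e_i\phi')$ and $([n_i],\ep_i)\cong([n],\phi\ep_i\phi')$ persist over both $F$ and $K$ (the case $n_i>n$ is symmetric). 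So we may assume $n_1=n_2=n$.

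The isomorphism $([n],e)\cong ([n],e_1)\oplus([n],e_2)$ is then encoded by morphisms $\alpha_i\in e_i FP_n(t)e$ and $\beta_i\in e FP_n(t)e_i$ with $\alpha_i\beta_j=\delta_{ij}e_i$ and $\beta_1\alpha_1+\beta_2\alpha_2=e$. Setting $e_i':=\beta_i\alpha_i$ produces orthogonal idempotents in $FP_n(t)$ summing to $e$, and $\alpha_i,\beta_i$ provide isomorphisms $([n],e_i')\cong ([n],e_i)$. The problem thus reduces to lifting the \emph{decomposition} $e=e_1'+e_2'$ to orthogonal idempotents summing to the given lift $\ep$.

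The key technical step is precisely this lift. Working in the local ring $R:=F[[T-t]]$, the kernel of the reduction map $RP_n(T)\to FP_n(t)$ is $(T-t)RP_n(T)$, which lies in the Jacobson radical of $RP_n(T)$ by Nakayama's lemma, since $RP_n(T)$ is finitely generated as an $R$-module. A standard application of idempotent lifting modulo an ideal contained in the Jacobson radical (available either directly from Theorem \ref{appenlift} in the appendix or from a short Nakayama-style argument) yields an idempotent $\ep_1'\in \ep RP_n(T)\ep$ lifting $e_1'$. Setting $\ep_2':=\ep-\ep_1'$, a direct computation shows that $\ep_2'$ is itself an idempotent, orthogonal to $\ep_1'$, and lifts $e_2'$.

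Finally, since $\ep_1'$ and $\ep_2'$ are orthogonal idempotents summing to $\ep$ in $KP_n(T)$, there is a canonical splitting $([n],\ep)\cong ([n],\ep_1')\oplus([n],\ep_2')$ in $\uRep(S_T;K)$. Applying Theorem \ref{lifttoK} to each summand (both $\ep_i'$ and $\ep_i$ are lifts of the isomorphic objects $([n],e_i')\cong([n],e_i)$) gives $([n],\ep_i')\cong([n],\ep_i)$, and the result follows. The only place where more is needed than Theorem \ref{lifttoK} as stated is the simultaneous lift of the orthogonal pair $(e_1',e_2')$; this is the main obstacle and is exactly the classical idempotent-lifting statement over the local ring $R$.
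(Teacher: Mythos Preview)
Your proof is correct and follows essentially the same strategy as the paper: pass to orthogonal idempotents summing to $e$ inside a single partition algebra, lift them, and finish via Theorem~\ref{lifttoK}. The paper obtains the orthogonal pair at the $F$--level by invoking Krull--Schmidt (rather than your explicit $e_i'=\beta_i\alpha_i$), and then lifts $\bar e_1,\bar e_2$ separately, asserting orthogonality of the lifts with a bare reference to Theorem~\ref{appenlift}; your device of lifting $e_1'$ inside the corner algebra $\ep\,RP_n(T)\,\ep$ and setting $\ep_2':=\ep-\ep_1'$ makes that orthogonality step more transparent than the paper's own argument.
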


\begin{proof} By the Krull-Schmidt property there exist $\bar{e}_1, \bar{e}_2\in FP_n(t)$ with the property $([n_i] , e_i)\cong ([n], \bar{e}_i)$ for $i=1,2$ such that $e=\bar{e}_1+\bar{e}_2$ is an orthogonal decomposition of the idempotent $e$.  Let $\bar{\ep}_1, \bar{\ep}_2\in KP_n(T)$ be lifts of $\bar{e}_1, \bar{e}_2$ respectively.  Then  $\bar{\ep}_1+\bar{\ep}_2$ is a lift of $e$.  Hence, by Theorem \ref{appenlift} and Proposition \ref{proprep}(2) $([n], \ep)\cong([n], \bar{\ep}_1+\bar{\ep}_2)$ in $\uRep(S_T; K)$.  Since $\bar{\ep}_1$ and $\bar{\ep}_2$ are orthogonal idempotents (see Theorem \ref{appenlift}) we have $([n], \bar{\ep}_1+\bar{\ep}_2)\cong([n], \bar{\ep}_1)\oplus([n], \bar{\ep}_2)$ in $\uRep(S_T; K)$.  The result now follows from the last statement of Theorem \ref{lifttoK}.
\end{proof}

By Proposition \ref{proprep}(2) any object in $\uRep(S_t; F)$ is a finite direct sum of objects of the form $([n], e)$.  Hence, setting $\Lift_t([n], e):=([n], \ep)$, where $\ep\in KP_n(T)$ is a lift of $e\in FP_n(t)$ and extending to all objects in $\uRep(S_t; F)$ by requiring $\Lift_t(A\oplus B)=\Lift_t(A)\oplus \Lift_t(B)$ gives the operation

$$\Lift_t:\left\{\begin{tabular}{c}
objects in $\uRep(S_t; F)$\\
up to isomorphism
\end{tabular}
\right\}\longrightarrow\left\{\begin{tabular}{c}
objects in $\uRep(S_T; K)$\\
up to isomorphism
\end{tabular}
\right\}$$
Theorem \ref{lifttoK} along with Proposition \ref{liftsums} show that $\Lift_t$ is well defined.   

\begin{example}\label{lift1}  (0) $\Lift_t(L(\varnothing))=\Lift_t(([0], \id_0))=([0], \id_0)=L(\varnothing)$ for all $t\in F$.

(1) Notice $\Lift_t(([1], \id_1))=([1], \id_1)=L(\Box)\oplus L(\varnothing)$ in $\uRep(S_T; K)$ for all $t\in F$.  If $t\not=0$, then $([1], \id_1)=L(\Box)\oplus L(\varnothing)$. Hence (using part (0) and the additivity of $\Lift_t$) we have $\Lift_t(L(\Box))=L(\Box)$ whenever $t\not=0$.  On the other hand, $([1], \id_1)\cong L(\Box)$ in $\uRep(S_0; F)$.  Hence $\Lift_0(L(\Box))=L(\Box)\oplus L(\varnothing)$.
\end{example}

The following proposition lists some useful properties of $\Lift_t$.

\begin{proposition}\label{Liftprop} For this proposition suppose $A$ and $B$ are objects in $\uRep(S_t; F)$.   


(1) $\Lift_t(A\otimes B)=\Lift_t(A)\otimes\Lift_t(B)$ for all $t\in F$.

(2) $(\dim_{\uRep(S_T; K)}\Lift_t(A))|_{T=t}=\dim_{\uRep(S_t; F)}A$ for all $t\in F$.

(3) Suppose $t\in F$ and $\lambda, \lambda^{(1)}, \ldots, \lambda^{(m)}$ are Young diagrams with the property  $\Lift_t(L(\lambda))=L(\lambda^{(1)})\oplus\cdots\oplus L(\lambda^{(m)})$ in $\uRep(S_T; K)$.  Then there exists a unique $j\in\{1,\ldots, m\}$ with $\lambda^{(j)}=\lambda$.  Moreover, $|\lambda^{(i)}|<|\lambda|$ for all $i\not=j$.  In particular, $\Lift_t$ is injective. 

(4) Fix a Young diagram $\lambda$.   $\Lift_t(L(\lambda))=L(\lambda)$ for all but finitely many $t\in F$.

(5) $\dim_K\Hom_{\uRep(S_T; K)}(\Lift_t(A), \Lift_t(B))=\dim_F\Hom_{\uRep(S_t; F)}(A, B)$ for all $t\in F$.

\end{proposition}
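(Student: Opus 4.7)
The plan is to treat the five parts in order of increasing difficulty.

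Parts (1) and (2) are direct verifications. For (1), by additivity of $\Lift_t$ we may assume $A=([n],e)$ and $B=([m],f)$; then $A\otimes B=([n+m],e\otimes f)$, where $e\otimes f$ is the horizontal concatenation of partition diagrams, and for lifts $\epsilon,\phi$ of $e,f$ the element $\epsilon\otimes\phi$ is a lift of $e\otimes f$ since the structure constants of $\otimes$ are parameter-free. For (2), reduce to $A=([n],e)$ and write $\epsilon=\sum_\pi a_\pi\pi$ with $a_\pi\in F[[T-t]]$; since each $\tr(\pi)$ is a monomial in the parameter by \S\ref{trace}, $\tr(\epsilon)=\sum_\pi a_\pi\tr(\pi)\in F[[T-t]]$, and evaluation at $T=t$ returns $\tr(e)=\dim_{\uRep(S_t;F)}A$.

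For (5), put $R=F[[T-t]]$ and let $RP_{n_A,n_B}$ denote the free $R$-module on partition diagrams. Choosing lifts $\epsilon_A\in RP_{n_A}(T)$ and $\epsilon_B\in RP_{n_B}(T)$, the $R$-submodule $M:=\epsilon_B\cdot RP_{n_A,n_B}\cdot\epsilon_A$ is torsion-free and hence free over the DVR $R$; its rank equals both $\dim_F\Hom_{\uRep(S_t;F)}(A,B)$ (via $M\otimes_R F$) and $\dim_K\Hom_{\uRep(S_T;K)}(\Lift_tA,\Lift_tB)$ (via $M\otimes_R K$).

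For (3), decompose $\epsilon=\sum_i\epsilon_i$ into primitive idempotents in $KP_n(T)$ (with $n=|\lambda|$); each $\epsilon_i$ corresponds to a Young diagram $\mu^{(i)}$ by Lemma \ref{classifylemma}, and Remark \ref{zeta} gives $|\mu^{(i)}|<n$ if and only if $\epsilon_i\in(\zeta)$. Passing to $KP_n(T)/(\zeta)\cong KS_n$, the image $\bar\epsilon$ is a lift of the primitive idempotent $\bar e_\lambda$, which remains primitive in $KS_n$ because characteristic-zero $S_n$-representations are absolutely irreducible; the uniqueness-up-to-conjugacy of idempotent lifts (Theorem \ref{appenlift}) then forces $\bar\epsilon$ to be conjugate to $\bar e_\lambda$ in $KS_n$ and so primitive for $\lambda$. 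Hence exactly one $\bar\epsilon_i$ is nonzero and must correspond to $\mu^{(i)}=\lambda$. Injectivity of $\Lift_t$ follows by peeling off: the maximal-size simple summands of $\Lift_tA$ appear with multiplicities equal to those in $A$ (lower-size simples contribute only lower-size summands to the lift), allowing induction on that size.

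Part (4) is the principal obstacle. I would construct, for each Young diagram $\lambda$ with $n=|\lambda|$, a $t$-independent universal primitive idempotent $\epsilon_\lambda\in F(T)P_n(T)$ whose coefficients are rational functions in $T$. Its existence follows from Deligne's semisimplicity theorem applied over $F(T)$ with transcendental parameter $T$ (making $F(T)P_n(T)$ a semisimple finite-dimensional $F(T)$-algebra), combined with Lemma \ref{classifylemma} and the absolute simplicity of the simple modules as argued in Proposition \ref{fieldex}. These coefficients have finitely many poles, and for $t$ outside a further finite exceptional set, $\epsilon_\lambda(t)\in FP_n(t)$ is primitive corresponding to $\lambda$ (the Wedderburn structure of $FP_n(t)$ matches that of $F(T)P_n(T)$ for $t\notin\Z_{\geq 0}$ by Deligne semisimplicity, and for $t\in\Z_{\geq 0}$ sufficiently large by the classical symmetric group picture of \S\ref{indecomposSt}). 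For such cofinite $t$ one may take $e_\lambda=\epsilon_\lambda(t)$; then $\epsilon_\lambda$ itself serves as a lift via the power-series embedding $F(T)\hookrightarrow K$, and Proposition \ref{fieldex} applied to the field extension $F(T)\subset K$ keeps it primitive in $KP_n(T)$, yielding $\Lift_tL(\lambda)=L(\lambda)$. The hardest step is verifying the finiteness of the exceptional set of $t\in\Z_{\geq 0}$ where specialization breaks primitivity.
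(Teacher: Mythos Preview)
Your arguments for parts (1), (2), and (3) are essentially the same as the paper's, and your DVR approach to (5) is a clean alternative to the paper's proof, which instead decomposes $FP_{n,n'}$ and $KP_{n,n'}$ into all four pieces $e_i' F P_{n,n'} e_j$ (with $e_2=1-e_1$, $e_2'=1-e_1'$) and their lifted versions, observes that each lifted piece has dimension at least that of its specialization, and forces equality because the total dimensions agree. Your argument that $M=\epsilon_B\,RP_{n_A,n_B}\,\epsilon_A$ is free over the DVR and that $M\otimes_R F\cong e_B\,FP_{n_A,n_B}\,e_A$ (the kernel of evaluation being exactly $(T-t)M$, since $M$ is stable under $x\mapsto \epsilon_B x\epsilon_A$) is correct and arguably more conceptual.

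For part (4), however, there is a genuine gap. You invoke Deligne's semisimplicity theorem to control the Wedderburn structure of $FP_n(t)$ for $t\notin\Z_{\ge 0}$, but this is precisely the result the paper is building toward (Corollary~\ref{ss}), so the argument is circular here. More importantly, you identify as the ``hardest step'' showing that the specialization $\epsilon_\lambda(t)$ remains primitive \emph{and corresponds to $\lambda$}, and you do not resolve it. The paper sidesteps both issues by using part~(3), which you have already proved: once $t$ avoids the finitely many poles of $\epsilon_\lambda\in F(T)P_n(T)$, the specialization $e:=\epsilon_\lambda|_{T=t}$ is an idempotent with lift $\epsilon_\lambda$. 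Primitivity of $e$ is then immediate from Proposition~\ref{liftsums}: a nontrivial orthogonal decomposition of $e$ would lift to a nontrivial decomposition of $([n],\epsilon_\lambda)$, contradicting primitivity of $\epsilon_\lambda$ in $KP_n(T)$ (Proposition~\ref{fieldex}). So $([n],e)\cong L(\mu)$ for some Young diagram $\mu$, and $\Lift_t(L(\mu))=([n],\epsilon_\lambda)=L(\lambda)$. Now part~(3) applied with $m=1$ forces $\mu=\lambda$. No ``further finite exceptional set'' is needed, and no appeal to semisimplicity of $FP_n(t)$ is required.
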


\begin{proof}  (1) It suffices to prove the statement when $A$ and $B$ are objects of the form $A=([n], e)$, $B=([n'], e')$.  Suppose $\ep, \ep'$ are lifts of $e, e'$ respectively.  Then $\ep\otimes\ep'$ is a lift of $e\otimes e'$, hence $$\Lift_t(A\otimes B)=\Lift_t([n+n'], e\otimes e')=([n+n'], \ep\otimes\ep')=\Lift_t(A)\otimes\Lift_t(B).$$

(2) Again, we may assume $A$ is of the form $A=([n], e)$. Let  $\ep$ denote a lift of $e$.  Then $\dim_{\uRep(S_T; K)}\Lift_t(A)=\tr(\ep)$, and $\dim_{\uRep(S_t; F)}A=\tr(e)=\tr(\ep)|_{T=t}$.

(3) It is easy to show $\Lift_t(L(\varnothing))=L(\varnothing)$ for all $t\in F$, $\Lift_t(L(\Box))=L(\Box)$ for all nonzero $t\in F$, and $\Lift_0(L(\Box))=L(\varnothing)\oplus L(\Box)$.  Hence, we can assume $n:=|\lambda|>1$.   Suppose $e\in FP_n(t)$ is an idempotent and  $\ep\in KP_n(T)$ is a lift of $e$.  Let $\tilde{e}$ (resp. $\tilde{\ep}$) denote the image of $e$ (resp. $\ep$) under the quotient map $FP_n(t)\to FP_n(t)/(\zeta)\cong FS_n$ (resp.  $KP_n(T)\to KP_n(T)/(\zeta)\cong KS_n$) where $\zeta$ is as in Lemma \ref{ees}.  Then $\tilde{\ep}\in KS_n$ is a lift of $\tilde{e}\in FS_n$.  However, $\tilde{e}$ (viewed as an element of $KS_n$) is also a lift of $\tilde{e}$ (viewed as an element of $FS_n$).  Thus (by Theorem \ref{appenlift}) $\tilde{\ep}$ is conjugate to $\tilde{e}$ in $KS_n$.  Now suppose $L(\lambda)\cong([n], e)$.  Then $\tilde{e}$ is a primitive idempotent in $FS_n$ corresponding to $\lambda$ (see Remark \ref{zeta}).  Thus $\tilde{e}$ (and hence $\tilde{\ep}$) is also a primitive idempotent in $KS_n$ corresponding to $\lambda$.  Therefore, any orthogonal decomposition of $\ep$ into primitive idempotents must contain exactly one primitive idempotent corresponding to $\lambda$ and all other idempotents corresponding to Young diagrams with smaller size than $\lambda$ (see Remark \ref{zeta}).  The result follows.

(4)  Set $K':=F(T)$ (field of fractions of the polynomial ring $F[T]$) viewed as a subfield of $K$.  Now suppose $\ep\in K'P_n(T)$ is a primitive idempotent corresponding to $\lambda$.  Then by Proposition \ref{fieldex}, $\ep$ is also primitive when viewed as an element of $KP_n(T)$.  Now write $\ep=\sum_{\pi\in P_{n,n}}a_\pi \pi$ where $a_\pi\in K'$.  Let $Q\subset F$ denote the finite set consisting of all roots of all denominators of the $a_\pi$'s.  If $t\not\in Q$, then $a_\pi\in F[[T-t]]$ for all $\pi\in P_{n,n}$.  Thus, if $t\not\in Q$ then $\ep$ is a lift of $e:=\ep|_{T=t}\in FP_n(t)$.  It is easy to show that $e$ is primitive in $FP_n(t)$.  Hence $([n], e)\cong L(\mu)$ for some Young diagram $\mu$ (see Theorem \ref{Youngclass}).  Thus $\Lift_t(L(\mu))=L(\lambda)$ for all $t\not\in Q$.  By part (3) we have $\mu=\lambda$ and we are done.

(5) Set $A=([n], e_1)$, $B=([n'], e_1')$, $e_2=1-e_1\in FP_n(t)$, $e_2'=1-e_1'\in FP_{n'}(t)$, and suppose $\ep_i\in KP_n(T)$ (resp. $\ep'_i\in KP_{n'}(T)$) is a lift of $e_i$ (resp. $e'_i$) for $i\in\{1,2\}$.  Then \begin{equation}\label{ineq} \dim_K\ep_i'KP_{n,n'}\ep_j\geq \dim_Fe_i' FP_{n,n'}e_j\end{equation} for all $i,j\in\{1,2\}$ since evaluating $T=t$ certainly cannot increase dimension.  On the other hand, $$ KP_{n,n'}=\bigoplus_{1\leq i, j\leq 2}\ep'_iKP_{n,n'}\ep_j\quad\text{and}\quad FP_{n,n'}=\bigoplus_{1\leq i, j\leq 2}e'_iFP_{n,n'}e_j.$$  Comparing dimensions we get $$|P_{n,n'}|=\sum_{1\leq i,j \leq 2}\dim_K\ep'_iKP_{n,n'}\ep_j\geq\sum_{1\leq i,j \leq 2}\dim_Fe'_iFP_{n,n'}e_j=|P_{n,n'}|.$$  Hence equality must hold in (\ref{ineq}) for all $i, j\in\{1,2\}$.  In particular, equality holds in (\ref{ineq}) when $i=j=1$ which gives the desired statement.  
\end{proof}


\subsection{On $ \uRep(S_t; F)$ for generic $t$}\label{genericSt}  In this section we will show that $ \uRep(S_t; F)$ is semisimple for ``generic" values of $t$.  Deligne showed that $ \uRep(S_t; F)$ is not semisimple if and only if $t$ is a nonnegative integer (see \cite{Del07}).  That result will follow from our description of the blocks in $ \uRep(S_t; F)$ (see Corollary \ref{ss}).  For now we confine ourselves to prove a weaker result (see Theorem \ref{generic}) which allows us to conclude that $\uRep(S_T; K)$ is semisimple (see Corollary \ref{ssT}).

The following well-known lemma will be useful in showing that $\uRep(S_t; F)$ is generically semisimple.

\begin{lemma}\label{trform}  Suppose $A$ is a finite dimensional $F$-algebra.  For $a\in A$, let $\phi_a$ denote the $F$-linear map $A\to A$ given by $x\mapsto ax$.  Define the \emph{trace form} on $A$ by $(a, b):=\tr(\phi_a\phi_b)$.  Then $A$ is a semisimple algebra if and only if the trace form on $A$ is non-degenerate.  
\end{lemma}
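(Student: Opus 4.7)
The plan is to prove the two implications separately, exploiting the identity $\phi_a\phi_b = \phi_{ab}$ so that the trace form can be rewritten as $(a,b) = \tr(\phi_{ab})$, i.e.\ the bilinear form coming from the left regular representation.

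For the ``non-degenerate $\Rightarrow$ semisimple'' direction, I would show that the Jacobson radical $J := \mathrm{rad}(A)$ is contained in the radical of the trace form. Given $a \in J$ and any $b \in A$, the product $ab$ lies in $J$ and is therefore nilpotent; since $\phi_{ab}^{\,n} = \phi_{(ab)^n}$, the operator $\phi_a \phi_b = \phi_{ab}$ is nilpotent, so $(a,b) = \tr(\phi_{ab}) = 0$. Non-degeneracy then forces $a = 0$, so $J = 0$ and $A$ is semisimple. This direction works in any characteristic.

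For the converse I would use Wedderburn's theorem: in characteristic zero, $A \cong \prod_{i=1}^{r} M_{n_i}(D_i)$ for finite-dimensional $F$-division algebras $D_i$. Left multiplication by an element supported in one factor annihilates the other factors, so the trace form on $A$ is the orthogonal direct sum of the trace forms on the simple factors $M_{n_i}(D_i)$; it therefore suffices to establish non-degeneracy on a single $B = M_n(D)$. For this one checks that the left-regular-representation trace on $B$ factors (up to an explicit positive integer multiple) through the reduced trace $M_n(D) \to Z(D)$ followed by the field trace $Z(D) \to F$, both of which are non-degenerate $F$-bilinear in characteristic zero; equivalently, one invokes the standard fact that any finite-dimensional separable $F$-algebra has non-degenerate trace form, and in characteristic zero every finite-dimensional semisimple $F$-algebra is separable.

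The main obstacle is this last step: checking non-degeneracy on the simple factors when the underlying division algebra $D$ is noncommutative. This is where the characteristic-zero hypothesis (present throughout the paper) is genuinely needed, since in positive characteristic the trace form on $M_n(F)$ with $p \mid n$ already degenerates. Once one either quotes the separability argument or performs the direct reduced-trace computation, the lemma follows by combining the two implications.
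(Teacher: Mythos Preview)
Your proposal is correct. The ``non-degenerate $\Rightarrow$ semisimple'' direction is exactly what the paper does: both of you observe that for $a\in J(A)$ and any $b$, the product $ab$ is nilpotent, so $\phi_{ab}$ has trace zero, forcing $J(A)$ into the radical of the form.

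The difference is in the converse. You go through Wedderburn, reduce to a single factor $M_n(D)$, and invoke separability (equivalently, non-degeneracy of the reduced trace) in characteristic zero. The paper instead stays elementary and proves directly that the radical $S$ of the trace form is contained in $J(A)$: if $a\in S$ then $(a,a^{n-1})=\tr(\phi_a^{\,n})=0$ for every $n\ge 1$, so all power sums of the eigenvalues of $\phi_a$ vanish; by Newton's identities the characteristic polynomial of $\phi_a$ is $X^r$, hence $a$ is nilpotent. Since $S$ is a two-sided ideal consisting of nilpotent elements, $S\subset J(A)$, and combined with the other inclusion one gets $S=J(A)$, which yields both implications at once.

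Your route is perfectly valid but imports more structure theory (Wedderburn, reduced traces or the separable-algebra trace criterion). The paper's argument is more self-contained: it never decomposes $A$ and uses only the power-sum/Newton trick, which is precisely where characteristic zero enters. Both use the char-zero hypothesis in an essential way, just at different points.
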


\begin{proof} Let $S:=\{a\in A~|~(a, b)=0\text{ for all }b\in A\}$.  We will show that $S$ is equal to $J(A)$ (the Jacobson radical of $A$).  Suppose $a\in J(A)$.  Since $J(A)$ is a nilpotent ideal, $ab$ is nilpotent for every $b\in A$.  Hence $(a, b)=0$ for all $b\in A$, so $J(A)\subset S$.  To show $S\subset J(A)$ it suffices to show that every element of $S$ is nilpotent.  
If $a\in S$ then $\tr(\phi_a^n)=0$ for all integers $n>0$.   If we let $x_1,\ldots, x_r\in F$ denote the eigenvalues of $\phi_a$, then we have $\sum_{i=1}^rx_i^n=0$ for all $n>0$.  Hence, any non-constant symmetric polynomial in $x_1,\ldots, x_r$ is zero (see for example \cite{MR1354144}).  Thus the characteristic polynomial of $\phi_a$ must be $\chi(X)=X^r$.  
Therefore $\phi_a$, and thus $a$, is nilpotent.  
\end{proof}

Before showing $\uRep(S_t; F)$ is generically semisimple we give two examples illustrating the usefulness of Lemma \ref{trform}.  The reader may find these examples helpful when reading the proof of Theorem \ref{generic}. 

\begin{example} (1) Consider the partition algebra $FP_1(t)$. Let $\pi$ be as in the proof of Theorem \ref{idemP} and fix the ordered basis $P_{1,1}=\{\id_1, \pi\}$ for $FP_1(t)$.  Under this ordered basis 
 the matrix with entries $(x, y)$ for $x, y\in P_{1,1}$ is $$\left(\begin{array}{cc} 2 & t\\ t & t^2\\\end{array}\right)$$
Since the determinant of the matrix above is $t^2$, we conclude (by Lemma \ref{trform}) that $FP_1(t)$ is semisimple if and only if $t\not=0$.

(2)  Consider the partition algebra $FP_2(t)$.  Using the ordered basis $$\includegraphics{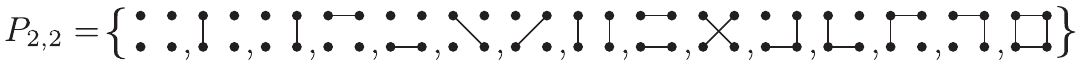}$$ the matrix with entries $(\pi, \mu)$ for $\pi, \mu\in P_{2,2}$ is 
$${\scriptsize \left(\begin{array}{ccccccccccccccc}
2t^4 & 2t^3 & 2t^3 & 2t^3 & 2t^3 & 2t^3 & 2t^3 & 2t^2 & 2t^2 & 2t^2 & 2t^2 & 2t^2 & 2t^2 & 2t^2 & 2t\\
2t^3& 5t^2& 2t^2& 2t^2& 2t^2& 2t^2& 2t^2& 5t& 2t& 2t& 2t& 5t& 5t& 2t& 5\\
2t^3& 2t^2& 5t^2& 2t^2& 2t^2& 2t^2& 2t^2& 5t& 2t& 2t& 5t& 2t& 2t& 5t& 5\\
2t^3& 2t^2& 2t^2& 2t^2& 2t^3& 2t^2& 2t^2& 2t& 2t^2& 2t& 2t^2& 2t^2& 2t& 2t& 2t\\
2t^3& 2t^2& 2t^2& 2t^3& 2t^2& 2t^2& 2t^2& 2t& 2t^2& 2t& 2t& 2t& 2t^2& 2t^2& 2t\\
2t^3& 2t^2& 2t^2& 2t^2& 2t^2& 2t^2& 5t^2& 2t& 2t& 5t& 5t& 2t& 5t& 2t& 5\\
2t^3& 2t^2& 2t^2& 2t^2& 2t^2& 5t^2& 2t^2& 2t& 2t& 5t& 2t& 5t& 2t& 5t& 5\\
2t^2& 5t& 5t& 2t& 2t& 2t& 2t& 15& 2t& 7& 5& 5& 5& 5& 5\\
2t^2& 2t& 2t& 2t^2& 2t^2& 2t& 2t& 2t& 2t^2& 2t& 2t& 2t& 2t& 2t& 2t\\
2t^2& 2t& 2t& 2t& 2t& 5t& 5t& 7& 2t& 15& 5& 5& 5& 5& 5\\
2t^2& 2t& 5t& 2t^2& 2t& 5t& 2t& 5& 2t& 5& 5& 5& 2t& 5t& 5\\
2t^2& 5t& 2t& 2t^2& 2t& 2t& 5t& 5& 2t& 5& 5& 5& 5t& 2t& 5\\
2t^2& 5t& 2t& 2t& 2t^2& 5t& 2t& 5& 2t& 5& 2t& 5t& 5& 5& 5\\
2t^2& 2t& 5t& 2t& 2t^2& 2t& 5t& 5& 2t& 5& 5t& 2t& 5& 5& 5\\
2t& 5& 5& 2t& 2t& 5& 5& 5& 2t& 5& 5& 5& 5& 5& 5\\
\end{array}\right)}$$ The determinant of the matrix above is $1259712t^{14}(t-1)^4(t-2)^6$.  Hence, by Lemma \ref{trform}, $FP_2(t)$ is semisimple if and only if $t\not=0, 1, 2$.\hfill$\Dox$ 
\end{example}

Now to prove that $\uRep(S_t; F)$ is generically semisimple.

\begin{theorem}\label{generic} $ \uRep(S_t; F)$ is semisimple for all but countably many values of $t$.  Moreover, if $ \uRep(S_t; F)$ is not semisimple, then $t$ is an algebraic integer.
\end{theorem}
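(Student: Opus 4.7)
The plan is to reduce semisimplicity of $\uRep(S_t;F)$ to semisimplicity of each partition algebra $FP_n(t)=\End([n])$, apply the trace-form criterion of Lemma~\ref{trform}, and exploit integrality of the Gram matrix over $\Z[T]$.

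First I would establish the equivalence: $\uRep(S_t;F)$ is semisimple if and only if $FP_n(t)$ is a semisimple $F$-algebra for every $n\geq 0$. The forward direction is immediate since $FP_n(t)=\End([n])$. Conversely, every indecomposable object in $\uRep(S_t;F)$ is of the form $([n],e)$ with $e$ a primitive idempotent of $FP_n(t)$ (Proposition~\ref{proprep}(2), Lemma~\ref{classifylemma}), and any Hom space between two such objects is realized as a corner of some $FP_N(t)$; semisimplicity of each $FP_N(t)$ then forces these Hom spaces to be well-behaved, giving semisimplicity of the category.

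Next, by Lemma~\ref{trform}, $FP_n(t)$ is semisimple iff $D_n(t)\neq 0$, where $D_n(T):=\det G_n(T)$ and $G_n$ is the Gram matrix of the trace form in the basis $P_{n,n}$. A direct computation gives
\[
G_n(T)_{\pi,\mu}=\tr(\phi_\pi\phi_\mu)=\sum_{\substack{\nu\in P_{n,n}\\ \pi\cdot\mu\cdot\nu=\nu}} T^{\ell(\pi,\mu)+\ell(\pi\cdot\mu,\nu)}\in\Z_{\geq 0}[T],
\]
so $D_n(T)\in\Z[T]$. Moreover $D_n$ is not identically zero: for $d\in\Z$ with $d\geq 2n$, Theorem~\ref{sw} identifies $FP_n(d)\cong\End_{S_d}(V_d^{\otimes n})$, which is the centralizer of the semisimple action of $FS_d$ on $V_d^{\otimes n}$ and hence semisimple; so $D_n(d)\neq 0$ for such $d$. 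Consequently the set of ``bad'' $t\in F$ is $\bigcup_{n\geq 0}\{t:D_n(t)=0\}$, a countable union of finite sets, yielding the first assertion.

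For the algebraic-integer claim, every bad $t$ is a root of some $D_n\in\Z[T]$ and is therefore algebraic over $\BQ$. Upgrading ``algebraic'' to ``algebraic integer'' is the main obstacle, as it requires every irreducible factor of $D_n$ in $\Z[T]$ to be monic. My plan is to obtain this by exploiting the cellular structure on $FP_n(T)$: the trace form decomposes, up to invertible base change, into a block matrix indexed by Young diagrams whose diagonal blocks are Gram matrices of cell modules, and these cell-module Gram determinants are known to factor as $\prod_{a\in\Z_{\geq 0}}(T-a)^{m_a}$. Consequently every root of $D_n(T)$ lies in $\Z_{\geq 0}$ and is in particular an algebraic integer. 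An alternative route uses Proposition~\ref{Liftprop} to track how indecomposables merge under $\Lift_t$ at bad $t$: such mergers force polynomial identities among generic dimensions, each of which factors into monic linear integer forms, again giving the algebraic-integer conclusion.
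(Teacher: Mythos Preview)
Your approach is essentially identical to the paper's: reduce to semisimplicity of each $FP_n(t)$, apply the trace-form criterion of Lemma~\ref{trform}, observe that the Gram matrix has entries in $\Z[t]$ so its determinant $D_n(t)\in\Z[t]$, and use Theorem~\ref{sw} at large integers $d\ge 2n$ to see $D_n$ is not identically zero. The paper's proof of the first sentence of the theorem is exactly this.

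You are right to flag the ``algebraic integer'' claim as the nontrivial point. In fact the paper's own proof does \emph{not} address it: it simply notes $\det M_n(t)\in\Z[t]$ and stops, which only yields that bad $t$ are algebraic \emph{numbers}. (The example just before the theorem, where $\det M_2(t)=1259712\,t^{14}(t-1)^4(t-2)^6$, shows the determinant is far from monic.) So the paper leaves precisely the gap you identified; for its intended application (Corollary~\ref{ssT}, that $\uRep(S_T;K)$ is semisimple for a transcendental $T$) ``algebraic'' suffices, which is presumably why the authors did not press further.

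Of your two proposed upgrades, the cellular-structure route is viable: the partition algebra is cellular and the cell-module Gram determinants factor over $\Z[T]$ into linear factors $(T-a)$ with $a\in\Z_{\ge 0}$ (this is in the Martin and Doran--Wales literature), which forces every root of $D_n$ into $\Z_{\ge 0}$. Your alternative via $\Lift_t$ and Proposition~\ref{Liftprop}, however, is circular as stated: the paper's development of lifting uses Corollary~\ref{ssT}, which in turn rests on Theorem~\ref{generic}. If you want a self-contained argument, stick with the cellular/Gram-determinant route, or simply weaken the statement to ``algebraic'' as the paper effectively does.
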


\begin{proof}  We may assume $t\not=0$.  Suppose $L_1$ and $L_2$ are two indecomposable objects in $\uRep(S_t; F)$.  By Proposition \ref{proprep}(2) along with Lemma \ref{classifylemma}(1)(2) we can find a nonnegative integer $n$ and idempotents $e_1, e_2\in FP_n(t)$ so that $L_1\cong([n], e_1)$ and $L_2\cong([n], e_2)$.  Whence $\Hom_{ \uRep(S_t; F)}(L_1, L_2)=e_2FP_n(t)e_1$.  Thus, in order to show $\Hom_{ \uRep(S_t; F)}(L_1, L_2)$ is either zero or a finite dimensional division algebra over $F$, it suffices to show $FP_n(t)$ is a semisimple algebra.  Therefore, for a fixed $t\in F$, $ \uRep(S_t; F)$ is semisimple whenever $FP_n(t)$ are semisimple for all $n\geq 0$.  

Let $M_n(t)$ denote the matrix whose rows and columns are labelled by the elements of $P_{n,n}$ (in some fixed order) with the $x,y$-entry equal to $(x, y)$ (the trace form on $FP_n(t)$, see Lemma \ref{trform}).  Then the entries of $M_n(t)$ are in $\Z[t]$.  Hence $\det M_n(t)\in\Z[t]$.  It follows from Lemma \ref{trform} that $FP_n(t)$ is semisimple if and only if $\det M_n(t)\not=0$.  However, from Theorem \ref{sw}, we know that $FP_n(d)$ is semisimple for integers $d\geq 2n$.  Hence, $\det M_n(t)$ is a polynomial in $t$ which is not identically zero.
Thus, for each $n\geq0$ there are only finitely many values of $t$ for which $\det M_n(t)=0$.  Therefore there are only countably many values of $t$ for which $FP_n(t)$ is not semisimple for all $n\geq 0$.   
\end{proof}

\begin{remark} If $t\in F$ is not an algebraic integer, then by Theorem \ref{generic} we know $ \uRep(S_t; F)$ is semisimple.  However, given an arbitrary $t\in F$, 
neither Theorem \ref{generic} nor its proof allow us to determine if $\uRep(S_t; F)$ is semisimple.   As mentioned at the beginning of this section, we will eventually show that $\uRep(S_t; F)$ is semisimple if and only if $t$ is not a nonnegative integer.
\end{remark}

We close this section with one final observation.

\begin{corollary}\label{ssT}  $\uRep(S_T; K)$ is semisimple.
\end{corollary}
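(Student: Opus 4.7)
The plan is to apply Theorem \ref{generic} directly to the pair $(K, T)$; the only real content to verify is that $T \in K$ is not a root of any of the discriminant polynomials $\det M_n(x)$ from its proof --- in fact I will show that $T$ is transcendental over $\BQ$.

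First, I would unpack the construction of $K$: by definition $K = \mathrm{Frac}(F[[T-t]])$, where $T-t$ is a formal indeterminate over $F$. Thus $T-t$ is transcendental over $F$, and since $T = t + (T-t)$ with $t \in F$, the element $T \in K$ is also transcendental over $F$. Because $F$ has characteristic zero we have $\BQ \subset F$, so $T$ is transcendental over $\BQ$ as well, and in particular $T$ is not an algebraic integer.

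Next I would revisit the proof of Theorem \ref{generic}. It exhibits, for each $n \geq 0$, a polynomial $\det M_n(x) \in \BZ[x]$ such that (working over any characteristic-zero field) $FP_n(t)$ is semisimple if and only if $\det M_n(t) \neq 0$, and it shows each $\det M_n(x)$ is not identically zero (using that $FP_n(d)$ is semisimple for integers $d \geq 2n$ by Theorem \ref{sw}). Applying this to the pair $(K, T)$: since $T$ is transcendental over $\BQ$, it cannot be a root of the nonzero polynomial $\det M_n(x) \in \BZ[x] \subset K[x]$, so $KP_n(T)$ is semisimple for every $n \geq 0$.

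Finally, as in the first half of the proof of Theorem \ref{generic}, once every partition algebra $KP_n(T)$ is semisimple, the $\Hom$-space between two indecomposable objects in $\uRep(S_T; K)$ --- which by Lemma \ref{classifylemma} can always be arranged as $([n], e_1)$ and $([n], e_2)$ for a common $n$ --- is $e_2 KP_n(T) e_1$, and this is either zero or a finite-dimensional division algebra over $K$. Combined with the Krull-Schmidt property (Proposition \ref{proprep}(3)), this is precisely the statement that $\uRep(S_T; K)$ is semisimple. There is no substantial obstacle: the corollary is essentially a direct specialization of Theorem \ref{generic} to a transcendental parameter.
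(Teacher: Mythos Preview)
Your proof is correct and follows exactly the paper's approach: the paper's one-line proof is simply ``This follows from Theorem \ref{generic} as $T$ is not an algebraic integer,'' and you have merely unpacked why $T$ is transcendental over $\BQ$ and why the argument of Theorem \ref{generic} applies over the field $K$.
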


\begin{proof} This follows from Theorem \ref{generic} as $T$ is not an algebraic integer.\end{proof}


\subsection{The interpolation functor $\uRep(S_d; F)\to \Rep(S_d; F)$}\label{interpolationSt} Throughout this section we assume $d$ is a nonnegative integer.
In this section we will describe following \cite[\S 6]{Del07} how $\uRep(S_d; F)$ ``interpolates" the category $\Rep(S_d; F)$.  More precisely, we will show $\Rep(S_d; F)$ is equivalent to the quotient of $\uRep(S_t; F)$ by the so-called ``negligible morphisms."  To start, let us define the \emph{interpolation functor}.

\begin{definition}\label{Ffunc} $\cat{F}: \uRep(S_d; F)\to  \Rep(S_d; F)$ is the functor defined on indecomposable objects by $\cat{F}([n], e)=f(e)(V_d^{\otimes n})$, and on morphisms $\alpha:([n], e)\to([n'], e')$ by $\cat{F}(\alpha)=f(\alpha)$.  Here $f$ and $V_d$ are as in section \ref{motivation}.
\end{definition}

Notice that $\cat{F}$ is clearly a tensor functor.  From the discussion in section \ref{motivation} we have the following.

\begin{proposition}\label{fully}  $\cat{F}$ is surjective on objects and morphisms.
\end{proposition}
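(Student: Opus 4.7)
The plan is to prove morphism surjectivity first, using Theorem \ref{sw}(1) almost directly, and then to handle object surjectivity via an idempotent-lifting argument which is the main subtlety. By additivity of $\cat{F}$ I reduce the morphism claim to pairs of objects $A=([n],e)$, $B=([n'],e')$. Given an $S_d$-equivariant map $g\colon\cat{F}(A)\to\cat{F}(B)$, I observe that since $f(e)$ and $f(e')$ are idempotent $S_d$-equivariant endomorphisms of $V_d^{\otimes n}$ and $V_d^{\otimes n'}$, the images $\cat{F}(A)=f(e)(V_d^{\otimes n})$ and $\cat{F}(B)=f(e')(V_d^{\otimes n'})$ have canonical complements; let $\iota_A,\pi_A$ and $\iota_B,\pi_B$ denote the corresponding inclusions and projections, which satisfy $\pi_A\iota_A=\id$, $\iota_A\pi_A=f(e)$, and similarly for $B$. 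Extending $g$ to $\tilde g:=\iota_B\circ g\circ\pi_A\colon V_d^{\otimes n}\to V_d^{\otimes n'}$ and applying Theorem \ref{sw}(1) produces $x\in FP_{n,n'}$ with $f(x)=\tilde g$; then $\alpha:=e'xe\in e'FP_{n,n'}e=\Hom_{\uRep(S_d;F)}(A,B)$ satisfies $f(\alpha)=f(e')\tilde g f(e)=\iota_B g\pi_A$, which recovers $g$ after restricting the domain to $\cat{F}(A)$ and the codomain to $\cat{F}(B)$.

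For object surjectivity, I use that $\Rep(S_d;F)$ is semisimple to reduce to realizing each irreducible $L_\lambda$ up to isomorphism. By Proposition \ref{full}, $L_\lambda$ occurs as a direct summand of some $V_d^{\otimes n}$, so there is an idempotent $p\in\End_{S_d}(V_d^{\otimes n})$ with $p(V_d^{\otimes n})\cong L_\lambda$. It then suffices to lift $p$ to an idempotent $e\in FP_n(d)$ with $f(e)=p$, because then $\cat{F}([n],e)=p(V_d^{\otimes n})\cong L_\lambda$.

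The main obstacle is this lifting, because $\ker f$ is not a priori nilpotent. The resolution is that $\End_{S_d}(V_d^{\otimes n})$ is semisimple (being the endomorphism algebra of a completely reducible representation in characteristic zero), so $f$ must factor through the semisimple quotient $FP_n(d)/J$, where $J$ denotes the (nilpotent) Jacobson radical of the finite-dimensional algebra $FP_n(d)$. The induced surjection $FP_n(d)/J\twoheadrightarrow\End_{S_d}(V_d^{\otimes n})$ of semisimple algebras splits, so $p$ admits a lift to an idempotent $\bar e\in FP_n(d)/J$; then standard idempotent lifting modulo the nilpotent ideal $J$ yields the required idempotent $e\in FP_n(d)$ with $f(e)=p$.
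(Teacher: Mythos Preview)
Your proof is correct. The morphism-surjectivity argument is exactly what the paper intends by citing Theorem~\ref{sw}(1); you have simply made explicit the routine $e',e$--sandwich needed to pass from $\Hom_{S_d}(V_d^{\otimes n},V_d^{\otimes n'})$ to $\Hom_{\uRep(S_d;F)}(([n],e),([n'],e'))$.

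For object surjectivity the paper's one-line proof just cites Proposition~\ref{full}, leaving implicit precisely the point you flag: given an idempotent $p\in\End_{S_d}(V_d^{\otimes n})$ one must produce an idempotent $e\in FP_n(d)$ with $f(e)=p$, and surjectivity of $f$ alone does not guarantee this. Your resolution via the Jacobson radical is correct and standard: since the target is semisimple, $J\subseteq\ker f$, the induced surjection of semisimple algebras has kernel a direct factor so idempotents lift to $FP_n(d)/J$, and then one lifts modulo the nilpotent ideal $J$. An equivalent but slightly more elementary variant avoids $J$ altogether: pick any $\alpha\in FP_n(d)$ with $f(\alpha)=p$; since $p\neq 0,1$ forces $t(t-1)$ to divide the minimal polynomial of $\alpha$, one can choose a polynomial $q$ with $q(0)=0$, $q(1)=1$ and $q(\alpha)$ idempotent in $F[\alpha]$, whence $f(q(\alpha))=q(p)=p$. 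Either way, you have supplied the detail the paper omits.
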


\begin{proof}
This follows from Proposition \ref{full} and Theorem \ref{sw}(1).
\end{proof}

However, by Theorem \ref{sw}(2) we know that $\cat{F}$ does not induce an equivalence of categories.  To illustrate the amount by which $\cat{F}$ fails to induce an equivalence of categories we need the following definition.

\begin{definition} A morphism $f:X\to Y$ in a tensor category is called \emph{negligible} if $\tr(fg)=0$ for all $g:Y\to X$.  Set $\cat{N}(X, Y):=\{f:X\to Y~|~f\text{ is negligible}\}.$
\end{definition}

\begin{example}\label{neg} (1)  The only morphisms in $\uRep_0(S_0; F)$ which are not negligible are nonzero scalar multiples of $\id_0$ (see Example \ref{traces}(1)). 

(2) Let $\pi:[1]\to[1]$ be as in the proof of Theorem \ref{idemP}.  Then $x_\pi:[1]\to[1]$, defined by equation (\ref{xpi}), is given by $x_\pi=\pi-\id_1$.  Since $\tr(\pi)=t=\tr(\id_1)$, we have $\tr(x_\pi)=0$.  Moreover, $x_\pi\pi=(t-1)\pi$ so that $\tr(x_\pi\pi)=t(t-1)$.  We conclude that $x_\pi$ is negligible if and only if $t=0, 1$.

(3) Consider the morphism $\pi\in\Hom_{\uRep_0(S_t; F)}([1], [2])$ given by $$\includegraphics{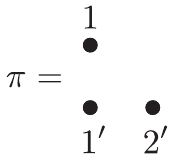}$$  From equation (\ref{xpi}) we get $x_\pi=\pi-\mu_1-\mu_2-\mu_3+2\mu_4$ where $$\includegraphics{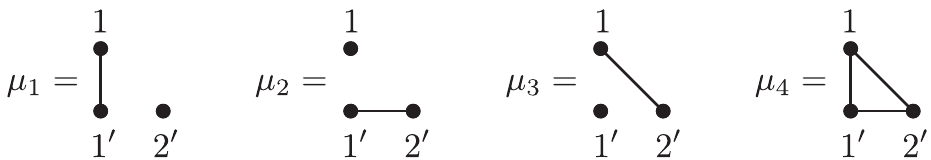}$$  If we set $$\includegraphics{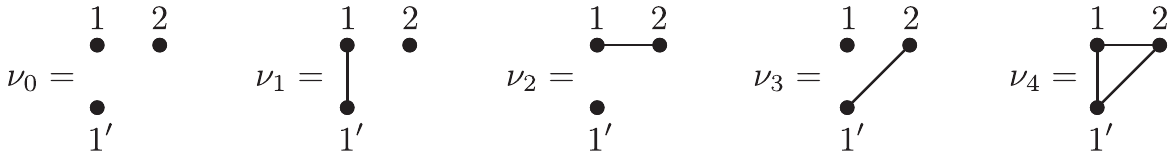}$$
then one can compute $\tr(x_\pi\nu_0)=t(t-1)(t-2)$, and $\tr(x_\pi\nu_i)=0$ for $i=1, 2, 3, 4$.  Thus $x_\pi$ is negligible if and only if $t=0, 1, 2$.\hfill$\Dox$
\end{example}

\begin{remark}  Each of the examples \ref{neg} follow from the following fact:  In $\uRep_0(S_t; F)$ $$\cat{N}([n], [m])=\left\{\begin{array}{ll}
\text{Span}_F\{x_\pi~|~\pi\in P_{n,m}\text{ has more than $t$ parts}\}, & \text{if }t\in\Z_{\geq 0}\\
0, & \text{otherwise.}
\end{array}\right.$$
For $t\in\Z_{\geq 0}$ this fact follows from Theorem \ref{sw}(2), Remark \ref{fzero}, along with the following Proposition \ref{N}(2).  For $t\not\in\Z_{\geq0}$ we will eventually show that the larger category $\uRep(S_t; F)$ is semisimple (see Corollary \ref{ss}).  It is well-known that there are no nonzero negligible morphisms in a semisimple category.
\end{remark}

\begin{proposition}\label{N} The following statements hold in any tensor category.

(1) $\cat{N}$ is a tensor ideal.

(2) The image under a full tensor functor of a morphism $f$ is negligible if and only if $f$ is negligible.
\end{proposition}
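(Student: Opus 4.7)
The plan is to verify the defining properties of a tensor ideal for $\cat{N}$ by standard manipulations with the categorical trace, and then to deduce (2) from the fact that every tensor functor preserves trace together with fullness.

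For (1), each $\cat{N}(X,Y)$ is an $F$-subspace of $\Hom(X,Y)$ because $g \mapsto \tr(\,\cdot\circ g)$ is $F$-linear. To check closure under composition on either side, take a negligible $f:X\to Y$ and arbitrary morphisms $a:Y\to V$, $b:W\to X$; for any test morphism $c:V\to W$, cyclicity of the categorical trace gives
$$\tr\bigl(c\circ a\circ f\circ b\bigr)=\tr\bigl(f\circ (b\circ c\circ a)\bigr),$$
which vanishes because $b\circ c\circ a:Y\to X$ and $f$ is negligible. Closure under tensor product with an arbitrary object $Z$ is the main point: for any $h:Y\otimes Z\to X\otimes Z$, the partial-trace identity
$$\tr\bigl((f\otimes\id_Z)\circ h\bigr)=\tr\bigl(f\circ \tr_Z(h)\bigr)$$
holds in any rigid symmetric tensor category, where $\tr_Z(h):Y\to X$ is the trace over the $Z$-factor (constructed from $\coev_Z$ and $\ev_Z$). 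Since $f$ is negligible this is zero, so $f\otimes\id_Z$ is negligible; the case $\id_Z\otimes f$ follows by inserting commutativity constraints, and then $f\otimes g=(f\otimes\id)\circ(\id\otimes g)$ is negligible for any $g$.

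For (2), let $\cat{G}:\cat{T}\to\cat{T}'$ be a full tensor functor. Compatibility of $\cat{G}$ with duals, evaluations, and coevaluations implies that $\cat{G}$ preserves the categorical trace: for every endomorphism $u$ in $\cat{T}$ the images of $\tr(u)$ and $\tr(\cat{G}(u))$ agree under the identifications $\End_{\cat{T}}(\mathbf{1})=F=\End_{\cat{T}'}(\mathbf{1})$. In particular, for every $g:Y\to X$,
$$\tr\bigl(\cat{G}(f)\circ\cat{G}(g)\bigr)=\tr\bigl(\cat{G}(f\circ g)\bigr)=\tr(f\circ g).$$
If $f$ is negligible the right hand side vanishes, and by fullness every $h:\cat{G}(Y)\to\cat{G}(X)$ is of the form $\cat{G}(g)$, so $\cat{G}(f)$ is negligible. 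Conversely, if $\cat{G}(f)$ is negligible then setting $h=\cat{G}(g)$ shows $\tr(f\circ g)=0$ for every $g:Y\to X$, so $f$ is negligible.

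The only non-routine ingredient is the partial-trace identity used in the tensor-product step of (1); everything else is bookkeeping with the trace axioms of a rigid symmetric tensor category and the definition of a tensor functor.
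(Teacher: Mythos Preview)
Your argument is correct and follows essentially the same route as the paper: cyclicity of trace handles closure under composition, the partial-trace identity handles closure under $\otimes$, and preservation of trace by tensor functors together with fullness yields (2). Your $\tr_Z(h)$ is precisely the morphism $(\id_X\otimes \ev_Z)\circ(h\otimes\id_{Z^\vee})\circ(\id_Y\otimes\coev_Z)$ that the paper writes out explicitly, so the two proofs differ only in notation and level of detail.
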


\begin{proof} Suppose $f:A\to B$, $g:B\to A$, and $h:B\otimes C\to A\otimes C$ are morphisms in a tensor category.  Observe that $\tr(f\circ g)=\tr(g\circ f)$.  Moreover, one can show $\tr((f\otimes \id_C)\circ h)=\tr(f\circ(\id_A\otimes ev_C)\circ (h\otimes\id_C)\circ(\id_B\otimes coev_C))$.  Statement (1) follows.  Statement (2) follows from the fact that a tensor functor preserves the trace of a morphism.
\end{proof}

Since there are no nonzero negligible morphisms in $\Rep(S_d; F)$, by Proposition \ref{N}(2) we conclude the functor $\cat{F}:\uRep(S_d; F)\to \Rep(S_d; F)$ sends all negligible morphisms to zero.  Thus $\cat{F}$ induces a functor $\cat{\overline{F}}:\uRep(S_d; F)/\cat{N}\to \Rep(S_d; F)$.

\begin{theorem} $\cat{\overline{F}}$ induces an equivalence of categories $\uRep(S_d; F)/\cat{N}\cong \Rep(S_d; F)$.
\end{theorem}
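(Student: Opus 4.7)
The plan is to verify the three standard criteria for $\overline{\cat{F}}$ to be an equivalence: essential surjectivity, fullness, and faithfulness. Essential surjectivity and fullness both come essentially for free from Proposition \ref{fully}, which tells us that $\cat{F}$ is already surjective on objects and morphisms; hence any object of $\Rep(S_d;F)$ arises (on the nose, not merely up to isomorphism) as $\cat{F}(X)$ for some $X$ in $\uRep(S_d;F)$, and any morphism in $\Rep(S_d;F)$ lifts to a morphism in $\uRep(S_d;F)$, which descends to a morphism in the quotient $\uRep(S_d;F)/\cat{N}$.

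The heart of the proof is faithfulness: I must show that if $\alpha\colon X\to Y$ is a morphism in $\uRep(S_d;F)$ with $\cat{F}(\alpha)=0$, then $\alpha\in\cat{N}(X,Y)$. This is exactly the content of Proposition \ref{N}(2) applied to the full tensor functor $\cat{F}$: since the zero morphism $\cat{F}(\alpha)=0$ is trivially negligible, Proposition \ref{N}(2) forces $\alpha$ itself to be negligible. I would unpack the argument briefly to make the logic transparent: for any $\beta\colon Y\to X$ in $\uRep(S_d;F)$, one has $\cat{F}(\beta\circ\alpha)=\cat{F}(\beta)\circ\cat{F}(\alpha)=0$, and because tensor functors preserve traces (mapping $\End(\mathbf{1})=F$ identically to $\End(\mathbf{1})=F$), we conclude $\tr(\beta\circ\alpha)=\tr(\cat{F}(\beta\circ\alpha))=0$. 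Thus $\alpha$ is negligible.

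Having verified the three criteria, the induced functor $\overline{\cat{F}}$ is automatically an equivalence. There is essentially no obstacle here; the whole construction has been set up precisely so that this theorem falls out of Proposition \ref{fully} and Proposition \ref{N}(2). If anything deserves a sentence of care, it is the observation that $\cat{F}$ is a genuine $F$-linear tensor functor so that its restriction to $\End(\mathbf{1})=F$ is the identity, which is what legitimizes the identification $\tr(\beta\circ\alpha)=\tr(\cat{F}(\beta\circ\alpha))$ used in the faithfulness step.
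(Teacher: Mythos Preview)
Your proof is correct and follows exactly the same approach as the paper, which simply cites Proposition~\ref{fully} and Proposition~\ref{N}(2) in a one-line proof. Your elaboration of the faithfulness step (unpacking the trace-preservation argument) is accurate and more detailed than the paper's version, but the underlying logic is identical.
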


\begin{proof} This follows from Proposition \ref{fully} and Proposition \ref{N}(2).
\end{proof}

We finish this section with the following proposition concerning the functor $\cat{F}$.  For proof of the proposition we refer the reader to \cite[Proposition 6.4]{Del07}\footnote{Deligne assumes $d\geq 2|\lambda|$ in \cite[Proposition 6.4]{Del07}.  If that assumption is removed, Deligne's proof will show Proposition \ref{Findy}.}.

\begin{proposition}\label{Findy}  Suppose $d$ is a nonnegative integer and $\lambda=(\lambda_1, \lambda_2, \ldots)$ is a Young diagram. 
If $d-|\lambda|\geq \lambda_1$, then $\cat{F}(L(\lambda))=L_{\lambda(d)}$.  If $d-|\lambda|<\lambda_1$, then $\cat{F}(L(\lambda))=0$.

\end{proposition}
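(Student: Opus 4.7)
\emph{Proof strategy.} The plan splits into two stages: first show $\cat{F}(L(\lambda))$ is simple or zero in $\Rep(S_d; F)$; then identify it and determine when it vanishes.

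Write $L(\lambda) = ([n], e_\lambda)$ with $n = |\lambda|$ and $e_\lambda \in FP_n(d)$ a primitive idempotent corresponding to $\lambda$ (Theorem \ref{idemP}), so that $\cat{F}(L(\lambda)) = f(e_\lambda)(V_d^{\otimes n})$. The map $f : FP_n(d) \to \End_{S_d}(V_d^{\otimes n})$ is a surjective homomorphism of finite-dimensional algebras (Theorem \ref{sw}(1)), and the image of a primitive idempotent under such a surjection is either zero or primitive: the local corner $e_\lambda FP_n(d) e_\lambda$ surjects onto $f(e_\lambda)\End_{S_d}(V_d^{\otimes n})f(e_\lambda)$, which must remain local if non-trivial (a surjective image of a local ring is local or zero). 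Hence $\cat{F}(L(\lambda))$ is zero or indecomposable, and by semisimplicity of $\Rep(S_d; F)$ it is zero or simple.

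For the identification, I would adapt Deligne's argument from \cite[Prop.~6.4]{Del07}. Via the embedding $FS_n \hookrightarrow FP_n(d)$ (Remark \ref{Snembedding}), a Young symmetrizer $c_\lambda \in FS_n$ acting on $V_d^{\otimes n}$ generates a submodule which, by iterated branching using $V_d \cong \Ind_{S_{d-1}}^{S_d} F$ (or by Schur--Weyl duality for the permutation representation), contains an identifiable copy of $L_{\lambda(d)}$ precisely when $d - |\lambda| \geq \lambda_1$. Since the image of $e_\lambda$ in $FP_n(d)/(\zeta) \cong FS_n$ is a primitive idempotent for $\lambda$ (Remark \ref{zeta}), $f(e_\lambda)$ selects the same summand, giving $\cat{F}(L(\lambda)) = L_{\lambda(d)}$ in this range. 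For the vanishing regime $d - |\lambda| < \lambda_1$, I would use Theorem \ref{sw}(2), that $\ker f$ is spanned by $\{x_\pi : \pi\text{ has more than }d\text{ parts}\}$, and show via the recursive construction of $e_\lambda$ from Lemma \ref{classifylemma} that $e_\lambda$ lies in this kernel (equivalently, some primitive-idempotent representative of the conjugacy class is supported on partitions with more than $d$ parts).

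The main obstacle is extending Deligne's range $d \geq 2|\lambda|$ to all $d$. A naive dimension-polynomial approach fails here: by Proposition \ref{Liftprop}(3), $\Lift_d L(\lambda)$ may acquire extra summands in the generic semisimple category $\uRep(S_T; K)$, so $\dim_F L(\lambda)$ in $\uRep(S_d; F)$ need not match a single polynomial evaluation of $\dim L_{\lambda(T)}$ at $T=d$. The extension therefore requires a structural argument---combinatorially verifying that $f(e_\lambda)$ selects the $L_{\lambda(d)}$-summand in the non-vanishing range, and that $e_\lambda \in \ker f$ in the vanishing range---which is the delicate step where Deligne's original assumption $d \geq 2|\lambda|$ (under which $f$ is an isomorphism by Theorem \ref{sw}) simplified matters and must be replaced by direct diagrammatic bookkeeping in $FP_n(d)$.
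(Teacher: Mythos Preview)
Your approach tracks the paper's closely: the paper does not give its own proof but refers to \cite[Proposition 6.4]{Del07}, noting in a footnote that Deligne's argument already works without the hypothesis $d\geq 2|\lambda|$. Your step~1 (image of a primitive idempotent under a surjection is primitive or zero, hence $\cat{F}(L(\lambda))$ is simple or zero) is correct and is precisely the reduction Deligne uses; your step~2 (identify the simple via the Young symmetrizer and the retraction $FP_n(d)\to FS_n$) is Deligne's identification.

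Where you diverge is in treating the range $d<2|\lambda|$ as a genuine obstacle requiring a separate diagrammatic argument. The paper's point is that this is not needed: Deligne's identification computes the $S_d$-module $c_\lambda V_d^{\otimes n}$ for the Young symmetrizer $c_\lambda$, and that classical computation is valid for all $d$, yielding $L_{\lambda(d)}$ plus summands coming from $\cat{F}(L(\mu))$ with $|\mu|<|\lambda|$ when $\lambda(d)$ is a partition, and only the latter summands when it is not. Since you already know $\cat{F}(L(\lambda))$ is simple or zero and that the smaller $\cat{F}(L(\mu))$ are accounted for by induction on $|\lambda|$, both the identification and the vanishing fall out at once. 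The assumption $d\geq 2|\lambda|$ in Deligne's statement only served to make $f$ bijective (so the idempotent is unique), not to make the module computation work.

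Your proposed vanishing argument, by contrast, has a real gap. You suggest showing that some conjugate of $e_\lambda$ is ``supported on partitions with more than $d$ parts,'' but $\ker f$ is spanned by the $x_\pi$ with $\pi$ having more than $d$ parts, not by the $\pi$ themselves, and there is no evident reason a primitive idempotent representative should lie in that span. The recursive construction in Lemma~\ref{classifylemma} is not explicit enough to read off such support. Drop this line and let the uniform Schur--Weyl computation of $c_\lambda V_d^{\otimes n}$ do the work in both regimes, as Deligne does.
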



\subsection{Dimensions}\label{dim} In this section we study a hook length formula which gives the dimension of indecomposable objects in $\uRep(S_T; K)$.    Let us start by defining the hook length formula.

\begin{definition}\label{hookpoly}  The \emph{hook length} of a fixed box in a Young diagram $\lambda$ is the number of boxes in $\lambda$ which are either directly below or directly to the right of the fixed box, counting the fixed box itself once.
 Given a Young diagram $\lambda$ let $P_\lambda$ denote the unique polynomial such that $$P_\lambda(d)=\frac{d!}{\prod(\text{hook lengths of }\lambda(d))}$$ for every integer $d\geq2|\lambda|$.
\end{definition}

\begin{example}  Let $\lambda=\includegraphics[bb= 0 4 16 1]{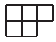}$ and suppose $d\geq 10$ is an integer.  In the following picture each box of $\lambda(d)$ is labeled by its hook length.  
$$\includegraphics{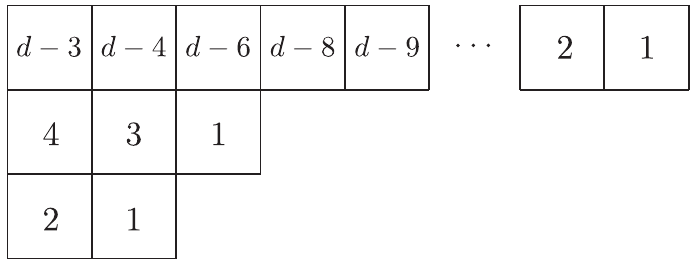}$$
Thus $P_\lambda(d)=\frac{d!}{24(d-3)(d-4)(d-6)(d-8)!}=\frac{1}{24}d(d-1)(d-2)(d-5)(d-7)$.\hfill$\Dox$
\end{example}

First, we show how $P_\lambda$ is related to the indecomposable object in $\uRep(S_T; K)$ corresponding to $\lambda$.

\begin{proposition}\label{gendimen} $\dim_{\uRep(S_T; K)}L(\lambda)=P_\lambda(T)$ for any Young diagram $\lambda$.
\end{proposition}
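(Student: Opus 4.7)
My plan is to reduce to the case of an honest symmetric group $S_d$ at infinitely many integer values $T=d$, where the dimension is controlled by the classical hook length formula, and then invoke the rigidity of rational functions that vanish at infinitely many points.

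First I would choose a primitive idempotent $\ep \in F(T)P_{|\lambda|}(T)$ (not merely in $KP_{|\lambda|}(T)$) corresponding to $\lambda$. To see that such an $\ep$ exists, note that the trace-form argument in the proof of Theorem \ref{generic} applies verbatim over the field $F(T)$: the polynomial $\det M_n(T) \in \Z[T]$ is nonzero (since $FP_n(d)$ is semisimple for large integer $d$), so it is a unit in $F(T)$, whence $F(T)P_n(T)$ is a finite-dimensional semisimple $F(T)$-algebra. Its primitive idempotents are classified exactly as in Theorem \ref{idemP} via reduction modulo $(\zeta)$ to $F(T)S_n$ (using Lemma \ref{ees} and Lemma \ref{lem}), so a representative $\ep$ of the $\lambda$-class exists. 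By Proposition \ref{fieldex}, $\ep$ remains primitive in $KP_{|\lambda|}(T)$, hence $L(\lambda) \cong ([|\lambda|],\ep)$ in $\uRep(S_T;K)$, and
\[
Q(T) \;:=\; \dim_{\uRep(S_T;K)} L(\lambda) \;=\; \tr(\ep) \;\in\; F(T).
\]

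Next I would specialize at large integers. Write $\ep = \sum_\pi a_\pi(T)\pi$ with $a_\pi \in F(T)$, and let $Q \subset F$ be the finite set of poles of the $a_\pi$. For any integer $d \geq 2|\lambda|$ with $d \notin Q$, the specialization $\ep|_{T=d} \in FP_{|\lambda|}(d)$ is a well-defined idempotent. Its image under the quotient $FP_{|\lambda|}(d) \to FS_{|\lambda|}$ is obtained from the image of $\ep$ in $F(T)S_{|\lambda|}$ by the same specialization; since irreducible representations of $S_{|\lambda|}$ are absolutely irreducible, this image remains a primitive idempotent of $FS_{|\lambda|}$ labeled by $\lambda$. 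By Remark \ref{zeta}, $\ep|_{T=d}$ is therefore a primitive idempotent of $FP_{|\lambda|}(d)$ corresponding to $\lambda$, so $([|\lambda|], \ep|_{T=d}) \cong L(\lambda)$ in $\uRep(S_d;F)$. Applying the interpolation functor $\cat{F}$ of Definition \ref{Ffunc} and invoking Proposition \ref{Findy} (whose hypothesis $d-|\lambda| \geq \lambda_1$ holds because $d \geq 2|\lambda|$), we obtain $\cat{F}(L(\lambda)) = L_{\lambda(d)}$. Since $\cat{F}$ is a tensor functor it preserves traces of identities, so
\[
Q(d) \;=\; \tr(\ep|_{T=d}) \;=\; \dim_{\uRep(S_d;F)} L(\lambda) \;=\; \dim L_{\lambda(d)} \;=\; P_\lambda(d),
\]
the last equality being the classical hook length formula (Definition \ref{hookpoly}).

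Finally, $Q(T)$ and $P_\lambda(T)$ are elements of $F(T)$ that agree at every integer $d \geq 2|\lambda|$ outside the finite exceptional set, hence at infinitely many points of $F$; since $F$ has characteristic zero, this forces $Q(T) = P_\lambda(T)$ in $F(T)$, and a fortiori in $K$. The main technical hurdle is the first step, choosing the idempotent over $F(T)$ and controlling its specialization at $T=d$ so that primitivity and the label $\lambda$ are preserved; once that is in hand the comparison with genuine symmetric groups via $\cat{F}$ and the hook length formula is routine.
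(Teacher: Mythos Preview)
Your approach is essentially the same as the paper's: the paper packages your direct specialization argument into Proposition~\ref{Liftprop}(2) and~(4), but the underlying idea---choose $\ep$ over $F(T)$, observe $\tr(\ep)\in F(T)$, specialize at large integers $d$, apply $\cat{F}$ and the hook length formula, then conclude by agreement at infinitely many points---is identical.

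One small gap worth flagging: your appeal to Remark~\ref{zeta} does not establish that $\ep|_{T=d}$ is \emph{primitive} in $FP_{|\lambda|}(d)$; that remark only tells you the label of an idempotent already known to be primitive, given its image in $FS_{|\lambda|}$. Knowing that the image is primitive does not by itself rule out $\ep|_{T=d}=e_1+e_2$ with $e_2\in(\zeta)$ nonzero. The correct justification (which the paper states as ``easy to show'' in its proof of Proposition~\ref{Liftprop}(4)) is that any orthogonal decomposition of $\ep|_{T=d}$ in $FP_{|\lambda|}(d)$ would lift via Theorem~\ref{appenlift} to an orthogonal decomposition of $\ep$ in $KP_{|\lambda|}(T)$, contradicting primitivity of $\ep$ there. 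Once this is in place your argument goes through unchanged.
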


\begin{proof}  Fix a Young diagram $\lambda$.  Let $K'=F(T)$ and suppose $\ep\in K'P_{|\lambda|}(T)$ is a primitive idempotent with $L(\lambda)=([|\lambda|], \ep)$ in $\uRep(S_T; K')$.  Applying Proposition \ref{fieldex} to the field extension $K'\subset K$ shows $L(\lambda)=([|\lambda|], \ep)$ in $\uRep(S_T; K)$ too.  Thus  $\dim_{\uRep(S_T; K)}L(\lambda)=\tr(\ep)$ is a rational function in $T$.   Now, by Proposition \ref{Liftprop}(4) and Proposition \ref{Findy} we can find an integer $N$ so that $\Lift_d(L(\lambda))=L(\lambda)$ and $\cat{F}:\uRep(S_d; F)\to \Rep(S_d; F)$ has $\cat{F}(L(\lambda))=L_{\lambda(d)}$ for all integers $d>N$.  Hence, if $d>N$ is an integer, we have $$\begin{array}{rll}
P_\lambda(d)\hspace{-.1in} & = \dim_{\Rep(S_d; F)}L_{\lambda(d)} & \text{(see e.g. \cite[4.12]{MR1153249})}\\
 & =\dim_{\uRep(S_d; F)}L(\lambda) & \text{(since tensor functors preserve dimension)}\\
 & =(\dim_{\uRep(S_T; K)}L(\lambda))|_{T=d}  & \text{(by Proposition \ref{Liftprop}(2))}
\end{array}$$
Hence, $\dim_{\uRep(S_T; K)}L(\lambda)$ (a rational function in $T$) agrees with $P_\lambda(T)$ (a polynomial in $T$) for infinitely many values of $T$ .  Hence they must always agree.
\end{proof}

Next, we wish to determine the roots of $P_\lambda$.  The following combinatorics will be useful toward that endeavor.  

\begin{definition} Given a Young diagram $\lambda$ and an integer $d\geq2|\lambda|$, create the \emph{$(\lambda, d)$ grid marking} as follows: 
Start with a grid of $(|\lambda|+1)\times (d-|\lambda|)$ black boxes.  Place the Young diagram $\lambda(d)$ (with white boxes) atop the grid so that the upper left corner of $\lambda(d)$ is atop the upper left corner of the grid.  Now place the numbers $0,\ldots, d-1$ into the boxes of the grid using the following rules:
 \begin{itemize}
\item Begin by placing the number 0 in the lower left box of the grid.
\item If the number $i$ is in a black box, place $i+1$ into the box directly above $i$.
\item If the number $i$ is in a white box, place $i+1$ into the box directly to the right of $i$.
\end{itemize} Label the rows of the grid $0,\ldots, |\lambda|$ (from top to bottom) and the columns of the grid $1,\ldots, d-|\lambda|$ (from left to right).  
\end{definition}

\begin{example} Set $\lambda=(4, 3, 1, 1, 0,\ldots)$ and $d=25$.  Below is the $\lambda$-marking of the $10\times 16$ grid. $$~\hspace{.75in}\includegraphics{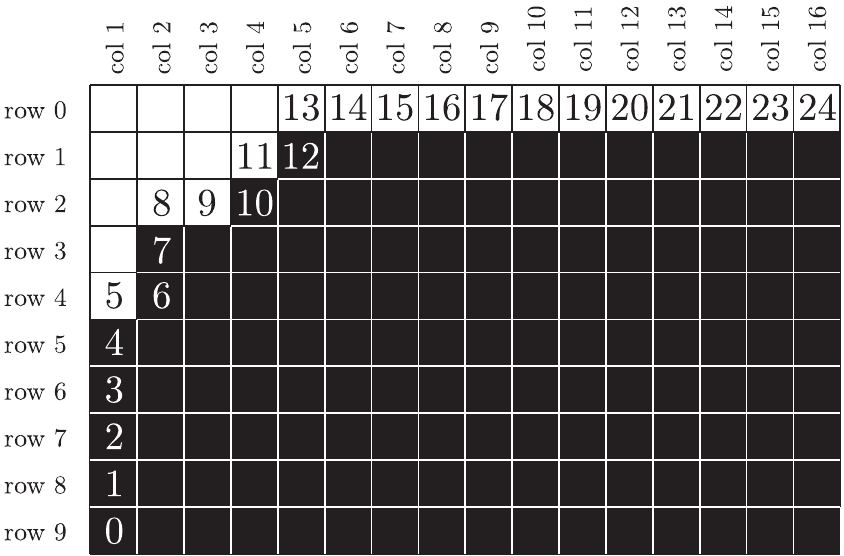}\hspace{.7in}\Dox$$
\end{example}

The following proposition records the properties of the $(\lambda, d)$ grid marking which will be useful for determining the roots of $P_\lambda$.

\begin{proposition}\label{grid} The $(\lambda, d)$ grid marking has the following properties:
\begin{enumerate}
\item[(1)] If $\includegraphics[bb= 0 4 12 1]{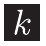}$ appears in the $i$th row, then $k=|\lambda|+\lambda_i-i$.
\item[(2)] If $\includegraphics[bb= 0 4 12 1]{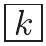}$ appears in the $i$th column, then $d-k$ is the hook length of the row $0$, column $i$ box in $\lambda(d)$.
\end{enumerate}
\end{proposition}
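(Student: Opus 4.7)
The plan is to recognize that the filling rule traces out a monotone lattice path in the grid---at each step one moves one cell up (through a black cell) or one cell right (through a white cell)---so the labels $0,1,\ldots,d-1$ are placed along a single staircase path starting at the lower-left cell $(|\lambda|,1)$ and ending at the upper-right cell $(0,d-|\lambda|)$. The first key step is the label formula: for any cell $(r,c)$ lying on this path, reaching $(r,c)$ from the starting cell requires exactly $|\lambda|-r$ upward moves and $c-1$ rightward moves, so its label is
\begin{equation}\label{eq:pathlabelplan}
k=(|\lambda|-r)+(c-1).
\end{equation}
Both parts of the proposition then follow by identifying which cell in the grid the picture refers to.

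For~(1), the picture shows the leftmost black cell in row $i$, sitting at $(i,\lambda_i+1)$: columns $1,\ldots,\lambda_i$ are white since they form the $i$-th row of $\lambda(d)$, while column $\lambda_i+1$ lies outside $\lambda(d)$. I would verify that this cell lies on the path by induction on $|\lambda|-i$: assuming the path exits row $i+1$ upward from $(i+1,\lambda_{i+1}+1)$, it enters row $i$ at column $\lambda_{i+1}+1$, traverses the (possibly empty) stretch of white cells at columns $\lambda_{i+1}+1,\ldots,\lambda_i$, and turns upward at the black cell $(i,\lambda_i+1)$. Substituting $c=\lambda_i+1$ into \eqref{eq:pathlabelplan} gives $k=|\lambda|+\lambda_i-i$.

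For~(2), the picture dually shows the topmost cell of the path in column $c$. Writing $\lambda'_c:=\#\{j\geq 1:\lambda_j\geq c\}$, I would show this cell is $(\lambda'_c,c)$: ascending column $c$, each cell $(r,c)$ with $\lambda'_c<r\leq |\lambda|$ is black since $\lambda_r<c$, whereas $(\lambda'_c,c)$ is white---either because $\lambda_{\lambda'_c}\geq c$ when $\lambda'_c\geq 1$, or because it lies in the all-white row $0$ of $\lambda(d)$ when $\lambda'_c=0$. Substituting $r=\lambda'_c$ into \eqref{eq:pathlabelplan} gives $k=|\lambda|-\lambda'_c+c-1$, so
\[
d-k=(d-|\lambda|-c)+\lambda'_c+1.
\]
The three summands count, respectively, the cells of $\lambda(d)$ strictly right of $(0,c)$ in row $0$, the cells of $\lambda(d)$ strictly below $(0,c)$ in column $c$ (those at rows $1,\ldots,\lambda'_c$), and the cell $(0,c)$ itself; their sum is precisely the hook length of $(0,c)$ in $\lambda(d)$.

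The main obstacle is the careful bookkeeping needed to describe the path and carry out the inductive step---in particular, keeping straight the distinction between the path entering a row and exiting it, and handling the boundary case $\lambda'_c=0$ in which the path exits into row $0$. Once the path's trajectory is in hand, both assertions reduce to substitutions into \eqref{eq:pathlabelplan} followed by an elementary combinatorial count.
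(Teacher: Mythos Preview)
Your proposal is correct and follows essentially the same approach as the paper: both identify the relevant cell's coordinates (the leftmost black cell $(i,\lambda_i+1)$ in row $i$ for part (1), the cell $(c_i,i)$ in column $i$ for part (2), where $c_i=\lambda'_i$ is the $i$th column length of $\lambda$) and then compute the label $k$ by counting the up/right moves along the staircase path from the lower-left corner. Your write-up is more detailed—you give an explicit inductive verification that the path actually visits the claimed cells and you handle the boundary case $\lambda'_c=0$—whereas the paper simply asserts the cell's location and computes, but the argument is the same.
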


\begin{proof} (1)  If $\includegraphics[bb= 0 4 12 1]{diss52.pdf}$ appears in the $i$th row, then $\includegraphics[bb= 0 4 12 1]{diss52.pdf}$ must be in column $\mu_i+1$.  Hence $k$ is the number of up/right moves it takes to get from the lower left corner to the upper right corner in a $(|\lambda|-i)\times(\lambda_i+1)$ grid.  The result follows.

(2)  Let $c_i$ denote the number of boxes in the $i$th column of $\lambda$.  Notice there are $c_i$ boxes below, and $d-|\lambda|-i$ boxes to the right of the row $0$, column $i$ box in $\lambda(d)$.  Hence the hook length of that box is $d-|\lambda|-i+c_i+1$.  On the other hand, if $\includegraphics[bb= 0 4 12 1]{diss53.pdf}$ appears in the $i$th column, then $\includegraphics[bb= 0 4 12 1]{diss53.pdf}$ must be in row $c_i$.  Hence $k$ is the number of up/right moves it takes to get from the lower left corner to the upper right corner in a $(|\lambda|-c_i)\times i$ grid.  Thus $k=|\lambda|-c_i+i-1$.
\end{proof}

Using Proposition \ref{grid}, we can determine all the roots of the polynomial $P_\lambda$.  

\begin{proposition}\label{roots} $P_\lambda$ is a degree $|\lambda|$ polynomial with $|\lambda|$ distinct, integer roots given by $|\lambda|+\lambda_i-i$ for each $i=1,\ldots,|\lambda|$.
\end{proposition}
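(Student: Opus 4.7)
The plan is to combine Proposition~\ref{grid} with an elementary factorization of the hook product of $\lambda(d)$ to write $P_\lambda(d)$ as an explicit product of $|\lambda|$ linear factors in $d$. The key step is a combinatorial partition of the path labels: the distinguished ``row boxes'' $(\text{row }i,\text{col }\lambda_i+1)$ of Proposition~\ref{grid}(1), for $i=1,\ldots,|\lambda|$, together with the distinguished ``column boxes'' $(\text{row }c_j,\text{col }j)$ of Proposition~\ref{grid}(2), for $j=1,\ldots,d-|\lambda|$ (with $c_j$ the number of parts of $\lambda$ that are $\geq j$, so $c_j=0$ when $j>\lambda_1$), should account for all $d$ cells visited by the path.

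To establish this I would first verify that each row box lies on the path (it is the cell where the path turns from right-to-up in row $i$ when $\lambda_i>0$, or simply the single visited cell of row $i$ when $\lambda_i=0$) and that each column box lies on the path (it is the cell where the path turns from up-to-right in column $j$, or the sole row-$0$ cell in col $j$ when $j>\lambda_1$). Disjointness is almost automatic: if $(\text{row }c_j,\text{col }j)=(\text{row }i,\text{col }\lambda_i+1)$ with $i\geq 1$, then $i=c_j$ and $j=\lambda_i+1$, which would force $\lambda_i=\lambda_{c_j}\geq j=\lambda_i+1$, absurd. Producing $|\lambda|+(d-|\lambda|)=d$ pairwise distinct cells on a path of length $d$ forces them to fill the path, and their labels therefore partition
\[
\{0,1,\ldots,d-1\}=\{|\lambda|+\lambda_i-i\,:\,1\leq i\leq|\lambda|\}\,\sqcup\,\{k_1,\ldots,k_{d-|\lambda|}\},
\]
where $k_j$ is the label of the column box in col $j$.

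With this in hand the rest is algebra. The non-top-row boxes of $\lambda(d)$ have exactly the same hook lengths as the corresponding boxes of $\lambda$ (adding the top row contributes no new cells to arm or leg), so their hook product is the constant $H_\lambda:=\prod(\text{hook lengths of }\lambda)$. By Proposition~\ref{grid}(2) the row $0$ hook in column $j$ equals $d-k_j$, and the partition of labels above gives
\[
\prod_{j=1}^{d-|\lambda|}(d-k_j)=\frac{\prod_{k=0}^{d-1}(d-k)}{\prod_{i=1}^{|\lambda|}\bigl(d-(|\lambda|+\lambda_i-i)\bigr)}=\frac{d!}{\prod_{i=1}^{|\lambda|}\bigl(d-(|\lambda|+\lambda_i-i)\bigr)}.
\]
Substituting into Definition~\ref{hookpoly} yields
\[
P_\lambda(d)=\frac{1}{H_\lambda}\prod_{i=1}^{|\lambda|}\bigl(d-(|\lambda|+\lambda_i-i)\bigr),
\]
a polynomial of degree $|\lambda|$ with the asserted roots. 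Distinctness and integrality follow from $(|\lambda|+\lambda_i-i)-(|\lambda|+\lambda_{i+1}-(i+1))=\lambda_i-\lambda_{i+1}+1\geq 1$. The only real work is the first step, namely the path-tracing argument confirming both the visitation and disjointness of the two families of distinguished cells; once that bookkeeping is done, the rest of the proof is immediate.
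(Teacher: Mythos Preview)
Your proposal is correct and follows essentially the same route as the paper: both arguments use the $(\lambda,d)$ grid marking to partition the labels $\{0,\ldots,d-1\}$ into white-cell labels (which, by Proposition~\ref{grid}(2), correspond to top-row hooks and cancel against $d!$) and black-cell labels (which, by Proposition~\ref{grid}(1), give the surviving linear factors $d-(|\lambda|+\lambda_i-i)$). Your write-up is simply more explicit about the bookkeeping---verifying that the row boxes and column boxes are disjoint, lie on the path, and together exhaust it, and recording the resulting closed formula $P_\lambda(d)=H_\lambda^{-1}\prod_i(d-(|\lambda|+\lambda_i-i))$---whereas the paper compresses this into the single sentence ``the roots are the $k$ with $d-k$ not a top-row hook.''
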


\begin{proof} Suppose $d$ is an integer with $d>2|\lambda|$.  It follows from Definition \ref{hookpoly} that the roots of $P_\lambda$ are exactly the integers $0\leq k<d$ such that $d-k$ is not a hook length of a box in the top row of $\lambda(d)$.  By Proposition \ref{grid}(2), those are exactly the values of $k$ for which $\includegraphics[bb= 0 4 12 1]{diss52.pdf}$ appears in the $(\lambda, d)$ grid marking.  The result now follows from Proposition \ref{grid}(1).
\end{proof}


\section{Endomorphisms of the identity functor} \label{endId}

In this section we study endomorphisms of the identity functor on $\uRep_0(S_t; F)$ constructed using certain central elements in group algebras of symmetric groups.  These endomorphisms of the identity functor will play a key role in describing the blocks in $\uRep(S_t; F)$.  This role is analogous to the role the Casimir element plays in Lie theory.


\subsection{Interpolating sums of $r$-cycles} In this section we define morphisms in $\uRep_0(S_t; F)$ which ``interpolate" the action of the sum of all $r$-cycles on representations of symmetric groups.  To begin, let $r$ and $d$ be positive integers with $r\leq d$. 
\begin{definition}\label{sums} Let $\Omega_{r, d}\in FS_d$ denote the sum of all $r$-cycles in $S_d$.\end{definition}  
Since $\Omega_{r, d}$ is in the center of $FS_d$, the action of $\Omega_{r, d}$ on $V_d^{\otimes n}$ gives an element of $\End_{S_d}(V_d^{\otimes n})$ for each integer $n\geq 0$.  
This, along with Theorem \ref{sw}, shows the following definition is valid.

\begin{definition}\label{Cint} For nonnegative integers $r, n,$ and $d$ with $r\leq d$ and $2n\leq d$, let $C_n^{r}( d)$ denote the unique element of $FP_n(d)$ with $f(C_n^r(d))\in\End_{S_d}(V_d^{\otimes n})$ given by the action of $\Omega_{r, d}$.
\end{definition}

The first goal of this section is to define elements of $FP_n(t)$ for arbitrary $t\in F$ which agree with the definition of $C_n^r(t)$ when $t$ is a sufficiently large integer.  We are able to do this because, as we will show, $C_n^r(d)$ depends polynomially on $d$.  The fact that $C_n^r(d)$ depends polynomially on $d$ boils down to the following combinatorial proposition.

\begin{proposition}\label{rcycs} Suppose $n$ is a nonnegative integer and $\pi\in P_{n, n}$.  Fix the following notation.
\begin{itemize}
\item Let $a$ denote the number of parts of $\pi$.

\item Let $b$ denote the number of parts $\pi_k$ of $\pi$ such that $j, j'\in \pi_k$ for some $1\leq j\leq n$.

\item Let $c$ denote the number of connected components in the trace diagram of $\pi$ (see section \ref{trace}).

\item Suppose $r$ and $d$ are positive integers with $d\geq r$, and ${\bd{i}}, {\bd{i}}'\in[n, d]$ are such that the $({\bd{i}}, {\bd{i}}')$-coloring of $\pi$ is perfect.  Let $S(\pi, r, d)$ denote the number of $r$-cycles, $\sigma\in S_d$, such that $\sigma(i_j)=i'_j$ for all $1\leq j\leq n$.

\end{itemize}
Fix a positive integer $r$.  

(1) If $S(\pi, r, d)\not=0$ for some integer $d\geq r$, then $S(\pi, r, d')\not=0$ for all $d'\geq d$.  

(2) If $S(\pi, r, d)$ is nonzero, then \begin{equation}\label{rcycles}S(\pi, r, d)=\frac{(r-a+c-1)!}{(r-a+b)!}\prod_{k=1}^{r+b-a}(d-r-b+k).\footnote{If $r=a-b$ then the empty product $\prod\limits_{k=1}^{r+b-a}(d-r-b+k)$ is equal to 1, and (\ref{rcycles}) gives $S(\pi, r, d)=(c-b-1)!$.  If $r<a-b$ or $r\leq a-c$ then $S(\pi, r, d)$ is zero for all $d$, so (\ref{rcycles}) does not apply.}\end{equation}
\end{proposition}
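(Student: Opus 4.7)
The plan is to enumerate $S(\pi, r, d)$ by unraveling the trace-closure structure of $\pi$. Fix a perfect coloring, which amounts to choosing distinct colors $c_p \in \{1,\ldots, d\}$ for each of the $a$ parts $p$ of $\pi$. Writing $T(j)$ (resp.\ $B(j)$) for the part of $\pi$ containing top vertex $j$ (resp.\ bottom vertex $j'$), the condition $\sigma(i_j)=i'_j$ translates to $\sigma(c_{T(j)})=c_{B(j)}$ for every $j$. These equations force a partial function $\psi$ on parts sending $T(j)\mapsto B(j)$, which must in fact be a bijection between the parts with at least one top vertex and those with at least one bottom vertex for any solution $\sigma$ to exist. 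The $b$ parts in the proposition are precisely the fixed points of $\psi$; they are the self-loops of the trace-closure graph whose vertices are the parts of $\pi$ and whose edges are $T(j)$--$B(j)$ for $j=1,\ldots,n$. The remaining connected components decompose into $m$ cycles of length $\geq 2$ among mixed non-$b$ parts together with $t$ chains running from pure-top to pure-bottom parts, giving $c=b+m+t$ and $a=b+m_0+2t$, where $m_0$ counts mixed non-$b$ parts.

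Because an $r$-cycle has a unique non-trivial cycle, a solution $\sigma$ can exist in only one of two scenarios: (A) $m=0$, so every non-$b$ part lies on some chain, and the $r$-cycle of $\sigma$ is built by splicing the $t$ chains together with $r+b-a$ free elements drawn from $\{1,\ldots,d\}\setminus\{c_1,\ldots,c_a\}$; or (B) $m=1$ and $t=0$, so the lone trace-closure cycle has length $r=a-b$ and is itself the $r$-cycle of $\sigma$ (with $\sigma$ fixing the $b$ forced colors). Case (B) determines $\sigma$ uniquely, so $S=1$; this matches (\ref{rcycles}) because then $c-b-1=0$ and $r+b-a=0$.

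For case (A) the count proceeds in two steps: first choose the $r+b-a$ free elements in $\binom{d-a}{r+b-a}$ ways, then arrange the $t$ chains (each treated as an indivisible ordered block with its forced direction) together with the $r+b-a$ free singletons into a single cyclic sequence in $(t+r+b-a-1)!$ ways. Multiplying, substituting $t=c-b$, and rewriting $\binom{d-a}{r+b-a}(r+b-a)!$ as $\prod_{k=1}^{r+b-a}(d-r-b+k)$ produces exactly the right-hand side of (\ref{rcycles}), proving part~(2). Part~(1) then falls out: the product $\prod_{k=1}^{r+b-a}(d-r-b+k)$ is nonzero precisely when $d\geq r+b$, a monotone condition in $d$, and in either scenario (A) or (B) realizing $S>0$ requires $d\geq r+b$. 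The main subtlety will be a clean justification of the cyclic arrangement count $(t+r+b-a-1)!$ --- a standard ``necklace with labeled rigid blocks'' application --- and a careful verification that configurations mixing a genuine trace-closure cycle with any chain cannot contribute, since any such $\sigma$ would possess at least two non-trivial cycles.
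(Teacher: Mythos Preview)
Your proposal is correct and follows essentially the same route as the paper's proof: both identify the partial bijection on parts induced by $T(j)\mapsto B(j)$, recognize the $b$ parts as its fixed points, and count $r$-cycles by cyclically arranging the $c-b$ non-singleton blocks together with $r-a+b$ freely chosen elements of $\{1,\ldots,d\}\setminus\{c_1,\ldots,c_a\}$, yielding $(r-a+c-1)!\binom{d-a}{r-a+b}$. Your explicit case split into (A) $m=0$ and (B) $m=1,\,t=0$ is a genuine clarification over the paper, which treats all non-singleton equivalence classes uniformly as ``blocks'' without separating chains from cycles; the paper's terser argument remains correct only because the hypothesis $S(\pi,r,d)\neq 0$ forces either all blocks to be chains or a single cycle of length exactly $r$, in which case its count degenerates to $0!\cdot\binom{d-a}{0}=1$, but this is left implicit. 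Your version makes that degeneration transparent and also verifies directly that a mixture of a trace-closure cycle with any chain is impossible for an $r$-cycle $\sigma$.
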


\begin{proof}  Part (1) is clear.  To prove part (2), since $S(\pi, r, d)$ does not depend on the choice of perfect coloring of $\pi$, we may assume $\{i_j, i_j'~|~j=1,\ldots, n\}=\{1,\ldots, a\}$.  For $x, y\in\{1,\ldots, a\}$ write $x\to y$ if $x=i_j$ and $y=i_j'$ for some $1\leq j\leq n$.  Generate the weakest equivalence relation on $\{1,\ldots, a\}$ such that $x$ and $y$ are equivalent whenever $x\to y$.  Notice that the equivalence classes correspond precisely to the connected components in the trace diagram of $\pi$.  Hence there are $c$ equivalence classes.  Now assume $S(\pi, r, d)\not=0$.  Then the following two implications must hold: if $x\to y$ and $x\to y'$ then $y=y'$; if $x\to y$ and $x'\to y$ then $x=x'$.
In particular, there are exactly $b$ equivalence classes with only one element; hence the $c-b$ equivalence classes with more than one element must account for $a-b$ elements of $\{1,\ldots, a\}$.  

Now, an $r$-cycle in $\sigma\in S_d$ can be thought of as a cyclic arrangement of $r$ distinct elements of $\{1,\ldots, d\}$.  If $\sigma(i_j)=i_j'$ for all $1\leq j\leq n$, then among the elements in $\{1,\ldots, a\}$ 
the cyclic arrangement corresponding to $\sigma$ contains precisely the $a-b$ elements in equivalence classes with more than one element.   Moreover, if $x\to y$ for distinct $x, y\in\{1,\ldots, a\}$, then $x$ and $y$ must be adjacent in the cyclic arrangement corresponding to $\sigma$.  Hence $\sigma$ is determined by a cyclic arrangement of the following $r-a+c$ items:  $c-b$ equivalence classes with more than one element;  $r-a+b$ elements of $\{a+1,\ldots, d\}$.  Finally, if such a $\sigma$ exists, then any choice of $r-a+b$ elements of $\{a+1,\ldots, d\}$ arranged cyclically with the equivalence classes containing more than one element determines an $r$-cycle in $S_d$ which maps $i_j\mapsto i_j'$.  Since there are $(r-a+c-1)!$ cyclic arrangements of $r-a+c$ items, and ${d-a\choose r-a+b}$ different $(r-a+b)$-element subsets of $\{a+1,\ldots, d\}$, it follows that $S(\pi, r, d)=(r-a+c-1)!{d-a\choose r-a+b}$, which is equivalent to the desired formula.
\end{proof}

\begin{example}  Let $n=15$ and $$\includegraphics{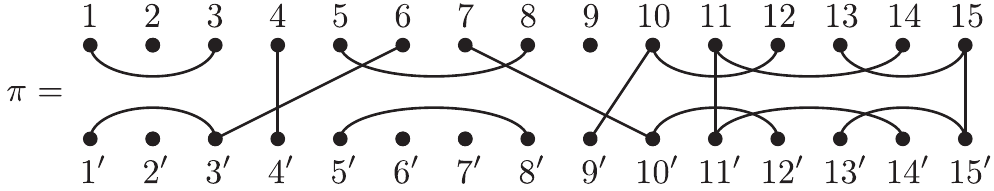}$$ Then $a=14$, $b=3$, and $c=7$.  For $d\geq 14$, let ${\bd{i}}, {\bd{i}}'\in[n, d]$ be the functions which give the perfect $({\bd{i}}, {\bd{i}}')$-coloring of $\pi$ below.
$$\includegraphics{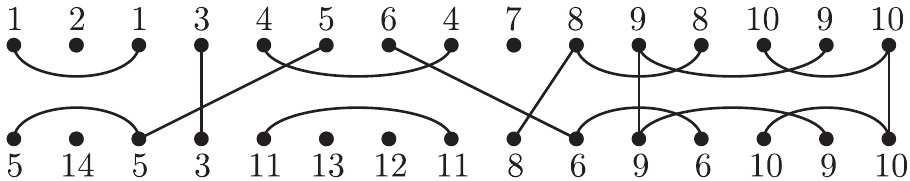}$$ For an $r$-cycle $\sigma\in S_d$ to satisfy $\sigma(i_j)=i'_j$ for $j=1,\ldots, n$, $\sigma$ must fix the $3=b$ numbers $3, 9, 10$, and map
\begin{equation}\label{chains}
1\to 5\to 13,\qquad 2\to 14,\qquad 4\to 11,\qquad 7\to 8\to 6\to 12.\end{equation}
Clearly, such an $r$-cycle exists if and only if $r\geq11=a-b$ and $d\geq r+3=r+b$.  In that case, the number of such $r$-cycles can be counted as follows.  
There are $(r-8)!=(r-a+c-1)!$ ways to arrange the $4=c-b$ ``chains" listed in (\ref{chains}) along with the $r-11=r-a+b$ other entries within the cycle.  Moreover, there are $d-14=d-a$ choices for the remaining $r-11=r-a+b$ entries within the $r$-cycle after the ``chains" in (\ref{chains}) have been taken into account.  Hence the number of desired $r$-cycles is given by $(r-8)!{d-14\choose r-11}=(r-a+c-1)!{d-a\choose r-a+b}$ which agrees with (\ref{rcycles}).\hfill$\Dox$
\end{example}

With Proposition \ref{rcycs} in mind, we are now ready for the following definition.

\begin{definition}\label{omega}  For $t\in F$ and integers $r>0$, and $n\geq0$, define $\omega_n^r(t)\in FP_n(t)$ as follows.  Using the basis $\{x_\pi\}$ for $FP_n(t)$ defined in (\ref{xpi}), set $$\omega_n^r(t)=\sum_{\pi\in P_{n,n}}q_{\pi, r, t}x_\pi$$ where, using the notation set up in Proposition \ref{rcycs}, $$q_{\pi, r, t}=\left\{\begin{array}{ll}
0,& \text{if }S(\pi, r, d)=0\text{ for all integers }d>1,\\
\frac{(r-a+c-1)!}{(r-a+b)!}\prod\limits_{k=1}^{r+b-a}(t-r-b+k), & \text{otherwise.}\end{array}\right.$$
\end{definition}

Although the definition of $\omega_n^r(t)$ may seem a bit complicated, for the rest of this paper we will only be concerned with the following (less complicated) properties of $\omega_n^r(t)$.

\begin{proposition}\label{centralomega} (1) Fix integers $r>0$ and $n\geq 0$.  Whenever $d$ is a sufficiently large\footnote{The statement is certainly true for $d\geq 2n+r$, although this bound is not sharp.} integer, $\omega_n^r(d)=C_n^r(d)$.  In other words, when $d$ is a sufficiently large integer, the map $f(\omega_n^r(d)):V_d^{\otimes n}\to V_d^{\otimes n}$ is given by the action of $\Omega_{r, d}\in S_d$.

(2) Fix $t\in F$ and an integer $r>0$.  The morphisms $\omega_n^r(t): [n]\to[n]$ for each nonnegative integer $n$ form an endomorphism of the identity functor on $\uRep_0(S_t; F)$.  In particular, $\omega_n^r(t)$ is in the center of $FP_n(t)$ for every $t\in F$ and integer $n\geq 0$.

\end{proposition}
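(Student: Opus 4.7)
\emph{Part (1).} I would work in the basis $\{x_\pi\}_{\pi\in P_{n,n}}$ of $FP_n(d)$ defined by (\ref{xpi}), where (\ref{fx}) tells us that $f(x_\pi)^{\bd{i}}_{\bd{i}'}$ is $1$ or $0$ according to whether the $(\bd{i},\bd{i}')$-coloring of $\pi$ is perfect. Then I would compute the matrix entries of $\Omega_{r,d}$ acting on $V_d^{\otimes n}$: since $\Omega_{r,d}\cdot v_{\bd{i}}=\sum_{\sigma}v_{\sigma\cdot\bd{i}}$ (the sum over $r$-cycles $\sigma$), the $(\bd{i},\bd{i}')$-entry of the action counts $r$-cycles $\sigma$ with $\sigma(i_j)=i'_j$. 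This count depends only on the unique $\pi$ for which $(\bd{i},\bd{i}')$ is a perfect coloring, and is exactly $S(\pi,r,d)$ from Proposition~\ref{rcycs}. Comparing entry by entry (valid since $f$ is injective for $d\geq 2n$ by Theorem~\ref{sw}(2)) we get $C_n^r(d)=\sum_\pi S(\pi,r,d)\,x_\pi$. Proposition~\ref{rcycs}(2) identifies each nonzero $S(\pi,r,d)$ with the polynomial expression in Definition~\ref{omega}, and one checks that the polynomial formula also gives $0$ at those small $d$ where $S(\pi,r,d)$ vanishes while the combinatorial pattern is consistent with some larger cycle. Hence $C_n^r(d)=\omega_n^r(d)$ once $d$ is large enough (e.g.\ $d\geq 2n+r$).

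\emph{Part (2).} An endomorphism of the identity functor on $\uRep_0(S_t;F)$ amounts to a family $\alpha_{[n]}\in FP_n(t)$ satisfying the naturality identity
\[
\mu\circ\alpha_{[n]} \;=\; \alpha_{[m]}\circ\mu \qquad\text{in } FP_{n,m}
\]
for every $\mu\in P_{n,m}$; the centrality statement is the special case $m=n$. So the task reduces to proving this identity with $\alpha_{[k]}=\omega_k^r(t)$. Both sides are $F$-linear combinations of basis partitions in $P_{n,m}$ whose coefficients are polynomials in $t$: the coefficients of $\omega_n^r(t)$ and $\omega_m^r(t)$ are polynomials by construction, and the composition rule $\nu\circ\pi=t^{\ell(\nu,\pi)}\nu\cdot\pi$ multiplies such polynomials by monomials in $t$. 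So the difference of the two sides has, in each coordinate, a polynomial entry in $t$, and it is enough to verify the identity at infinitely many values.

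Specialising to $t=d$ a large positive integer, part~(1) gives $\omega_n^r(d)=C_n^r(d)$ and $\omega_m^r(d)=C_m^r(d)$; by construction these correspond under $f$ to the multiplication operators by the central element $\Omega_{r,d}\in FS_d$ on $V_d^{\otimes n}$ and $V_d^{\otimes m}$ respectively. Since $f(\mu)$ is $S_d$-equivariant and $\Omega_{r,d}$ lies in the centre of $FS_d$, these operators commute with $f(\mu)$; equivalently $f(\mu\circ\omega_n^r(d))=f(\omega_m^r(d)\circ\mu)$. For $d\geq n+m$, Theorem~\ref{sw}(2) gives injectivity of $f:FP_{n,m}\to\Hom_{S_d}(V_d^{\otimes n},V_d^{\otimes m})$, so the identity holds already in $FP_{n,m}$ at $t=d$. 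Since $F$ has characteristic zero it is infinite, and a polynomial vanishing at infinitely many values is identically zero, so the identity holds for all $t\in F$.

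\emph{Main obstacle.} The principal technical point is the matrix-entry computation in part~(1). Working in the natural partition basis $\{\pi\}$ is awkward because the coefficients of $C_n^r(d)$ in that basis are not the clean counts $S(\pi,r,d)$; one would have to invert the triangular relation (\ref{xpi}). Switching to the $x_\pi$-basis, whose defining feature is the perfect-coloring indicator formula (\ref{fx}), is exactly what aligns $C_n^r(d)$ with the combinatorics governed by Proposition~\ref{rcycs}. Once this alignment is in place, everything in part~(2) is a transfer of the classical fact that $\Omega_{r,d}$ is central, made rigorous by the polynomiality of all constructions in $t$ together with density of integer evaluations.
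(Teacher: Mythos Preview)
Your proof is correct and follows essentially the same route as the paper: for part~(1) you compare matrix entries of $\Omega_{r,d}$ with those of $f(\omega_n^r(d))$ in the $x_\pi$-basis via (\ref{fx}) and Proposition~\ref{rcycs}, and for part~(2) you use injectivity of $f$ at large $d$ together with polynomiality to propagate the naturality identity from integer $d$ to all $t$. One small remark: the sentence about the polynomial formula giving $0$ ``at those small $d$'' is unnecessary and slightly off---you only need agreement for large $d$, which follows directly from Proposition~\ref{rcycs}(1) (eventual nonvanishing) and~(2) (the formula when nonzero), exactly as the paper argues.
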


\begin{proof}  For ${\bd{i}}, {\bd{i}}'\in [n, d]$, let $\pi({{\bd{i}},{\bd{i}}'})\in P_{n,n}$ denote the unique partition  which has a perfect $({\bd{i}},{\bd{i}}')$-coloring.  Then the action of $\Omega_{r, d}$ on $V_d^{\otimes n}$ maps the basis vector $v_{\bd{i}}\mapsto \sum_{{\bd{i}}'\in [n, d]}S(\pi({\bd{i}},{\bd{i}}'), r, d)v_{{\bd{i}}'}$.  On the other hand, (\ref{fx}) shows that $f(\omega_n^r(t))$ maps $v_{\bd{i}}\mapsto \sum_{{\bd{i}}'\in [n, d]}q_{\pi({\bd{i}},{\bd{i}}'), r, d}v_{{\bd{i}}'}$.  By Proposition \ref{rcycs}, $S(\pi({\bd{i}},{\bd{i}}'), r, d)=q_{\pi({\bd{i}},{\bd{i}}'), r, d}$ for sufficiently large $d$.  This proves part (1).

To prove part (2), choose $\mu\in P_{n,m}$.  For an integer $d>r$, we know that $f(\mu):V_d^{\otimes n}\to V_d^{\otimes m}$ commutes with the action of $\Omega_{r, d}\in S_d$.  Hence, by part (1),  $f(\omega_m^r(d)\mu)=f(\mu\omega_n^r(d))$ when $d$ is a sufficiently large integer.  Thus, by part (2) of Theorem \ref{sw}, $\omega_m^r(d)\mu=\mu\omega_n^r(d)$ when $d$ is a sufficiently large integer.
If we set $\omega_m^r(t)\mu=:\sum_{\pi\in P_{n, m}}a_\pi(t)\pi$ and $\mu\omega_n^r(t)=:\sum_{\pi\in P_{n, m}}a'_\pi(t)\pi$ for each $t\in F$, then we have shown the polynomials $a_\pi(t)$ and $a'_\pi(t)$ are equal when $t$ is a sufficiently large integer.  Hence they are always equal.
\end{proof}


\subsection{Frobenius' formula} This section will be devoted to studying how $\omega^r_n(t)$ interacts with indecomposable objects in $\uRep(S_t; F)$.  We start with the following proposition.

\begin{proposition}\label{xi}  Fix $t\in F$ along with integers $r>0$ and $n\geq0$.  If $e$ is a primitive idempotent in $FP_n(t)$, then there exist $\xi\in F$ and a positive integer $m$ such that $(\omega_n^r(t)-\xi)^me=0$.
\end{proposition}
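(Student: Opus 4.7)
The plan is to show that the finite-dimensional $F$-algebra $B := eFP_n(t)e$ is local with residue field equal to $F$ itself, after which $\xi$ will be defined as the image of $\omega_n^r(t)e$ in that residue field. Primitivity of $e$ in $FP_n(t)$ is equivalent to the only idempotents of $B$ being $0$ and $e$, which for a finite-dimensional algebra is equivalent to $B$ being local; let $J := J(B)$ denote its (nilpotent) Jacobson radical and $D := B/J$ the residue division algebra over $F$.

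The crux is showing $D = F$, which I would do by base change to an algebraic closure $\bar F$. By Proposition \ref{fieldex}, $e$ remains primitive in $\bar F P_n(t)$, so $B \otimes_F \bar F = e\bar F P_n(t)e$ is likewise local, with residue field a finite-dimensional division algebra over the algebraically closed field $\bar F$ and hence equal to $\bar F$ itself. Since $F$ has characteristic zero, the extension $\bar F / F$ is separable, so the Jacobson radical commutes with base change: $J(B \otimes_F \bar F) = J \otimes_F \bar F$. Therefore $D \otimes_F \bar F = (B \otimes_F \bar F)/J(B \otimes_F \bar F) = \bar F$, which forces $\dim_F D = 1$ and so $D = F$.

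Once $D = F$ is established, the rest is immediate. By Proposition \ref{centralomega}(2), $\omega_n^r(t)$ lies in the center of $FP_n(t)$, so $\omega_n^r(t)e = e\omega_n^r(t)e \in B$. Let $\xi \in F = D$ be its image in the residue field. Then $\omega_n^r(t)e - \xi e \in J$, which is nilpotent, so $(\omega_n^r(t)e - \xi e)^m = 0$ for some $m \geq 1$. Using centrality of $\omega_n^r(t)$ together with $e^2 = e$, this expands inductively to $(\omega_n^r(t) - \xi)^m e = 0$, as desired.

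The main obstacle is the identification $D = F$. Its two essential inputs are Proposition \ref{fieldex} (transferring primitivity across the field extension $F \subset \bar F$, which is the reason the authors proved it earlier) and the standard fact that the Jacobson radical of a finite-dimensional algebra is preserved under separable base change --- which holds automatically in characteristic zero. Everything after that is bookkeeping inside the local ring $B$, exploiting that central elements modulo the nilpotent radical act by an honest scalar in $F$.
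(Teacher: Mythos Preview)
Your argument is correct, but it proceeds along a different line from the paper's. The paper works directly with the minimal polynomial $a(x)\in F[x]$ of $\omega:=\omega_n^r(t)$ acting on $e$ (i.e.\ the monic $a$ of least degree with $a(\omega)e=0$): using B\'ezout and primitivity of $e$ it shows $a(x)$ is a power of a single irreducible in $F[x]$, then by Proposition~\ref{fieldex} the same reasoning over $\bar F$ forces the minimal polynomial there to be $(x-\xi)^m$, and since $(x-\xi)^m$ divides $a(x)$ one concludes $\xi\in F$. Your route is more structural: you identify $B=eFP_n(t)e$ as a local algebra, use Proposition~\ref{fieldex} plus the compatibility of the Jacobson radical with separable base change to pin down the residue field as $F$, and then read off $\xi$ as the image of $\omega e$ there. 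Both arguments hinge on Proposition~\ref{fieldex}; the paper's version is slightly more elementary in that it avoids invoking the base-change behaviour of $J(B)$, while yours is cleaner conceptually and makes transparent why $\xi$ lands in $F$ rather than merely in $\bar F$.
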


\begin{proof}  Let $\bar{F}$ denote the algebraic closure of $F$ and write $\omega:=\omega_n^r(t)$.  Let $a(x)$ (resp. $a'(x)$) denote the monic polynomial of minimal degree in $F[x]$ (resp. $\bar{F}[x]$) with $a(\omega)e$ (resp. $a'(\omega)e$) equal to zero.\footnote{The polynomial $a(x)$ (resp. $a'(x)$) exists since $FP_n(t)$ (resp. $\bar{F}P_n(t)$) is finite dimensional over $F$ (resp. $\bar{F}$).}  First we will show that $a(x)$ is a power of an irreducible polynomial in $F[x]$.  To do so, suppose $b(x)$ and $c(x)$ are relatively prime monic polynomials in $F[x]$ with $a(x)=b(x)c(x)$.  Then there exist polynomials $g(x), h(x)\in F[x]$ with $\deg g(x)<\deg c(x)$, $\deg h(x)<\deg b(x)$, and $g(x)b(x)+h(x)c(x)=1$.  Hence $g(\omega)b(\omega)e+h(\omega)c(\omega)e=e$ is a decomposition of $e$ into orthogonal idempotents (here we are using the fact that $\omega$ is in the center of $FP_n(t)$, see Proposition \ref{centralomega}(2)).  Since $e$ is primitive, this implies $g(\omega)b(\omega)e=0$ or $h(\omega)c(\omega)e=0$.  The minimality of $a(x)$ implies that either $g(x)=0$ or $h(x)=0$, which implies $c(x)=1$ or $b(x)=1$.  Thus $a(x)$ is a power of an irreducible polynomial in $F[x]$.  Since $e$ is primitive in $\bar{F}P_n(t)$ (see Proposition \ref{fieldex}), the same line of reasoning shows $a'(x)$ is a power of an irreducible polynomial in $\bar{F}[x]$.  Hence $a'(x)=(x-\xi)^m$ for some positive integer $m$ and $\xi\in\bar{F}$. Since $a(x)$ is a power of an irreducible polynomial in $F[x]$ and $(x-\xi)^m$ divides $a(x)$ in $\bar{F}[x]$, we conclude $\xi\in F$.
\end{proof}

Next, we use a classical result of Frobenius to produce a formula for the scalar $\xi$ in Proposition \ref{xi}.  The study of this formula will be the key to describing the blocks of $\uRep(S_t, F)$ in section \ref{secblocks}.  First we state Frobenius' result on the symmetric group:

\begin{theorem}[Frobenius' formula\footnote{We take formula (\ref{frob}) to be the definition of $\xi_{r, k}^\lambda$ even if $\lambda=(\lambda_0, \lambda_1,\ldots)$  is not a Young diagram.}]\label{fro} Fix integers $d\geq r\geq 1$.  Given a Young diagram $\lambda=(\lambda_0, \lambda_1,\ldots)$ of size $d$, set $\mu_i=\lambda_i-i$ for each $i\geq 0$.  Then $\Omega_{r, d}$, (Definition \ref{sums}), acts on the simple $S_d$-module corresponding to $\lambda$ by the scalar 
\begin{equation}\label{frob}\xi^\lambda_{r, k}:=\frac{1}{r}\sum_{i=0}^k~(\mu_i+k-1)(\mu_i+k-2)\cdots(\mu_i+k-r)\prod_{0\leq j\leq k\atop j\not=i}\frac{\mu_i-\mu_j-r}{\mu_i-\mu_j}
\end{equation} where $k$ is any positive integer such that $\lambda_{k+1}=0$.
\end{theorem}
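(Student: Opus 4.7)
The plan is to derive Frobenius's formula by combining three classical ingredients: (i) because $\Omega_{r,d}$ is a class sum and hence central in $FS_d$, it acts on the simple module $L_\lambda$ by the scalar $|C_r|\chi^\lambda(c_r)/\dim L_\lambda$, where $|C_r|=d!/(r(d-r)!)$ is the size of the $r$-cycle class and $c_r$ is any $r$-cycle; (ii) the Murnaghan--Nakayama rule evaluates the character on an $r$-cycle as
\[
\chi^\lambda(c_r) \;=\; \sum_{B}\, (-1)^{\mathrm{ht}(B)}\, f^{\lambda\setminus B},
\]
summed over $r$-rim hooks $B\subset\lambda$; and (iii) Frobenius's dimension formula
\[
f^\nu \;=\; \frac{|\nu|!}{\prod_j l'_j!}\,\prod_{a<b}(l'_a-l'_b),\qquad l'_j=\nu_j+k-j,
\]
makes each ratio $f^{\lambda\setminus B}/f^\lambda$ explicit in terms of the shifted parts $l_j=\mu_j+k$.

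I would then translate the combinatorics of $r$-rim hooks into the beta-number language and track the signs carefully. In those variables, the removable $r$-rim hooks of $\lambda$ are indexed by the $i\in\{0,\ldots,k\}$ for which $\mu_i-r\notin\{\mu_j:j\neq i\}$; the hook $B_i$ is removed by replacing $\mu_i$ with $\mu_i-r$ and leaving the other $\mu_j$ alone. The Murnaghan--Nakayama sign $(-1)^{\mathrm{ht}(B_i)}$ equals the number of $j>i$ with $\mu_i-r<\mu_j$, which is precisely the number of adjacent transpositions needed to re-sort the new beta-number sequence back into strict decrease. Applying Frobenius's dimension formula to $\lambda\setminus B_i$ without first re-sorting then produces a matching factor of $(-1)^{\mathrm{ht}(B_i)}$ in the Vandermonde, so that paired with the Murnaghan--Nakayama sign it contributes $+1$; the factorial ratio contributes a length-$r$ falling factorial in $l_i=\mu_i+k$, and the Vandermonde ratio contributes exactly $\prod_{j\neq i}(\mu_i-\mu_j-r)/(\mu_i-\mu_j)$.

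Once the signs have been absorbed, the factors of $d!$ and $(d-r)!$ will cancel cleanly against the prefactor $|C_r|/f^\lambda$ from (i), and the expression will collapse to the sum over $i\in\{0,\ldots,k\}$ on the right-hand side of (\ref{frob}). The main obstacle I anticipate is precisely the sign identification in the middle paragraph--matching the rim-hook leg length against the re-sorting parity of the beta-numbers--which is standard in the abacus model of partitions but is where the exact shape of (\ref{frob}) is pinned down. After that, the remainder is routine manipulation of factorials and Vandermonde-type products, with no further subtlety.
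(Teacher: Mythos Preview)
Your sketch is correct and follows the standard textbook derivation. However, you should be aware that the paper does not actually prove this theorem: it is stated as a classical result of Frobenius, with the remark that a modern proof is outlined in \cite[exercise~4.17]{MR1153249} (Fulton--Harris) and \cite[\S I.7, Example~7]{MR1354144} (Macdonald). Your proposal is precisely the argument indicated in those references: combine the class-sum formula for the central action, one application of the Murnaghan--Nakayama rule to strip an $r$-cycle, and the Frobenius determinantal dimension formula written in the $\beta$-numbers $l_i=\mu_i+k$. The identification of rim-hook removal with the move $l_i\mapsto l_i-r$ on the abacus, together with the observation that the re-sorting parity equals $(-1)^{\mathrm{ht}(B_i)}$, is exactly how the sign is handled in Macdonald.

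Two small points worth making explicit when you write it up. First, the sum in (\ref{frob}) runs over \emph{all} $i\in\{0,\dots,k\}$, not merely those indexing a removable $r$-rim hook; the non-removable terms vanish because either some factor $\mu_i-\mu_j-r$ is zero (when $l_i-r$ collides with another $l_j$) or the falling factorial $(l_i-1)\cdots(l_i-r)$ contains a zero factor (when $0<l_i\le r$), and one checks directly that the stability of the formula under $k\mapsto k+1$ absorbs the remaining boundary case. Second, your notation $\chi^\lambda(c_r)$ should be read as the character value at an element of cycle type $(r,1^{d-r})$, so that a single application of Murnaghan--Nakayama leaves the identity class and produces $f^{\lambda\setminus B}$; you have this right implicitly, but it is worth saying. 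With those two clarifications the argument is complete.
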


The result of Theorem \ref{fro}  first appeared in \cite{Frob}.  A modern proof of Theorem \ref{fro} is outlined in \cite[exercise 4.17]{MR1153249}, (see also \cite[Example 7 in section I.7]{MR1354144}).

We close this section by showing the scalar $\xi$ in Proposition \ref{xi} is given by Frobenius' formula.

\begin{proposition}\label{minpoly} Fix $t\in F$, a positive integer $r$, and a Young diagram $\lambda$.  Suppose that $L(\lambda)=([n], e)$ in $\uRep(S_t, F)$.   If $k$ is a positive integer such that $\lambda_{k+1}=0$, then $(\omega_n^r(t)-\xi^{\lambda(t)}_{r, k})^me=0$ for some positive integer $m$.
\end{proposition}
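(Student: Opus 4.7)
By Proposition \ref{xi} there exist $\xi \in F$ and a positive integer $m$ with $(\omega_n^r(t) - \xi)^m e = 0$, so the task reduces to identifying $\xi$ as $\xi^{\lambda(t)}_{r,k}$. The plan is to lift $e$ to an idempotent $\ep \in F[[T-t]] P_n(T)$ via Theorem \ref{lifttoK}, exploit the diagonal action of $\omega_n^r(T)$ in the semisimple category $\uRep(S_T; K)$ (Corollary \ref{ssT}), and then descend back to $T = t$ via a characteristic polynomial argument on the integral module $M := F[[T-t]] P_n(T) \cdot \ep$.

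For the semisimple picture, Proposition \ref{Liftprop}(3) gives a decomposition $\Lift_t(L(\lambda)) \cong L(\lambda) \oplus \bigoplus_i L(\lambda^{(i)})$ in $\uRep(S_T; K)$ with $|\lambda^{(i)}| < |\lambda|$, and correspondingly an orthogonal decomposition $\ep = \ep_0 + \sum_i \ep_i$ into primitive idempotents in $KP_n(T)$. Since $\omega_n^r(T)$ is central (Proposition \ref{centralomega}(2)) and the category is semisimple, it acts on each simple summand $L(\mu)$ as a scalar $\chi_\mu(T) \in K$, and I claim $\chi_\mu(T) = \xi^{\mu(T)}_{r, k_\mu}$ (where $k_\mu$ is any integer with $\mu_{k_\mu+1} = 0$). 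Indeed, by Proposition \ref{Liftprop}(4) we have $\Lift_d(L(\mu)) = L(\mu)$ for all but finitely many integers $d$; for such $d$ further chosen so that $d \geq 2|\mu|+r$ and $d - |\mu| \geq \mu_1$, the interpolation functor $\cat{F}: \uRep(S_d; F) \to \Rep(S_d; F)$ sends $L(\mu)$ to $L_{\mu(d)}$ (Proposition \ref{Findy}) and identifies $\omega_n^r(d)$ with the action of $\Omega_{r,d}$ (Proposition \ref{centralomega}(1)), which by Frobenius' formula (Theorem \ref{fro}) acts on $L_{\mu(d)}$ as $\xi^{\mu(d)}_{r, k_\mu}$. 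Since $\chi_\mu(T)$ can be chosen to lie in $F(T)$ (take a primitive idempotent for $\mu$ over $F(T)$, cf.\ the proof of Proposition \ref{Liftprop}(4)), and since both $\chi_\mu(T)$ and $\xi^{\mu(T)}_{r, k_\mu}$ are then rational functions in $T$ agreeing at infinitely many integers, they are identically equal.

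Since $F[[T-t]]$ is a DVR, $M$ is free of finite rank over $F[[T-t]]$, so the $F[[T-t]]$-linear action of $\omega_n^r(T)$ on $M$ has a characteristic polynomial $p(T,x) \in F[[T-t]][x]$, and over $K$ this factors as $\prod_\mu (x - \xi^{\mu(T)}_{r, k_\mu})^{D_\mu}$ where $D_\mu := \dim_K KP_n(T) \ep_\mu$. Reducing modulo $(T-t)$, $p(t, x)$ is the characteristic polynomial of $\omega_n^r(t)$ on $M/(T-t)M = FP_n(t)e$; and since $e$ is primitive, $FP_n(t)e$ is indecomposable, so the centrality of $\omega_n^r(t)$ forces its generalized eigenspaces (which are $FP_n(t)$-submodules) to consist of a single one, with eigenvalue $\xi$ from Proposition \ref{xi}. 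Hence $p(t, x) = (x - \xi)^D$, and unique factorization in $F[x]$ forces $\xi^{\mu(t)}_{r, k_\mu} = \xi$ for every $\mu$ with $D_\mu > 0$; taking $\mu = \lambda$ gives $\xi = \xi^{\lambda(t)}_{r,k}$, as desired. The main obstacle is the scalar identification $\chi_\mu(T) = \xi^{\mu(T)}_{r, k_\mu}$, which requires both a rationality check on $\chi_\mu$ and careful use of the interpolation functor $\cat{F}$ with Frobenius' formula at generic integer parameter values; once this is in hand, the descent is a mechanical comparison of characteristic polynomial factorizations.
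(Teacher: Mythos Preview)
Your proof is correct, but it takes a substantially different route from the paper's. The paper's argument is shorter and avoids lifting entirely: assuming $n=|\lambda|$, it applies the quotient map $\psi:FP_n(t)\twoheadrightarrow FS_n$ of Lemma~\ref{ees}(2) to the relation $(\omega_n^r(t)-\xi)^m e=0$. Since $\psi(e)$ is a primitive idempotent $c_\lambda\in FS_n$ (Remark~\ref{zeta}) and $FS_n$ is semisimple, this forces $\psi(\omega_n^r(t))c_\lambda=\xi c_\lambda$, exhibiting $\xi$ directly as the eigenvalue of a central element on the simple $FS_n$-module $L_\lambda$; polynomiality of $\xi$ in $t$ is then immediate from Definition~\ref{omega}. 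One then compares with Frobenius' formula at large integers $d$ via the interpolation functor $\cat{F}$, exactly as you do for $\chi_\mu$. The quotient to $FS_n$ is the key simplification you are missing.

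Your approach---lift to $F[[T-t]]$, diagonalize over the semisimple category at $T$, and descend by comparing characteristic polynomials---is more elaborate but has the virtue of proving more: you obtain $\xi^{\mu(t)}_{r,k_\mu}=\xi^{\lambda(t)}_{r,k}$ for \emph{every} $\mu$ appearing in $\Lift_t(L(\lambda))$, which is essentially the content of the later Proposition~\ref{liftsim}. One small point you should make explicit: to reduce the factorization $p(T,x)=\prod_\mu(x-\xi^{\mu(T)}_{r,k_\mu})^{D_\mu}$ modulo $(T-t)$, you need each eigenvalue $\xi^{\mu(T)}_{r,k_\mu}$ to lie in $F[[T-t]]$ rather than merely in $K$. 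This follows because the $\xi^{\mu(T)}_{r,k_\mu}$ are roots of the monic polynomial $p(T,x)\in F[[T-t]][x]$ and the DVR $F[[T-t]]$ is integrally closed in $K$; alternatively, one can check directly from~(\ref{frob}) that $\xi^{\mu(T)}_{r,k_\mu}$ is a polynomial in $T$.
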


\begin{proof} By Theorem \ref{Youngclass} we may assume $n=|\lambda|$.  Let $\xi$ and $m$ be as in Proposition \ref{xi}, so that $(\omega_n^r(t)-\xi)^me=0$.  Applying the quotient map $\psi:FP_n(t)\twoheadrightarrow FS_n$ (Lemma \ref{ees}(2)) to this equation yields $(\psi(\omega_n^r(t))-\xi)^mc_\lambda=0$ in $FS_n$ where $c_\lambda$ is a primitive idempotent in $FS_n$ corresponding to $\lambda$.  Since $\psi(\omega_n^r(t))$ is central in $FS_n$, this implies $\psi(\omega_n^r(t))c_\lambda=\xi c_\lambda$.  Hence, by Definition \ref{omega}, $\xi$ depends polynomially on $t$.  

Now, assume $d$ is a positive integer such that $d\geq\lambda_1+|\lambda|$.  By Proposition \ref{Findy}, applying the functor $\cat{F}$ to the equation $(\omega_n^r(d)-\xi)^me=0$ yields $(\Omega_{r, d}-\xi)^mc_{\lambda(d)}=0$.  Thus, by Theorem \ref{fro}, $\xi=\xi^{\lambda(t)}_{r, k}$ whenever $t=d$ is a sufficiently large integer.  Since $\xi$ depends polynomially on $t$, $\xi^{\lambda(t)}_{r, k}$ is a rational function in $t$, and $\xi=\xi^{\lambda(t)}_{r, k}$ for infinitely many values of $t$, we conclude that $\xi=\xi^{\lambda(t)}_{r, k}$ for all $t\in F$.
\end{proof}


\section{Blocks}\label{blocks}   

\subsection{Blocks in additive categories}  \label{blocksdef}
Let $\cat{A}$ denote an arbitrary $F$-linear Karoubian category.  Consider the weakest equivalence relation on the set of isomorphism classes of indecomposable objects in $\cat{A}$ where two indecomposable objects are equivalent whenever there exists a nonzero morphism between them.  We call the equivalence classes in this relation \emph{blocks}\setcounter{footnote}{0}\footnote{Notice that while this definition makes sense for a general additive category it is reasonable only for categories satisfying suitable finiteness assumptions.
At the very least one needs to require that any object decomposes into a finite direct sum of indecomposable ones.}.  We will also use the term block to refer to a full subcategory of $\cat{A}$ whose objects are direct sums of indecomposable objects in a single block.  We will say a block is \emph{trivial} if it contains only one indecomposable object (up to isomorphism) and the endomorphism ring of that indecomposable object is $F$.  

Consider the following equivalence class on the set of Young diagrams of arbitrary size.

\begin{definition}\label{sim}  For $t\in F$ and a Young diagram $\lambda=(\lambda_1, \lambda_2, \ldots)$, set  $$\mu_\lambda(t):=(t-|\lambda|, \lambda_1-1, \lambda_2-2,\ldots).$$  For Young diagrams $\lambda$ and $\lambda'$ write $\mu_\lambda(t)=(\mu_0, \mu_1,\ldots)$ and $\mu_{\lambda'}(t)=(\mu'_0, \mu'_1,\ldots)$.  We write $\lambda\stackrel{t}{\sim}\lambda'$ whenever there exists a bijection $\tau:\Z_{\geq 0}\to\Z_{\geq 0}$ with $\mu_i=\mu'_{\tau(i)}$ for all $i\geq 0$.
\end{definition}

\begin{example} Let $$\includegraphics{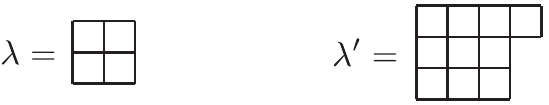}$$ Then $\mu_\lambda(7)=(3, 1, 0, -3, -4, -5,\ldots)$ and $\mu_{\lambda'}(7)=(-3, 3, 1, 0, -4, -5,\ldots)$.  Hence $\lambda\stackrel{7}{\sim}\lambda'$.\hfill$\Dox$
\end{example}

Clearly, for each $t\in F$,  $\stackrel{t}{\sim}$ defines an equivalence relation on the set of all Young diagrams, and hence on the indecomposable objects in $\uRep(S_t; F)$ (see Theorem \ref{Youngclass}).
The main goal of section \ref{blocks} is devoted to the proof of the following theorem.

\begin{theorem}\label{blocksinSt} $L(\lambda)$ and $L(\lambda')$ are in the same block of $\uRep(S_t; F)$ if and only if $\lambda\stackrel{t}{\sim}\lambda'$.
\end{theorem}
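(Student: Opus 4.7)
The plan is to use the endomorphisms $\omega_n^r(t)$ of the identity functor constructed in Section \ref{endId} to establish the necessity direction, and to use a lifting argument (via Proposition \ref{Liftprop}) for sufficiency.

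\emph{Necessity.} Suppose $L(\lambda)$ and $L(\lambda')$ lie in the same block. By definition, there is a chain of indecomposables joining them with nonzero morphisms between consecutive terms. Since $\omega_n^r(t)$ is a natural endomorphism of the identity functor (Proposition \ref{centralomega}(2)), it commutes with every morphism, forcing its generalized eigenvalue to be constant along such a chain. By Proposition \ref{minpoly}, that eigenvalue on $L(\lambda)$ equals Frobenius' scalar $\xi^{\lambda(t)}_{r,k}$. We therefore obtain $\xi^{\lambda(t)}_{r,k}=\xi^{\lambda'(t)}_{r,k'}$ for every $r\geq 1$ (and suitable $k,k'$). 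To extract the combinatorial conclusion $\lambda\stackrel{t}{\sim}\lambda'$, I would analyze Frobenius' formula (\ref{frob}) as $r$ grows: the sequence $\{\xi^{\lambda(t)}_{r,k}\}_{r\geq 1}$ determines, via its top-degree-in-$r$ contributions, a system of (shifted) power sums in the entries $\mu_\lambda(t)_i$, after canceling the universal tail $(-k-1,-k-2,\ldots)$ common to all Young diagrams. Equality of these power sums for all $r$ forces the multisets of entries of $\mu_\lambda(t)$ and $\mu_{\lambda'}(t)$ to coincide, which is the condition $\lambda\stackrel{t}{\sim}\lambda'$.

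\emph{Sufficiency.} First I would observe that $\stackrel{t}{\sim}$ is generated by elementary transpositions of two entries of $\mu_\lambda(t)$, so it suffices to connect $\lambda$ to each such neighbor. The plan is to use lifts. For any diagram $\lambda$, Proposition \ref{Liftprop}(3) gives a decomposition $\Lift_t L(\lambda)=L(\lambda)\oplus L(\lambda^{(1)})\oplus\cdots\oplus L(\lambda^{(m)})$ with $|\lambda^{(i)}|<|\lambda|$. The necessity direction already applied inside the semisimple category $\uRep(S_T;K)$ (combined with the compatibility of $\omega^r$ with specialization $T\rightsquigarrow t$ in Proposition \ref{centralomega}) forces each $\lambda^{(i)}\stackrel{t}{\sim}\lambda$. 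Writing $L(\lambda)=([n],e)$ in $\uRep(S_t;F)$, the idempotent $e$ is primitive, while its lift $\tilde e\in KP_n(T)$ splits into the primitive idempotents cutting out the $L(\lambda^{(i)})$'s. If $L(\lambda)$ and some $L(\lambda^{(i)})$ lay in different blocks of $\uRep(S_t;F)$, a central idempotent in the corresponding summand of $\uRep(S_t;F)$ would separate them, contradicting the primitivity of $e$ in $FP_n(t)$. Hence $L(\lambda)$ and $L(\lambda^{(i)})$ belong to the same block. Iterating—descending via $\Lift_t$ from larger diagrams and ascending by locating smaller diagrams that sit inside the lift of a larger one—should exhaust the entire $\stackrel{t}{\sim}$-class.

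\emph{Main obstacle.} I expect two delicate points. The first is the symmetric-function step in necessity: extracting multiset equality from the identities $\xi^{\lambda(t)}_{r,k}=\xi^{\lambda'(t)}_{r,k'}$ requires isolating power-sum contributions in Frobenius' formula while controlling the rational denominators $\mu_i-\mu_j$, which may coincide for certain $t$. The second, and probably harder, is arranging the lift-and-descent procedure in sufficiency to actually cover every $\stackrel{t}{\sim}$-equivalence class: one must verify that the indecomposable summands produced by iteration of $\Lift_t$ connect all diagrams related by elementary swaps, and not merely a proper subset. Handling $t\in\Z_{\geq 0}$ (where semisimplicity of $\uRep(S_t;F)$ can fail) alongside generic $t$ will require care, since Proposition \ref{Liftprop}(3) behaves uniformly but the block structure does not.
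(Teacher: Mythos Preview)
Your necessity direction is essentially the paper's Lemma~\ref{equalxi}, and your worry about the denominators $\mu_i-\mu_j$ is unfounded: the paper observes that $\xi^{\lambda(t)}_{r,k}\prod_{i<j}(\mu_i-\mu_j)$ is an antisymmetric \emph{polynomial} in the $\mu_i$, hence divisible by the Vandermonde, so $\xi^{\lambda(t)}_{r,k}$ is itself a symmetric polynomial with leading part $\tfrac{1}{r}\sum_i\mu_i^r$. No rational-function difficulties arise.

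The real gap is your step~3 in the sufficiency argument. You write that if $L(\lambda)$ and $L(\lambda^{(i)})$ lay in different blocks of $\uRep(S_t;F)$, a block central idempotent would ``separate them, contradicting the primitivity of $e$ in $FP_n(t)$.'' But a block idempotent $z$ acts on the indecomposable $([n],e)$ by either $e$ or $0$; it never decomposes $e$. The splitting you have is of the \emph{lift} $\tilde e$ in $KP_n(T)$, a different algebra, and there is no mechanism by which a central idempotent of $FP_n(t)$ is forced to split $\tilde e$ compatibly with the $KP_n(T)$-decomposition. (Lifts of central idempotents need not remain central.) So no contradiction is produced, and you have not shown $\Hom_{\uRep(S_t;F)}(L(\lambda^{(i)}),L(\lambda))\neq 0$. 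The paper bridges this gap with Proposition~\ref{Liftprop}(5): dimensions of $\Hom$-spaces are preserved under $\Lift_t$, so once you know $\Lift_t L(\lambda^{(i-1)})$ and $\Lift_t L(\lambda^{(i)})$ share an indecomposable summand in the semisimple category $\uRep(S_T;K)$, the $\Hom$-space over $F$ is forced to be nonzero.

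Your step~4 is also incomplete, as you acknowledge. Knowing only that the extra summands of $\Lift_t L(\lambda)$ are smaller and $\stackrel{t}{\sim}$-equivalent (your step~2, which is Proposition~\ref{liftsim}) does not tell you there \emph{are} any extra summands, nor which ones. The paper proves the precise statement $\Lift_d L(\lambda^{(i)})=L(\lambda^{(i)})\oplus L(\lambda^{(i-1)})$ for $i>0$ (Lemma~\ref{liftdec}(2)); this requires first ruling out $\Lift_d L(\lambda^{(i)})=L(\lambda^{(i)})$ via a dimension/hook-length argument (Propositions~\ref{gendimen}, \ref{zero}, \ref{Findy}), and then an induction on the partial order $\prec$ on nontrivial classes, using the tensor product with $L(\Box)$ and the combinatorial Lemma~\ref{oneboxlemma} to propagate from smaller blocks. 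Your ``iterate and hope'' does not supply this.
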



\subsection{}  In this section we use Frobenius' formula to show that $\stackrel{t}{\sim}$-equivalence classes are unions  of blocks in $\uRep(S_t; F)$.

\begin{lemma}\label{equalxi} Suppose $\lambda$ and $\lambda'$ are Young diagrams and $k>0$ is an integer with $\lambda_{k+1}=\lambda'_{k+1}=0$.
 
 (1)  If $L(\lambda)$ and $L(\lambda')$ are in the same block in $\uRep(S_t; F)$, then $\xi^{\lambda(t)}_{r, k}=\xi^{\lambda'(t)}_{r, k}$ for every $r>0$.

(2) If $\xi^{\lambda(t)}_{r, k}=\xi^{\lambda'(t)}_{r, k}$ for every $r>0$, then $\lambda\stackrel{t}{\sim}\lambda'$.
\end{lemma}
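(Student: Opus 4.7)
For Part (1), my plan is to exploit the naturality of the endomorphisms $\omega_n^r(t)$ from Proposition \ref{centralomega}(2). These extend (via $([n],e)\mapsto e\omega_n^r(t)$ on idempotent-cut objects, and diagonally on direct sums) to an endomorphism $\omega^r$ of the identity functor on all of $\uRep(S_t;F)$, and Proposition \ref{minpoly} tells us that on each indecomposable $L(\lambda)$ this $\omega^r$ has a unique generalized eigenvalue $\xi^{\lambda(t)}_{r,k}$. Now suppose $f:L(\lambda)\to L(\lambda')$ is a nonzero morphism. Naturality yields $f\circ\omega^r_{L(\lambda)}=\omega^r_{L(\lambda')}\circ f$, so for $m$ as in Proposition \ref{minpoly},
\begin{equation*}
(\omega^r_{L(\lambda')}-\xi^{\lambda(t)}_{r,k})^m\circ f \;=\; f\circ(\omega^r_{L(\lambda)}-\xi^{\lambda(t)}_{r,k})^m \;=\; 0.
\end{equation*}
On $L(\lambda')$, however, $\omega^r-\xi^{\lambda'(t)}_{r,k}$ is nilpotent, so $\omega^r-\xi^{\lambda(t)}_{r,k}$ is a nilpotent plus the scalar $\xi^{\lambda'(t)}_{r,k}-\xi^{\lambda(t)}_{r,k}$; were that scalar nonzero, the displayed operator would be invertible and force $f=0$. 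Thus $\xi^{\lambda(t)}_{r,k}=\xi^{\lambda'(t)}_{r,k}$, and transitivity of equality along a chain of nonzero morphisms extends the conclusion to all $L(\lambda),L(\lambda')$ in a common block.

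For Part (2), my goal is to recover the multiset $\{\mu_0,\mu_1,\ldots,\mu_k\}$ of entries of $\mu_\lambda(t)$ (and likewise for $\lambda'$) from the sequence $(\xi^{\lambda(t)}_{r,k})_{r\geq 1}$; since the tails of $\mu_\lambda(t)$ and $\mu_{\lambda'}(t)$ are both $(-k-1,-k-2,\ldots)$, multiset equality of the length-$(k+1)$ prefixes is exactly $\lambda\stackrel{t}{\sim}\lambda'$. I would rewrite Frobenius' formula in residue form. Setting $P(x):=\prod_{i=0}^{k}(x-\mu_i)$, the identities $\prod_{j\neq i}(\mu_i-\mu_j)=P'(\mu_i)$ and $\prod_{j\neq i}(\mu_i-\mu_j-r)=-P(\mu_i-r)/r$ give
\begin{equation*}
\xi^{\lambda(t)}_{r,k} \;=\; -\frac{1}{r^2}\sum_{i=0}^{k}\frac{A_r(\mu_i)\,P(\mu_i-r)}{P'(\mu_i)},\qquad A_r(x):=\prod_{s=1}^{r}(x+k-s).
\end{equation*}
The right-hand sum is the sum of residues of the rational function $A_r(x)P(x-r)/P(x)$ at its finite poles, hence equals minus its residue at $\infty$. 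Expanding $P(x-r)/P(x)$ at $x=\infty$ yields a Laurent series whose coefficients are polynomials in $r$ and in the Newton power sums $p_n:=\sum_i\mu_i^n$. Taking $r=1,2,\ldots,k+1$ then produces a system of $k+1$ equations in the unknowns $p_1,\ldots,p_{k+1}$; I would verify invertibility by identifying the leading power-sum contribution for each $r$. Once the $p_n$ are determined, Newton's identities give the elementary symmetric functions of the $\mu_i$, and hence the multiset $\{\mu_0,\ldots,\mu_k\}$.

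The hard part is the explicit nondegeneracy check in Part (2): one must pin down the precise dependence of $\xi^{\lambda(t)}_{r,k}$ on the highest power sum appearing at each $r$ and confirm that the resulting $(k+1)\times(k+1)$ matrix is nonsingular. A secondary technicality is that Frobenius' formula as written presumes the $\mu_i$ are pairwise distinct; when $\mu_0$ coincides with some $\mu_j$ (as can happen for special $t$), one must reinterpret the identity by continuity, using that after clearing denominators both sides become polynomial identities in the $\mu_i$. Part (1) is essentially formal once naturality and Proposition \ref{minpoly} are in hand, so the combinatorial substance of the lemma lies entirely in Part (2).
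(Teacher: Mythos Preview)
Your Part (1) is correct and essentially the paper's argument in different clothing: where you say ``nilpotent plus nonzero scalar is a unit in the local ring $\End(L(\lambda'))$,'' the paper writes down the B\'ezout identity $p(x)(x-\xi)^m+q(x)(x-\xi')^m=1$ for the coprime polynomials $(x-\xi)^m$ and $(x-\xi')^m$ and applies it directly. Both reduce to the same naturality input from Proposition~\ref{centralomega}(2) and Proposition~\ref{minpoly}.

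Your Part (2), however, is genuinely incomplete, and the route you chose is harder than necessary. You set up a residue reformulation and then defer the ``explicit nondegeneracy check,'' which is exactly the heart of the matter; as written, nothing prevents the system in $p_1,\ldots,p_{k+1}$ from being singular. The paper bypasses all of this with a two-step observation that also dissolves your ``secondary technicality'':
\begin{enumerate}
\item[(i)] Multiply $\xi^{\lambda(t)}_{r,k}$ by the Vandermonde $\prod_{i<j}(\mu_i-\mu_j)$. The result is visibly a polynomial and is alternating in $\mu_0,\ldots,\mu_k$, hence divisible by the Vandermonde again; thus $\xi^{\lambda(t)}_{r,k}$ is itself a \emph{symmetric polynomial} in the $\mu_i$. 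This is what handles the degenerate case $\mu_0=\mu_j$ with no appeal to continuity.
\item[(ii)] Read off the top-degree part directly from Frobenius' formula: each factor $\frac{\mu_i-\mu_j-r}{\mu_i-\mu_j}$ has degree $0$ with leading term $1$, so
\[
\xi^{\lambda(t)}_{r,k}\;=\;\tfrac{1}{r}\sum_{i=0}^k\mu_i^{\,r}\;+\;(\text{terms of total degree}<r).
\]
\end{enumerate}
Step (ii) says the family $\{\xi^{\lambda(t)}_{r,k}\}_{r\ge1}$ is upper-triangular over the power sums $p_r$ with nonzero diagonal $1/r$, so it generates the full ring of symmetric polynomials in $\mu_0,\ldots,\mu_k$. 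Equality of all $\xi$'s therefore forces equality of the multisets $\{\mu_i\}$ and $\{\mu_i'\}$, which is $\lambda\stackrel{t}{\sim}\lambda'$. This is exactly the ``leading power-sum contribution'' you were reaching for, obtained without any residue bookkeeping; your approach can be made to work, but you would end up reproving (ii) inside the Laurent expansion at infinity.
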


\begin{proof} To prove part (1), let us first fix some notation.  Let $n, n'$ be nonnegative integers and $e\in FP_n(t), e'\in FP_{n'}(t)$ be idempotents with $L(\lambda)\cong([n], e)$ and $L(\lambda')\cong([n'], e')$.  Fix $r$ and write $\xi:=\xi^{\lambda(t)}_{r, k}$, $\xi':=\xi^{\lambda'(t)}_{r, k}$, $\omega:=\omega_n^r(t)$, $\omega':=\omega_{n'}^r(t)$.  Finally, let $m$ be a positive integer with $(\omega-\xi)^me=0$ and $(\omega'-\xi')^me'=0$ (such an $m$ exists by Proposition \ref{minpoly}).  Now, suppose $\xi\not=\xi'$.  Then there are polynomials $p(x), q(x)\in F[x]$ with $p(x)(x-\xi)^m+q(x)(x-\xi')^m=1$.  Hence, given any morphism $\phi:([n'], e')\to([n], e)$ in $\uRep(S_t; F)$, we have $$\phi=p(\omega)(\omega-\xi)^m\phi+q(\omega)(\omega-\xi')^m\phi=p(\omega)(\omega-\xi)^me\phi+q(\omega)(\omega-\xi')^m\phi e'.$$  By  Proposition \ref{centralomega}(2), the right side of the equation above is equal to $$p(\omega)(\omega-\xi)^me\phi+\phi q(\omega')(\omega'-\xi')^m e'=0.$$  Thus, if there exists a nonzero morphism $([n'], e')\to([n], e)$ in $\uRep(S_t; F)$, then $\xi=\xi'$.

To prove part (2), first notice that $\xi^{\lambda(t)}_{r, k}$ is symmetric in $\mu_0,\ldots, \mu_k$.  Thus $\xi^{\lambda(t)}_{r, k}\prod_{0\leq i<j\leq k}(\mu_i-\mu_j)$ is an antisymmetric polynomial in $\mu_0,\ldots, \mu_k$.  However, every antisymmetric polynomial in $\mu_0,\ldots, \mu_k$ is divisible by $\prod_{0\leq i<j\leq k}(\mu_i-\mu_j)$.  Thus $\xi^{\lambda(t)}_{r, k}$ is a symmetric polynomial in $\mu_0,\ldots, \mu_k$.  
Moreover, examining equation (\ref{frob}) shows that as a polynomial in $\mu_0,\ldots, \mu_k$, $$\xi^{\lambda(t)}_{r, k}=\frac{1}{r}\sum_{i=0}^k\mu_i^r+\left(\text{terms of total degree less than }r\right).$$  The formula above shows that the ring generated by $\{\xi_{r,k}^{\lambda(t)}\}_{r>0}$ contains the power sum $\sum_{i=0}^k\mu_i^r$ for each $r>0$ and hence is equal to the ring of all symmetric polynomials in $\mu_0,\ldots, \mu_k$.  Thus, if $\xi^{\lambda(t)}_{r, k}=\xi^{\lambda'(t)}_{r, k}$ for all $r>0$ then $\mu_0,\ldots, \mu_k$ is a permutation of $\mu'_0,\ldots, \mu'_k$.
\end{proof}


\subsection{On the equivalence relation $\stackrel{t}{\sim}$}  In this section we prove some elementary properties of the equivalence relation $\stackrel{t}{\sim}$ and give examples.

We say a $\stackrel{t}{\sim}$-equivalence class is \emph{trivial} if it contains only one Young diagram.  Soon we will see that the $\stackrel{t}{\sim}$-equivalence classes are all trivial unless $t$ is a nonnegative integer (see Corollary \ref{gensim}(1)).  First, we prove the following easy proposition.

\begin{proposition}\label{gs}  Suppose $\lambda$ is a Young diagram and write $\lambda_\mu(t)=(\mu_0, \mu_1,\ldots)$.  Suppose further that $\tau:\Z_{\geq 0}\to\Z_{\geq 0}$ is a bijection and set $\mu'=(\mu'_0, \mu'_1,\ldots)$ where $\mu'_{i}=\mu_{\tau^{-1}(i)}$.  There exists a Young diagram $\lambda'$ such that $\mu'=\mu_{\lambda'}(t)$ if and only if $\mu_i'\in\Z$ with $\mu'_i>\mu'_{i+1}$ for all $i>0$.  
\end{proposition}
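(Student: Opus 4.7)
The plan is to handle both directions separately, the forward direction being immediate and the backward direction requiring a counting-and-bijection analysis.

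For the forward direction, if $\mu' = \mu_{\lambda'}(t)$ then $\mu'_i = \lambda'_i - i \in \Z$ for $i \geq 1$, and the Young-diagram condition $\lambda'_i \geq \lambda'_{i+1}$ rearranges to $\mu'_i \geq \mu'_{i+1} + 1 > \mu'_{i+1}$.

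For the converse, I will define $\lambda'_i := \mu'_i + i$ for $i \geq 1$ and verify that $(\lambda'_i)_{i \geq 1}$ is a Young diagram with $|\lambda'| = t - \mu'_0$. The non-increasing property $\lambda'_i \geq \lambda'_{i+1}$ is immediate from the strict integer inequality $\mu'_i - \mu'_{i+1} \geq 1$. The key step will be showing $\lambda'_i \geq 0$, or equivalently $\mu'_i \geq -i$, for every $i \geq 1$. Suppose for contradiction that $\mu'_i \leq -i-1$ for some $i$. The strict decrease of $\mu'_1 > \mu'_2 > \cdots$ allows at most $i-1$ of the $\mu'_j$ with $j \geq 1$ to be $\geq -i$; together with $\mu'_0$ this gives at most $i$ entries of $\mu'$ that are $\geq -i$. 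Since $\mu'$ is a rearrangement of $\mu$ and $\mu_1, \ldots, \mu_i$ already supply $i$ distinct entries $\geq -i$, equality must hold, forcing $\mu_0 \leq -i-1$ and $\lambda_j = 0$ for $j > i$. Thus $\mu_0 = -(i+k)$ for some $k \geq 1$, and the multiset $\{\mu'_j : j \geq i\}$ equals $\{\mu_0\} \cup \{-(i+1), -(i+2), \ldots\}$ with the value $-(i+k)$ repeated, contradicting the required strict decrease.

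Next I will show $\lambda'_i = 0$ for $i$ sufficiently large. Being a non-increasing sequence of non-negative integers, $(\lambda'_i)$ stabilizes at some value $L \geq 0$, so $\mu'_i = L - i$ for all $i$ beyond some index $I$. Choosing $N$ with $\mu_j = -j$ for $j > N$ (possible since $\lambda$ is a Young diagram), the identity $\mu_{\tau^{-1}(i)} = L - i$ forces $\tau^{-1}(i) = i - L$ for all sufficiently large $i$. Thus $\tau$ acts as the shift $j \mapsto j + L$ on a cofinite subset of $\Z_{\geq 0}$, which is a bijection of $\Z_{\geq 0}$ only when $L = 0$. With this in hand, $\tau$ fixes all but finitely many indices, so $\tau$ restricts to a bijection of $\{0, 1, \ldots, n\}$ for large $n$; comparing $\sum_{i=0}^n \mu_i = \sum_{i=0}^n \mu'_i$ and simplifying with $\mu_0 = t - |\lambda|$ together with the analogous expression for $\mu'$ yields $\mu'_0 = t - |\lambda'|$. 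The hardest step in this plan is the counting-and-duplication argument for $\mu'_i \geq -i$; the remaining verifications are largely bookkeeping.
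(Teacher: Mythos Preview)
Your argument is correct, but it takes a considerably longer route than the paper's.  The paper's key observation is that once both $(\mu_j)_{j\ge 1}$ and $(\mu'_j)_{j\ge 1}$ are strictly decreasing, the bijection $\tau$ is forced to be the identity on all indices $i>\max\{\tau(0),\tau^{-1}(0)\}$ (because removing index $\tau^{-1}(0)$ from $(\mu_j)_{j\ge1}$ and index $\tau(0)$ from $(\mu'_j)_{j\ge1}$ leaves two strictly decreasing enumerations of the same set, hence equal term by term).  That single step gives $\lambda'_i=\lambda_i$ for all large $i$, from which both $\lambda'_i\ge 0$ and $\lambda'_i=0$ eventually follow at once.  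You instead prove these two facts separately: the non-negativity via a counting argument on entries $\ge -i$, and the eventual vanishing via a stabilization-and-shift argument.  Both detours work, but neither is needed once the structure of $\tau$ is pinned down directly.

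One small imprecision worth flagging: in your counting step you write ``together with $\mu'_0$ this gives at most $i$ entries of $\mu'$ that are $\ge -i$,'' implicitly treating $\mu'_0$ as comparable to an integer.  In general $\mu'_0=t-|\lambda'|$ need not lie in $\Z$ (or even $\R$), so the comparison is not a priori meaningful.  The fix is immediate: if $\tau(0)=0$ then $\mu'_j=\mu_j$ for all $j\ge1$ (by the same strictly-decreasing-enumeration reasoning), so $\mu'_i=\lambda_i-i\ge -i$ and the contradiction hypothesis $\mu'_i\le -i-1$ cannot arise; hence you may assume $\tau(0)\neq 0$, in which case $\mu_0=\mu'_{\tau(0)}$ with $\tau(0)\ge1$ is an integer by hypothesis, and likewise $\mu'_0\in\Z$.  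With that remark your counting argument goes through as written.
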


\begin{proof} Suppose $\lambda'$ satisfies $\mu'=\mu_{\lambda'}(t)$. Then $\mu'_i=\lambda_i'-i>\lambda_{i+1}'-i-1=\mu'_{i+1}$ for all $i>0$.  

On the other hand, suppose $\mu'_i>\mu'_{i+1}$ for all $i>0$.  Set $\lambda_i'=\mu_i+i$ for all $i\geq0$ and $\lambda'=(\lambda_1', \lambda_2',\ldots)$.  Since $\mu_i>\mu_{i+1}$ and $\mu'_i>\mu'_{i+1}$ for all $i>0$, $\tau(i)$ must equal $i$ for all $i>\text{max}\{\tau(0), \tau^{-1}(0)\}$.  Thus $\lambda'_i=\lambda_i$ for all $i>\text{max}\{\tau(0), \tau^{-1}(0)\}$.  This shows $\lambda_i'=0$ for all but finitely many values of $i$.  Moreover, $\mu_i>\mu_{i+1}$ for $i>0$ implies $\lambda_i'\geq\lambda_{i+1}'$ for all $i>0$.  Thus $\lambda'$ is indeed a Young diagram.   Finally, choose $k>\text{max}\{\tau(0), \tau^{-1}(0)\}$ with $\lambda_k=0$.  Then $\lambda_k'=0$ as well.  Furthermore, $t=\sum_{i=0}^k\lambda_i=\sum_{i=0}^k\mu_i+\frac{k(k+1)}{2}=\sum_{i=0}^k\mu'_i+\frac{k(k+1)}{2}=\sum_{i=0}^k\lambda_i'$, which implies $\lambda_0'=t-|\lambda'|$.
\end{proof}

\begin{corollary}\label{gensim} (1) The $\stackrel{t}{\sim}$-equivalence classes are all trivial unless $t\in\Z_{\geq0}$.

(2) Suppose $d\in\Z_{\geq0}$ and $\lambda$ is a Young diagram. The $\stackrel{d}{\sim}$-equivalence class containing $\lambda$ is nontrivial if and only if the coordinates of $\mu_\lambda(d)$ are pairwise distinct.
\end{corollary}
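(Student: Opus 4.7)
The plan is to apply Proposition \ref{gs} to understand which rearrangements of $\mu_\lambda(t) = (\mu_0, \mu_1, \ldots)$ can arise as $\mu_{\lambda'}(t)$ for some Young diagram $\lambda'$. Let $M$ denote the multiset of values appearing in $\mu_\lambda(t)$. Any $\lambda' \stackrel{t}{\sim} \lambda$ is determined by a choice of value $\mu'_0 \in M$: the rest of the sequence $(\mu'_i)_{i \geq 1}$ is forced to be the decreasing arrangement of $M \setminus \{\mu'_0\}$, and this qualifies as the $\mu$-sequence of a Young diagram precisely when (i) all $\mu'_i$ are integers and (ii) $M \setminus \{\mu'_0\}$ has no repeated values. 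Since $(\mu_i)_{i \geq 1}$ is already strictly decreasing, every non-integrality or repetition in $M$ must involve the coordinate $\mu_0 = t - |\lambda|$.

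For part (1), if $t \notin \Z$ then $\mu_0 \notin \Z$ while all $\mu_j$ with $j \geq 1$ are integers, so (i) forces $\mu'_0 = \mu_0$ and therefore $\lambda' = \lambda$. If $t = -N$ is a negative integer, I would check that $N + |\lambda|$ exceeds the length of $\lambda$, hence $\mu_{N + |\lambda|} = -(N + |\lambda|) = \mu_0$; thus the value $\mu_0$ has multiplicity two in $M$ and all other values have multiplicity one. Condition (ii) then forces $\mu'_0$ to be (a copy of) the value $\mu_0$, and the sorted remainder is $(\mu_1, \mu_2, \ldots)$, giving $\lambda' = \lambda$. This proves part (1).

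For part (2), the same argument shows that whenever $\mu_0 = \mu_j$ for some $j \geq 1$ (i.e.\ the coordinates of $\mu_\lambda(d)$ are not pairwise distinct), the class is trivial. Conversely, assuming the coordinates are pairwise distinct, I would exhibit a nontrivial equivalent by setting $\mu'_0 := \mu_1$ and taking $(\mu'_i)_{i \geq 1}$ to be the decreasing arrangement of $M \setminus \{\mu_1\} = \{\mu_0, \mu_2, \mu_3, \ldots\}$: distinctness supplies (ii), integrality of $d$ supplies (i), and $\mu'_0 = \mu_1 \neq \mu_0$ guarantees $\lambda' \neq \lambda$. The main obstacle is verifying the implicit shape conditions hidden in Proposition \ref{gs} (that the rows $\lambda'_i$ are nonnegative and eventually zero); this is handled by observing that $\mu_0$ slots between two consecutive values $\mu_k$ and $\mu_{k+1}$ in the sorted order, so the underlying permutation $\tau$ has finite support, $\mu'_i = \mu_i = -i$ for all sufficiently large $i$, and strict decrease then propagates $\mu'_i \geq -i$ through all $i \geq 1$.
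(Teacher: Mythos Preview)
Your proposal is correct and follows essentially the same approach as the paper's proof: both deduce from Proposition~\ref{gs} that the class containing $\lambda$ is nontrivial if and only if the coordinates of $\mu_\lambda(t)$ are all integers and pairwise distinct, then handle the negative-integer case by exhibiting the coincidence $\mu_{|\lambda|-t}=\mu_0$. Your last paragraph re-verifies shape conditions (nonnegativity, eventual vanishing) that Proposition~\ref{gs} already guarantees internally, so that discussion is unnecessary, though not incorrect.
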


\begin{proof} Suppose $\lambda$ is a Young diagram.  It follows from Proposition \ref{gs} that the $\stackrel{t}{\sim}$-equivalence class containing $\lambda$ is nontrivial if and only if the coordinates of $\mu_\lambda(t)$ are all integers which are pairwise distinct.  This proves part (2).  Also, this implies the $\stackrel{t}{\sim}$-equivalence classes are all trivial unless $t\in\Z$.  Finally, if $t$ is a negative integer, then $\lambda_{|\lambda|-t}=0$ which implies $\mu_{|\lambda|-t}=\mu_0$.  This proves part (1). 
\end{proof}

\begin{example} (1) Let $\lambda=(2, 1, 0,\ldots)$.  Then $\mu_\lambda(t)=(t-3, 1, -1, -3, -4,\ldots)$.  Thus, by Corollary \ref{gensim}, $\lambda$ is in a nontrivial $\stackrel{t}{\sim}$-equivalence class if and only if $t\in\Z_{\geq 0}$ and $t\not=0, 2, 4$.  

(2)  If we let $\varnothing$ denote the Young diagram $(0,\ldots)$, then $\mu_\varnothing(t)=(t, -1, -2, -3,\ldots)$.  Thus, by Corollary \ref{gensim}, $\lambda$ is in a nontrivial $\stackrel{t}{\sim}$-equivalence class if and only if $t\in\Z_{\geq 0}$.~\hfill$\Dox$
\end{example}

The following proposition will allow us to give a complete description of nontrivial $\stackrel{t}{\sim}$-equivalence classes.

\begin{proposition}\label{mindiagprop} Suppose $d$ is a nonnegative integer and $\lambda$ is a Young diagram in a nontrivial $\stackrel{d}{\sim}$-equivalence class.  Label the coordinates of $\mu_\lambda(d)$ by $\mu_0, \mu_1,\ldots$ so that $\mu_{i}<\mu_{i-1}$ for all $i>0$ (such a labeling is possible by Corollary \ref{gensim}(2)).  For each integer $i\geq0$, set $\lambda^{(i)}=(\lambda^{(i)}_1, \lambda^{(i)}_2,\ldots)$ where \begin{equation}\label{lambdai}\lambda_j^{(i)}:=\left\{\begin{array}{ll}
\mu_{j-1}+j &\text{if $j\leq i$},\\
\mu_j+j & \text{if $j>i$}.
\end{array}\right.\end{equation} for each $j>0$.   Then $\lambda^{(0)}\subset\lambda^{(1)}\subset\lambda^{(2)}\subset\cdots$ is a complete list of the Young diagrams $\stackrel{d}{\sim}$-equivalent to $\lambda$.
\end{proposition}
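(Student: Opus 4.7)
The plan is to build everything on Proposition \ref{gs}, which identifies the rearrangements of the coordinates of $\mu_\lambda(d)$ that arise as $\mu_{\lambda'}(d)$ for some Young diagram $\lambda'$: precisely those whose entries in positions $j \geq 1$ form a strictly decreasing sequence of integers. The argument then splits naturally into three small steps---produce each $\lambda^{(i)}$ as such a rearrangement, verify the chain of inclusions, and show that no other Young diagram can be $\stackrel{d}{\sim}$-equivalent to $\lambda$.

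To realize $\lambda^{(i)}$, fix $i \geq 0$ and consider the sequence $\mu' = (\mu'_0, \mu'_1, \ldots)$ that places $\mu_i$ in position $0$ and lists the remaining entries in their natural decreasing order $\mu_0, \mu_1, \ldots, \mu_{i-1}, \mu_{i+1}, \mu_{i+2}, \ldots$ starting at position $1$. Since the $\mu_k$ are pairwise distinct integers with $\mu_{k-1} > \mu_k$, the entries of $\mu'$ in positions $j \geq 1$ are strictly decreasing; the only step worth a moment's thought is the junction $j = i$, where one needs $\mu_{i-1} > \mu_{i+1}$, which follows from $\mu_{i-1} > \mu_i > \mu_{i+1}$. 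Proposition \ref{gs} therefore produces a Young diagram with $\mu_{\lambda^{(i)}}(d) = \mu'$, and the formula $\lambda^{(i)}_j = \mu'_j + j$ for $j \geq 1$ is exactly (\ref{lambdai}). Comparing (\ref{lambdai}) at consecutive indices, $\lambda^{(i)}$ and $\lambda^{(i+1)}$ agree in every row except the $(i+1)$-st, where the difference $\lambda^{(i+1)}_{i+1} - \lambda^{(i)}_{i+1} = \mu_i - \mu_{i+1}$ is strictly positive; hence $\lambda^{(i)} \subset \lambda^{(i+1)}$.

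For exhaustiveness, take any $\lambda' \stackrel{d}{\sim} \lambda$ and write $\mu_{\lambda'}(d) = (\mu'_0, \mu'_1, \ldots)$. Its multiset of entries equals $\{\mu_0, \mu_1, \ldots\}$, so $\mu'_0 = \mu_i$ for a unique $i \geq 0$. The remaining coordinates $\mu'_j = \lambda'_j - j$ are strictly decreasing in $j \geq 1$ (because $\lambda'_j \geq \lambda'_{j+1}$ are integers), and since the $\mu_k$ are all distinct the only strictly decreasing arrangement of $\{\mu_0, \mu_1, \ldots\} \setminus \{\mu_i\}$ is $(\mu_0, \ldots, \mu_{i-1}, \mu_{i+1}, \ldots)$; adding $j$ back gives (\ref{lambdai}), so $\lambda' = \lambda^{(i)}$. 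There is no real obstacle here beyond the monotonicity check at the junction $j = i$, which relies on the fact---already extracted from the non-triviality assumption via Corollary \ref{gensim}(2)---that the $\mu_k$ are pairwise distinct.
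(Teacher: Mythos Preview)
Your argument is correct and follows essentially the same route as the paper's proof: both identify the $\mu$-sequence of $\lambda^{(i)}$ as the cyclic shift placing $\mu_i$ in position $0$, invoke Proposition~\ref{gs} for existence and completeness, and compare consecutive diagrams row by row to obtain the inclusions. Your treatment of the exhaustiveness step is in fact more explicit than the paper's one-line appeal to Proposition~\ref{gs}, but the underlying idea is identical.
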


\begin{proof}   Given $i\geq0$,  $\lambda^{(i)}$ is a Young diagram with $\mu_{\lambda^{(i)}}(d)=(\mu_0^{(i)}, \mu_1^{(i)},\ldots)$ where $$\mu_j^{(i)}=\left\{\begin{array}{ll}
\mu_{i} &\text{if $j=0$},\\
\mu_{j-1} & \text{if $0<j\leq i$},\\
\mu_j &  \text{if $j> i$}.
\end{array}\right.$$  Hence $\lambda^{(i)}\stackrel{d}{\sim}\lambda$ for all $i\geq0$.  Moreover, it follows from Proposition \ref{gs} that $\lambda^{(0)}, \lambda^{(1)}, \lambda^{(2)},\ldots$ is a complete list of Young diagrams which are $\stackrel{d}{\sim}$-equivalent to $\lambda$.  
Furthermore,  
$\lambda^{(i-1)}_j=\lambda^{(i)}_j$ for all $j\not=i$ and $\lambda^{(i-1)}_{i}=\mu_i+i<\mu_{i-1}+i=\lambda^{(i)}_i$ for each $i>0$.  Hence $\lambda^{(i-1)}\subset\lambda^{(i)}$ for all $i>0$.  
\end{proof}

\begin{corollary}\label{mindiag} A Young diagram $\lambda$ is the minimal element in a nontrivial $\stackrel{d}{\sim}$-equivalence class if and only if $\lambda(d)$ is a Young diagram.  In particular, the nontrivial $\stackrel{d}{\sim}$-equivalence classes are parametrized by Young diagrams of size $d$.  Moreover, if $\{\lambda^{(0)}\subset\lambda^{(1)}\subset\cdots\}$ is a nontrivial $\stackrel{d}{\sim}$-equivalence class, then for each $i\geq 0$ the Young diagram $\lambda^{(i)}$ is obtained from the Young diagram $\lambda^{(0)}(d)$ by removing row $i$ and adding one box to row $j$ for each $j=0,\ldots, i-1$.
\end{corollary}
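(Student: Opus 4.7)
The plan is to derive all three assertions from Proposition \ref{mindiagprop} by careful bookkeeping.

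For part (1), the key observation is that for any Young diagram $\lambda$, the tail $(\lambda_1-1,\lambda_2-2,\ldots)$ of $\mu_\lambda(d)$ is automatically strictly decreasing (since $\lambda_i\geq\lambda_{i+1}$ gives $\lambda_i-i>\lambda_{i+1}-(i+1)$). Thus sorting $\mu_\lambda(d)$ into strict decreasing order, as required by Proposition \ref{mindiagprop}, amounts to inserting the single entry $d-|\lambda|$ into the correct position. Writing $\mu_0>\mu_1>\cdots$ for the sorted sequence, $\lambda$ agrees with the bottom diagram $\lambda^{(0)}$ of its $\stackrel{d}{\sim}$-class precisely when $d-|\lambda|$ is already in position $0$, i.e.\ $d-|\lambda|>\lambda_1-1$, which is equivalent to $d-|\lambda|\geq\lambda_1$, which in turn is exactly the statement that $\lambda(d)$ is a Young diagram. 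Moreover this same strict inequality forces all coordinates of $\mu_\lambda(d)$ to be distinct, so by Corollary \ref{gensim}(2) the class is automatically nontrivial.

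For part (2), Proposition \ref{mindiagprop} exhibits every nontrivial class as a chain $\lambda^{(0)}\subset\lambda^{(1)}\subset\cdots$ with a unique minimum $\lambda^{(0)}$, so by part (1) the nontrivial classes are in bijection with Young diagrams $\lambda$ such that $\lambda(d)$ is a Young diagram. It then remains to see that $\lambda\mapsto\lambda(d)$ identifies these with Young diagrams of size $d$: injectivity is immediate (drop the first row), and for surjectivity, given any Young diagram $\mu=(\mu_1,\mu_2,\ldots)$ of size $d$, the diagram $\lambda:=(\mu_2,\mu_3,\ldots)$ satisfies $|\lambda|=d-\mu_1$, hence $\lambda(d)=\mu$.

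For part (3), the computation is a direct unwinding of formula (\ref{lambdai}). The rows of $\lambda^{(0)}(d)$, indexed $0,1,2,\ldots$, have sizes $\mu_0,\mu_1+1,\mu_2+2,\ldots$; i.e.\ row $j$ contains $\mu_j+j$ boxes. Removing row $i$ (which has $\mu_i+i$ boxes) and adjoining one box to each of rows $0,1,\ldots,i-1$ yields a diagram whose rows, re-indexed from $1$, have sizes
$$\mu_0+1,\,\mu_1+2,\ldots,\mu_{i-1}+i,\,\mu_{i+1}+(i+1),\,\mu_{i+2}+(i+2),\ldots,$$
matching $\lambda^{(i)}$ exactly. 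Monotonicity of the new row sequence is automatic from $\mu_0>\mu_1>\cdots$, the only delicate step being $\mu_{i-1}+i\geq\mu_{i+1}+(i+1)$, which follows from the strict chain $\mu_{i-1}>\mu_i>\mu_{i+1}$. The main ``obstacle" throughout is simply keeping straight the two indexing conventions---Young diagrams indexed from $1$ versus $t$-completions indexed from $0$---and recognizing that sorting $\mu_\lambda(d)$ reduces to inserting a single entry; once this is set up, all three claims fall out from the formulas of Proposition \ref{mindiagprop}.
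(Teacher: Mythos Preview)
Your proof is correct and follows essentially the same approach as the paper's: both arguments derive everything from Proposition \ref{mindiagprop} and Corollary \ref{gensim}(2), using the observation that the tail of $\mu_\lambda(d)$ is already strictly decreasing so that sorting amounts to inserting the single entry $d-|\lambda|$. Your version is somewhat more explicit---you spell out the row-by-row check in part (3) and verify monotonicity of the resulting diagram---whereas the paper simply reads off the identities $\lambda^{(i)}_1=\mu_0+1$, $\lambda^{(i)}_j=\lambda^{(0)}_{j-1}+1$ for $1<j\leq i$, and $\lambda^{(i)}_j=\lambda^{(0)}_j$ for $j>i$ directly from (\ref{lambdai}). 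One small point: your biconditional ``$\lambda=\lambda^{(0)}$ precisely when $d-|\lambda|$ is already in position $0$'' is asserted rather than argued; the forward direction is immediate from (\ref{lambdai}), and the converse follows either from your own insertion picture (in fact $\lambda=\lambda^{(k)}$ where $k$ is the insertion position) or, as the paper does, from the identity $d-|\lambda^{(0)}|=\mu_0$ established in the proof of Proposition \ref{mindiagprop}.
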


\begin{proof} Suppose $\lambda$ is a Young diagram with the property that $\lambda(d)$ is also a Young diagram.  Then $d-|\lambda|>\lambda_1-1>\lambda_2-2>\cdots$.  Thus, by Corollary \ref{gensim}(2),  $\lambda$ is in a nontrivial $\stackrel{d}{\sim}$-equivalence class.  Moreover, $\lambda=\lambda^{(0)}$ (using the notation in Proposition \ref{mindiagprop}) is the minimal element in its $\stackrel{d}{\sim}$-equivalence class.  Conversely, suppose $\{\lambda^{(0)}\subset\lambda^{(1)}\subset\cdots\}$ is a nontrivial $\stackrel{d}{\sim}$-equivalence class and let $\mu_0,\mu_1,\ldots$ be as in Proposition \ref{mindiagprop}.  From (\ref{lambdai}) we have  $d-|\lambda^{(0)}|=\mu_0\geq\mu_1+1=\lambda_1^{(0)}$.  Hence $\lambda^{(0)}(d)$ is a Young diagram.    Finally, using (\ref{lambdai}) we see $\lambda^{(i)}_1=\mu_0+1=d-|\lambda^{(0)}|+1$, 
$\lambda^{(i)}_j=\mu_{j-1}+j=\lambda^{(0)}_{j-1}+1$ for $0<j\leq i$, and $\lambda^{(i)}_{j}=\lambda^{(0)}_j$ for $j>i$.  The result follows.
\end{proof}

\begin{example}\label{3class}  (1) The only nontrivial $\stackrel{0}{\sim}$-equivalence class is $\{\varnothing, (1), (1^2), (1^3),\ldots\}$.

(2) Below are the three nontrivial $\stackrel{3}{\sim}$-equivalence classes of Young diagrams. $$\includegraphics{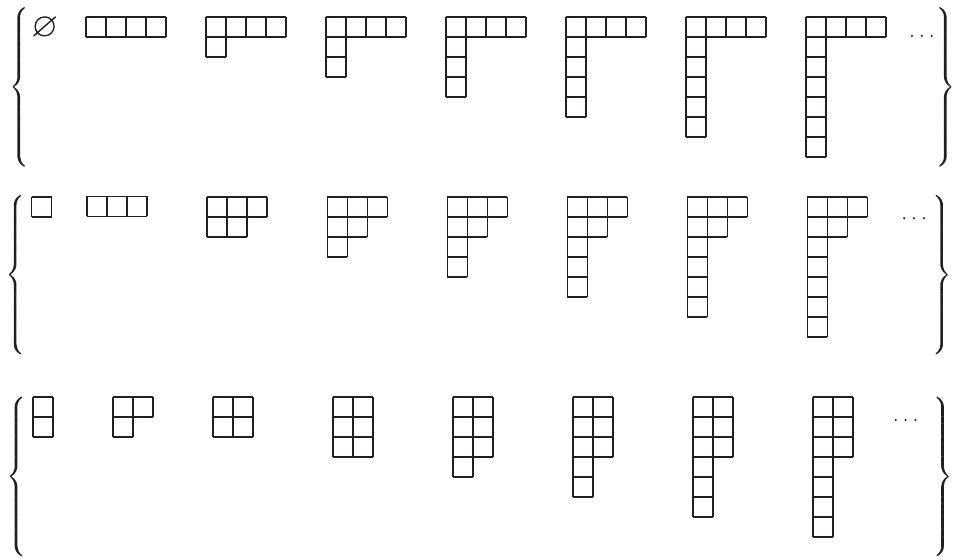}$$
~\hfill$\Dox$
\end{example}

Next we show how the polynomials $P_\lambda$ defined in section \ref{dim} can be used to determine when a Young diagram is in a trivial $\stackrel{d}{\sim}$-equivalence class.

\begin{proposition}\label{zero} Suppose $\lambda$ is a Young diagram and $d$ is a nonnegative integer.  $\lambda$ is in a trivial $\stackrel{d}{\sim}$-equivalence class if and only if $P_\lambda(d)=0$.
\end{proposition}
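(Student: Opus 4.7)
The plan is to reduce the statement to a direct comparison between the list of roots of $P_\lambda$ given by Proposition \ref{roots} and the characterization of trivial $\stackrel{d}{\sim}$-equivalence classes given by Corollary \ref{gensim}(2).

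First I would recall that by Corollary \ref{gensim}(2), $\lambda$ lies in a trivial $\stackrel{d}{\sim}$-equivalence class if and only if two of the coordinates of $\mu_\lambda(d) = (d-|\lambda|,\, \lambda_1-1,\, \lambda_2-2,\, \ldots)$ coincide. Next I would observe that the ``tail'' coordinates are automatically pairwise distinct: since $\lambda_i \geq \lambda_{i+1}$, we have
\[
(\lambda_i - i) - (\lambda_{i+1} - (i+1)) = \lambda_i - \lambda_{i+1} + 1 \geq 1,
\]
so $\lambda_1 - 1 > \lambda_2 - 2 > \cdots$ strictly. Consequently, a coincidence in $\mu_\lambda(d)$ can only occur between the zeroth coordinate $d - |\lambda|$ and some coordinate $\lambda_i - i$ with $i \geq 1$, which is equivalent to $d = |\lambda| + \lambda_i - i$.

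Then I would observe that the constraint $d \geq 0$ forces $i \leq |\lambda|$: indeed, for $i > |\lambda|$ we have $\lambda_i = 0$, so the equation $d = |\lambda| - i$ would force $d < 0$. Conversely, for $1 \leq i \leq |\lambda|$, the value $|\lambda| + \lambda_i - i$ is nonnegative. Thus the set of nonnegative integers $d$ for which $\lambda$ is in a trivial $\stackrel{d}{\sim}$-equivalence class is exactly
\[
\{\,|\lambda| + \lambda_i - i \,:\, 1 \leq i \leq |\lambda|\,\}.
\]

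Finally I would invoke Proposition \ref{roots}, which says precisely that the roots of $P_\lambda$ are the $|\lambda|$ distinct integers $|\lambda| + \lambda_i - i$ for $i = 1, \ldots, |\lambda|$. Comparing the two descriptions gives the desired equivalence $P_\lambda(d) = 0 \Leftrightarrow \lambda$ lies in a trivial $\stackrel{d}{\sim}$-equivalence class. There is no serious obstacle here; the only small point to be careful about is handling the index range $i > |\lambda|$ (where the putative coincidence falls outside $d \geq 0$) so that the sets on both sides match exactly.
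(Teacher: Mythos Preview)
Your proposal is correct and follows essentially the same route as the paper: invoke Corollary \ref{gensim}(2), note that the tail coordinates $\lambda_i-i$ are automatically strictly decreasing so any coincidence must involve $d-|\lambda|$, restrict the index range to $1\le i\le|\lambda|$ using $d\ge 0$, and then match the resulting set $\{\,|\lambda|+\lambda_i-i\,\}$ with the roots of $P_\lambda$ via Proposition \ref{roots}. Your write-up simply spells out a couple of steps (distinctness of the tail, the index bound) a bit more explicitly than the paper does.
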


\begin{proof} By Corollary \ref{gensim}, $\lambda$ is in a trivial $\stackrel{d}{\sim}$-equivalence class if and
only if the coordinates of $\mu_\lambda(d)$ are not distinct, which occurs if and only if $d-|\lambda|=\lambda_i-i$ for some $i>0$.  However, $d-|\lambda|>\lambda_i-i$ when $i>|\lambda|$, so $\lambda$ is in a trivial $\stackrel{d}{\sim}$-equivalence class if and only if $d=|\lambda|+\lambda_i-i$ for some $0<i\leq|\lambda|$.  The result now follows from Proposition \ref{roots}.
\end{proof}

We conclude this section by defining a total order on the nontrivial $\stackrel{d}{\sim}$-equivalence classes.  This ordering will be useful in section \ref{secblocks}.

\begin{definition} If $B$ and $B'$ are nontrivial $\stackrel{d}{\sim}$-equivalence classes with minimal diagrams $\lambda$ and $\lambda'$ respectively, we write $B\prec B'$ if $\lambda(d)\prec\lambda'(d)$ (see section \ref{notation}).
\end{definition}

\begin{example} (1) The nontrivial $\stackrel{3}{\sim}$-equivalence classes in Example \ref{3class}(2) are listed in decreasing order.

(2)  Below are the seven nontrivial $\stackrel{5}{\sim}$-equivalence classes with $B_0\prec\cdots\prec B_6$. $$\includegraphics{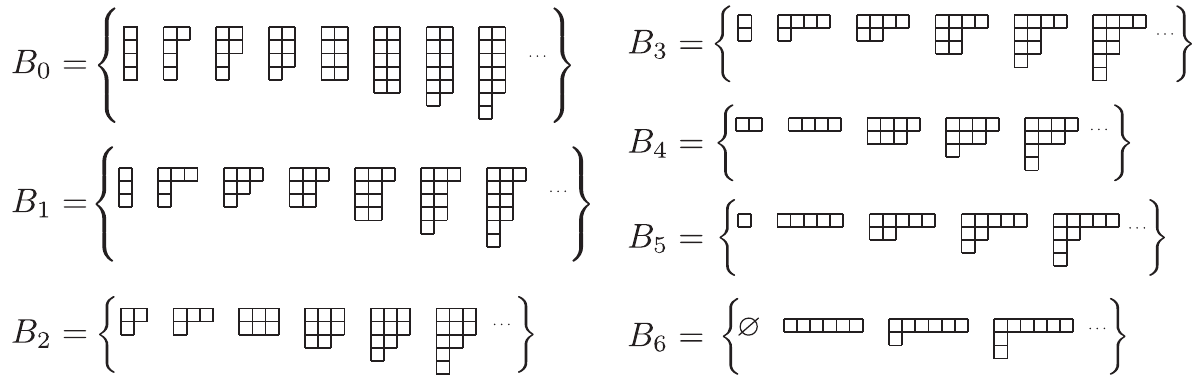}$$
~\hfill$\Dox$
\end{example}


\subsection{The functor $-\otimes L(\Box)$}\label{part1}

In this section we explain how to decompose the tensor product $L(\lambda)\otimes L(\Box)$ in $\uRep(S_T; K)$ where $\lambda$ is an arbitrary Young diagram.  
The following lemma will be our main tool in our study of decomposing tensor products.

\begin{lemma}\label{tdlemma} Suppose $\lambda, \mu, \lambda^{(1)},\ldots, \lambda^{(m)}$ are Young diagrams with the property   $L(\lambda)\otimes L(\mu)=L(\lambda^{(1)})\oplus\cdots\oplus L(\lambda^{(m)})$ in $\uRep(S_T; K)$.  Then there exists an integer $N$ such that $L_{\lambda(d)}\otimes L_{\mu(d)}=L_{\lambda^{(1)}(d)}\oplus\cdots\oplus L_{\lambda^{(m)}(d)}$ in $\Rep(S_d; F)$ whenever $d$ is an integer with $d\geq N$.
\end{lemma}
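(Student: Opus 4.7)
The plan is to compare the decomposition of $L(\lambda)\otimes L(\mu)$ in two different ways: lift the $\uRep(S_d; F)$ decomposition up to the semisimple category $\uRep(S_T; K)$ (to identify the summands with the given $L(\lambda^{(i)})$'s), and push the $\uRep(S_d; F)$ decomposition down through the interpolation functor $\cat{F}:\uRep(S_d; F)\to\Rep(S_d; F)$ (to obtain the desired decomposition in $\Rep(S_d; F)$). For this to work, the integer $d$ must be large enough that both $\Lift_d$ and $\cat{F}$ behave transparently on every indecomposable that could possibly appear.

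First I would establish a uniform size bound on all relevant Young diagrams. Since $L(\lambda)\otimes L(\mu)\cong ([|\lambda|+|\mu|],\, e_\lambda\otimes e_\mu)$, every indecomposable summand (in either $\uRep(S_d; F)$ or $\uRep(S_T; K)$) is of the form $([|\lambda|+|\mu|],\, e')$ for a primitive idempotent $e'$. By Theorem \ref{idemP} together with Proposition \ref{proprep}(2), such a summand is isomorphic to $L(\nu)$ with $|\nu|\leq|\lambda|+|\mu|$. In particular, $|\lambda^{(i)}|\leq|\lambda|+|\mu|$ for every $i$, and any indecomposable summand $L(\nu)$ of $L(\lambda)\otimes L(\mu)$ in $\uRep(S_d; F)$ also satisfies $|\nu|\leq|\lambda|+|\mu|$. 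Since there are only finitely many Young diagrams $\rho$ with $|\rho|\leq|\lambda|+|\mu|$, Proposition \ref{Liftprop}(4) and Proposition \ref{Findy} jointly yield an integer $N$ such that for every integer $d\geq N$ and every such $\rho$ we have both $\Lift_d(L(\rho))=L(\rho)$ in $\uRep(S_T; K)$ and $d-|\rho|\geq\rho_1$, so that $\cat{F}(L(\rho))=L_{\rho(d)}$.

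Now fix $d\geq N$ and write the Krull--Schmidt decomposition $L(\lambda)\otimes L(\mu)=\bigoplus_\nu L(\nu)^{\oplus n_\nu}$ in $\uRep(S_d; F)$, where by the size bound above $n_\nu=0$ whenever $|\nu|>|\lambda|+|\mu|$. Applying $\Lift_d$ and using Proposition \ref{Liftprop}(1) together with the choice of $N$, I obtain
$$L(\lambda)\otimes L(\mu)=\Lift_d(L(\lambda))\otimes\Lift_d(L(\mu))=\bigoplus_\nu L(\nu)^{\oplus n_\nu}$$
in $\uRep(S_T; K)$. Comparing with the hypothesis $L(\lambda)\otimes L(\mu)=\bigoplus_{i=1}^m L(\lambda^{(i)})$ and invoking the Krull--Schmidt property (Proposition \ref{proprep}(3)) in the semisimple category $\uRep(S_T; K)$ (Corollary \ref{ssT}), I conclude that the multiset of $\nu$'s (counted with multiplicity $n_\nu$) coincides with the multiset $\{\lambda^{(1)},\ldots,\lambda^{(m)}\}$.

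Finally, I apply the tensor functor $\cat{F}$ to the decomposition in $\uRep(S_d; F)$. Since $\cat{F}(L(\rho))=L_{\rho(d)}$ for every relevant $\rho$ by the choice of $N$, this gives
$$L_{\lambda(d)}\otimes L_{\mu(d)}=\cat{F}(L(\lambda))\otimes\cat{F}(L(\mu))=\cat{F}\bigl(L(\lambda)\otimes L(\mu)\bigr)=\bigoplus_\nu L_{\nu(d)}^{\oplus n_\nu}=\bigoplus_{i=1}^m L_{\lambda^{(i)}(d)},$$
which is exactly the desired decomposition in $\Rep(S_d; F)$. The only real subtlety, and the place where one has to be careful, is that the threshold $N$ cannot be allowed to depend on the a priori unknown Young diagrams $\nu$ appearing in the $\uRep(S_d; F)$ decomposition; this is resolved by the uniform size bound established at the outset, which reduces the problem to finitely many possible $\rho$'s.
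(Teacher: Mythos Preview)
Your proof is correct and follows the same overall strategy as the paper: lift to the semisimple category $\uRep(S_T;K)$ to identify the summands, then apply the interpolation functor $\cat{F}$. The one place where you diverge is in handling the decomposition in $\uRep(S_d;F)$: you first establish the uniform size bound $|\nu|\le|\lambda|+|\mu|$ so that $N$ can be chosen to work for every possible summand, then lift each piece individually and compare multisets via Krull--Schmidt. The paper instead chooses $N$ only for the finitely many \emph{known} diagrams $\lambda,\mu,\lambda^{(1)},\dots,\lambda^{(m)}$, observes that $\Lift_d(L(\lambda)\otimes L(\mu))=\Lift_d\bigl(L(\lambda^{(1)})\oplus\cdots\oplus L(\lambda^{(m)})\bigr)$, and then invokes the injectivity of $\Lift_d$ (Proposition~\ref{Liftprop}(3)) to conclude directly that $L(\lambda)\otimes L(\mu)=L(\lambda^{(1)})\oplus\cdots\oplus L(\lambda^{(m)})$ in $\uRep(S_d;F)$. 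This sidesteps entirely the subtlety you flag at the end---there are no ``unknown'' summands to worry about---and makes the argument a little shorter. Your size-bound argument, on the other hand, is more self-contained in that it does not need the full injectivity statement.
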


\begin{proof} By Proposition \ref{Liftprop}(4) we can find an integer $N$ such that whenever $d$ is an integer with $d\geq N$ the following two properties are satisfied: \begin{enumerate}
\item[($\star$)] $\Lift_d$ acts as the identity on all of the objects $L(\lambda), L(\mu), L(\lambda^{(1)}),\ldots, L(\lambda^{(m)})$.

\item[($\star\star$)] $\lambda(d), \mu(d), \lambda^{(1)}(d),\ldots, \lambda^{(m)}(d)$ are all Young diagrams.
\end{enumerate}
If $d\geq N$, by ($\star$) and Proposition \ref{Liftprop}(1), we have $$\Lift_d(L(\lambda)\otimes L(\mu))=L(\lambda^{(1)})\oplus\cdots\oplus L(\lambda^{(m)})$$ in $\uRep(S_T; K)$.  Hence, by ($\star$) and Proposition \ref{Liftprop}(3), $$L(\lambda)\otimes L(\mu)=L(\lambda^{(1)})\oplus\cdots\oplus L(\lambda^{(m)})$$ in $\uRep(S_d; F)$.  If we apply the tensor functor $\cat{F}$ to the equality above, the result follows from Proposition \ref{Findy} along with ($\star\star$).
\end{proof}

From Lemma \ref{tdlemma}, along with the well-known simple algorithm for decomposing $L_\lambda\otimes L_{(d-1, 1, 0,\ldots)}$ in $\Rep(S_d; F)$ (see e.g. \cite{MR0095209}), we have the following algorithm for decomposing $L(\lambda)\otimes L(\Box)$ in $\uRep(S_T; K)$.

\begin{proposition}\label{tenone}  $L(\lambda)\otimes L(\Box)=\bigoplus_{\nu}L(\nu)^{\oplus a_\nu}$  in $\uRep(S_T; K)$ where $a_\nu$ is the number of times the Young diagram $\nu$ is obtained from $\lambda$ as a result of one of the following three procedures:

(1) Add an addable\footnote{Given a Young diagram $\lambda$, an addable (resp. removable) box of $\lambda$ is a box which can be added to (resp. removed from) $\lambda$ resulting in a new Young diagram.} box to $\lambda$.

(2) Delete a removable box from $\lambda$.

(3) First delete a removable box from $\lambda$, then add an addable box to the resulting Young diagram.

\end{proposition}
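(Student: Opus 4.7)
The plan is to reduce the computation in $\uRep(S_T; K)$ to the classical decomposition of $L_{(d-1,1)} \otimes L_{\lambda(d)}$ in $\Rep(S_d; F)$ for a sufficiently large integer $d$. By Corollary \ref{ssT} the category $\uRep(S_T;K)$ is semisimple, so $L(\lambda) \otimes L(\Box)$ splits as a finite direct sum $\bigoplus_j L(\mu^{(j)})$ of indecomposable objects for some (initially unknown) Young diagrams $\mu^{(j)}$. Applying Lemma \ref{tdlemma} to this decomposition (with the second tensor factor $\Box$, so $\Box(d) = (d-1,1,0,\ldots)$) shows that for all sufficiently large integers $d$ one has $L_{\lambda(d)} \otimes L_{(d-1,1)} = \bigoplus_j L_{\mu^{(j)}(d)}$ in $\Rep(S_d; F)$. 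Hence it suffices to identify the right-hand side classically and to check that its summands are precisely the $L_{\nu(d)}$ for $\nu$ ranging with multiplicity over the Young diagrams obtained from $\lambda$ by procedures (1)--(3).

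For the classical calculation I would use the identity $V_d = L_{(d)} \oplus L_{(d-1,1)}$ to write $L_{(d-1,1)} \otimes L_{\lambda(d)}$ as $V_d \otimes L_{\lambda(d)}$ minus one copy of $L_{\lambda(d)}$, and then apply Frobenius reciprocity together with the standard branching rule to obtain
\[
V_d \otimes L_{\lambda(d)} \;=\; \Ind_{S_{d-1}}^{S_d}\Res^{S_d}_{S_{d-1}} L_{\lambda(d)} \;=\; \bigoplus_{(B,A)} L_{(\lambda(d) \setminus B) \cup A},
\]
where the sum runs over all pairs consisting of a removable box $B$ of $\lambda(d)$ and an addable box $A$ of $\lambda(d) \setminus B$. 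In particular the multiplicity of $L_{\lambda(d)}$ itself equals the number of removable boxes of $\lambda(d)$, and subtracting one copy of $L_{\lambda(d)}$ reduces this multiplicity by one.

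It remains to match the pairs $(B,A)$ with the three procedures of the proposition. Choose $d$ large enough that $d - |\lambda| > \lambda_1$ and that $d - |\mu| > \mu_1$ for every Young diagram $\mu$ occurring on the right-hand side of the classical decomposition; then each such $\mu$ can be written uniquely as $\nu(d)$ for some Young diagram $\nu$ that does not depend on $d$, and row $0$ of $\lambda(d)$ is strictly longer than row $1$. Splitting the chains by the positions of $B$ and $A$ relative to row $0$ gives: pairs with $B$ in row $0$ and $A$ below correspond to adding an addable box of $\lambda$, which is case (1); pairs with $B$ below row $0$ and $A$ in row $0$ correspond to deleting a removable box of $\lambda$, which is case (2); pairs with both $B$ and $A$ below row $0$ and $B \ne A$ correspond to case (3) with $\nu \ne \lambda$; and pairs with $B = A$ contribute a total of $r'+1$ copies of $L_{\lambda(d)}$, where $r'$ is the number of removable boxes of $\lambda$. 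After subtracting the one copy of $L_{\lambda(d)}$ from the $-L_{\lambda(d)}$ term, the remaining $r'$ copies are exactly the contributions of case (3) in which the removed and added boxes coincide in $\lambda$, producing $\nu = \lambda$.

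The main obstacle is the bookkeeping of the previous paragraph: for each removable $B'$ of $\lambda$ one has to verify that the addable boxes of $\lambda \setminus B'$ (as an abstract Young diagram) correspond bijectively to the addable boxes of $\lambda(d) \setminus B$ that lie below row $0$, where $B$ is the image of $B'$ in $\lambda(d)$, and analogously for the row-$0$ cases and for the addable boxes of $\lambda$ itself. This correspondence is routine once one uses that row $0$ of $\lambda(d)$ is strictly longer than row $1$ for large $d$, so the row-$0$ addable and removable structure decouples from what happens below row $0$.
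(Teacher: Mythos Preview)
Your proposal is correct and follows essentially the same approach as the paper: the paper simply invokes Lemma~\ref{tdlemma} together with the known decomposition of $L_{\lambda(d)}\otimes L_{(d-1,1,0,\ldots)}$ in $\Rep(S_d;F)$ (citing \cite{MR0095209}) and does not spell out the classical computation. Your argument supplies those classical details explicitly via $V_d\cong\Ind_{S_{d-1}}^{S_d}\mathbf{1}$ and the branching rule, which is exactly the intended route.
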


\begin{example}\label{expart1}  In this example we will apply Proposition \ref{tenone} to $L(\lambda)\otimes L(\Box)$ in $\uRep(S_T; K)$ where $\lambda=(3, 1)$.  The following figure illustrates  all of the adding and removing of boxes necessary to apply Proposition \ref{tenone}.
$$\includegraphics{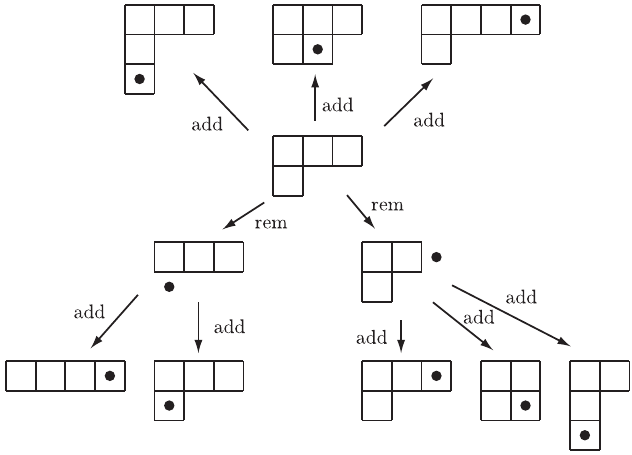}$$
Therefore, in $\uRep(S_T; K)$, $L(\lambda)\otimes L(\Box)$ decomposes as a direct sum of two copies of $L(\lambda)$ and one copy of the indecomposable object corresponding to each of the Young diagrams $(3, 1^2), (3, 2), (4, 1), (3), (2, 1), (4), (2^2), (2, 1^2)$.
 \hfill$\Dox$
\end{example}

\begin{corollary}\label{tenonecor}  Suppose $L(\lambda)\otimes L(\Box)=\bigoplus_{\nu}L(\nu)^{\oplus a_\nu}$  in $\uRep(S_T; K)$ and let $d$ be a nonnegative integer..  Then $a_\nu=1$ whenever $\nu\not=\lambda$ is such that the coordinates of $\mu_\lambda(d)-\mu_\nu(d)$ are all zero except for exactly one $1$ and one $-1$, and $a_\nu=0$ for all other $\nu\not=\lambda$.
\end{corollary}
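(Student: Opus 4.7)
The plan is to derive the corollary directly from Proposition \ref{tenone} by computing the effect of each of its three box-manipulations on the sequence $\mu_\lambda(d)=(d-|\lambda|,\lambda_1-1,\lambda_2-2,\ldots)$. For operation (1), adding a box to row $i\geq 1$ produces $\nu$ with $|\nu|=|\lambda|+1$ and $\nu_i=\lambda_i+1$, so $\mu_\lambda(d)-\mu_\nu(d)$ equals $+1$ in coordinate $0$, $-1$ in coordinate $i$, and zero elsewhere. Operation (2), removing a box from row $i$, gives the reversed pattern: $-1$ in coordinate $0$ and $+1$ in coordinate $i$. Operation (3), removing a box from row $i$ and adding one to row $j\neq i$ (so that $\nu\neq\lambda$), fixes coordinate $0$ and produces $+1$ in coordinate $i$ together with $-1$ in coordinate $j$. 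In every case $\mu_\lambda(d)-\mu_\nu(d)$ has exactly one $+1$, one $-1$, and zeros elsewhere.

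Next I would observe that these three procedures are mutually exclusive when applied to produce a given $\nu\neq\lambda$, because they change $|\nu|$ by $+1$, $-1$, and $0$ respectively; and within each procedure the affected rows are read off from $\nu$, so a given $\nu\neq\lambda$ can arise in at most one way from at most one of the three. By Proposition \ref{tenone} this already gives $a_\nu\leq 1$, and the first paragraph shows $a_\nu=0$ unless $\mu_\lambda(d)-\mu_\nu(d)$ has the stated form.

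For the converse, suppose $\nu\neq\lambda$ is a Young diagram with $\mu_\lambda(d)-\mu_\nu(d)$ equal to $+1$ at some position $p$, $-1$ at some position $q$, and zero elsewhere. Distinguishing whether $0\in\{p,q\}$ places $(\lambda,\nu)$ into exactly one of the three cases above and dictates how to recover $\nu$ from $\lambda$. To conclude $a_\nu=1$ it remains to check that the corresponding operation is admissible, i.e.\ that the relevant boxes are actually addable/removable; this follows from the defining inequalities of the Young diagram $\nu$ applied to the rows that remain unchanged.

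The main obstacle I expect is the bookkeeping for operation (3): given a Young diagram $\nu$ with $\nu_p=\lambda_p-1$ and $\nu_q=\lambda_q+1$, one must verify that the intermediate shape obtained by removing a box from row $p$ of $\lambda$ still admits an addable box in row $q$. The only delicate subcase is $q=p+1$, where one needs $\lambda_p-1>\lambda_q$, and this follows from $\nu_p\geq\nu_{p+1}$ rewritten as $\lambda_p-1\geq\lambda_q+1$. The remaining subcases reduce to the Young-diagram inequalities for $\lambda$ and $\nu$ in the rows not involved in the move.
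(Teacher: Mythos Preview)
Your proposal is correct and follows essentially the same approach as the paper: both translate the three box operations of Proposition~\ref{tenone} into their effect on the coordinates of $\mu_\lambda(d)-\mu_\nu(d)$ and then invoke Proposition~\ref{tenone}. The paper's proof is terser, simply asserting the ``if and only if'' between each operation and the corresponding $(+1,-1)$-pattern, whereas you spell out the admissibility check (in particular the adjacent-row subcase $q=p+1$) that the paper leaves implicit.
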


\begin{proof} Assume $\nu\not=\lambda$ is a Young diagram and write $\mu_\lambda(d)=(\mu_0, \mu_1,\ldots)$ and $\mu_\nu(d)=(\mu_0', \mu_1',\ldots)$.   $\nu$ is obtained from $\lambda$ by adding one addable box (resp. deleting a removable box) if and only if $\mu_0-\mu_0'=|\nu|-|\lambda|$ equals $1$ (resp. $-1$), and 
there exists an $i>0$ such that $\mu_i-\mu'_i=\lambda_i-\nu_i$ is equal to $-1$ (resp. $1$) and $\mu_j-\mu'_j=\lambda_j-\nu_j=0$ for all $0<j\not=i$.  Similarly, $\nu$ is obtained from $\lambda$ by first deleting a removable box and then adding an addable box if and only if $\mu_0=\mu_0'$ and the numbers $\mu_1-\mu_1',\mu_2-\mu_2',\ldots$ are all zero except for exactly one $1$ and one $-1$.  The result now follows from Proposition \ref{tenone}.
\end{proof}

We conclude this section with a technical lemma concerning tensoring with $L(\Box)$ which will be useful in subsequent sections.  

\begin{lemma}\label{oneboxlemma} Fix a nonnegative integer $d$ and suppose $B=\{\lambda^{(0)}\subset \lambda^{(1)}\subset\cdots\}$ is a nontrivial $\stackrel{d}{\sim}$-equivalence class.  

(1)  If $B$ is minimal with respect to $\prec$, then for each integer $i>0$ there exists a Young diagram $\rho$, in a trivial $\stackrel{d}{\sim}$-equivalence class, with $L(\rho)\otimes L(\Box)=\bigoplus_{\nu}L(\nu)^{\oplus a_\nu}$ in $\uRep(S_T; K)$ where $$a_{\lambda^{(j)}}=\left\{\begin{array}{ll}
1 & \text{if }j\in\{i, i-1\},\\
0 & \text{if }j\not\in\{i, i-1\}.\\
\end{array}\right.$$

(2) If $B$ is not minimal with respect to $\prec$, then there exists a nontrivial $\stackrel{d}{\sim}$-equivalence class $B'=\{\rho^{(0)}\subset \rho^{(1)}\subset\cdots\}$  such that $B'\precneqq B$,~ and for each integer $i\geq0$, $L(\rho^{(i)})\otimes L(\Box)=\bigoplus_{\nu}L(\nu)^{\oplus a_\nu}$ and $L(\lambda^{(i)})\otimes L(\Box)=\bigoplus_{\nu}L(\nu)^{\oplus b_\nu}$ in $\uRep(S_T; K)$ where $$a_{\lambda^{(j)}}=b_{\rho^{(j)}}=\left\{\begin{array}{ll}
1 & \text{if }j=i,\\
0 & \text{if }j\not=i.\\
\end{array}\right.$$
\end{lemma}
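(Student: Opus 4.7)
My strategy rests on Corollary \ref{tenonecor}, which in $\uRep(S_T;K)$ translates the appearance of $L(\nu)$ in $L(\rho)\otimes L(\Box)$ (for $\nu\ne\rho$) into a single combinatorial condition: the multiplicity is $1$ exactly when $\mu_\rho(d)-\mu_\nu(d)$ has all zero coordinates save one $+1$ and one $-1$, and $0$ otherwise. I will therefore work entirely with $\mu$-sequences, specifying the relevant Young diagrams by their $\mu$-sequences (recovered via Proposition \ref{gs}) and verifying the multiplicity conditions by inspecting two-position differences.

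For part (1), $\prec$-minimality of $B$ together with Corollary \ref{mindiag} forces $\lambda^{(0)}(d)=(1^d)$, so the coordinates $\mu_0>\mu_1>\cdots$ of $\mu_{\lambda^{(0)}}(d)$ (labeled as in Proposition \ref{mindiagprop}) are completely explicit and every gap $g_j:=\mu_{j-1}-\mu_j$ equals $1$ except the single boundary gap $g_d=2$. Given $i>0$, I construct $\rho$ by perturbing $\mu_{\lambda^{(i)}}(d)$ at exactly two positions so that (a) a duplicated coordinate appears (hence $\rho$ lies in a trivial class by Corollary \ref{gensim}(2)), and (b) the resulting $\mu_\rho(d)$ is a single $(+1,-1)$ swap away from both $\mu_{\lambda^{(i-1)}}(d)$ and $\mu_{\lambda^{(i)}}(d)$. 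The construction splits on $g_i$: if $g_i=2$ I raise the $0$th coordinate by $1$ and lower the $i$th coordinate by $1$, making both equal to $\mu_i+1=\mu_{i-1}-1$; if $g_i=1$ I lower the $i$th coordinate by $1$ (so that both the $0$th and $i$th coordinates hold the value $\mu_i$) and raise an auxiliary coordinate at some position $p\notin\{0,i\}$ whose preceding gap in $\mu_{\lambda^{(i)}}(d)$ is at least $2$; the boundary gap $g_d=2$ guarantees such a $p$ exists. A direct inspection then shows that for $j\notin\{i-1,i\}$ the difference $\mu_\rho(d)-\mu_{\lambda^{(j)}}(d)$ has at least three nonzero entries.

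For part (2), non-minimality of $B$ forces some index $r$ with $\mu_r-\mu_{r+1}\geq 2$ in the coordinates of $\mu_{\lambda^{(0)}}(d)$. Set $\mu'_r:=\mu_r-1$, $\mu'_{r+1}:=\mu_{r+1}+1$, and $\mu'_j:=\mu_j$ otherwise; by the gap hypothesis the $\mu'_j$ are strictly decreasing with distinct values, so Proposition \ref{gs} yields a Young diagram $\rho^{(0)}$ with these as its $\mu$-coordinates. Its class $B'$ is nontrivial, strictly precedes $B$ in $\prec$ (since $\rho^{(0)}(d)$ arises from $\lambda^{(0)}(d)$ by moving one box downward), and is distinct from $B$ (their $\mu$-multisets differ). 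Writing $B'=\{\rho^{(0)}\subset\rho^{(1)}\subset\cdots\}$ as in Proposition \ref{mindiagprop}, the sequences $\mu_{\rho^{(i)}}(d)$ and $\mu_{\lambda^{(i)}}(d)$ differ at exactly the two positions carrying the values indexed by $r$ and $r+1$, with differences $-1$ and $+1$, producing the required single-swap link. To exclude $L(\rho^{(j)})$ from $L(\lambda^{(i)})\otimes L(\Box)$ for $j\ne i$ (and symmetrically), I decompose
\[
\mu_{\rho^{(j)}}(d)-\mu_{\lambda^{(i)}}(d)=\bigl(\mu_{\rho^{(j)}}(d)-\mu_{\lambda^{(j)}}(d)\bigr)+\bigl(\mu_{\lambda^{(j)}}(d)-\mu_{\lambda^{(i)}}(d)\bigr);
\]
the first summand has two nonzero $\pm 1$ entries of the ``$r,r+1$'' type within $\lambda^{(j)}$, while the second has $|i-j|+1\geq 2$ nonzero entries from the index cycling, and a case analysis shows these cannot combine into a single $(+1,-1)$ pattern.

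The main obstacle lies in the combinatorics of part (1): the auxiliary position $p$ must simultaneously respect strict decrease of $\mu_\rho(d)$ past position $0$ and avoid spurious single-swap connections to other $L(\lambda^{(j)})$, and its existence near the lone boundary gap $g_d=2$ requires careful case work separating small $i$, $i$ near $d$, and $i>d$. The very small cases (notably $d\le 1$) may also require separate verification.
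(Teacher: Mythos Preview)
Your construction in part (2) has a genuine gap. You assert that non-minimality of $B$ provides an index $r$ with $\mu_r-\mu_{r+1}\geq 2$, and then set $\mu'_r=\mu_r-1$, $\mu'_{r+1}=\mu_{r+1}+1$, claiming the resulting sequence is strictly decreasing with distinct values. But when the gap is \emph{exactly} $2$ you get $\mu'_r=\mu'_{r+1}$, so the sequence fails strict decrease (equivalently, in Young-diagram terms you are moving a box from row $r$ to row $r+1$ when their lengths differ by $1$, which does not yield a partition). Non-minimality does not rule this out: for $d=6$ and $\lambda^{(0)}(d)=(3,2,1)$ one has $\mu_{\lambda^{(0)}}(6)=(3,1,-1,-3,-4,\ldots)$, every nontrivial gap equals $2$, and your construction fails for every admissible $r$. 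Thus your $\rho^{(0)}$ is not in general a Young diagram in a nontrivial class, and the argument collapses.

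The paper avoids this by modifying two possibly \emph{non-adjacent} positions: it takes $m$ to be the last row of $\lambda^{(0)}(d)$ with at least two boxes and $k$ the first empty row, and moves one box from row $m$ to row $k$. In $\mu$-coordinates this sends $\mu_m\mapsto\mu_m-1$ and $\mu_k\mapsto\mu_k+1$; the choice of $m$ and $k$ guarantees $\mu_m-1>\mu_{m+1}$ and $\mu_{k-1}>\mu_k+1$, so the new sequence is strictly decreasing with distinct entries. The exclusion of $j\neq i$ is then handled not by your additive decomposition but by rewriting $\mu_{\lambda^{(i)}}(d)-\mu_{\rho^{(j)}}(d)$ via the permutation $\tau=\tau_i^{-1}\tau_j$ and observing that the inequalities $\mu_m-\mu'_k>1$, $\mu_k-\mu'_m<-1$ force $\tau=\mathrm{id}$.

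For part (1) your $\mu$-sequence perturbation idea is sound in outline, but it is considerably more laborious than necessary: when you lower position $i$ and raise an auxiliary position $p$, the obvious candidate $p=i+1$ often collides with the lowered position, and locating a valid $p$ forces exactly the case split you flag at the end. The paper bypasses all of this by writing down $\rho$ explicitly (four cases: $i=1$, $1<i<d$, $i=d$, $i>d$) and verifying the multiplicities directly from Proposition~\ref{tenone} rather than through Corollary~\ref{tenonecor}.
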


\begin{proof} (1) Assume $B$ is the minimal $\stackrel{d}{\sim}$-equivalence class.  Then \begin{equation}\label{miniblock}\lambda^{(i)}=\left\{\begin{array}{ll}
(2^i, 1^{d-i-1}) & \text{if }0\leq i<d,\\
(2^d, 1^{i-d}) & \text{if }i\geq d.\\
\end{array}\right.\end{equation}  Now fix $i>0$ and set $$\rho:=\left\{\begin{array}{ll}
(1^d) & \text{if }i=1,\\
(3, 2^{i-2}, 1^{d-i})&  \text{if } 1<i<d,\\
(2^{d-1}, 1) & \text{if }i=d,\\
(3, 2^{d-1}, 1^{i-d-1})  & \text{if } i> d.
\end{array}\right.$$  If we set $\mu_\rho(d)=(\mu_0, \mu_1,\ldots)$, then it is easy to check that $$\mu_0=\left\{\begin{array}{ll}
\mu_i & \text{if }1\leq i\leq d,\\
\mu_{i+1} & \text{if }i> d.\\
\end{array}\right.$$ Hence $\rho$ is in a trivial $\stackrel{d}{\sim}$-equivalence class (see Corollary \ref{gensim}).   Finally, comparing $\rho$ to the Young diagrams in (\ref{miniblock}) and using Proposition \ref{tenone}, it is easy to check that $\rho$ satisfies part (1).

(2)  Suppose $B$ is not minimal so that $\lambda^{(0)}(d)=(\lambda_0, \lambda_1,\ldots)$ is a Young diagram of size $d$ distinct from $(1^d)$.  Then $\lambda^{(0)}(d)$ has a row with more than one box.  Let $m$ be the maximal integer with $\lambda_m>1$, and let $k$ be the minimal positive integer with $\lambda_k=0$.  Let $\rho^{(0)}$ be the Young diagram with $\rho^{(0)}(d)$ obtained from $\lambda^{(0)}(d)$ by deleting one box from row $m$ and adding one box to row $k$
$$\includegraphics{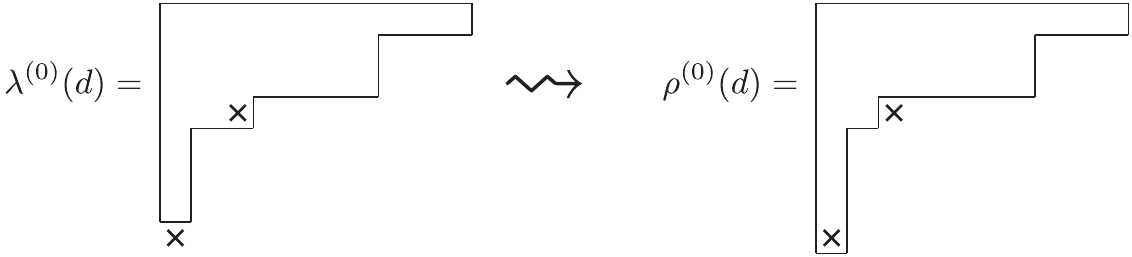}$$ 
Since $\rho^{(0)}(d)$ is a Young diagram of size $d$ with $\rho^{(0)}(d)\prec\lambda^{(0)}(d)$, $\rho^{(0)}$ is the minimal element in a nontrivial $\stackrel{d}{\sim}$-equivalence class $B'=\{\rho^{(0)}\subset\rho^{(1)}\subset\cdots\}$ which satisfies  $B'\precneqq B$ (see Corollary \ref{mindiag}).  
By Corollary \ref{tenonecor} it suffices to show that the coordinates of $\mu_{\lambda^{(i)}}(d)-\mu_{\rho^{(j)}}(d)$ are all zero except for one $1$ and one $-1$ if and only if $i=j$.

Write $\mu_{\lambda^{(0)}}(d)=(\mu_0, \mu_1,\ldots)$ and $\mu_{\rho^{(0)}}(d)=(\mu'_0, \mu'_1,\ldots)$.  For an integer $i\geq0$, let $\tau_i:\Z_{\geq 0}\to\Z_{\geq 0}$ be the bijection mapping $0\mapsto i$, $j\mapsto j-1$ ($0<j\leq i$), and $j\mapsto j$ ($j>i$).  Then, by Proposition \ref{mindiagprop}, $\mu_{\lambda^{(i)}}(d)=(\mu_{\tau_i(0)}, \mu_{\tau_i(1)},\ldots)$ and $\mu_{\rho^{(j)}}(d)=(\mu'_{\tau_j(0)}, \mu'_{\tau_j(1)},\ldots)$ for all $i, j\geq 0$.  
Now suppose $i, j\geq 0$ are such that the coordinates of $\mu_{\lambda^{(i)}}(d)-\mu_{\rho^{(j)}}(d)$ are all zero except for one $1$ and one $-1$.  Equivalently, setting $\tau=\tau_i^{-1}\tau_j$, there exist nonnegative integers $m', k'$ such that \begin{equation}\label{mutau}\mu_l-\mu_{\tau(l)}'=\left\{\begin{array}{ll}
1 & \text{if $l=m'$,}\\
-1& \text{if $l=k'$,}\\
0 & \text{otherwise.}
\end{array}\right.\end{equation}  
By the construction of $\rho^{(0)}(d)$ we have $$\mu_l-\mu_l'=\left\{\begin{array}{ll}
1 & \text{if $l=m$,}\\
-1& \text{if $l=k$,}\\
0 & \text{otherwise.}
\end{array}\right.$$
Since $\mu_0>\mu_1>\cdots$ (resp. $\mu'_0>\mu'_1>\cdots$), neither $\mu_m$ nor $\mu_k$ (resp. $\mu'_m$ nor $\mu'_k$) is equal to $\mu_l=\mu_l'$ whenever $l\not=m, k$.  Moreover, since $k>m$ we have $\mu_m-\mu_k'=\mu_m'-\mu_k'+1>1$ and $\mu_k-\mu_m'=\mu_k'-\mu_m'-1<-1$.  Hence (\ref{mutau}) holds if and only if $m=m'$, $k=k'$, and $\tau$ is the identity map.  However, $\tau=\tau_i^{-1}\tau_j$ is the identity map if and only if $i=j$.  This completes the proof.
\end{proof}


\subsection{Proof of result on blocks}\label{secblocks} In this section we give a proof of Theorem \ref{blocksinSt}.  To start, given a Young diagram $\lambda$ we will describe how $\Lift_t(L(\lambda))$ decomposes as a sum of indecomposable objects in $\uRep(S_T; K)$ (see Lemma \ref{liftdec}).  This description, along with Lemma \ref{equalxi}  will be used to prove Theorem \ref{blocksinSt}.  We begin with the following proposition.

\begin{proposition}\label{liftsim} Suppose $\lambda, \lambda^{(1)},\ldots, \lambda^{(m)}$ are Young diagrams with the property  that $\Lift_t(L(\lambda))=L(\lambda^{(1)})\oplus\cdots\oplus L(\lambda^{(m)})$ in $\uRep(S_T; K)$.  Then $\lambda^{(i)}\stackrel{t}{\sim}\lambda$ for all $i=1,\ldots, m$.  
\end{proposition}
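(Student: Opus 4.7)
The plan is to use Lemma \ref{equalxi}(2): it suffices to show that, for a common $k$ with $\lambda_{k+1}=0$ and $\lambda^{(i)}_{k+1}=0$ for all $i$, and every $r>0$, one has $\xi_{r,k}^{\lambda^{(i)}(t)}=\xi_{r,k}^{\lambda(t)}$ for every $i$. I will establish this by tracking a single characteristic polynomial defined over $R:=F[[T-t]]$ and specialising it in two ways.

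Write $L(\lambda)=([n],e)$ in $\uRep(S_t;F)$ and let $\ep\in RP_n(T)$ be a lift of $e$, so $\Lift_t(L(\lambda))=([n],\ep)$. Set $M:=\ep\,RP_n(T)\,\ep$; as a direct summand of the free $R$-module $RP_n(T)$, it is itself free of finite rank. Because $\omega_n^r(T)$ is central with coefficients in $R$ (Proposition \ref{centralomega}(2)), left multiplication $L_\omega$ on $M$ is $R$-linear and has a characteristic polynomial $\chi(x)\in R[x]$. Extending scalars to $K$, $M\otimes_R K=\ep\,KP_n(T)\,\ep$ is the endomorphism algebra $\End_{\uRep(S_T;K)}(\Lift_t(L(\lambda)))$, which by the semisimplicity of $\uRep(S_T;K)$ (Corollary \ref{ssT}) decomposes as $\bigoplus_\nu M_{n_\nu}(K)$, where $n_\nu$ is the multiplicity of $L(\nu)$ among $L(\lambda^{(1)}),\dots,L(\lambda^{(m)})$. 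Proposition \ref{minpoly} forces $\omega_n^r(T)$ to act on each block as the scalar $\xi_{r,k}^{\nu(T)}$ (the nilpotent part from Proposition \ref{xi} vanishes, as $K$ has no nonzero nilpotents). Since each $\xi_{r,k}^{\nu(T)}$ is polynomial in $T$ (by the formula in the proof of Lemma \ref{equalxi}(2)), this gives
\[ \chi(x)\;=\;\prod_\nu\bigl(x-\xi_{r,k}^{\nu(T)}\bigr)^{n_\nu^2}\quad\text{in }R[x]. \]

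Reducing modulo $(T-t)$ instead identifies $M/(T-t)M$ with $eFP_n(t)e=\End_{\uRep(S_t;F)}(L(\lambda))$, which is a local $F$-algebra with residue field $F$ (the simple top of $FP_n(t)e$ pulls back from the absolutely irreducible $FS_n$-module $L_\lambda$, cf.\ the proof of Proposition \ref{fieldex}). Proposition \ref{minpoly} yields $\omega_n^r(t)e=\xi_{r,k}^{\lambda(t)}e+N$ with $N$ nilpotent, so the reduction of $L_\omega$ has characteristic polynomial $(x-\xi_{r,k}^{\lambda(t)})^d$, where $d=\dim_F eFP_n(t)e=\sum_\nu n_\nu^2$ by Proposition \ref{Liftprop}(5). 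Reducing the above identity for $\chi(x)$ modulo $(T-t)$ and invoking unique factorisation in $F[x]$ forces $\xi_{r,k}^{\nu(t)}=\xi_{r,k}^{\lambda(t)}$ for every $\nu$ with $n_\nu>0$; as $r$ was arbitrary, Lemma \ref{equalxi}(2) then yields $\nu\stackrel{t}{\sim}\lambda$, completing the proof. The main technical obstacle is exhibiting $M$ as a free $R$-module whose two scalar extensions realise the expected endomorphism algebras on either side; once that is in place, both evaluations of $\chi(x)$ are essentially forced by Proposition \ref{minpoly}.
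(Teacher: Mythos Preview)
Your proof is correct and takes a genuinely different route from the paper's. Both arguments reduce via Lemma~\ref{equalxi}(2) to showing $\xi_{r,k}^{\lambda^{(i)}(t)}=\xi_{r,k}^{\lambda(t)}$ for every $r$, but they diverge from there. The paper works with the \emph{minimal} polynomial $A(x)\in K[x]$ of $\omega_n^r(T)$ on $\ep$, factors it as $A=BC$ according to whether the roots specialise to $\xi_{r,k}^{\lambda(t)}$ at $T=t$, and then runs a contradiction argument: a B\'ezout identity $GB+HC=1$ is shown (via a somewhat delicate minimality/degree argument) to have coefficients in $F[[T-t]]$, and its reduction would then split the primitive idempotent $e$. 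Your approach instead packages everything into the \emph{characteristic} polynomial of left multiplication by $\omega_n^r(T)$ on the free $R$-module $M=\ep\,RP_n(T)\,\ep$; this polynomial lives naturally in $R[x]$, so its two specialisations (to $K$ and to $F$) are automatic, and unique factorisation in $F[x]$ finishes the job. Your argument sidesteps the integrality check on the B\'ezout coefficients entirely, at the cost of needing the freeness of $M$ over the local ring $R$.

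One small imprecision: the matrix decomposition $\ep KP_n(T)\ep\cong\bigoplus_\nu M_{n_\nu}(K)$ presupposes $\End_{\uRep(S_T;K)}(L(\nu))=K$, which you did not justify on the $K$-side. This does follow from Proposition~\ref{fieldex} applied to $K\subset\bar K$, but in any case you do not actually need the precise exponents $n_\nu^2$: it suffices that the only eigenvalues of $L_\omega$ on $M\otimes_R K$ are the $\xi_{r,k}^{\nu(T)}$, which follows directly from centrality of $\omega_n^r(T)$, semisimplicity of $\ep KP_n(T)\ep$ (so its center has no nilpotents), and Proposition~\ref{minpoly} over $K$. Likewise, on the $F$-side the statement about the residue field is superfluous---all you use is that $\omega_n^r(t)e-\xi_{r,k}^{\lambda(t)}e$ is nilpotent, which is exactly Proposition~\ref{minpoly}.
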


\begin{proof} Fix a positive integer $k$ such that $\lambda^{(i)}_k=\lambda_k=0$ for all $i=1,\ldots, m$.  By Lemma \ref{equalxi}(2), it suffices to show $\xi^{\lambda^{(i)}(t)}_{r, k}=\xi^{\lambda(t)}_{r, k}$ for all $r>0$ and $i=1,\ldots, m$.  Fix a positive integer $r$, set $n=|\lambda|$, $e=e_\lambda$ and let $\ep\in KP_n(T)$ be a lift of $e$.  Let $A(x)\in K[x]$ be the minimal monic polynomial with $A(\omega_n^r(T))\ep=0$.  If $\ep=\ep_1+\cdots+\ep_{m'}$ is an orthogonal decomposition of $\ep$ into primitive idempotents, then $m'=m$ and (after reordering) $L(\lambda^{(i)})=([n], \ep_i)$ for all $i=1,\ldots, m$.  Hence $A(x)$ is the product of linear terms of the form $x-\xi^{\lambda^{(i)}(T)}_{r, k}$ (see Proposition \ref{minpoly}).  Let $B(x), C(x)\in K[x]$ be the unique monic polynomials with $A=BC$ such that $B(x)$ (resp. $C(x)$) is the product of linear terms of the form $x-\xi^{\lambda^{(i)}(T)}_{r, k}$ with $\xi^{\lambda^{(i)}(t)}_{r, k}$ equal to (resp. not equal to) $\xi^{\lambda(t)}_{r, k}$.  Suppose for a contradiction that $C(x)\not=1$.  Since $B(x)$ and $C(x)$ are relatively prime polynomials of positive degree, there exist nonzero polynomials $G(x), H(x)\in K[x]$ with $\deg G(x)<\deg C(x)$,  $\deg H(x)<\deg B(x)$ and \begin{equation}\label{gbhc}G(x)B(x)+H(x)C(x)=1.\end{equation}  Let $N$ be the minimal integer so that all coefficients of both $G'(x):=(T-t)^NG(x)$ and $H'(x):=(T-t)^NH(x)$ lie  in $F[[T-t]]$.  Then from equation (\ref{gbhc}) we have \begin{equation}\label{gbhcprime}G'(x)B(x)+H'(x)C(x)=(T-t)^N.\end{equation}
Let $b(x), c(x), g(x), h(x)\in F[x]$ be the polynomials obtained by evaluating $T=t$ in the polynomials $B(x), C(x), G'(x), H'(x)$ respectively.  If $N>0$, then equation (\ref{gbhcprime}) implies $b(x)$ (resp. $c(x)$) divides $h(x)$ (resp. $g(x)$).  On the other hand, $\deg b(x)=\deg B(x)>\deg H(x)\geq\deg h(x)$.  Similarly $\deg c(x)>\deg g(x)$.  Thus $h(x)=g(x)=0$ which contradicts the minimality of $N$.  Hence $N=0$ which implies the coefficients of $H(x)=H'(x)$ and $G(x)=G'(x)$ are in $F[[T-t]]$. 
Thus evaluating $T=t$, $x=\omega_n^r(t)$ in equation (\ref{gbhc}) and multiplying by $e$ gives \begin{equation}\label{gbhcomega}g(\omega_n^r(t))b(\omega_n^r(t))e+h(\omega_n^r(t))c(\omega_n^r(t))e=e.
\end{equation}
Since $\omega_n^r(t)$ is in the center of $FP_n(t)$ (Proposition \ref{centralomega}(2)) and $b(\omega_n^r(t))c(\omega_n^r(t))e=0$, the two summands on the left side of equation (\ref{gbhcomega}) are idempotents.  As $e$ is assumed to be primitive, either $g(\omega_n^r(t))b(\omega_n^r(t))e=0$ or $h(\omega_n^r(t))c(\omega_n^r(t))e=0$.  Since $G(\omega_n^r(T))B(\omega_n^r(T))\ep$ and $H(\omega_n^r(T))C(\omega_n^r(T))\ep$ are lifts of $g(\omega_n^r(t))b(\omega_n^r(t))e$ and $h(\omega_n^r(t))c(\omega_n^r(t))e$ respectively, it follows that 
 either $G(\omega_n^r(T))B(\omega_n^r(T))\ep=0$ or $H(\omega_n^r(T))C(\omega_n^r(T))\ep=0$ (see Theorem \ref{appenlift}) which contradicts the minimality of $A(x)$.  This completes the proof.
\end{proof}

Among other things, Proposition \ref{liftsim} implies that $\Lift_t(L(\lambda))=L(\lambda)$ whenever $\lambda$ is in a trivial $\stackrel{t}{\sim}$-equivalence class. The following lemma describes $\Lift_t(L(\lambda))$ for arbitrary $\lambda$.

\begin{lemma}\label{liftdec} (1) Suppose $t\in F$ and  $\lambda$ is a Young diagram which is either in a trivial $\stackrel{t}{\sim}$-equivalence class or the minimal element of a nontrivial $\stackrel{t}{\sim}$-equivalence class.  Then $\Lift_t(L(\lambda))=L(\lambda)$.

(2)  Suppose $d$ is a nonnegative integer and $B=\{\lambda^{(0)}\subset\lambda^{(1)}\subset\cdots\}$ is a nontrivial $\stackrel{d}{\sim}$-equivalence class.  Then $\Lift_d(L(\lambda^{(i)}))=L(\lambda^{(i)})\oplus L(\lambda^{(i-1)})$ for all $i>0$.
\end{lemma}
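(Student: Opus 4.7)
The plan for part (1) is to invoke Propositions \ref{Liftprop}(3) and \ref{liftsim}: every summand of $\Lift_t L(\lambda)$ is of the form $L(\mu)$ with $\mu \stackrel{t}{\sim} \lambda$, exactly one copy of $L(\lambda)$ appears, and every other summand satisfies $|\mu| < |\lambda|$. In a trivial $\stackrel{t}{\sim}$-class no other $\mu$ is available at all; for the minimal element $\lambda = \lambda^{(0)}$ of a nontrivial class, every other class member strictly contains $\lambda^{(0)}$ by the chain $\lambda^{(0)} \subset \lambda^{(1)} \subset \cdots$ of Proposition \ref{mindiagprop}, so has strictly larger size. In either scenario no extra summand can appear.

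For part (2), I will write $\Lift_d L(\lambda^{(i)}) = L(\lambda^{(i)}) \oplus \bigoplus_{0 \le j < i} L(\lambda^{(j)})^{\oplus m_{i, j}}$ with $m_{i, j} \in \Z_{\ge 0}$ (Propositions \ref{Liftprop}(3) and \ref{liftsim}) and must show $m_{i, i-1} = 1$ and $m_{i, j} = 0$ for $j < i - 1$. The argument is a double induction: outer on $B$ along the total order $\prec$ from \S\ref{part1}, inner on $i$. When $B$ is $\prec$-nonminimal, Lemma \ref{oneboxlemma}(2) supplies a strictly smaller class $B' = \{\rho^{(0)} \subset \rho^{(1)} \subset \cdots\}$, and I will compute $\Lift_d(L(\rho^{(i)}) \otimes L(\Box))$ in two ways via Proposition \ref{Liftprop}(1). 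One way uses $\Lift_d L(\rho^{(i)}) \otimes \Lift_d L(\Box)$ (with the outer hypothesis for $B'$, part (1), and $\Lift_d L(\Box) = L(\Box)$ for $d \ne 0$ from Example \ref{lift1}(1)), whose $B$-contribution is $L(\lambda^{(i)}) \oplus L(\lambda^{(i-1)})$ after applying Lemma \ref{oneboxlemma}(2) to both $\rho^{(i)}$ and $\rho^{(i-1)}$; the other way first decomposes $L(\rho^{(i)}) \otimes L(\Box) = \bigoplus_\nu L(\nu)^{\oplus a_\nu}$ in $\uRep(S_d; F)$ and then lifts summand-by-summand, invoking the inner hypothesis for indices $<i$. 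Matching coefficients of each $L(\lambda^{(k)})$ and using non-negativity of all the $a_\nu$'s and $m_{i, j}$'s forces the desired conclusion for every $i \ge 2$. The inner base $i = 1$ only delivers $a_{\lambda^{(0)}} + m_{1, 0} = 1$ from this single comparison, so I will also run the mirror computation on $L(\lambda^{(1)}) \otimes L(\Box)$: its $B'$-summands reduce to $L(\rho^{(1)})$ alone by Lemma \ref{oneboxlemma}(2), and the analogous two-way comparison produces $m_{1, 0} \ge 1$, hence $m_{1, 0} = 1$.

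For the outer base where $B$ is $\prec$-minimal, Lemma \ref{oneboxlemma}(2) is unavailable; I will substitute Lemma \ref{oneboxlemma}(1), which for each $i \ge 1$ supplies a $\rho^i$ in a trivial $\stackrel{d}{\sim}$-class whose tensor with $L(\Box)$ contains exactly $L(\lambda^{(i)}) \oplus L(\lambda^{(i-1)})$ among its $B$-summands. Part (1) gives $\Lift_d L(\rho^i) = L(\rho^i)$, so the two-way comparison closes every inner step $i \ge 2$ as before. The one remaining case is $i = 1$ with $B$ minimal, which is the main obstacle of the proof. For $d = 0$ it is Example \ref{lift1}(1) verbatim. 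For $d \ge 1$ the comparison only yields $c_{\lambda^{(0)}} + m_{1, 0} = 1$ with no mirror equation available, so I will close the case using the interpolation functor $\cat{F}$ of \S\ref{interpolationSt}: for minimal $B$ the completion $\lambda^{(1)}(d)$ fails to be a Young diagram, so $\cat{F}(L(\lambda^{(1)})) = 0$ by Proposition \ref{Findy}; by Proposition \ref{N}(2) the identity of $L(\lambda^{(1)})$ is negligible, so $\dim_{\uRep(S_d; F)} L(\lambda^{(1)}) = 0$. Combining Propositions \ref{gendimen} and \ref{Liftprop}(2) translates this into $P_{\lambda^{(1)}}(d) + m_{1, 0} P_{\lambda^{(0)}}(d) = 0$, and a direct hook-length manipulation (both polynomials are nonzero at $d$ by Proposition \ref{zero}) delivers $P_{\lambda^{(1)}}(d) = - P_{\lambda^{(0)}}(d)$, yielding $m_{1, 0} = 1$ and completing the induction.
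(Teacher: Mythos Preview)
Your argument is correct and shares the paper's overall strategy---part (1) is identical, and part (2) is an induction along the order $\prec$ driven by Lemma \ref{oneboxlemma}---but it is organized more elaborately than necessary.

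The main simplification you miss: the paper opens part (2) by proving, once and for all, that $\Lift_d L(\lambda^{(i)})$ is \emph{decomposable} for every $i>0$. The argument is precisely the interpolation-functor idea you reserve for the single stubborn case $i=1$ with $B$ minimal: if the lift were indecomposable it would equal $L(\lambda^{(i)})$ by Proposition \ref{Liftprop}(3), so $\dim_{\uRep(S_d;F)} L(\lambda^{(i)}) = P_{\lambda^{(i)}}(d) \ne 0$ by Propositions \ref{gendimen}, \ref{Liftprop}(2), and \ref{zero}; but $\lambda^{(i)}(d)$ is never a Young diagram for $i>0$ (Corollary \ref{mindiag}), so $\cat{F}(L(\lambda^{(i)}))=0$ by Proposition \ref{Findy} and the dimension vanishes, a contradiction. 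With decomposability established globally, each step of the $\prec$-induction becomes a one-liner: the $B$-part of the lifted tensor product (with $([1],\id_1)$) is exactly $L(\lambda^{(i)}) \oplus L(\lambda^{(i-1)})$, Propositions \ref{Liftprop}(3) and \ref{liftsim} force $L(\lambda^{(i)})$ to occur as an indecomposable summand over $F$, and its decomposable lift must then absorb the single remaining $L(\lambda^{(i-1)})$. Your inner induction on $i$, the multiplicity bookkeeping for the $m_{i,j}$, and the mirror computation for $i=1$ all become unnecessary.

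One smaller point: even within your own framework, the hook-length identity $P_{\lambda^{(1)}}(d) = -P_{\lambda^{(0)}}(d)$ is stronger than you need. From $P_{\lambda^{(1)}}(d) + m_{1,0}\, P_{\lambda^{(0)}}(d) = 0$ together with $P_{\lambda^{(1)}}(d)\ne 0$ (Proposition \ref{zero}) you already get $m_{1,0}\ne 0$; combined with $m_{1,0}\le 1$ from your coefficient comparison, this gives $m_{1,0}=1$ immediately. Indeed, the paper records the general identity $P_{\lambda^{(i)}}(d) = -P_{\lambda^{(i-1)}}(d)$ as a \emph{consequence} of this lemma (see the Remark following it), not an ingredient.
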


\begin{proof} Part (1) follows from Propositions \ref{Liftprop}(3) and \ref{liftsim}.  To prove part (2), assume $\Lift_d(L(\lambda^{(i)}))$ is indecomposable.  Then it follows from  Proposition \ref{Liftprop}(3) that  $\Lift_d(L(\lambda^{(i)}))=L(\lambda^{(i)})$.  Hence, by Proposition \ref{Liftprop}(2) and Proposition \ref{gendimen}, $\dim_{\uRep(S_d; F)}L(\lambda^{(i)})=P_{\lambda^{(i)}}(d)$, which is nonzero by Proposition \ref{zero}.  Therefore $\cat{F}(L(\lambda^{(i)}))$ is nonzero in $\Rep(S_d; F)$, which is only possible if $i=0$ (see Propositions \ref{Findy} and  \ref{mindiag}).  Hence $\Lift_d(L(\lambda^{(i)}))$ is decomposable whenever $i>0$.  We now proceed by induction on $\prec$.  

For our base case, assume that $B$ is the minimal $\stackrel{d}{\sim}$-equivalence class.  Fix $i>0$ and let $\rho$ be as in as in Lemma \ref{oneboxlemma}(1).  Since $\rho$ is in a trivial $\stackrel{d}{\sim}$-equivalence class $\Lift_d(L(\rho))=L(\rho)$ (by part (1)).  Hence, by Example \ref{lift1}(1), Proposition \ref{Liftprop}(1), and Lemma \ref{oneboxlemma}(1), $$\Lift_d(L(\rho)\otimes([1], \id_1))=L(\rho)\otimes(L(\Box)\oplus L(\varnothing))=L(\lambda^{(i)})\oplus L(\lambda^{(i-1)})\oplus A$$ in $\uRep(S_T; K)$ where $A$ is an object with no indecomposable summands of the form $L(\lambda^{(j)})$.  By Propositions \ref{Liftprop}(3) and \ref{liftsim}, $L(\rho)\otimes([1], \id_1)$ must have $L(\lambda^{(i)})$ as an indecomposable summand in $\uRep(S_d; F)$.  Moreover, since $\Lift_d(L(\lambda^{(i)}))$ is decomposable, it must be the case that $\Lift_d(L(\lambda^{(i)}))=L(\lambda^{(i)})\oplus L(\lambda^{(i-1)})$.  

Now assume $B$ is not minimal and let $B'=\{\rho^{(0)}\subset\rho^{(1)}\subset\cdots\}$ be as in Lemma \ref{oneboxlemma}(2).  Since $B'\precneqq B$, by induction we have $\Lift_d(L(\rho^{(i)}))=L(\rho^{(i)})\oplus L(\rho^{(i-1)})$ whenever $i>0$.   Hence, by Example \ref{lift1}(1), Proposition \ref{Liftprop}(1) and Lemma \ref{oneboxlemma}(2), $$\begin{array}{rcl}\Lift_d(L(\rho^{(i)})\otimes([1], \id_1)) & = &(L(\rho^{(i)})\oplus L(\rho^{(i-1)}))\otimes(L(\Box)\oplus L(\varnothing))\\
& = & L(\lambda^{(i)})\oplus L(\lambda^{(i-1)})\oplus A\end{array}$$ in $\uRep(S_T; K)$ where $A$ is an object with no indecomposable summands of the form $L(\lambda^{(i)})$.  As in the base case, this implies the desired result.
\end{proof}

\begin{remark} Lemma \ref{liftdec}(2) implies that $P_{\lambda^{(i)}}(d)=-P_{\lambda^ 
{(i-1)}}(d)$
for $i>0$ whence $P_{\lambda^{(i)}}(d)=(-1)^i P_{\lambda^{(0)}}(d)$. This  
identity was
observed by Deligne, see \cite[\S 7.5]{Del07}.
\end{remark}

We are ready to prove the main result section \ref{blocks}, which we restate now.

\medskip

\noindent{\bf Theorem \ref{blocksinSt}.} \emph{$L(\lambda)$ and $L(\lambda')$ are in the same block of $\uRep(S_t; F)$ if and only if $\lambda\stackrel{t}{\sim}\lambda'$.}

\begin{proof}   If $L(\lambda)$ and $L(\lambda')$ are in the same block of $\uRep(S_t; F)$, then Lemma \ref{equalxi} implies $\lambda\stackrel{t}{\sim}\lambda'$.
To prove the converse, by Corollary \ref{gensim}(1) we may assume $t=d$ is a nonnegative integer.  Suppose $\{\lambda^{(0)}\subset \lambda^{(1)}\subset\cdots\}$ is a nontrivial $\stackrel{d}{\sim}$-equivalence class and fix $i> 0$. By Proposition \ref{Liftprop}(5) $\dim_F\Hom_{\uRep(S_d; F)}(L(\lambda^{(i-1)}), L(\lambda^{(i)}))$ is equal to $\dim_K\Hom_{\uRep(S_T; K)}(\Lift_d(L(\lambda^{(i-1)})), \Lift_d(L(\lambda^{(i)})))$, which is nonzero by Lemma \ref{liftdec}.  The result follows.
\end{proof}

We close this section by examining the dimensions of the Hom spaces between indecomposable objects in $\uRep(S_t; F)$.

\begin{proposition}\label{dimstuff} (1) $\dim_F\operatorname{End}_{\uRep(S_t; F)}(L(\lambda))=1$ whenever $\lambda$ is in a trivial $\stackrel{t}{\sim}$-equivalence class.  In particular, the block corresponding to a trivial $\stackrel{t}{\sim}$-equivalence class is trivial.

(2) Given a nontrivial block 
$\{\lambda^{(0)}\subset\lambda^{(1)}\subset\cdots\}$ in $\uRep(S_t; F)$, $$\dim_F\Hom_{\uRep(S_t; F)}(L(\lambda^{(i)}), L(\lambda^{(j)}))=\left\{\begin{array}{ll}
2& \text{if $i=j>0$,}\\
0& \text{if $|i-j|>2$,}\\
1& \text{otherwise.}
\end{array}\right.$$
\end{proposition}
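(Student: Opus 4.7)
The core strategy is to transfer both Hom-dimension calculations to the semisimple category $\uRep(S_T;K)$ via Proposition \ref{Liftprop}(5), and then exploit the lift decompositions in Lemma \ref{liftdec}.

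For part (1), since $\lambda$ lies in a trivial $\stackrel{t}{\sim}$-equivalence class, Lemma \ref{liftdec}(1) gives $\Lift_t(L(\lambda))=L(\lambda)$, so Proposition \ref{Liftprop}(5) reduces the claim to $\dim_K\operatorname{End}_{\uRep(S_T;K)}(L(\lambda))=1$. Writing $L(\lambda)=([n],e_\lambda)$ with $n=|\lambda|$, in the semisimple category $\uRep(S_T;K)$ the indecomposable $L(\lambda)$ is simple, so $e_\lambda KP_n(T)e_\lambda$ is a finite-dimensional division algebra over $K$. I plan to identify it with $K$ by passing through the quotient map $KP_n(T)\to KS_n$ of Lemma \ref{ees}(2): by Remark \ref{zeta}, $e_\lambda$ maps to a primitive idempotent $\bar{e}_\lambda\in KS_n$ corresponding to $\lambda$, so $e_\lambda KP_n(T)e_\lambda$ surjects onto $\bar{e}_\lambda KS_n\bar{e}_\lambda=\operatorname{End}_{KS_n}(KS_n\bar{e}_\lambda)$, which equals $K$ by absolute irreducibility of representations of the symmetric group (the same point exploited in the proof of Proposition \ref{fieldex}). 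A nonzero algebra homomorphism from a division algebra onto $K$ must be an isomorphism, giving the desired dimension. Combined with Theorem \ref{blocksinSt}, which pairs blocks with $\stackrel{t}{\sim}$-equivalence classes, the block containing $L(\lambda)$ then has $L(\lambda)$ as its sole indecomposable object and endomorphism ring $F$, so it is trivial.

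For part (2), I will set $A_0:=\{\lambda^{(0)}\}$ and $A_i:=\{\lambda^{(i)},\lambda^{(i-1)}\}$ for $i>0$, so that Lemma \ref{liftdec} yields $\Lift_t(L(\lambda^{(i)}))=\bigoplus_{\mu\in A_i}L(\mu)$. Proposition \ref{Liftprop}(5) then gives
$$\dim_F\Hom_{\uRep(S_t;F)}(L(\lambda^{(i)}),L(\lambda^{(j)}))=\dim_K\Hom_{\uRep(S_T;K)}\!\Big(\bigoplus_{\mu\in A_i}L(\mu),\bigoplus_{\nu\in A_j}L(\nu)\Big).$$
By semisimplicity of $\uRep(S_T;K)$ (Corollary \ref{ssT}), combined with the part (1) calculation applied in the $K$-linear setting, Homs between non-isomorphic simples $L(\mu),L(\nu)$ vanish while each $\operatorname{End}(L(\mu))$ is one-dimensional, so the right-hand side equals $|A_i\cap A_j|$. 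A short case-enumeration for $|A_i\cap A_j|$ then yields the asserted formula.

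The principal obstacle is the identification $\operatorname{End}_{\uRep(S_T;K)}(L(\lambda))=K$ in the semisimple category; once this is settled, part (2) becomes pure lift-and-count bookkeeping driven by Lemma \ref{liftdec}, Proposition \ref{Liftprop}(5), and the semisimplicity of $\uRep(S_T;K)$.
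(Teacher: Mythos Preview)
Your proposal is correct and follows essentially the same route as the paper: reduce via Proposition~\ref{Liftprop}(5) and Lemma~\ref{liftdec} to showing $\dim_K\Hom_{\uRep(S_T;K)}(L(\mu),L(\nu))=\delta_{\mu\nu}$, then count. The only difference is in how you pin down $\End_{\uRep(S_T;K)}(L(\lambda))=K$: the paper invokes Proposition~\ref{fieldex} to pass to the algebraic closure and then applies Schur's lemma, whereas you argue directly via the surjection $e_\lambda KP_n(T)e_\lambda\twoheadrightarrow \bar e_\lambda KS_n\bar e_\lambda\cong K$ and injectivity of maps out of a division ring---but this is exactly the absolute-irreducibility argument underlying Proposition~\ref{fieldex}, so the two are the same in substance.
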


\begin{proof} By Lemma \ref{liftdec} and Proposition \ref{Liftprop}(5), it suffices to prove $$\dim_K\Hom_{\uRep(S_T; K)}(L(\lambda), L(\lambda'))=\left\{\begin{array}{ll}
1& \text{if $\lambda=\lambda'$,}\\
0& \text{if $\lambda\not=\lambda'$.}
\end{array}\right.$$  By Proposition \ref{fieldex} it suffices to consider the case when $K$ is algebraically closed.  As $\uRep(S_T; K)$ is semisimple (Corollary \ref{ssT}), the result follows from Schur's lemma.
\end{proof}

\begin{corollary}\label{ss} $\uRep(S_t; F)$ is semisimple if and only if $t$ is not a nonnegative integer.
\end{corollary}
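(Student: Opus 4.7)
The plan is to treat the two implications separately, with both directions being short consequences of the preceding structural results (especially Theorem \ref{blocksinSt} and Proposition \ref{dimstuff}). The main work has already been done; this corollary is essentially a summary.

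First I would handle the direction $t \notin \Z_{\geq 0} \implies$ semisimple. By Corollary \ref{gensim}(1), every $\stackrel{t}{\sim}$-equivalence class of Young diagrams is a singleton. Theorem \ref{blocksinSt} then implies that every block of $\uRep(S_t; F)$ consists of a single isomorphism class of indecomposable objects, and Proposition \ref{dimstuff}(1) tells us the endomorphism ring of each such indecomposable is $F$. Combined with the Krull--Schmidt property (Proposition \ref{proprep}(3)), this shows $\uRep(S_t; F)$ decomposes as a direct sum of its trivial blocks, each equivalent to the category of finite-dimensional $F$-vector spaces, and is therefore semisimple.

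For the converse, assume $t = d \in \Z_{\geq 0}$. I need to produce evidence of non-semisimplicity, for which it suffices to exhibit two non-isomorphic indecomposable objects with a nonzero morphism between them. Corollary \ref{mindiag} guarantees at least one nontrivial $\stackrel{d}{\sim}$-equivalence class exists (parametrized by Young diagrams of size $d$, which is always a nonempty set). Let $\{\lambda^{(0)} \subset \lambda^{(1)} \subset \cdots\}$ be such a class. By Theorem \ref{Youngclass}, $L(\lambda^{(0)})$ and $L(\lambda^{(1)})$ are non-isomorphic indecomposable objects of $\uRep(S_d; F)$, and by Proposition \ref{dimstuff}(2), $\dim_F \Hom_{\uRep(S_d; F)}(L(\lambda^{(0)}), L(\lambda^{(1)})) = 1 \neq 0$. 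In a semisimple Karoubian additive category, however, the Hom space between any two non-isomorphic indecomposable objects must vanish. This contradiction shows $\uRep(S_d; F)$ is not semisimple.

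No step is really an obstacle here since all the substance was absorbed into the block classification and the Hom-dimension computation in Proposition \ref{dimstuff}; the only mild subtlety is making sure at least one nontrivial $\stackrel{d}{\sim}$-class exists for every nonnegative integer $d$ (including $d = 0$, where the class $\{\varnothing, (1), (1^2), \ldots\}$ from Example \ref{3class}(1) works), but this is immediate from Corollary \ref{mindiag}.
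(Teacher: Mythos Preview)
Your proof is correct and follows essentially the same approach as the paper, which simply cites Proposition \ref{dimstuff}(1) together with Corollaries \ref{gensim}(1) and \ref{mindiag}. You are slightly more explicit in invoking Theorem \ref{blocksinSt} and Proposition \ref{dimstuff}(2) for the non-semisimple direction, but this only spells out what the paper's terse citations leave implicit.
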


\begin{proof} This follows from Proposition \ref{dimstuff}(1) along with corollaries \ref{gensim}(1) and  \ref{mindiag}.
\end{proof}


\section{Description of a non-semisimple block} \label{blockquiver}

In this section we give a complete description of nontrivial blocks in $\uRep(S_d; F)$ for all $d\in\Z_{\geq0}$.  In particular, we prove that all nontrivial blocks are equivalent as additive categories.  We begin by describing the nontrivial block in $\uRep(S_0; F)$.  Next, we show that for fixed $d\in\Z_{\geq0}$, the nontrivial blocks in $\uRep(S_d; F)$ are all equivalent.  We then state a conjecture which would allow us to compare blocks in $\uRep(S_d; F)$ with those in $\uRep(S_{d-1}; F)$ using a ``restriction functor."  Finally, we use Martin's results on the partition algebras to give a complete description of nontrivial blocks.


\subsection{The nontrivial block in $\uRep(S_0; F)$}  In this section we give a complete description of the one nontrivial block in $\uRep(S_0; F)$.  In this particular case the constructions of all idempotents are easy enough that we are able to fully describe the block by brute force computations.  
We expect this method is too computationally complicated in other cases.

Throughout this section we consider the group algebra of the symmetric group $FS_n$ as a subalgebra of the partition algebra $FP_n(0)$ (see Remark \ref{Snembedding}).  With this in mind, we have the following idempotents:\begin{equation*} s_n:=\frac{1}{n!}\sum_{\sigma\in S_n}sgn(\sigma)\sigma\in FP_n(0).
\end{equation*}

\begin{proposition}\label{sgn} In $\uRep(S_0; F)$, $([n], s_n)\cong L((1^n))$ for all $n\geq 0$.
\end{proposition}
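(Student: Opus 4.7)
The plan is to lift $([n], s_n)$ from $\uRep(S_0; F)$ to the semisimple category $\uRep(S_T; K)$, decompose it there, and then invoke injectivity of $\Lift_0$ from Proposition \ref{Liftprop}(3).

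Since the coefficients of $s_n=\tfrac{1}{n!}\sum_{\sigma\in S_n}\operatorname{sgn}(\sigma)\sigma$ lie in $\BQ\subset F[[T]]$ independently of $T$, the same formula defines an idempotent in $KP_n(T)$ which is a lift of $s_n\in FP_n(0)$ in the sense of Theorem \ref{lifttoK}. Consequently $\Lift_0(([n], s_n))=([n], s_n)$ as an object of $\uRep(S_T; K)$. By Example \ref{lift1}(1), $[1]\cong L(\Box)\oplus L(\varnothing)$ in $\uRep(S_T; K)$; since $s_n$ acts on $[n]=[1]^{\otimes n}$ as the antisymmetrizer in the symmetric monoidal sense, the standard decomposition of exterior powers of a direct sum together with the vanishing $\Lambda^j L(\varnothing)=0$ for $j\geq 2$ (valid in characteristic zero, since the braiding on $L(\varnothing)^{\otimes j}$ is trivial) yields
\[
([n], s_n)\;\cong\;\Lambda^n\bigl(L(\Box)\oplus L(\varnothing)\bigr)\;\cong\;\Lambda^n L(\Box)\;\oplus\;\Lambda^{n-1} L(\Box).
\]

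I would then identify $\Lambda^k L(\Box)\cong L((1^k))$ in $\uRep(S_T; K)$ by induction on $k$, the cases $k=0,1$ being immediate. For the inductive step, the identity $s_k=s_k(s_{k-1}\otimes\id_1)$ in $KS_k\subset KP_k(T)$ realizes $\Lambda^k L(\Box)$ as a direct summand of $\Lambda^{k-1}L(\Box)\otimes L(\Box)\cong L((1^{k-1}))\otimes L(\Box)$ (by the inductive hypothesis); by Proposition \ref{tenone} the latter decomposes as at most five distinct indecomposables whose dimensions (Proposition \ref{gendimen} plus the hook length formula) are pairwise distinct polynomials in $T$. The general identity $\dim\Lambda^k V=\binom{\dim V}{k}$, valid in any rigid symmetric monoidal category over a field of characteristic zero, gives $\dim\Lambda^k L(\Box)=\binom{T-1}{k}=P_{(1^k)}(T)$, singling out $L((1^k))$ as the unique matching summand.

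Combining these steps, $\Lift_0(([n], s_n))\cong L((1^n))\oplus L((1^{n-1}))$ for $n\geq 1$ (with the convention $L((1^0))=L(\varnothing)$). On the other hand, $(1^n)$ occupies position $n$ in the unique nontrivial $\stackrel{0}{\sim}$-equivalence class $\{\varnothing\subset (1)\subset (1^2)\subset\cdots\}$ of Example \ref{3class}(1), so Lemma \ref{liftdec}(2) gives $\Lift_0(L((1^n)))\cong L((1^n))\oplus L((1^{n-1}))$. Hence $\Lift_0(([n], s_n))\cong \Lift_0(L((1^n)))$, and Proposition \ref{Liftprop}(3) forces $([n], s_n)\cong L((1^n))$ in $\uRep(S_0; F)$; the case $n=0$ is handled separately and is trivial since $s_0=\id_0$ and $L((1^0))=L(\varnothing)=([0],\id_0)$. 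The main subtlety I expect is the inductive identification of $\Lambda^k L(\Box)$, which rests on the categorical dimension formula for exterior powers and linear independence of the hook-length polynomials appearing in Proposition \ref{tenone}; neither is developed explicitly in the paper up to this point, but both are standard.
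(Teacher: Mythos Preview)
Your approach via lifting and exterior powers is genuinely different from the paper's, which is a short direct computation: the paper notes that the image of $s_n$ under $FP_n(0)\twoheadrightarrow FS_n$ is the primitive sign idempotent, so $L((1^n))$ is a summand of $([n],s_n)$; since $\End(L((1^n)))$ is already $2$-dimensional by Proposition~\ref{dimstuff}(2), it suffices to show $\dim_F s_n FP_n(0) s_n = 2$, which is done by an elementary argument on partition diagrams.

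Your outline is sound, but there is a gap in the inductive identification $\Lambda^k L(\Box)\cong L((1^k))$. You argue that $\dim \Lambda^k L(\Box) = \binom{T-1}{k} = P_{(1^k)}(T)$ and that the indecomposable summands of $L((1^{k-1}))\otimes L(\Box)$ have pairwise distinct dimensions, concluding this ``singles out'' $L((1^k))$. But $\Lambda^k L(\Box)$ is not a priori indecomposable, and pairwise distinctness of the individual summand dimensions does not preclude coincidences among their \emph{sums}. Indeed for $k=2$ one has
\[
P_{(2)}(T)+P_\varnothing(T)=\frac{T(T-3)}{2}+1=\frac{(T-1)(T-2)}{2}=P_{(1^2)}(T),
\]
so the dimension of $\Lambda^2 L(\Box)$ is matched equally well by $L((1^2))$ and by $L((2))\oplus L(\varnothing)$. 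For $k\geq 3$ the leading-coefficient comparison does close the argument (the leading coefficient of $P_{(2,1^{k-2})}$ is $(k-1)/k!$, not $1/k!$, and the lower-degree $P_\mu$ all have positive leading coefficients), so the problem is localized to $k=2$. There you need one more ingredient: for instance, $L(\Box)$ inherits a symmetric self-duality from $[1]$, so the one-dimensional space $\Hom(\mathbf{1},L(\Box)^{\otimes 2})$ lands in $S^2L(\Box)$ and not $\Lambda^2 L(\Box)$, ruling out $L(\varnothing)$ as a summand. With that patch your proof goes through, though it is considerably more elaborate than the paper's two-line computation of $s_nFP_n(0)s_n$.
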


\begin{proof} The proposition is certainly true when $n=0$, so we assume $n>0$.  Since the projection $FP_n(0)\twoheadrightarrow FS_n$ maps $s_n\mapsto s_n$, and in $FS_n$ the idempotent $s_n$ is primitive corresponding to $(1^n)$, we know any orthogonal decomposition of $s_n$ into primitive idempotents in $FP_n(0)$ must contain a summand corresponding to $L((1^n))$ in $\uRep(S_0; F)$.  Thus, by Example \ref{3class}(1) and Proposition \ref{dimstuff} it suffices to show $\dim_F s_nFP_n(0)s_n=2$.

Suppose $\mu\in P_{n,n}$ satisfies $\tau\mu=\pi$ for some transposition $\tau\in S_n\subset P_{n,n}$.  Then $s_n\mu s_n=s_n\tau\mu s_n=-s_n\pi s_n$, which implies $s_n\mu s_n=0$.  Similarly, if $\mu\tau=\mu$ for some transposition $\tau$, then $s_n\mu s_n=0$.  The only elements of $\mu\in P_{n,n}$ such that $\tau\mu\not=\mu$ and $\mu\tau\not=\mu$ for all transpositions $\tau\in S_n$ are either elements of $S_n$ or of the form $\sigma x\sigma'$ for some $\sigma, \sigma'\in S_n$ with $x=\id_{n-1}\otimes\pi$ where $\pi\in P_{1,1}$ is as in the proof of Theorem \ref{idemP}.  If $\mu\in S_n$, then $s_n\mu s_n=\pm s_n$.  If $\mu=\sigma x\sigma'$ for some $\sigma, \sigma'\in S_n$, then $s_n\mu s_n=\pm s_n x s_n$.  Hence, $s_n$ and $s_n x s_n$ span $s_nFP_n(0)s_n$.
\end{proof}

As a consequence of Example \ref{3class}(1) and Proposition \ref{sgn}, describing the nontrivial block in $\uRep(S_0; F)$ amounts to describing morphisms among the objects $([n], s_n)$ for $n\geq0$.  To describe such morphisms, let $x^1_0$ denote the unique element of $P_{1,0}$ and let 
$x_n^{n+1}=\id_n\otimes x^1_0$ for all $n>0$.  Finally, let $x^n_{n+1}=(x^{n+1}_n)^\vee$ for all $n\geq 0$ as pictured below.  $$\includegraphics{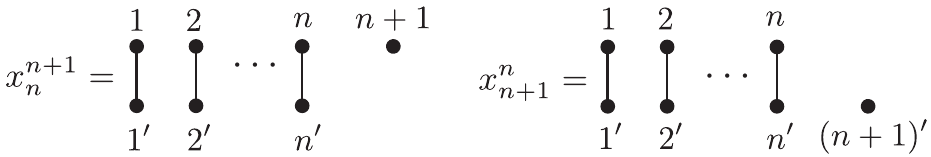}$$ The following lemma records a couple properties useful in forthcoming calculations.

\begin{lemma}\label{calcs} The following identities hold in $\uRep(S_0; F)$:

(1) $s_n x_n^{n+1}s_{n+1}=x_n^{n+1}s_{n+1}$ for all $n\geq 0$.

(2) $-ns_nx_n^{n-1}x_{n-1}^ns_n=(n+1)x_n^{n+1}s_{n+1}x^n_{n+1}$ for all $n> 0$.

(3) $x_{n-1}^nx_n^{n+1}s_{n+1}=0$ for all $n>0$.
\end{lemma}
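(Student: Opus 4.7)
All three identities follow by direct computation in the partition algebra, relying on two workhorses: (i) for any $\sigma\in S_n$, embedded in $S_{n+1}$ via $\sigma\mapsto\sigma\otimes\id_1$, the interchange law in the tensor category gives $\sigma\, x^{n+1}_n=x^{n+1}_n\,\sigma$, because $x^{n+1}_n=\id_n\otimes x^1_0$ and the two factors act on disjoint strands; and (ii) $\rho s_k=s_k\rho=sgn(\rho)\,s_k$ for every $\rho\in S_k$, since $s_k$ is an antisymmetrizer. Identity (1) then falls out in two lines: expand $s_n=\frac{1}{n!}\sum_\sigma sgn(\sigma)\,\sigma$, push each $\sigma$ past $x^{n+1}_n$ using (i), and apply (ii) on the right, so that the product of signs collapses the sum to $x^{n+1}_n s_{n+1}$.

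For (3), a direct partition-diagram computation shows that $x^n_{n-1}x^{n+1}_n\in FP_{n+1,n-1}$ is the partition with $n-1$ through strands (pairing top vertex $i$ with bottom vertex $i'$ for $i<n$) together with two isolated top singletons at positions $n$ and $n+1$. In particular, right multiplication by the transposition $(n,n+1)\in S_{n+1}$ merely swaps the two top singletons and leaves the diagram unchanged. Combined with (ii) this gives
$$(x^n_{n-1}x^{n+1}_n)s_{n+1}=(x^n_{n-1}x^{n+1}_n)(n,n+1)s_{n+1}=-(x^n_{n-1}x^{n+1}_n)s_{n+1},$$
which forces the expression to vanish in characteristic zero.

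Identity (2) is the main content. First, both sides lie in $s_nFP_n(0)s_n$: the left side trivially, and the right side by (1) together with its dual $s_{n+1}x^n_{n+1}s_n=s_{n+1}x^n_{n+1}$, proved by the same argument as (1) using $x^n_{n+1}\sigma=(\sigma\otimes\id_1)x^n_{n+1}$. The plan is to expand each side as an explicit sum of partition diagrams of a common shape. Any diagram in $P_{n,n}$ consisting of $n-1$ through strands together with one top singleton and one bottom singleton is determined by a triple $(i^*,j,\rho)$, where $i^*,j\in\{1,\ldots,n\}$ are the positions of the singletons and $\rho\colon\{1,\ldots,n\}\setminus\{i^*\}\to\{1,\ldots,n\}\setminus\{j\}$ is the through-strand bijection; write $D(i^*,j,\rho)$ for this partition. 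A direct composition shows $\sigma(\id_{n-1}\otimes\pi)\tau=D(\tau^{-1}(n),\sigma(n),\rho)$ with $\rho(i)=\sigma(\tau(i))$ for all $\sigma,\tau\in S_n$, and that $x^{n+1}_n \sigma\, x^n_{n+1}$ is either a closed loop carrying the factor $t=0$ (when $\sigma(n+1)=n+1$, contributing nothing) or equals $D(\sigma^{-1}(n+1),\sigma(n+1),\sigma')$, where $\sigma'$ is the restriction of $\sigma$ to $\{1,\ldots,n\}\setminus\{\sigma^{-1}(n+1)\}$. Each triple $(i^*,j,\rho)$ arises from exactly $(n-1)!$ pairs $(\sigma,\tau)\in S_n\times S_n$ and from a unique $\sigma\in S_{n+1}$; the factorization $\sigma=\hat\rho_j\cdot(i^*,n+1)$ in $S_{n+1}$, where $\hat\rho_j\in S_n$ extends $\rho$ by $i^*\mapsto j$, yields $sgn(\sigma\tau)=sgn(\hat\rho_j)=-sgn(\sigma)$. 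Substituting produces
$$s_n(\id_{n-1}\otimes\pi)s_n=\tfrac{1}{n\cdot n!}\sum_{(i^*,j,\rho)}sgn(\hat\rho_j)\,D(i^*,j,\rho)\quad\text{and}\quad x^{n+1}_n s_{n+1}x^n_{n+1}=-\tfrac{1}{(n+1)!}\sum_{(i^*,j,\rho)}sgn(\hat\rho_j)\,D(i^*,j,\rho),$$
and multiplying these by $-n$ and $n+1$ respectively collapses both to the common value $-\tfrac{1}{n!}\sum sgn(\hat\rho_j)\,D(i^*,j,\rho)$. The hardest part is precisely this sign-and-multiplicity bookkeeping---translating between the $S_n\times S_n$ and $S_{n+1}$ parametrizations---rather than the diagram computations themselves.
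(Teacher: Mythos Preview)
Your proof is correct and follows essentially the same route as the paper: part~(1) via the interchange $\sigma\,x_n^{n+1}=x_n^{n+1}(\sigma\otimes\id_1)$ together with the antisymmetrizer absorption, part~(3) via the transposition $(n,n+1)$ fixing $x_{n-1}^n x_n^{n+1}$, and part~(2) via the same bijective bookkeeping between pairs $(\tau_1,\tau_2)\in S_n\times S_n$ and elements $\sigma\in S_{n+1}$ with the sign relation $sgn(\tau_1\tau_2)=-sgn(\sigma)$. Your explicit parametrization $D(i^*,j,\rho)$ and the factorization $\sigma=\hat\rho_j\cdot(i^*,n+1)$ make the counting and sign arguments more transparent than the paper's terser phrasing, but the underlying argument is the same.
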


\begin{proof} To prove part (1), let $s_n'=s_n\otimes\id_1\in FP_{n+1}(0)$ for all $n\geq 0$.  Then $s_n x_n^{n+1}s_{n+1}=x_n^{n+1}s_n's_{n+1}=x_n^{n+1}s_{n+1}$.

To prove part (2), notice that for $\sigma\in S_{n+1}$ there are exactly $(n-1)!$ pairs $(\tau_1, \tau_2)$ with $\tau_1, \tau_2\in S_n$ such that $\tau_1x_n^{n-1}x_{n-1}^n\tau_2=x_n^{n+1}\sigma x_{n+1}^n$.  Moreover, for such a pair $sgn(\tau_1\tau_2)=-sgn(\sigma)$.  Conversely, given any $\tau_1, \tau_2\in S_n$, either $\tau_1x_n^{n-1}x_{n-1}^n\tau_2=0$ or there exists a unique $\sigma\in S_{n+1}$ with $\tau_1x_n^{n-1}x_{n-1}^n\tau_2=x_n^{n+1}\sigma x_{n+1}^n$.  Therefore, $-(n!s_n)x_n^{n-1}x_{n-1}^n(n!s_n)=(n-1)!x_n^{n+1}(n+1)!s_{n+1}x^n_{n+1}$ which is equivalent to the identity in part (2).

To prove part (3), let $\tau\in S_{n+1}$ denote the transposition $n\leftrightarrow n+1$.  Then 
$x_{n-1}^nx_n^{n+1}s_{n+1}=(x_{n-1}^nx_n^{n+1}\tau) s_{n+1}=x_{n-1}^nx_n^{n+1}(\tau s_{n+1})=-x_{n-1}^nx_n^{n+1}s_{n+1}$.  The result follows.
\end{proof}

Next, define the following morphisms:
\begin{equation*} \alpha_n:=(-1)^n(n+1)!s_{n+1}x^n_{n+1}s_{n},\qquad\beta_n:=\frac{1}{n!}s_nx^{n+1}_ns_{n+1}\quad (n\geq0),\end{equation*}
\begin{equation*}\gamma_n:=(-1)^nns_nx_n^{n-1}x_{n-1}^ns_n\quad(n>0).\end{equation*}

The next lemma contains all calculations needed to describe the nontrivial block in $\uRep(S_0; F)$.

\begin{lemma}\label{calculations}The following equations hold in $\uRep(S_0; F)$:

(1) $\alpha_n\not=0$ for $n\geq0$.

(2) $\beta_n\not=0$ for $n\geq0$.

(3) $\gamma_n\not=0$ for $n>0$.

(4) $\gamma_n^2=0$ for $n>0$, hence $\gamma_n$ is not a scalar multiple of $s_n$.

(5) $\beta_0\alpha_0=0$.

(6) $\beta_n\alpha_n=\gamma_n$ for $n>0$.

(7) $\alpha_{n-1}\beta_{n-1}=\gamma_n$ for $n>0$.

(8) $\alpha_n\alpha_{n-1}=0$ for $n>0$.

(9) $\beta_{n-1}\beta_n=0$ for $n>0$.

\end{lemma}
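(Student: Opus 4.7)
My plan is to prove the algebraic identities (5)--(9) by direct manipulation, and then use them together with the dimension information from Proposition \ref{dimstuff} to deduce the nonvanishing statements (1)--(4). The main tools are Lemma \ref{calcs} together with its two ``mirror'' forms
\[
s_{n+1}x_{n+1}^n s_n = s_{n+1}x_{n+1}^n, \qquad s_{n+1}x_{n+1}^n x_n^{n-1}=0,
\]
obtained by applying duality to parts (1) and (3) of Lemma \ref{calcs} and using that the antisymmetrizer of a permutation group is self-dual (so $s_k^\vee=s_k$).

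For (6) and (7) I would expand $\beta_n\alpha_n$ and $\alpha_{n-1}\beta_{n-1}$ from the definitions, use $s_{n+1}^2=s_{n+1}$ (respectively $s_{n-1}^2=s_{n-1}$) to collapse the middle, and apply Lemma \ref{calcs}(2) flanked by $s_n$'s to recognize the result as a nonzero scalar multiple of $\gamma_n$. For (8) and (9) I would reduce $\alpha_n\alpha_{n-1}$ and $\beta_{n-1}\beta_n$ to expressions containing the substrings $s_{n+1}x_{n+1}^n x_n^{n-1}$ and $x_{n-1}^n x_n^{n+1}s_{n+1}$ respectively, by absorbing intermediate symmetrizers via the mirror identities and Lemma \ref{calcs}(1), and then invoke Lemma \ref{calcs}(3) or its dual to conclude vanishing. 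For (5), since $s_0=\id_0$ and $s_1=\id_1$, the composition $\beta_0\alpha_0=x_0^1\circ x_1^0$ reduces to the composition of the unique elements of $P_{1,0}$ and $P_{0,1}$, which contains one free loop; at $t=0$ that loop contributes the factor zero.

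For the nonvanishing half, a direct diagrammatic computation identifies $x_n^{n-1}x_{n-1}^n$ with the element $x=\id_{n-1}\otimes\pi$ from the proof of Proposition \ref{sgn}, so $\gamma_n=(-1)^n n\, s_nxs_n$. The proof of Proposition \ref{sgn} also shows $\{s_n, s_nxs_n\}$ spans $s_nFP_n(0)s_n$, and Proposition \ref{dimstuff}(2) forces this space to be two-dimensional for $n>0$; hence these two spanning vectors are actually linearly independent, which proves (3) and gives the ``hence'' clause of (4). For $\gamma_n^2=0$, combining (6) and (7) writes $\gamma_n^2$ as a nonzero scalar times $\beta_n(\alpha_n\alpha_{n-1})\beta_{n-1}$, which vanishes by (8). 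Finally, (1) and (2) for $n>0$ are immediate from (6): if either $\alpha_n$ or $\beta_n$ were zero then $\gamma_n=\beta_n\alpha_n$ would vanish, contradicting (3); and for $n=0$ one observes that $\alpha_0=x_1^0$ and $\beta_0=x_0^1$ are themselves the unique nonzero elements of $P_{0,1}$ and $P_{1,0}$. The main challenge is routine bookkeeping: the factorial and $(-1)^n$ normalizations built into $\alpha_n,\beta_n,\gamma_n$ must line up correctly with the scalars $-n$ and $n+1$ of Lemma \ref{calcs}(2), but once those signs are sorted out the argument runs essentially mechanically.
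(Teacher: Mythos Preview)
Your treatment of (5)--(9) is essentially the paper's: the paper also expands the definitions, absorbs the intermediate antisymmetrizers via Lemma~\ref{calcs}(1), and then applies Lemma~\ref{calcs}(2) or (3). The only cosmetic difference is that the paper does not write out the mirror identities separately; instead it observes once that $\alpha_n$ and $\beta_n^\vee$ agree up to a nonzero scalar, so that (1) and (8) follow from (2) and (9) by duality, whereas you prove (8) directly from the dual of Lemma~\ref{calcs}(3).

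Your handling of the nonvanishing statements (1)--(4) is genuinely different. The paper proves (2) and (3) by exhibiting a single nonzero coefficient in the partition-basis expansion (for instance, the coefficient of $x_n^{n+1}$ in $\beta_n$ is $\tfrac{1}{n!(n+1)!}$), and proves $\gamma_n^2=0$ by a direct rewrite using Lemma~\ref{calcs}(2) followed by (3). You instead bootstrap from the dimension count: the proof of Proposition~\ref{sgn} shows $\{s_n,\,s_nxs_n\}$ spans $s_nFP_n(0)s_n$, and Proposition~\ref{dimstuff}(2) (together with Proposition~\ref{sgn} itself) forces this space to be two-dimensional, so the spanning set is a basis, yielding (3) and the ``hence'' clause of (4) at once; then $\gamma_n^2=\beta_n(\alpha_n\alpha_{n-1})\beta_{n-1}=0$ via (6), (7), (8), and (1), (2) for $n>0$ follow from (6) and (3). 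This is legitimate since Propositions~\ref{sgn} and~\ref{dimstuff} are proved earlier and do not rely on Lemma~\ref{calculations}. The trade-off: your route avoids any explicit coefficient computation but imports the block-dimension machinery from \S\ref{blocks}, while the paper's argument is entirely self-contained once Lemma~\ref{calcs} is available.
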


\begin{proof} Up to a nonzero scalar multiple,  $\alpha_n$ and $\beta_n^\vee$ are equal.  Hence, parts (1) and (8) will follow from parts (2) and (9) respectively.  

(2) Write $\beta_n=\sum_{\pi\in P_{n+1, n}}b_\pi\pi$. Then $b_{x_n^{n+1}}=\frac{1}{n!(n+1)!}$.

(3) Write $\gamma_n=\sum_{\pi\in P_{n+1, n}}c_\pi\pi$. Then $c_{x_n^{n-1}x_{n-1}^{n}}=\frac{(-1)^n}{n!}$.

(4) $\gamma_n^2=-n(n+1)s_nx_n^{n-1}x_{n-1}^nx_n^{n+1}s_{n+1}x_{n+1}^n=0$, (Lemma \ref{calcs}(2)\&(3)).  

(5) $\beta_0\alpha_0=-x_0^1x_1^0=0$ in $\uRep(S_0; F)$.

(6) $\beta_n\alpha_n=(-1)^{n+1}(n+1)s_nx_n^{n+1}s_{n+1}x_{n+1}^ns_n=\gamma_n$, (Lemma \ref{calcs}(2)).

(7) $\alpha_{n-1}\beta_{n-1}=(-1)^{n}ns_nx_n^{n-1}s_{n-1}x_{n-1}^ns_n=\gamma_n$, (Lemma \ref{calcs}(1)).

(9) $\beta_{n-1}\beta_n=\frac{1}{(n-1)!n!}s_{n-1}x_{n-1}^ns_nx_n^{n+1}s_{n+1}$, which (by Lemma \ref{calcs}(1)) is equal to $\frac{1}{(n-1)!n!}s_{n-1}x_{n-1}^nx_n^{n+1}s_{n+1}$, which (by Lemma \ref{calcs}(3)) is equal to zero.
\end{proof}

The following theorem describes the nontrivial block in $\uRep(S_0; F)$.

\begin{theorem}\label{zeroblock} Let $L_n:=L((1^n))$ for all $n\geq 0$.  The nontrivial block in $\uRep(S_0; F)$ has the following associated quiver  $$\includegraphics{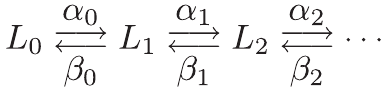}$$ 
with relations: $\beta_0\alpha_0=0$, $\alpha_{n}\alpha_{n-1}=0$, $\beta_{n-1}\beta_{n}=0$,  $\beta_n\alpha_n=\alpha_{n-1}\beta_{n-1}$ for $n>0$.
\end{theorem}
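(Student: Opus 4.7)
The plan is to match up the block with the additive category freely generated by the displayed quiver modulo the listed relations. For objects, Example~\ref{3class}(1) shows the unique nontrivial $\stackrel{0}{\sim}$-equivalence class is $\{(1^n):n\ge0\}$, and Proposition~\ref{sgn} realizes the indecomposables $L_n:=L((1^n))$ as $([n],s_n)$. The morphisms $\alpha_n,\beta_n$ defined just above Lemma~\ref{calculations} live in this block, and each asserted relation is verified in Lemma~\ref{calculations}: part~(5) gives $\beta_0\alpha_0=0$, parts~(8) and~(9) give $\alpha_n\alpha_{n-1}=0$ and $\beta_{n-1}\beta_n=0$, and parts~(6)--(7) give $\beta_n\alpha_n=\alpha_{n-1}\beta_{n-1}=:\gamma_n$. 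This yields an additive functor $\Phi$ from the category defined by the quiver with relations into our block, essentially surjective by the vertex identification.

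To see $\Phi$ is fully faithful, I would match dimensions of Hom spaces. By Proposition~\ref{dimstuff}(2), the block has $\dim\End(L_0)=1$, $\dim\End(L_n)=2$ for $n>0$, $\dim\Hom(L_n,L_{n\pm1})=1$, and all remaining Hom spaces vanish. In the source, the stated relations already force the same upper bounds: the relation $\beta_{n+1}\alpha_{n+1}=\alpha_n\beta_n$ gives $\alpha_n\beta_n=\gamma_{n+1}$, and then $\gamma_n^2=\beta_n(\alpha_n\beta_n)\alpha_n=\beta_n\beta_{n+1}\alpha_{n+1}\alpha_n=0$, so $\End(L_n)$ is spanned by $\{\id_{L_n},\gamma_n\}$ for $n>0$ (and by $\{\id_{L_0}\}$ alone since $\gamma_0=\beta_0\alpha_0=0$); $\Hom(L_n,L_{n+1})$ reduces to $\alpha_n$ because $\alpha_n\gamma_n=\alpha_n\alpha_{n-1}\beta_{n-1}=0$ and $\gamma_{n+1}\alpha_n=\beta_{n+1}\alpha_{n+1}\alpha_n=0$ absorb any prepended or appended loop, and symmetrically for $\beta_n$; finally paths to $L_{n\pm2}$ vanish at once from $\alpha_{n+1}\alpha_n=0=\beta_{n-1}\beta_n$. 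Lemma~\ref{calculations}(1)--(4) guarantees that $\alpha_n,\beta_n$ and $\gamma_n$ are nonzero with $\gamma_n^2=0$, so $\{\id_{L_n},\gamma_n\}$ is linearly independent in $\End(L_n)$ for $n>0$. Thus $\Phi$ hits a spanning set in every Hom space and the source dimensions are bounded above by the target dimensions, so $\Phi$ is bijective on Hom spaces, completing the proof.

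The main obstacle is the source-side dimension bound, whose key point is that $\gamma_n$ admits the two factorizations $\beta_n\alpha_n=\alpha_{n-1}\beta_{n-1}$: this double factorization, combined with the zero relations $\alpha_{n+1}\alpha_n=0=\beta_{n-1}\beta_n$, collapses every longer alternating path back to a power of $\gamma_n$, and hence to zero by $\gamma_n^2=0$. Everything else---verifying the relations in the block and obtaining the target Hom dimensions---is already contained in Lemma~\ref{calculations} and Proposition~\ref{dimstuff}.
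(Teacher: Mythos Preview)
Your proof is correct and follows essentially the same approach as the paper: identify the indecomposables via Example~\ref{3class}(1) and Proposition~\ref{sgn}, verify the relations using Lemma~\ref{calculations}(5)--(9), and then show exhaustiveness by collapsing all paths down to $\alpha_n$, $\beta_n$, or $\gamma_n=\beta_n\alpha_n$ and invoking Proposition~\ref{dimstuff}(2) together with Lemma~\ref{calculations}(1)--(4) for nonvanishing and linear independence. The only cosmetic difference is that you package this as an explicit functor $\Phi$ with a dimension-matching argument, whereas the paper states more directly that the arrows generate and the relations are exhaustive.
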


\begin{proof} The indecomposable objects in the block follow from Example \ref{3class}(1).  The fact that the arrows generate all the morphisms between the indecomposable objects follows from Lemma \ref{dimstuff} and Lemma \ref{calculations}(1)-(4)\&(6).  By Lemma \ref{calculations}(5)-(9), the relations hold.  Finally, the relations imply that the only nonzero compositions of $\alpha$'s and $\beta$'s are of the form $\alpha_n~ (n\geq 0), \beta_n~ (n\geq 0)$, and $\beta_n\alpha_n ~(n> 0)$.  By Lemma \ref{calculations}(4)\&(6), $\beta_n\alpha_n $ and $s_n=\id_{L_n}$ are linearly independent.  Hence the relations are exhaustive.
\end{proof}

\begin{remark} The block described in Theorem \ref{zeroblock} is equivalent to the nontrivial blocks in the 
{\em Temperley-Lieb category}. This can be deduced from the results in \cite[\S 10]{Wes}.
\end{remark}


\subsection{Comparison of non-semisimple blocks in $\uRep(S_d; F)$.}  In this section we show that for fixed $d\in\Z_{\geq0}$, the non-semisimple blocks in $\uRep(S_d; F)$ are all equivalent as additive categories.  Thereafter we conjecture that a restriction functor induces an equivalence of categories between certain non-semisimple blocks in $\uRep(S_d; F)$ and $\uRep(S_{d-1}; F)$.  
First, let us fix some notation.  Given a block $\cat{B}$ in $\uRep(S_d; F)$, let $\Inc_\cat{B}: \cat{B}\to\uRep(S_d; F)$ and $\Proj_\cat{B}: \uRep(S_d; F)\to\cat{B}$ denote the inclusion and projection functors respectively.  

\begin{proposition}\label{blocksfixd} Suppose $\cat{B}$ is a nontrivial block in $\uRep(S_d; F)$ which is not minimal with respect to $\prec$.  There exists a block $\cat{B}'$ in $\uRep(S_d; F)$ with $\cat{B}'\precneqq\cat{B}$ such that $\Proj_{\cat{B}'}\circ(-\otimes L(\Box))\circ\Inc_{\cat{B}}:\cat{B}\to\cat{B}'$ is an equivalence of additive categories.  Hence, all nontrivial blocks in $\uRep(S_d; F)$ are all equivalent as additive categories.
\end{proposition}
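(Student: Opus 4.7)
The plan is to take $\cat{B}'$ from Lemma \ref{oneboxlemma}(2) and prove that $G := \Proj_{\cat{B}'} \circ (- \otimes L(\Box)) \circ \Inc_\cat{B}$ together with its partner $H := \Proj_\cat{B} \circ (- \otimes L(\Box)) \circ \Inc_{\cat{B}'}$ are mutually quasi-inverse equivalences of additive categories.

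First I will compute $G$ and $H$ on indecomposable objects. Writing $\cat{B} = \{\lambda^{(0)} \subset \lambda^{(1)} \subset \cdots\}$ and $\cat{B}' = \{\rho^{(0)} \subset \rho^{(1)} \subset \cdots\}$, I claim that $G(L(\lambda^{(i)})) \cong L(\rho^{(i)})$ and $H(L(\rho^{(i)})) \cong L(\lambda^{(i)})$ for every $i \geq 0$. To prove this, expand $L(\lambda^{(i)}) \otimes L(\Box) = \bigoplus_\nu L(\nu)^{c_\nu}$ in $\uRep(S_d; F)$ and apply $\Lift_d$. Proposition \ref{Liftprop}(1) together with Lemma \ref{liftdec} computes the lift of the left-hand side as $(L(\lambda^{(i)}) \oplus L(\lambda^{(i-1)})) \otimes L(\Box)$ (with the convention $L(\lambda^{(-1)}) := 0$), whose $\cat{B}'$-summands are precisely $L(\rho^{(i)})$ and $L(\rho^{(i-1)})$ by Lemma \ref{oneboxlemma}(2); matching with the $\Lift_d$-expansion of the right-hand side yields the linear system $c_{\rho^{(j)}} + c_{\rho^{(j+1)}} = \delta_{j,i} + \delta_{j,i-1}$, whose only non-negative integer solution is $c_{\rho^{(i)}} = 1$ and $c_{\rho^{(j)}} = 0$ for $j \neq i$. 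The argument for $H$ is symmetric, and in particular $G$ is essentially surjective on indecomposables.

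Next I will establish an adjunction $G \dashv H$. Since $L(\Box)$ is self-dual, the endofunctor $- \otimes L(\Box)$ on $\uRep(S_d; F)$ is its own two-sided adjoint; combined with the vanishing of $\Hom$ between distinct blocks, this yields a natural isomorphism $\Hom_{\cat{B}'}(G(X), Y) \cong \Hom_\cat{B}(X, H(Y))$ for $X \in \cat{B}$ and $Y \in \cat{B}'$.

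Finally I will show $G$ is fully faithful. Let $\eta : \id_\cat{B} \to HG$ be the unit and $\epsilon : GH \to \id_{\cat{B}'}$ the counit. The triangle identity $H(\epsilon_Y) \circ \eta_{H(Y)} = \id_{H(Y)}$ shows that $\eta_{H(Y)}$ is a split monomorphism for every $Y \in \cat{B}'$; since every indecomposable $L(\lambda^{(i)})$ is isomorphic to $H(L(\rho^{(i)}))$, naturality of $\eta$ makes $\eta_{L(\lambda^{(i)})}$ a split mono into $HG(L(\lambda^{(i)})) \cong L(\lambda^{(i)})$. Because $L(\lambda^{(i)})$ is nonzero and indecomposable, the Krull--Schmidt property (Proposition \ref{proprep}(3)) forces this split mono to be an isomorphism, so $\eta$ is a natural isomorphism on every indecomposable and hence everywhere by additivity. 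Thus $G$ is an equivalence. For the final sentence of the proposition, iterate: since dominance order on Young diagrams of size $d$ has the unique minimum $(1^d)$, every nontrivial block of $\uRep(S_d; F)$ is equivalent through a finite chain of such $G$'s to the unique $\prec$-minimal nontrivial block, whence all nontrivial blocks in $\uRep(S_d; F)$ are mutually equivalent. The main technical obstacle will be the object-level computation in the second paragraph; after that, the adjunction and Krull--Schmidt arguments are essentially formal.
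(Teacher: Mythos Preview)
Your proof is correct and follows essentially the same route as the paper: choose $\cat{B}'$ via Lemma~\ref{oneboxlemma}(2), compute $G$ and $H$ on indecomposables by lifting to $\uRep(S_T;K)$, and then invoke the self-adjointness of $-\otimes L(\Box)$ together with the $\Inc/\Proj$ adjunction. Two minor remarks: (i) you should note explicitly that $d\neq 0$ (guaranteed since a non-minimal nontrivial block exists), so that $\Lift_d(L(\Box))=L(\Box)$ by Example~\ref{lift1}(1); (ii) your linear-system extraction of the multiplicities $c_{\rho^{(j)}}$ is a slight variant of the paper's argument, which instead uses the injectivity of $\Lift_d$ (Proposition~\ref{Liftprop}(3)) together with Proposition~\ref{liftsim} to read off the decomposition directly --- both are valid and amount to the same thing. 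Your final paragraph, using the triangle identity and Krull--Schmidt to show the unit is an isomorphism, is a welcome expansion of what the paper compresses into ``the result follows.''
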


\begin{proof} First, notice $d\not=0$ as we are assuming a nontrivial block exists in $\uRep(S_d; F)$.  Hence, $\Lift_d(L(\Box))=L(\Box)$ (see Example \ref{lift1}(1)).  Let $B=\{\lambda^{(0)}\subset\lambda^{(1)}\subset\cdots\}$ denote the set of Young diagrams corresponding to indecomposable objects in $\cat{B}$. Now, let $B'=\{\rho^{(0)}\subset\rho^{(1)}\subset\cdots\}$  be the set of Young diagrams given by Lemma \ref{oneboxlemma}(2), and $\cat{B}'$ the corresponding block.  By Lemma \ref{liftdec} we have the following $$\begin{array}{rcll}
\Lift_d(L(\lambda^{(0)})) & = & L(\lambda^{(0)}),\\
\Lift_d(L(\lambda^{(i)}))&=&L(\lambda^{(i)})\oplus L(\lambda^{(i-1)})&(i>0),\\
\Lift_d(L(\rho^{(0)}))&=&L(\rho^{(0)}),\\
\Lift_d(L(\rho^{(i)}))&=&L(\rho^{(i)})\oplus L(\rho^{(i-1)})& (i>0).\\
\end{array}$$
Hence, by Proposition \ref{Liftprop}(1) and Lemma \ref{oneboxlemma}(2), 
$$\begin{array}{rrcll}
\Lift_d(L(\lambda^{(i)})\otimes L(\Box))&=&\Lift_d(L(\lambda^{(i)}))\otimes L(\Box)&=&\Lift_d(L(\rho^{(i)}))\oplus A,\\
\Lift_d(L(\rho^{(i)})\otimes L(\Box))&=&\Lift_d(L(\rho^{(i)}))\otimes L(\Box)&=&\Lift_d(L(\lambda^{(i)}))\oplus B,\\
\end{array}$$ in $\uRep(S_T; K)$ for all $i\geq0$, where $A$ (resp. $B$) is an object in $\uRep(S_T; K)$ with no indecomposable summands of the form $L(\rho^{(j)})$ (resp. $L(\lambda^{(j)})$).  Thus, by Propositions \ref{Liftprop}(3) and \ref{liftsim}, 
$L(\lambda^{(i)})\otimes L(\Box)=L(\rho^{(i)})\oplus A'$ and $L(\rho^{(i)})\otimes L(\Box)=L(\lambda^{(i)})\oplus B'$  in $\uRep(S_d; F)$ for all $i\geq 0$, where $A'$ (resp. $B'$) is an object in $\uRep(S_d; F)$ with no indecomposable summands of the form $L(\rho^{(j)})$ (resp. $L(\lambda^{(j)})$).  Hence, $\Proj_{\cat{B}'}\circ(-\otimes L(\Box))\circ\Inc_{\cat{B}}$  and $\Proj_\cat{B}\circ(-\otimes L(\Box))\circ\Inc_{\cat{B}'}$ are inverse to one another on objects.  Moreover, since $(-\otimes L(\Box))$ is self-adjoint and $\Proj_\cat{B}$ is both right and left adjoint to $\Inc_{\cat{B}}$, it follows that $\Proj_\cat{B}\circ(-\otimes L(\Box))\circ\Inc_{\cat{B}'}$ is adjoint to $\Proj_{\cat{B}'}\circ(-\otimes L(\Box))\circ\Inc_{\cat{B}}$.  The result follows.
\end{proof}


Next, we use the universal property of $\uRep(S_t; F)$ (see \cite[8.3]{Del07}) to define a restriction functor.

\begin{definition} Let $\uRes_{S_{t-1}}^{S_t}:\uRep(S_t; F)\to\uRep(S_{t-1}; F)$ denote the functor given by the universal property of $\uRep(S_t; F)$ which sends $([1], \id_1)\mapsto([1], \id_1)\oplus([0], \id_0)$.
\end{definition}



\begin{conjecture}\label{conjecture} For $d\in\Z_{\geq0}$, let $\cat{B}_d$ denote the nontrivial block in $\uRep(S_d; F)$ containing the object $L(\varnothing)$.  Then the functor $\Proj_{\cat{B}_d}\circ\uRes_{S_d}^{S_{d+1}}\circ\Inc_{\cat{B}_{d+1}}$ induces an equivalence of additive categories $\cat{B}_{d+1}\cong\cat{B}_{d}$.
\end{conjecture}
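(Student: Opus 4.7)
The strategy is to reduce the question to the semisimple category $\uRep(S_T;K)$ via the lifting process developed earlier in the paper. The first step is to establish the compatibility
\[
\Lift_d \circ \uRes_{S_d}^{S_{d+1}} \cong \uRes_{S_T}^{S_{T+1}} \circ \Lift_{d+1},
\]
which should follow because both $\uRes$ (via the universal property $[1]\mapsto [1]\oplus[0]$) and $\Lift$ (via idempotents whose coefficients are polynomial in the parameter) are defined in a $t$-independent manner. Next, I would pin down the branching rule in the semisimple setting: combining the classical branching rule for $S_d\subset S_{d+1}$ with the interpolation functor $\cat{F}$ of Proposition \ref{Findy} and Proposition \ref{Liftprop}(5), one expects
\[
\uRes_{S_T}^{S_{T+1}}(L(\lambda)) \cong L(\lambda) \oplus \bigoplus_{\nu} L(\nu)
\]
in $\uRep(S_T;K)$, where the sum runs over Young diagrams $\nu$ obtained from $\lambda$ by removing a single removable box (the extra $L(\lambda)$ summand comes from removing the unique box at the end of row $0$ of $\lambda(T)$, which recovers $\lambda(T-1)$).

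With these tools in hand, label the indecomposables of $\cat{B}_{d+1}$ and $\cat{B}_d$ as $L(\lambda^{(i)})$ and $L(\rho^{(i)})$ respectively using Corollary \ref{mindiag}, so that $\lambda^{(0)}=\rho^{(0)}=\varnothing$, $\lambda^{(i)}=(d+2,1^{i-1})$, and $\rho^{(i)}=(d+1,1^{i-1})$ for $i\ge 1$. To describe $R:=\Proj_{\cat{B}_d}\circ \uRes_{S_d}^{S_{d+1}}\circ \Inc_{\cat{B}_{d+1}}$ on objects, I would write $\uRes(L(\lambda^{(i)}))=\bigoplus_{\mu}a_\mu L(\mu)$ in $\uRep(S_d;F)$, apply $\Lift_d$ to both sides, and compare with the expression for $\uRes_{S_T}^{S_{T+1}}(\Lift_{d+1}(L(\lambda^{(i)})))$ produced by the branching rule together with Lemma \ref{liftdec}. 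Every Young diagram appearing as an indecomposable summand besides the $L(\rho^{(j)})$ turns out to lie in a trivial $\stackrel{d}{\sim}$-class, and matching multiplicities of each $L(\rho^{(j)})$ yields the recursion $a_{\rho^{(j)}}+a_{\rho^{(j+1)}}=\delta_{j,i}+\delta_{j,i-1}$ ($j\ge 0$) whose unique eventually-zero solution is $a_{\rho^{(j)}}=\delta_{j,i}$. Hence $R(L(\lambda^{(i)}))\cong L(\rho^{(i)})$ and $R$ is a bijection on isomorphism classes of indecomposables.

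For fullness and faithfulness of $R$, Propositions \ref{Liftprop}(5) and \ref{dimstuff} already guarantee that the Hom dimensions between corresponding indecomposables match. By the explicit quiver description of a non-semisimple block from Section \ref{blockquiver} (compare Theorem \ref{zeroblock} and Proposition \ref{blocksfixd}), each block is a path category modulo explicit relations, so it suffices to verify that $R$ carries each basic arrow to a nonzero morphism, full faithfulness then being automatic from the matching dimensions. The most elegant route I would try is to construct an induction functor $\uInd_{S_{t-1}}^{S_t}$ as a two-sided adjoint to $\uRes$ (suggested by Frobenius reciprocity in the rigid tensor setting), then track the unit and counit of the adjunction on each block and compare with the unit/counit of the $(-\otimes L(\Box),-\otimes L(\Box))$-adjunction used in Proposition \ref{blocksfixd}. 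Producing this adjoint inside Deligne's non-abelian framework is the principal obstacle, and is the reason I view this step as the most delicate one -- presumably also why the statement appears as a conjecture rather than a theorem.
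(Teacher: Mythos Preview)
The paper does not prove this statement: it is explicitly labeled a \emph{conjecture}. The only information the paper offers is the remark immediately following it, namely that bijectivity on isomorphism classes of objects is ``not hard'' and that what remains is to show the functor is full or faithful. So there is no ``paper's own proof'' to compare against; instead your proposal should be read as an attempt to resolve an open problem.

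Your object-level analysis is in line with the paper's remark. The identification of $\cat{B}_{d+1}$ and $\cat{B}_d$ with the chains $\{\lambda^{(i)}\}$ and $\{\rho^{(i)}\}$, the branching computation in the semisimple category, and the multiplicity recursion forcing $R(L(\lambda^{(i)}))\cong L(\rho^{(i)})$ are all reasonable and reproduce exactly what the authors say is the easy half. (One caveat: $\Lift_t$ is defined only on isomorphism classes of objects, not as a functor, so your compatibility statement $\Lift_d\circ\uRes\cong\uRes\circ\Lift_{d+1}$ must be read at the level of objects; your argument only needs that, but the phrasing should be adjusted.)

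The genuine gap is in the fullness/faithfulness step, and it is twofold. First, there is a circularity: you invoke ``the explicit quiver description of a non-semisimple block from Section~\ref{blockquiver}'' for both $\cat{B}_{d+1}$ and $\cat{B}_d$, but for $d\ge 1$ that description is Theorem~\ref{blocksviamartin}, whose only proof in the paper goes through Martin's results. The entire point of the conjecture (see the final remark of \S\ref{blockquiver}) is to give a route to Theorem~\ref{blocksviamartin} that \emph{avoids} Martin. If you are willing to assume Martin, the conjecture loses most of its interest; if you are not, you cannot appeal to the quiver presentation of $\cat{B}_{d+1}$. Second, even granting the quiver description, your reduction ``it suffices to verify that $R$ carries each basic arrow to a nonzero morphism'' is not actually carried out: you give no mechanism for checking that $\uRes$ applied to (say) the map $\alpha_n:L(\lambda^{(n)})\to L(\lambda^{(n+1)})$ has nonzero $\cat{B}_d$-component. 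Your proposed route via a two-sided adjoint $\uInd$ is a natural idea, but as you yourself note, constructing that adjoint in the Karoubian (non-abelian) setting is precisely the missing ingredient---which is why the statement remains a conjecture in the paper.
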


\begin{remark}  
It is not hard to show that $\Proj_{\cat{B}_d}\circ\uRes_{S_d}^{S_{d+1}}\circ\Inc_{\cat{B}_{d+1}}$ is bijective on objects.  Hence, to prove Conjecture \ref{conjecture} it suffices to show $\Proj_{\cat{B}_d}\circ\uRes_{S_d}^{S_{d+1}}\circ\Inc_{\cat{B}_{d+1}}$ is either full or faithful.  
\end{remark}


\subsection{Description of blocks via Martin}  In this section we give a general description of the nontrivial blocks based on the results of Martin.  We start by reviewing the main result in \cite{MR1399030}.

Assume $d\not=0$ and let $\lambda^{(0)}\subset\lambda^{(1)}\subset\cdots$ denote the Young diagrams associated to a fixed nontrivial block in $\uRep(S_d; F)$.  For each $m\geq|\lambda^{(n)}|$, let $E_m^{(n)}$ denote the simple $FP_m(d)$-module associated to $\lambda^{(n)}$ (see Theorem \ref{idemP}(1)), and let $P_m^{(n)}$ denote its projective cover.  According to \cite[Proposition 9]{MR1399030}, these modules have Loewy structure
\begin{equation*}\label{loewy}\begin{array}{l}
P_m^{(0)}=\begin{array}{c}
 E^{(0)}_m \\
E^{(1)}_m
\end{array}\quad(m\geq|\lambda^{(1)}|),\\
\\
P_m^{(n)}=\begin{array}{rcl}
& E^{(n)}_m &\\
E^{(n-1)}_m & & E^{(n+1)}_m\\
& E^{(n)}_m &
\end{array}\quad(n>0, m\geq|\lambda^{(n+1)}|).
\end{array}
\end{equation*}  
In other words, for $m>0$, $P_m^{(0)}$ has a maximal simple submodule $B_0\cong E_m^{(1)}$  with $P_m^{(0)}/B_0\cong E^{(0)}_m$.  Moreover, 
for each $m\geq|\lambda^{(n+1)}|$ there exists a chain of submodules 
$$\includegraphics{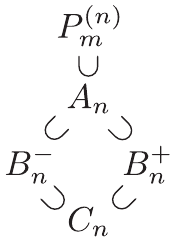}$$
with $P_m^{(n)}/A_n\cong E^{(n)}_m, A_n/B_n^{\pm}\cong E^{(n\mp1)}_m, B_n^{\pm}/C_n\cong E^{(n\pm1)}_m$, and $C_n\cong E^{(n)}_m$ for $n>0$.  Using the notation above, define the following maps\footnote{The equations in (\ref{martinmaps}) are only defined when $m$ is sufficiently large.}:
\begin{equation}\label{martinmaps}\begin{array}{ll}
\alpha_0=\alpha_{0, m}:P^{(0)}_m\twoheadrightarrow P^{(0)}_m/B_0\cong C_1\hookrightarrow P^{(1)}_m\\
\beta_0=\beta_{0, m}:P^{(1)}_m\twoheadrightarrow P^{(1)}/B_1^+\cong P_m^{(0)}\\
\alpha_n=\alpha_{n,m}: P_m^{(n)}\twoheadrightarrow P_m^{(n)}/B^-_n\cong B^-_{n+1}\hookrightarrow P_m^{(n+1)} &  (n>0) \\
\beta_n=\beta_{n,m}:P_m^{(n+1)}\twoheadrightarrow P_m^{(n+1)}/B^+_{n+1}\cong B^+_n\hookrightarrow P_m^{(n)}  &  (n>0)\\
\gamma_n=\gamma_{n,m}: P_m^{(n)}\twoheadrightarrow P_m^{(n)}/A_n\cong C_n\hookrightarrow P_m^{(n)}  &  (n>0)
\end{array}
\end{equation}

We are now ready to give a general description of the nontrivial blocks in $\uRep(S_d; F)$. 

\begin{theorem}\label{blocksviamartin}  Suppose $d$ is a nonnegative integer and $\cat{B}$ is a nontrivial block in $\uRep(S_d; F)$.  Then $\cat{B}$ is equivalent as an additive category to the nontrivial block described in Theorem \ref{zeroblock}.
\end{theorem}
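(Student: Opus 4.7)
The plan is to exploit Proposition \ref{blocksfixd}, which reduces the theorem to verifying that one nontrivial block in $\uRep(S_d; F)$ has the presentation of Theorem \ref{zeroblock}, together with Martin's Loewy description of the projective indecomposable modules of the partition algebra recalled just above the theorem statement. The case $d=0$ is of course exactly Theorem \ref{zeroblock}, so I would fix $d > 0$ and a nontrivial block $\cat{B}$ with indecomposables labelled $\lambda^{(0)} \subset \lambda^{(1)} \subset \cdots$.

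For any given $N$, by Lemma \ref{classifylemma} one may choose $m \geq |\lambda^{(N+1)}|$ large enough that the indecomposables $L(\lambda^{(n)})$ for $0 \leq n \leq N$ are simultaneously realized as $([m], e_n)$ with primitive $e_n \in FP_m(d)$, so that $P_m^{(n)} = FP_m(d)e_n$. We then have $\Hom_{\uRep(S_d; F)}(L(\lambda^{(i)}), L(\lambda^{(j)})) \cong \Hom_{FP_m(d)}(P_m^{(i)}, P_m^{(j)})$, and Proposition \ref{dimstuff} combined with Martin's Loewy structure pins down these Hom spaces up to isomorphism. Through these finite truncations $\cat{B}$ is identified with the full additive subcategory of $FP_m(d)$-projectives spanned by the $P_m^{(n)}$.

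The next step is to use Martin's morphisms $\alpha_n, \beta_n, \gamma_n$ from (\ref{martinmaps}) as generators. The dimensional count in Proposition \ref{dimstuff} shows that $\alpha_n$ and $\beta_n$ respectively span the one-dimensional Hom spaces $\Hom(P_m^{(n)}, P_m^{(n+1)})$ and $\Hom(P_m^{(n+1)}, P_m^{(n)})$, while $\{\id_{P_m^{(n)}}, \gamma_n\}$ is a basis for the two-dimensional $\End(P_m^{(n)})$, with $\gamma_n^2 = 0$ since $\gamma_n$ factors through the simple socle $C_n$. The composites $\alpha_n \alpha_{n-1}$ and $\beta_{n-1}\beta_n$ live in one-dimensional Hom spaces and, by Martin's filtration, their images would have to be simple subquotients of $P_m^{(n+1)}$ or $P_m^{(n-1)}$ not appearing in the appropriate Loewy layer, hence they vanish; likewise $\beta_0 \alpha_0 = 0$ since $P_m^{(0)}$ has Loewy length two and the composite factors through the zero overlap of socle and top. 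Both $\beta_n \alpha_n$ and $\alpha_{n-1}\beta_{n-1}$ factor through $C_n$ by construction and are therefore nonzero scalar multiples of $\gamma_n$, so rescaling the $\alpha_n, \beta_n$ inductively by nonzero scalars (which leaves the already-vanishing relations unaffected) produces the normalization $\beta_n \alpha_n = \alpha_{n-1}\beta_{n-1}$ for all $n > 0$.

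Passing to the colimit over $N$ (equivalently, noting that the presentation on $\{L(\lambda^{(n)}): n \leq N\}$ stabilizes compatibly as $N$ grows), one obtains exactly the quiver with relations of Theorem \ref{zeroblock}, which proves the claimed additive equivalence. The main technical obstacle is confirming inside Martin's filtration that both $\beta_n \alpha_n$ and $\alpha_{n-1}\beta_{n-1}$ are genuinely nonzero scalar multiples of $\gamma_n$, rather than vanishing, so that the relation $\beta_n\alpha_n = \alpha_{n-1}\beta_{n-1}$ is nontrivial; this boils down to unwinding Martin's identifications $A_n/B_n^{\pm} \cong E_m^{(n \mp 1)}$ and $B_n^{\pm}/C_n \cong E_m^{(n \pm 1)}$ and checking that the two routes down to $C_n$ really produce nonzero maps, which is where the nondegeneracy of the Loewy linking pattern in Martin's picture is essential.
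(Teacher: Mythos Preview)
Your proposal is correct and follows essentially the same approach as the paper: identify $\Hom_{\uRep(S_d;F)}(L(\lambda^{(i)}),L(\lambda^{(j)}))$ with $\Hom_{FP_m(d)}(P_m^{(i)},P_m^{(j)})$ for $m$ large, then use Martin's Loewy structure and the dimension count of Proposition~\ref{dimstuff} to verify that the maps $\alpha_n,\beta_n,\gamma_n$ of (\ref{martinmaps}) satisfy the relations of Theorem~\ref{zeroblock}. Two small remarks: invoking Proposition~\ref{blocksfixd} is unnecessary since your argument already treats an arbitrary nontrivial block directly; and your explicit rescaling step to normalize $\beta_n\alpha_n=\alpha_{n-1}\beta_{n-1}$ is a point the paper leaves implicit in its claim that the Martin maps satisfy Lemma~\ref{calculations}(6)--(7) on the nose.
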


\begin{proof}  We may assume $d\not=0$.  Notice $$\Hom_{\uRep(S_d; F)}(L(\lambda^{(n)}), L(\lambda^{(n')}))=\Hom_{FP_m(d)}(P_m^{(n)}, P_m^{(n')})$$ whenever $m\geq |\lambda^{(n)}|, |\lambda^{(n')}|$.  Hence, by Proposition \ref{dimstuff}(2), it suffices to prove the maps defined by (\ref{martinmaps}) satisfy  equations (1)-(7) in Lemma \ref{calculations}.  Equations (1)-(5) are clearly satisfied. The fact that equations (6) and (7) are satisfied follows from the observation that  the maps $B^\mp_{n\pm1}\hookrightarrow P_m^{(n\pm1)}\twoheadrightarrow P_m^{(n\pm1)}/B^\pm_{n\pm1}\cong B^\pm_n$ factor through the maps $B^\mp_{n\pm1}\twoheadrightarrow B^\mp_{n\pm1}/C_{n\pm1}\cong C_n\hookrightarrow B^\pm_n$.  
\end{proof}

\begin{remark} Our proof of Theorem \ref{blocksviamartin} is a bit unsatisfactory since it is
based on rather deep results from \cite{MR1399030}. On the other hand Theorem \ref{blocksviamartin} follows from Theorem \ref{zeroblock}, Proposition \ref{blocksfixd}, along with Conjecture \ref{conjecture}.  Hence a proof of Conjecture \ref{conjecture} would complete a proof of Theorem \ref{blocksviamartin} which is independent from \cite{MR1399030}. 
\end{remark}


\appendix

\section{On lifting idempotents}\label{liftingappendix}    As we were unable to find an appropriate reference, in this appendix we give standard proofs of well-known results on lifting idempotents.
Assume $F$ is a field, $u$ is an indeterminate, and $A$ is a free $F[[u]]$-algebra of finite rank.  Finally, for $a\in A$, let $\bar{a}$ denote the image of $a$ under the quotient map $A\to A/Au$.  In this appendix we will show that any idempotent in the $F$-algebra $A/Au$ can be lifted to an idempotent in $A$.  First, we need the following lemma.

\begin{lemma}\label{invert}  $a$ is a unit in $A$ if and only if $\bar{a}$ is a unit in $A/Au$.
\end{lemma}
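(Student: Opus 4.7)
The forward direction is immediate: if $a \in A$ has an inverse $a^{-1}$, then reducing $aa^{-1} = a^{-1}a = 1$ modulo $Au$ shows that $\bar{a}$ is a unit in $A/Au$. So the content of the lemma lies in the converse.

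For the converse, assume $\bar{a}$ is a unit in $A/Au$ and choose any $b \in A$ whose image satisfies $\bar{b}\bar{a} = \bar{a}\bar{b} = \bar{1}$. Then $ab = 1 - x$ and $ba = 1 - y$ for some $x, y \in Au$. The plan is to show $1 - x$ and $1 - y$ are units in $A$, so that $b(1-x)^{-1}$ is a right inverse of $a$ and $(1-y)^{-1}b$ is a left inverse; these must then agree, making $a$ a unit.

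The key tool is the $u$-adic completeness of $A$. Since $A$ is free of finite rank as an $F[[u]]$-module and $F[[u]]$ is $u$-adically complete, $A \cong F[[u]]^n$ is also complete in the $u$-adic topology (completeness is preserved by finite direct sums). Moreover, $u$ lies in the center of $A$ because $A$ is an $F[[u]]$-algebra; hence if $x = uc$ with $c \in A$ then $x^n = u^n c^n \in Au^n$, so the partial sums of $\sum_{n \geq 0} x^n$ form a Cauchy sequence and converge in $A$ to a genuine two-sided inverse of $1 - x$. The same argument handles $1 - y$, and the proof concludes by the remark above.

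The only step requiring care is the completeness argument, but it is the standard geometric series trick for $u$-adic rings; I do not anticipate any real obstacle.
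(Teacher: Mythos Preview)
Your argument is correct and is essentially the same as the paper's: both invert $1-x$ (with $x\in Au$) via the geometric series $\sum_{n\ge 0} x^n$, using that $A$ is $u$-adically complete, and then conclude that a one-sided inverse of $a$ exists on each side. The only difference is cosmetic---you spell out why the series converges (completeness of $A\cong F[[u]]^n$ and centrality of $u$), whereas the paper simply writes the series down.
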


\begin{proof}  If $a$ is a unit in $A$, then clearly $\bar{a}$ is a unit in $A/Au$ with $\bar{a}^{-1}=\overline{a^{-1}}$.  Now suppose $\bar{a}$ is a unit in $A/Au$.  Then there exists $b\in A$ with $\bar{a}\bar{b}=\bar{1}$ in $A/Au$.  Hence $ab=1+xu$ for some $x\in A$.  Thus $ab\left(1-\sum_{i=1}^\infty x^iu^i\right)=1$ in $A$.  Thus $a$ is right invertible.  Similarly $a$ is left invertible, hence invertible.
\end{proof}

Now we are ready to prove the following theorem on lifting idempotents.

\begin{theorem}\label{appenlift}  Given an idempotent $e\in A/Au$, there exists an idempotent $\ep\in A$ such that $\bar{\ep}=e$ (we say $\ep$ is a lift of $e$).  Moreover, if two idempotents are conjugate in $A/Au$, then their lifts are conjugate in $A$.
\end{theorem}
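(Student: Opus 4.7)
The plan is to prove the theorem in two parts: first, construct a lift by a Newton-style iteration using the $u$-adic completeness of $A$, then handle the conjugacy statement by reducing to the case where the two idempotents reduce to the same element mod $u$ and producing an explicit conjugating unit.

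For existence, I would start with any $a_0\in A$ such that $\bar{a}_0=e$; then $d_0:=a_0^2-a_0\in Au$ since $e^2=e$. Define iteratively $a_{n+1}:=3a_n^2-2a_n^3$. Because $d_n:=a_n^2-a_n$ is a polynomial in $a_n$, it commutes with $a_n$, so a direct (purely commutative) calculation yields
$$a_{n+1}^2-a_{n+1}=d_n^2(4d_n-3) \quad \text{and} \quad a_{n+1}-a_n=-(2a_n-1)\,d_n.$$
Thus by induction $d_n\in Au^{2^n}$, and $a_{n+1}-a_n\in Au^{2^n}$, so $(a_n)$ is Cauchy in the $u$-adic topology. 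Since $A$ is free of finite rank over $F[[u]]$, it is complete in the $u$-adic topology, so $a_n$ converges to some $\ep\in A$. Passing to the limit gives $\ep^2=\ep$ and $\bar{\ep}=\bar{a}_0=e$, producing the desired lift.

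For the conjugacy statement, I would first treat the special case of two lifts $\ep,\ep'\in A$ of the same idempotent, where $\ep-\ep'\in Au$. Set $v:=1+(\ep'-\ep)(2\ep-1)$. Then $\bar{v}=1$, so by Lemma~\ref{invert} $v\in A^\times$. A short computation using $\ep^2=\ep$ and $(\ep')^2=\ep'$ gives
$$v\ep=\ep+(\ep'-\ep)\ep=\ep'\ep \quad\text{and}\quad \ep' v=\ep'+(\ep'\ep-\ep')\bigl(2\ep-1\bigr)\cdot (\cdots) =\ep'\ep,$$
so $v\ep=\ep' v$ and hence $\ep'=v\ep v^{-1}$. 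For the general case, suppose $e_1,e_2\in A/Au$ satisfy $ue_1u^{-1}=e_2$ for some unit $u\in(A/Au)^\times$, and let $\ep_1,\ep_2$ be lifts. Pick any $U\in A$ with $\bar{U}=u$; by Lemma~\ref{invert}, $U\in A^\times$. Then $U\ep_1 U^{-1}$ is an idempotent lifting $e_2$, so by the special case it is conjugate to $\ep_2$ in $A$, and composing conjugations shows $\ep_1$ and $\ep_2$ are conjugate.

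The main (minor) obstacle is the commutative identity $a_{n+1}^2-a_{n+1}=d_n^2(4d_n-3)$: it is routine once one observes that $a_n$ and $d_n$ commute, but one must resist the temptation to manipulate it as in a non-commutative setting. Everything else is algebraic bookkeeping and a single appeal to completeness of $A$ and to Lemma~\ref{invert}.
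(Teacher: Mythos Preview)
Your proof is correct and follows essentially the same strategy as the paper: both use the Newton-type iteration $a\mapsto 3a^2-2a^3$ for existence (you iterate directly in $A$ and invoke $u$-adic completeness, while the paper steps through the tower $A/Au^i$), and both handle conjugacy by lifting the conjugating unit via Lemma~\ref{invert}, reducing to the case of two lifts of the same idempotent, and exhibiting an explicit intertwiner that reduces to $1$ modulo $u$ (you take $v=1+(\ep'-\ep)(2\ep-1)$, the paper takes $\ep\ep'+(1-\ep)(1-\ep')$). Do fill in the ``$(\cdots)$'' in the verification that $\ep' v=\ep'\ep$, and avoid reusing the letter $u$ for the conjugating unit in $A/Au$.
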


\begin{proof} To prove the first statement let $A_i:=A/Au^i$ for $i>0$ and let $\psi_i$ denote the quotient map $A_i\to A_i/A_iu^{i-1}=A_{i-1}$ for each $i>1$.  Set $e_1=e\in A_1$.  We will inductively show that for each $i>1$ there exists an idempotent $e_i\in A_i$ with $\psi_i(e_i)=e_{i-1}$.  The first statement of the theorem will follow by letting $\ep$ be the unique element of $A$ such that $\ep\mapsto e_i$ under the quotient map $A\to A_i$ for all $i>0$.  Now, fix $i>1$ and assume $e_{i-1}$ is an idempotent in $A_{i-1}$.  Choose $a_i\in A_i$ with $\psi_i(a_i)=e_{i-1}$ and set $e_i=3a_i^2-2a^3_i$.  Then $\psi_i(e_i)=3e_{i-1}^2-2e_{i-1}^3=e_{i-1}$ in $A_{i-1}$.  It remains to show that $e_i$ is an idempotent in $A_i$.  Set $b_i=a_i^2-a_i$.  Then $\psi_i(b_i)=0$, hence $b_i\in A_iu^{i-1}$.  In particular, $b_i^2=0$ in $A_i$.  Moreover, $e_i=a_i-(2a_i-1)b_i$.  Since $a_i$ and $b_i$ clearly commute, $$e_i^2=a_i^2-2(2a_i-1)b_i=a_i^2-4b_i^2-2a_ib_i=a_i^2-2(a_i^3-a_i^2)=e_i\quad\text{in }A_i.$$

To prove the second statement of the theorem, suppose $\ep$ and $\ep'$ are idempotents in $A$.  First assume $\bar{\ep}=\bar{x}\overline{\ep'}\bar{x}^{-1}$ in $A/Au$ for some $x\in A$ with $\bar{x}$ invertible.  By Lemma \ref{invert}, $x$ is invertible.  
It follows that $x\ep'x^{-1}$ is an idempotent in $A$ with 
$\bar{\ep}=\overline{x\ep'x^{-1}}$ in $A/Au$.  Therefore, it suffices to show $\ep$ and $\ep'$ are conjugate in $A$ whenever $\bar{\ep}=\overline{\ep'}$.  To do so, consider the element $a:=\ep\ep'+(1-\ep)(1-\ep')\in A$.  If $\bar{\ep}=\overline{\ep'}$, then $\bar{a}=\bar{1}$.  Hence $a$ is a unit in $A$ (see Lemma \ref{invert}).  Moreover, $\ep a=\ep\ep'=a\ep'$ so that $\ep=a\ep'a^{-1}$.
\end{proof}


\renewcommand{\bibname}{\textsc{references}} 
\bibliographystyle{alpha}	
	\addcontentsline{toc}{chapter}{\bibname}
	\bibliography{references}

\begin{thebibliography}{Wes95}

\bibitem[Ben91]{MR1110581}
D.~J. Benson.
\newblock {\em Representations and cohomology. {I}}, volume~30 of {\em
  Cambridge Studies in Advanced Mathematics}.
\newblock Cambridge University Press, Cambridge, 1991.
\newblock Basic representation theory of finite groups and associative
  algebras.

\bibitem[BK01]{MR1797619}
Bojko Bakalov and Alexander Kirillov, Jr.
\newblock {\em Lectures on tensor categories and modular functors}, volume~21
  of {\em University Lecture Series}.
\newblock American Mathematical Society, Providence, RI, 2001.

\bibitem[Del07]{Del07}
P.~Deligne.
\newblock La cat\'egorie des repr\'esentations du groupe sym\'etrique {$S\sb
  t$}, lorsque {$t$} n'est pas un entier naturel.
\newblock In {\em Algebraic groups and homogeneous spaces}, Tata Inst. Fund.
  Res. Stud. Math., pages 209--273. Tata Inst. Fund. Res., Mumbai, 2007.

\bibitem[DW00]{MR1779601}
William~F. Doran, IV and David~B. Wales.
\newblock The partition algebra revisited.
\newblock {\em J. Algebra}, 231(1):265--330, 2000.

\bibitem[FH91]{MR1153249}
William Fulton and Joe Harris.
\newblock {\em Representation theory}, volume 129 of {\em Graduate Texts in
  Mathematics}.
\newblock Springer-Verlag, New York, 1991.
\newblock A first course, Readings in Mathematics.

\bibitem[Fro68]{Frob}
Ferdinand~Georg Frobenius.
\newblock \"{U}ber die charaktere der symmetrischen gruppe, s'ber akad. wiss.
  berlin. (1900), 303-315; gesammelte {A}bhandlungen. {B}\"ande {III}.
\newblock {\em Springer-Verlag}, pages 148--166, 1968.

\bibitem[HR05]{MR2143201}
Tom Halverson and Arun Ram.
\newblock Partition algebras.
\newblock {\em European J. Combin.}, 26(6):869--921, 2005.

\bibitem[Kno07]{MR2349713}
Friedrich Knop.
\newblock Tensor envelopes of regular categories.
\newblock {\em Adv. Math.}, 214(2):571--617, 2007.

\bibitem[Lit58]{MR0095209}
D.~E. Littlewood.
\newblock Products and plethysms of characters with orthogonal, symplectic and
  symmetric groups.
\newblock {\em Canad. J. Math.}, 10:17--32, 1958.

\bibitem[Mac95]{MR1354144}
I.~G. Macdonald.
\newblock {\em Symmetric functions and {H}all polynomials}.
\newblock Oxford Mathematical Monographs. The Clarendon Press Oxford University
  Press, New York, second edition, 1995.
\newblock With contributions by A. Zelevinsky, Oxford Science Publications.

\bibitem[Mar91]{MR1103994}
Paul Martin.
\newblock {\em Potts models and related problems in statistical mechanics},
  volume~5 of {\em Series on Advances in Statistical Mechanics}.
\newblock World Scientific Publishing Co. Inc., Teaneck, NJ, 1991.

\bibitem[Mar94]{MR1265453}
Paul Martin.
\newblock Temperley-{L}ieb algebras for nonplanar statistical mechanics---the
  partition algebra construction.
\newblock {\em J. Knot Theory Ramifications}, 3(1):51--82, 1994.

\bibitem[Mar96]{MR1399030}
Paul Martin.
\newblock The structure of the partition algebras.
\newblock {\em J. Algebra}, 183(2):319--358, 1996.

\bibitem[Wes95]{Wes}
B.~W. Westbury.
\newblock The representation theory of the {T}emperley-{L}ieb algebras.
\newblock {\em Math. Z.}, 219(4):539--565, 1995.

\end{thebibliography}

\end{document}